\documentclass[12pt,a4paper]{amsart}
\usepackage{amssymb,stmaryrd}%,blkarray}
\usepackage[a4paper,left=2.25cm,right=2.25cm,top=3cm,bottom=3cm,headsep=1cm]{geometry}

\usepackage[mathbf]{euler} 
\usepackage{mathtools,mathdots}
\usepackage[normalem]{ulem}
\usepackage{fancybox,kbordermatrix}

\setlength{\parskip}{1pt}
\newcommand\vf{\vec f}
\newcommand\vX{\vec u}
\newcommand\vY{\vec v}
\newcommand\vZ{\vec w}
\newcommand\vW{\vec x}
\newcommand\W{\mathcal W}
\renewcommand\emptyset{\varnothing}
\newcommand\loc{\mathrm{loc}}
\newcommand\killing[2]{\langle #1,#2\rangle}
%\newcommand{\bra}[1]{\left\langle #1\right|}
%\newcommand{\ket}[1]{\left|#1\right\rangle}
%\newcommand{\braket}[2]{\left< #1 \vphantom{#2}\right|
%\! \left. #2 \vphantom{#1} \right>}%
%tikz stuff
\usepackage{tikz}\usetikzlibrary{fit,positioning,matrix,calc,decorations.markings,angles,decorations.pathmorphing,decorations.pathreplacing,arrows.meta}%,matrix,fit,shapes.geometric}
\tikzset{mynode/.style={circle,draw=black,fill=black,inner sep=1.8pt,outer sep=0pt}}
\tikzset{edgelabel/.style={\mcol,inner sep=0pt}}
\tikzset{invlabel/.style={draw=black,text=black,circle,inner sep=0pt,minimum size=3mm}}
\tikzset{triv/.style={circle,fill=black,inner sep=0.2mm}}%for trivalent vertices to look nicer
%this is a useful command: only start tikz if not only started
\newcommand\tikzif[2][]{
%\tikzifinpicture{\begin{scope}[#1]#2\end{scope}}{\begin{tikzpicture}[#1]#2\end{tikzpicture}}  
\tikzifinpicture{#2}{\begin{tikzpicture}[#1]#2\end{tikzpicture}}% don't process options if not
}
% useful macros
\tikzset{math mode/.style = {execute at begin node=$, execute at end node=$}}%makes all nodes math mode
\def\dr{red!80!black} % \def\dr{red!50!black}
\def\dg{green!80!black} % \def\dg{green!50!black}
\def\db{blue!80!black} % \def\db{blue!80!black}
%\tikzset{d/.style={line width = .5}} 
\tikzset{d/.style={ultra thick}}
\tikzset{dr/.style={draw=\dr,d}}
\tikzset{dg/.style={draw=\dg,d}}
\tikzset{db/.style={draw=\db,d}} 
% label style
\def\mcol{}
\def\m{}
% root style
\tikzset{rt/.style={text=blue,execute at begin node=$\sf,execute at end node=$}}

\def\rr#1{r_{\mathsf{#1}}}
%
%\input linkpattern.tex
% a useful macro
\tikzset{arrow/.style={postaction={decorate,decoration={markings,mark = at position #1 with {\arrow{Straight Barb[line width=0.2mm,length=1.2mm]}}}}},arrow/.default=0.5}
\tikzset{invarrow/.style={postaction={decorate,decoration={markings,mark = at position #1 with {\arrowreversed{Straight Barb[line width=0.2mm,length=1.2mm]}}}}},invarrow/.default=0.5}
\newlength\myshift
\newcommand\getshift{%
%\pgfgettransformentries{\mya}{\myb}{\myc}{\myd}{\mys}{\myt}%
% coordinate (x,y) is transformed to (ax + by + s, cx + dy + t)  
\pgfmathsetlength{\myshift}{0.8mm}
%\pgfmathsetlength{\myshift}{min(\mya*1.25mm,2mm)}
}
% remove parentheses
\def\noparenx#1{%
	\ifx\relax#1
	\else
	\if)#1%
	\else
	\if(#1%
	\else
	#1%
	\fi
	\fi
	\expandafter\noparenx
	\fi
}

\newcommand\rh[5][]{
\tikzif[baseline=0,scale=1.25]{
\getshift
\draw[thick,black] (0,0) coordinate (ad) -- node[edgelabel,left,xshift=\myshift] {$#2$} ++(-60:1) coordinate (ba) -- node[edgelabel,right,xshift=-\myshift] {$#3$} ++(60:1) coordinate (cb) -- node[edgelabel,right,xshift=-\myshift] {$#4$} ++(120:1) coordinate (dc) -- node[edgelabel,left,xshift=\myshift] {$#5$} cycle;
\ifx\&#1\&\else\node[invlabel] at (0.5,0) {$\ss#1$};\fi
}}

\newcommand\uptri[4][]{
  \tikzif[baseline=0.34cm,scale=1.25]{
    \getshift
    \draw[thick,black] (-0.5,0) -- node[edgelabel] (horiz) {$\vphantom{0}\smash{#3}$} ++(0:1) -- node[edgelabel,right,xshift=-\myshift] (NE) {$#4$} ++(120:1) -- node[edgelabel,left,xshift=\myshift] (NW) {$#2$} ++(240:1) -- cycle; % redundant ending due to tikz node positioning bug with cycle
    \ifx\&#1\&\else\node[invlabel] at (0,0.33) {$\ss#1$};\fi
  }}
\newcommand\downtri[4][]{
  \tikzif[baseline=-0.54cm,scale=1.25]{
    \getshift
    \begin{scope}[scale=-1]
      \draw[thick,black] (-0.5,0) -- node[edgelabel] (horiz) {$\vphantom{0}\smash{#3}$} ++(0:1) -- node[edgelabel,left,xshift=\myshift] (NE) {$#4$} ++(120:1) -- node[edgelabel,right,xshift=-\myshift] (NW) {$#2$} ++(240:1) -- cycle; % redundant ending due to tikz node positioning bug with cycle
      \ifx\&#1\&\else\node[invlabel] at (0,0.33) {$\ss#1$};\fi
    \end{scope}
  }}
\newcommand\Deltatri{{\tikz[baseline=0,scale=0.3]{\uptri{}{}{}}}}
\newcommand\nablatri{{\tikz[baseline=-0.25cm,scale=0.3]{\downtri{}{}{}}}}
\usepackage{hyperref}
\setlength{\multlinegap}{0pt}
\allowdisplaybreaks[1]
\renewcommand\ss{\scriptstyle}
\newcommand\sss{\scriptscriptstyle}
\newcommand\Uq{{\mathcal U}_q}
\newcommand\gqg{\Uq(\mathfrak{g}[z^\pm])}
\newcommand\aqg{\Uq(\mathfrak{a}_d[z^\pm])}
\newcommand\xqg{\Uq(\mathfrak{x}_{2d}[z^\pm])}

\newcommand\ZZ{{\mathbb Z}}
\newcommand\NN{{\mathbb N}}
\newcommand\QQ{{\mathbb Q}}
\newcommand\CC{{\mathbb C}}
\newcommand\PP{{\mathbb P}}
\newcommand\CP{{\mathbb{CP}}}
\newcommand\RR{{\mathbb R}}
\newcommand\Otimes\bigotimes
\theoremstyle{plain}
\newtheorem{thm}{Theorem}[section]

\newtheorem{prop}[thm]{Proposition}
\newtheorem{lem}[thm]{Lemma}
\newtheorem{cor}[thm]{Corollary}
\newtheorem{ex}[thm]{Example}
\theoremstyle{definition}

\theoremstyle{remark}
\newtheorem*{rmk*}{Remark}
\newcommand\defn[1]{\textbf{#1}}
\newcommand\into\hookrightarrow
\newcommand\onto\twoheadrightarrow
\newcommand\otno\twoheadleftarrow
\newcommand\dom\backslash
\newcommand\tensor\otimes
\newcommand\Tensor\bigotimes
\newcommand\iso\cong
\newcommand\lie[1]{{\mathfrak{#1}}}
\DeclareMathOperator\attr{attr}
\newcommand\calM{{\mathcal M}}
\newcommand\horizbar[1][]{\tikzif[baseline=0]{\draw[thick] (0,0) -- node{$#1$} (0.5,0); }}
\newcommand\NW[1][]{\tikzif[baseline=0]{\draw[thick] (0.5,0) -- node{$#1$} ++(120:0.5); }}
\newcommand\NE[1][]{\tikzif[baseline=0]{\draw[thick] (0,0) -- node{$#1$} ++(60:0.5); }}
\tikzset{gauged/.style={rectangle,rounded corners=2mm,draw,inner sep=0.5mm,minimum size=4mm}}
\tikzset{framed/.style={rectangle,draw,inner sep=0.5mm,minimum size=4mm}}
\tikzset{script math mode/.style = {execute at begin node=$\ss , execute at end node=$}}%makes all nodes math mode
\newcommand\St{\text{St}}
\newcommand\actson\circlearrowright
\newcommand\diag{{\mathop{diag}}}
\newcommand\rhocek{{\check\rho}}

\newcommand\rem[2][]{}
\long\def\junk#1{}
\title[Schubert puzzles and integrability II]{Schubert puzzles and integrability II: \\ multiplying motivic Segre classes}
\author{Allen Knutson}
\address{Allen Knutson, Cornell University, Ithaca, New York}
\email{allenk@math.cornell.edu}
\thanks{AK was supported by NSF grant 1953948.}
\author{Paul Zinn-Justin}
\address{Paul Zinn-Justin, School of Mathematics and Statistics, The University of Melbourne, 
Victoria 3010, Australia}
\email{pzinn@unimelb.edu.au}
\thanks{PZJ was supported by ARC grant FT150100232.}
\date{\today}
\begin{document}

\begin{abstract}
  In {\em Schubert Puzzles and Integrability I}
  we proved several ``puzzle rules'' for computing products
  of Schubert classes in $K$-theory (and sometimes equivariant $K$-theory)
  of $d$-step flag varieties. The principal tool was ``quantum integrability'',
  in several variants of the Yang--Baxter equation; this let us recognize the 
  Schubert structure constants as $q\to0$ limits of certain matrix entries
  in products of $R$- (and other) matrices 
  of $\gqg$-representations. In the present work we give
  direct cohomological interpretations of those same matrix entries 
  but at finite $q$:
  they compute products of ``motivic Segre classes'', closely related to
  $K$-theoretic Maulik--Okounkov stable classes living on the
  {\em cotangent bundles} of the flag varieties. Without $q\to0$, 
  we avoid some divergences that blocked fuller understanding of $d=3,4$.
  The puzzle computations are then explained (in cohomology only
  in this work, not $K$-theory) in
  terms of Lagrangian convolutions between Nakajima quiver varieties.
  More specifically, the conormal bundle to the diagonal inclusion 
  of a flag variety factors through a quiver variety that is not a 
  cotangent bundle, and it is on {\em that} intermediate quiver variety that
  the $R$-matrix calculation occurs.
\end{abstract}

%\vspace*{-0.75cm}
\maketitle
%\vspace*{-0.75cm}

{\small
  \tableofcontents
}

\junk{
- define stable classes / motivic classes as living in $H_{TxC*}(T^*Fl)$ / in $H_T(Fl)[[h^{-1}]]$
(or without $T$, in $H_{C*}(T^*Fl)$ / $H(Fl)[h^{-1}]$?), one obtained
from the other by pushing forward from $T^*Fl$ to $Fl$ (in localized coho
since map is not proper, or power series)

- summary of paper I: key identity.

\junk{
- interpretation: the whole symplectic reduction mess. basically we're trying to imitate the setup

*  take our product of schubert classes in $Fl \times Fl$

*  then apply the diagonal slice.
except here we need to replace every object with an "integrably-meaningful" one:
[which in means in particular everything will be symplectic]

*  first we should use $T^* Fl$, not $Fl$ (and stable classes instead of schubert)

*  next we need to replace $Fl \times Fl$ with this quiver variety (which contains
$T^*Fl \times T^* Fl$) (and replace $S \times S$ with $St$ of the concatenated word)

*  finally, the analogue of the diagonal slice is ($d=1$) the symplectic quotient
by the $n\times n$ upper right corner unipotent subgroup, using a nontrivial value
of the moment map which preserves the right sub-torus.

- some more details at $d=1$. nonequivariant case (careful what we mean by that: we keep $q$ or $\hbar$), interpolating between
Grothendieck and dual Grothendieck
}
- the $d=4$ case: a lot of things need to be redone from paper I. maybe only
explicit formulae for $H_T$.

- the $B$-twist and reduction to results of paper I
}

\junk{positivity issue: why is it that the (nonequivariant) 
structure constants always turn out nonnegative?
it's true at $d\le 3$ (because all pieces have positive fugacity
-- actually, is this really true? recheck), 
and in all cases I computed at $d=4$. 
this is expected when inversions numbers match (though nonobvious because
of negative puzzle weights at $d=4$), but not in general.
in any case the situation will need to be spelled out in the paper
}

\junk{[gray]AK: is there duality (flip left/right, $i\leftrightarrow d-i$)
  in nondegenerate puzzles? I'm guessing that
  $(V\tensor W)^* \iso W^*\tensor V^*$ as reps, in which case the $R$-matrix
  would be in some sense backwards. Anyway it should be easy to check
  by computer, and if not true, should be commented on}

\section{Introduction}\label{sec:intro}
\subsection{What puzzles ``really'' compute}
Equivariant puzzles were introduced in \cite{KT} to study Grassmannian 
Schubert calculus, and were related to quantum integrable systems in 
\cite{artic46,artic68}. In our previous paper \cite{artic71} 
we extended this connection beyond $1$-step flag manifolds (i.e., Grassmannians)
to $2$- and $3$-step flag manifolds. This required recognizing the
puzzle labels (for a fixed number of steps $d$) as indexing the 
basis vectors in three representations of a quantized loop algebra $\gqg$
(one representation for each edge angle: \NE, \horizbar, \NW).

The state sums computed using the $R$-matrices of those representations then
came with an extra parameter $q$ not appearing in the Schubert calculus
structure constants; to be rid of it, we took $q\to 0$ or $q\to \infty$.
This point was subtle enough that at $d=3$, we were unable to take this
limit without first losing equivariance, and so we only discovered (and
proved) in this way a formula for {\em non}equivariant $K$-theoretic Schubert
calculus on $3$-step flag manifolds. 

The purpose of the present paper is to give a cohomological interpretation 
of these puzzle state sums at general $q$, not just as $q^\pm \to 0$
(and in particular, to recover equivariance in $3$-step, and extend
to $4$-step).
They are again structure constants for multiplication of a certain 
ring-with-basis; the ring is no longer cohomology (or equivariant, or
$K$-cohomology) of a $d$-step flag manifold, but rather of its
cotangent bundle\footnote{Of course the manifold and its cotangent 
  bundle are equivariantly homotopic, but the extra parameter is only
  geometrically natural on the cotangent bundle.},
and properly speaking the basis elements live only
in an equivariant localization (we must formally invert the class of 
the zero section in the cotangent bundle).

These cotangent bundle calculations can to some extent be interpreted
on the base; our best result in this direction is theorem \ref{thm:EP},
a puzzle formula for the Euler characteristic of the intersection
of three generically situated Bruhat cells. (In the traditional setting,
one only computes this number when the intersection is $0$-dimensional.)

\junk{
\subsection{Quiver varieties}
Cotangent bundles to $d$-step flag manifolds 
are well known to arise as {\em Nakajima quiver varieties,}
as follows. In the usual definition (see e.g.~\cite{Ginz-naka}),
one attaches a new ``framed vertex'' to each of the old ``gauged vertices''
of one's quiver, here $A_d$, and every vertex gets a vector space.
In the very special case we consider first, all but the leftmost
framed space is $0$, so we omit those vertices (obtaining,
essentially, an $A_{d+1}$ quiver).

\newcommand\lvec[1]{\stackrel{\leftarrow}{#1}} 
\newcommand\rvec[1]{\stackrel{\rightarrow}{#1}}
\newcommand\lphi{\lvec{\phi}}
\newcommand\rphi{\rvec{\phi}}
\newcommand\xto{\longrightarrow} % which package is this in?
\setcounter{MaxMatrixCols}{20}

We then consider quiver representations % (all arrows bidirectional)
$$
\begin{matrix}
  \CC^n &=& \fbox{$V_0$}\\ \\
  && \lphi_1 \uparrow\downarrow \rphi_0 \\
  && V_1 &
  \begin{matrix} \rphi_1 \\ \longrightarrow \\ \longleftarrow \\ \lphi_2  \end{matrix} 
  & V_2 &
  \begin{matrix} \rphi_2 \\ \longrightarrow \\ \longleftarrow \\ \lphi_3  \end{matrix} 
  & \qquad \cdots\qquad 
  \begin{matrix} \rphi_{d-1} \\ \longrightarrow \\ \longleftarrow \\ \lphi_d  \end{matrix} 
  & V_d  &
  \begin{matrix} \rphi_{d} \\ \longrightarrow \\ \longleftarrow \\ \lphi_{d+1}  \end{matrix} 
  & V_{d+1} = 0 \text{ by convention}
\end{matrix}
$$
satisfying the ``preprojective condition'' 
$\rphi_{i-1} \circ \lphi_i \ =\ \lphi_{i+1} \circ \rphi_i$
at the gauge vertices $i = 1,\ldots,d$ but not at the framing vertex $0$,
then impose an open ``stability'' condition we'll explain later, 
and finally mod out by change-of-basis at the gauge vertices. 
The gauge-invariant endomorphism and subspaces of $V_0$
$$ X := \left( \rphi_0 \circ \lphi_1 \right) : V_0 \to V_0, \qquad 
F_i := \text{image}(V_i \xrightarrow{\lphi_{1} \circ \cdots \circ \lphi_d} V_0) $$
satisfy $V_0 \geq F_1 \geq \ldots \geq F_{d+1} = 0$, 
and $X\cdot F_i \leq F_{i+1}$ by the preprojective condition.
The stability condition is that $V_i \to V_0$ is $1:1$,
so $(X, F_\bullet)$ is a point in
(Springer resolution description of) the cotangent bundle to $d$-step
flag manifolds with subspaces of codimension $(\dim V_\bullet)$.
It turns out that this map from the quiver variety to this 
cotangent bundle is an isomorphism.

A quiver variety is an example of a {\em symplectic resolution}, which
in particular means it is symplectic (as cotangent bundles are, of
course) and possesses a ``dilating'' circle action (namely, the
simultaneous scaling of the $(\lphi_i,\rphi_i)$ maps, descending to
dilation of the cotangent fibers). One generally asks that this
dilation action have one, attractive, fixed point $o$ on the affinization,
and that the affinization map be proper; together these have 
the salutary effect that the symplectic resolution retracts to
the fiber over $o$ of the affinization map. 
(In the example above, the affinization map is a Springer resolution, 
and the $o$ fiber is the zero section of the cotangent bundle.)

%\subsection{The ring with basis}

Fix vector spaces on the framing vertices (which in the example above,
was just the $V_0$), but let the vector spaces on the gauge vertices vary, 
then define a \defn{quiver scheme} as the disjoint union of the
resulting quiver varieties. Following the work \cite{Varagnolo} in
cohomology, in \cite{Nakaj-quiv3} there was defined an action of
$\gqg$ on the equivariant $K$-theory of a quiver scheme,
and indeed, the (minuscule) representations we made use of in
\cite{artic71} arise in this way.
endjunk}

\subsection{The ring with basis}\label{ssec:ringwbasis}

In \cite{MO-qg, Ok-K} are defined ``stable bases'' of the 
cohomology and $K$-theory of ``symplectic resolutions'', a class of spaces
that includes cotangent bundles to flag manifolds. These bases
depend on a choice of symplectic circle action with isolated fixed points,
and are closely analogous to Schubert bases of cohomology of flag
manifolds. In both cases, one considers the Bia\l ynicki--Birula
attracting sets of the fixed points.  In the Schubert situation the
classes are those of the closures of the attracting sets.
In the symplectic-resolution situation, {\em if} an attracting set is
closed -- which is automatic when the resolution is affine
enough; see lemma \ref{lem:attrclosed} --
then the associated ``stable class'' in cohomology is indeed that of the 
attracting set. 
\junk{
For a simple but very important example, consider the
  space $G/T$ (not $G/B$), a typical $G$-orbit on $\lie{g}^*$ and hence
  symplectic.  The $T$-fixed points are exactly of the form $N(T)/T$,
  and for a regular dominant coweight the attracting sets are
  of the form $BwT/T$, $w\in W := N(T)/T$. 
  Each one is closed -- obviously $BeT/T = B/T$ is closed since
  $B\leq G$ is closed, and the orbits $BwT/T$ are permuted by the
  {\em right} action of $W$. 
}

On a general symplectic resolution $M$ an attracting set
(while automatically Lagrangian) need not be closed.
(This unfortunate behavior occurs in the case of most interest 
in this paper, the cotangent bundle $T^*(P_-\dom G)$ to a flag manifold
$P_-\dom G$.)
However, symplectic resolutions always have $T$-equivariant
deformations $\widetilde M$ to affine varieties $M_a$ on which the
attracting sets {\em are} closed. (For example, $T^*(P_-\dom G)$ has a 
Grothendieck--Springer deformation to the affine variety $L\dom G$, 
where $L$ is a Levi subgroup of the parabolic $P_-$.)

Using such a deformation, we can define the stable class
{\em in cohomology} associated to a 
fixed point as follows. We deform the symplectic resolution $M$ to affine $M_a$,
obtain the (closed, Lagrangian) attracting cycle $C_a$ inside $M_a$
and follow that subvariety $C_a$ back through the
degeneration, to $C \subseteq M$. %, typically reducible. 

In the $H^*_{T\times \CC^\times}(T^* (P_-\dom G)) \iso H^*_T(P_-\dom G)[\hbar]$
case this $C$ is a union of conormal varieties to 
Schubert varieties (with complicated multiplicities).
This is already sufficient to show that the Schubert
classes arise as the $\hbar$-leading terms in the stable classes,
where $\hbar$ is the weight of the dilation action on the cotangent fibers.
In particular, if we invert $\hbar$ then the stable classes
are a basis over $H^*_T[\hbar^\pm]$.
(The thesis \cite{Voula} concerns the product structure in this basis,
but only when $P_-\backslash G$ is projective space.)

In $K$-theory, using the classes of the structure sheaves of the
attracting sets (or their degeneration from affine) doesn't lead to 
an especially good basis; it seems that one wants the classes of some
other sheaves. However, there is no unique limit when degenerating sheaves,
so we can't use the above limiting trick to define
``$K$-theoretic stable classes''.
Instead we give in proposition \ref{prop:charact}
a recurrence relation on the equivariant $K$-theoretic
stable classes $\St^\lambda$, closely related to ones in
\cite{SZZ-Kstable,aluffi2020motivic}.
\junk{PZJ: don't think the paragraph above [actually the redundant
  junked one in next subsection] is correct.
  AK: No? I thought this was a right-operator (theirs, necessitating $G/B$
  not $G/P$) vs. left-operator (ours) issue.}

Let $i:P_-\backslash G \to T^*(P_-\backslash G)$ be the zero section, 
and $[P_-\backslash G]$ the class of its image. 
Since for all fixed points $\mu$ the point restriction 
$[P_-\backslash G]|_\mu$ is nonzero, we get a well-defined element 
$[P_-\backslash G]^{-1}$ in the appropriately localized equivariant $K$-theory 
$K^{\loc}_{T\times \CC^\times}(T^*(P_-\backslash G))
$.
Finally, following the terminology of \cite[footnote 1]{aluffi2020motivic}
we define for now the \defn{motivic Segre class} $S^\lambda$ 
as the ratio $\St^\lambda \big/ [P_-\backslash G]$ in this localization.
(An alternate, more explicit, definition is given in \S \ref{sec:stable}.)

\subsection{Our main results}\label{ssec:mainres}
\junk{
  We {\em define}\/ the motivic Segre classes using a recursion
  based on the standard trigonometrix $R$-matrix for $A_d$'s 
  defining representation. To demonstrate their connection to the 
  $K$-theoretic stable basis defined in the literature,
  we show our recursion is equivalent to a different recursion used in 
  the characterization in \cite{SZZ-Kstable}. 
}
For $d\leq 4$, we define a second group $X_{2d}$, three representations 
$V_1(z_1),V_2(z_2),V_3(z_3)$ of its quantized loop algebra $\xqg$,
and intertwiners 
\newcommand\cR{{\check R}}
\[
\cR :\ V_1(z_1) \tensor V_2(z_2) \to V_2(z_2) \tensor V_1(z_1), \qquad
U :\ V_1(q^{h_d/3}z) \tensor V_2(q^{-h_d/3} z) \to V_3(z)
\]
($h_d$ being the dual Coxeter number of $X_{2d}$)
that satisfy the Yang--Baxter and bootstrap equations exactly as in 
\cite[\S 3]{artic71}. 
\junk{(To efficiently check those equations, in \S ?? we 
  make use of a braid group action on quantized loop algebras.)}
Puzzles provide a way to compute the matrix entries in a massive
product of $R$s and $U$s, and via essentially the same proof
as in \cite[\S 3]{artic71} we show that puzzles compute the
product of the motivic Segre classes. (There are some new puzzle pieces 
required at finite $q$, that were suppressed as $q\to\infty$.)

One key difference between the $d=4$ case, vs. the $d=1,2,3$ cases
we studied in \cite{artic71}, is that the representations $V_a(z_i)$
are not minuscule -- rather, each is the adjoint-plus-trivial 
representation of $\lie{e}_8$, bearing a $9$-dimensional central weight space.
To write down $\cR,U$ as matrices requires picking bases of this
weight space and its dual. (These choices were unique up to a
simple gauge equivalence, in the $d=1,2,3$ cases.)
We do this in \S \ref{ssec:d4}.

The close relation between SSM classes and Euler characteristics
gives us a corollary that can be stated completely within the Grassmannian:
a puzzle formula (in \S\ref{ssec:SSM}) for the topological Euler
characteristic of the intersection of three generically translated
Bruhat cells. In particular, this gives a positivity result for $d\leq 3$
for these Euler characteristics (times the sign familiar from $K$-theory)
for which there was no known geometric proof, and based on it
we conjectured that the positivity holds for triple intersections of
Bruhat cells in general $G/P$. Since this paper was written, that
conjecture has been proven, in \cite{SSW}.

\junk{
 1) we have an algebraic way to define a bunch of classes in K_T, which we then divide by [zero section].
 2) we can show that puzzles can compute the structure constants in that basis
 3) for d<4 we can take q-limits of those to get the K_T and dual K_T bases on the zero section
 4) for d=1 we have a geometric understanding of the partial puzzles.
 Is there anything else this paper is about, that the intro should address? Of course (1),(2) involve a whole lotta algebra, e.g. the braid action giving the three embeddings.
}

Define a \defn{positivity notion} in Schubert calculus
to be a submonoid $M$ (under $+$) of the coefficient
ring (either the cohomology of a point, or a localization thereof) s.t.
$M \cap -M = 0$.
The principal consequence is that a sum of elements of $M$ is zero
only if each term is zero.  Call a formula \defn{manifestly $M$-positive}
if it is a sum of terms, each in $M\setminus 0$.
%In this formalism the manifestly positive formul\ae\ in Schubert calculus are
%for $(-1)^{\ell(\pi)+\ell(\rho)-\ell(\sigma)} c_{\pi\rho}^\sigma$,
%not for the structure constants $c_{\pi\rho}^\sigma$ themselves.
%[TRUE BUT NOT RELEVANT IN THE PRESENT PAPER]

We show that our puzzle formulae for the
coefficients in the product of motivic Segre classes
are manifestly $M$-positive for some $M$ to be specified,
for $d=1,2$ equivariantly, and $d=1,2,3$ nonequivariantly.

\subsection{Plan of the paper}
In \S\ref{sec:stable} we define the motivic Segre classes on type $A$
partial flag manifolds. In \S\ref{sec:puzzle} we state the
main formula (a bit schematically, with detailed fugacities in
\S\ref{sec:exd1} and appendix \ref{app:Rd1};
see also \S\ref{ssec:d123}-\ref{ssec:d4} for the $d=2,3,4$ cases
in cohomology. \rem{Where is best to   send people who want to compute
  in $K$-theory?}
In particular, in \S\ref{ssec:limits} we collect the
various limiting versions of our main theorem, from $K$ to $H$,
from motivic Segre classes to Schubert classes, and from equivariant
to nonequivariant. In \S\ref{sec:exd1} we provide full details on
the $d=1$ (Grassmannian) case, and give in \S\ref{ssec:loop}
a loop-model interpretation of the nonequivariant puzzles;
in this model we can sum over fewer puzzles, at the cost of
measuring global features of each (the number of loops).
In \S\ref{sec:coho} we pass from $K$-theory to ordinary cohomology,
where we can more reasonably explore the $d=3,4$ cases.
This is also where we give a puzzle formula for the Euler characteristic
of the intersection of three Bruhat cells.
We discuss positivity of our puzzle rules in \S\ref{sec:positivity}.

In the remaining two sections, we give a retrodiction of our results
(at least in cohomology) using quiver varieties. In \S\ref{sec:Nakajima}
we recall the definitions and results we need about them,
and give a new result (proposition \ref{prop:reflect}) about recognizing
quiver varieties as cotangent bundles of partial flag varieties.
This is also where we recognize the type $A_d$ $R$-matrix as sitting
inside the type $X_{2d}$ $R$-matrix (lemma \ref{lem:ressinglev2}).
In \S\ref{sec:geominterp} we sneak up on the $X_{2d}$-based puzzles
through several steps: replace the diagonal inclusion $M\to M\times M$
(whose pullback defines the multiplication on cohomology)
by the conormal bundle of its graph (lemma \ref{lem:zerosection}),
factor that correspondence through an $X_{2d}$ quiver variety
(proposition \ref{prop:composite}), obtain the SSM classes using
stable envelopes (recalled in \S\ref{ssec:envelopes}), and show that
the failure of the resulting Lagrangian-correspondence squares to commute
can be computed using puzzles (\S \ref{sssec:following}).

Appendix~\ref{app:qg} gives the presentation of the quantized affine
algebras and their representations that we need. Appendix~\ref{app:Rd1}
computes the full details of these representations, for $d=1$.
Appendix~\ref{app:exd4} provides examples of $d=4$ puzzles, whereas Appendix~\ref{app:scal}
is a table of scalar products that is useful for computing $H_T$-fugacities of $d\le 3$ puzzles.

\junk{somewhere, general blah blah; $T^*(P_-\backslash G) \overset p\to P_-\backslash G$, 
torus action $\hat T=T\times \CC^\times$, 
fixed points indexed by single-number strings.
with the weird convention, meant to reconcile everyone, that
$\sigma\lambda=\sigma(\lambda_1,\ldots,\lambda_n):=(\lambda_{\sigma^{-1}(1)},
\ldots,\lambda_{\sigma^{-1}(n)})$, and we identify $\sigma$ and $\sigma\omega$,
where $\omega$ is the unique weakly increasing string with the right content
(define similarly $\alpha$ as the unique weakly decreasing string).
Bruhat order ($\alpha$ smallest, $\omega$ largest).
$K$-theory.
Most of the time, we shall work in {\em localized}\/ $K$-theory, i.e.,
the base ring is the field $K_{\hat T}(pt)[(1-w)^{-1},w\in \text{weights}]$.
careful that even that's too big, because of fusion: explicitly should be something like
making $1-q^2z_i/z_j$ invertible where $i,j$ are in the same sequence $\{1,\ldots,n\}$ or $\{n+1,\ldots,2n\}$,
(and possibly $1-z_i/z_j$ for all $i,j$ as well?).
also for the rep theory part we probably want to tensor with $\CC$?
restriction to fixed points map an isomorphism after localization.
let's just define the scaling parameter to be $q^{-2}$, mentioning that
there's an arbitrariness in the sign of $q$.}

In our next paper \cite{artic83} in this series we will exploit some
other quiver varieties to solve some additional families of Schubert
calculus problems.

Finally, we mention that all results of this paper were carefully
checked by computer with the help of {\it Macaulay2} \cite{M2}.
In particular, the puzzle rules of \cite{artic71} and of the present
paper are implemented in the package {\it {C}otangent{S}chubert} \cite{artic82}.

\subsection*{Acknowledgments} We thank Iva Halacheva, David Hernandez, Leonardo
Mihalcea, and Hiraku Nakajima for discussions on many related topics.
% Authorship we reserve for people who end their names with consonants. <- not true any more

\section{Stable and motivic Segre classes}\label{sec:stable}
\subsection{Geometric setup}\label{sec:geom}
\subsubsection{Cotangent bundles of flag varieties}
Let $G=GL_n(\CC)$, $B_\pm$ denote the upper/lower triangular matrices,
with intersection $T$ the diagonal matrices, and let $P_- \geq B_-$ be
a parabolic subgroup with Levi factor $\prod_{i=0}^d GL_{p_i}(\CC)$.
The main actors of this paper are the %right
coset space $P_-\dom G$
and its cotangent bundle $T^*(P_-\dom G) \overset p\to P_-\backslash G$.
The torus $T$ naturally acts on $P_-\dom G$,
but the cotangent bundle $T^*(P_-\dom G)$ has an extra
circle action by scaling of the fiber, resulting in an action of $\hat
T:=T\times \CC^\times$.  The $T$-fixed points in $P_-\dom G$ (or $\hat
T$-fixed points in $T^*(P_-\dom G)$) are indexed by elements of
$W_P\dom W$, where $W=N(T)/T\cong \mathcal S_n$ and $W_P=W\cap P_-
\cong \prod_{i=0}^d \mathcal S_{p_i} \dom \mathcal S_n$. We denote by
$\alpha$ (resp.\ $\omega$) the smallest (resp.\ largest) element in
$W/W_P$ w.r.t.\ the Bruhat order. As in \cite{artic71}, we identify
elements of $W_P\dom W$ and strings of $n$ letters in the alphabet
$\{0,\ldots,d\}$, where $W$ acts on strings by permuting their labels,
and $W_P$ is the stabilizer of $\omega$, which is identified with the
unique weakly increasing string $0^{p_0},\ldots,d^{p_d}$.

\subsubsection{Schubert cycles}
Schubert cycles are not as central as in \cite{artic71}, but we still need the following definitions.
Schubert cells $X^\sigma_o := P_- \dom P_- \sigma B_+$ are $B_+$-orbits of fixed points $\sigma \in W_P\dom W$.
Their closures are Schubert varieties $X^\sigma$. 
The codimension of $X^\sigma$ is given by the number of inversions in the string corresponding to $\sigma$.
%Schubert cycles $X^\lambda$ define classes $S^\lambda$ in cohomology, 
%$T$-equivariant cohomology, $K$-theory, and $T$-equivariant $K$-theory.

\subsubsection{$K$-theory}\label{sssec:K}
In what follows, we shall consider the equivariant $K$-theory ring $K_{\hat T}(T^*(P_-\backslash G))$.
%$K_{\hat T}(T^*(P_-\backslash G))$ 
It can be described explicitly as follows. 
Define the restriction map
\begin{align*}
\vert_\sigma:\ 
\ZZ[x_1^\pm,\ldots,x_n^\pm,z_1^\pm,\ldots,z_n^\pm,t^\pm]^{W_P} &\to K_{\hat T}(pt)
\cong \ZZ[z_1^\pm,\ldots,z_n^\pm,t^\pm] \\
f &\mapsto f\vert_\sigma := f(z_{\sigma^{-1}(1)},\ldots,z_{\sigma^{-1}(n)},z_1,\ldots,z_n,t)
\end{align*}
where the $W_P$ permutes the $x$ Laurent variables,
and $\sigma$ runs over $W_P\dom W$.
The $z_1,\ldots,z_n,t$ are the equivariant parameters associated to $\hat T=T\times \CC^\times$.\footnote{The $T$-action is right multiplication $x\mapsto x\,\text{diag}(z_1,\ldots,z_n)^{-1}$; this is the opposite choice that is made in \cite{artic71}.}

To present $K_{\hat T}(T^*(P_- \dom GL_n)) \iso K_{\hat T}(P_- \dom GL_n)$,
we use $GL_n \into Mat_n$ and consider the maps
$$
\ZZ[x_1^\pm,\ldots,x_n^\pm,z_1^\pm,\ldots,z_n^\pm,t^\pm]^{W_P} \iso
% K_{P_- \times \hat T} \iso
K_{P_- \times \hat T}(Mat_n)
\onto
K_{P_- \times \hat T}(GL_n)
\iso
K_{\hat T}(P_- \dom GL_n)
\into \bigoplus_{W_P\dom W} K_{\hat T}(pt)
$$
The second map is surjective because (as in \cite[verse]{KM-Schubert}) each
Schubert class in the target can be lifted to the class of a matrix
Schubert variety, and the final map is injective because
its kernel is torsion \cite[proposition 2.1]{Segal}, 
but $K_{\hat T}(P_- \dom GL_n)$ is a free $K_{\hat T}$-module
(on the Schubert basis). 

Consequently
\[
 K_{\hat T}(T^*(P_-\dom G)) \ \iso \ 
\ZZ[x_1^\pm,\ldots,x_n^\pm,z_1^\pm,\ldots,z_n^\pm,t^\pm]^{W_P} \bigg/
\bigcap_{\sigma\in W_P\dom W} \ker\left(\vert_\sigma\right) 
\]
where the $x_i$ are the Chern roots of the duals of the tautological bundles.
Thus to describe a class, it suffices to give a Laurent polynomial 
and check its symmetry in the $x$s.

\junk{Though the restriction map
$K_{\hat T}(T^*(P_-\backslash G)) \to K_{\hat T}(pt)^{W_P\dom W}$ is
only injective, it becomes an isomorphism after localization.
\rem{is that obvious? AK: it holds for $T$ acting on any space. PZJ: even with the limited localization above?}}

% In each case, these classes form a basis for whatever sort of 
% cohomology group, considered as a module over the same cohomology of a point: 
% $H(pt) \iso K(pt) \iso \ZZ$, $H_T(pt) \iso \ZZ[x_1,\ldots,x_n]$, 
% $K_T(pt) \iso \ZZ[u_1^\pm,\ldots,u_n^\pm]$. Our goal is to compute 
% as in \eqref{eq:LRT} the coefficients, living in this base ring, of
% the expansion in the basis of the product of two basis elements.

\subsection{Localization}\label{ssec:loc}
In this paper we need to extend the base ring $K_{\hat T}(pt)$.

A first (minor) modification is that we need a
square root of $t$, that is a variable $q$ such that $t=q^{-2}$. It can be formally defined by introducing a double cover
of the circle scaling the fiber; {\em in the rest of this paper, this is implicitly
done.}
Note that there is an arbitrariness in the sign of $q$ which will reappear in \S\ref{sec:coho}.

More importantly, most of the time, we shall require localization.  
When we deal with cotangent bundles of flag varieties, as in the current section,
localization corresponds, in the parametrization of \S\ref{sssec:K},
to inverting $1-q^{2}z_i/z_j$ for any $i,j$.
Note that $1-w$ is invertible, where
$w$ runs over weights at fixed points in the normal directions to the
base $P_-\backslash G$, which is all that matters for now. (When
we work with more general quiver varieties in \S\ref{sec:puzzle} and
\S\ref{sec:Nakajima}--\ref{sec:geominterp},
we shall extend slightly the localization, in a way that is compatible
with the one presented here).

Finally, to conform to the representation-theory literature, and for simplicity of notation, we tensor with $\CC(q)$
(or one could use $\QQ(q)$, as in \cite[\S9]{CP-book}).
In fact, some of the localisation above can be dispensed with, and in \S\ref{sec:coho} (and only there), we will reconsider it.

We denote the resulting base ring $K_{\hat T}^{\loc}(pt)$, and
$K_{\hat T}^{\loc}(X) := K_{\hat T}(X) \otimes_{K_{\hat T}(pt)} K_{\hat T}^{\loc}(pt)$.

\subsection{The single-number $R$-matrix}
We temporarily set aside the geometry, and introduce $R$-matrices.
Consider the vector space $V^A=\CC^{d+1}=\left<e_0,\ldots,e_d\right>$, 
as well as the evaluation representation $V^A(z)$ of $\aqg$,
where the evaluation parameter $z$ is also called the spectral parameter, see Appendix~\ref{app:qgeval}.
A word of caution is needed on the somewhat misleading notation $V^A(z)$:
we consider $q$ and $z$ as formal parameters,
so that as a vector space it is $V^A \otimes \CC(q)[z^\pm]$. 
We take tensor products of
such representations over $\CC(q)$, hence the asymmetric notation in $q$ and $z$.
Furthermore, we localize as in \S\ref{ssec:loc},
i.e., in the tensor product $V^A(z_1)\otimes\cdots\otimes V^A(z_n)$,
we allow ourselves to invert $1-q^{2} z_i/z_j$, where $z_i$ and $z_j$ are any two spectral parameters.
%{\em Unless stated otherwise,} 
\junk{This localization is implicit in the rest of this section.}

We use the same diagrammatic language as in \cite[\S 3]{artic71}, 
and briefly recall it here.
Our diagrams consists of graphs embedded in the plane, whose edges are labeled. In this section
the set of labels is $\{0,1,\ldots,d\}$. The convention is that the labels of
external edges are fixed, but those of internal ones are summed over. To each edge are also attached some fixed data: a spectral parameter and
an orientation (the ``direction of time'').
This way, an edge labeled $i$ with a spectral parameter $z$ corresponds
to the standard basis element $e_i\in V^A(z)$. Juxtaposition of edges
corresponds to taking a tensor product, where the ordering is left to right
if they are oriented downwards.

The $R$-matrix is represented by the $4$-valent vertex
\[
\check R^{kl}_{ij}(z',z'')=
\tikz[baseline=0,xscale=0.5]
{
\draw[invarrow=0.75,d] (-1,-1) node[below] {$\m i$} -- node[pos=0.75,right] {$\ss z''$} (1,1) node[above] {$\m l$};
\draw[invarrow=0.75,d] (1,-1) node[below] {$\m j$} -- node[pos=0.75,left] {$\ss z'$} (-1,1) node[above] {$\m k$};
}
\]
Its entries depend on the ratio $z''/z'$ of parameters attached to the two lines crossing,
and have the following explicit expression: (see \cite{jimbo-Rmat} and references therein)
\begin{equation}\label{eq:Rsingle}
\check R_{ij}^{kl}(z',z'')=
\check R_{ij}^{kl}(z''/z')=
\frac{1}{\textstyle 1-q^2 z''/z'}
\begin{cases}
 1-q^2 z''/z'& i=j=k=l
\\
 q(1-z''/z') & i=l\ne j=k
\\
 1-q^2 & i=k<j=l
\\
 (1-q^2)z''/z' & i=k>j=l
\\
0 & \text{otherwise}
\end{cases}
\junk{
=
\begin{cases}
1& i=j=k=l
\\
\frac{\textstyle q(1-z''/z')}{\textstyle 1-q^2 z''/z'} & i=l\ne j=k
\\
\frac{\textstyle 1-q^2}{\textstyle 1-q^2 z''/z'} & i=k<j=l
\\
\frac{\textstyle (1-q^2)z''/z'}{\textstyle 1-q^2 z''/z'} & i=k>j=l
\\
0 & \text{else}
\end{cases}
}
\end{equation}
\junk{AK: I pulled the denominator out in the above, for readability;
  the original formulation is in a junk clause}
\rem[gray]{I reverted to the normal sign convention for $q$.
$q$ is now related to the scaling parameter $t$ by
$q=t^{-1/2}$ (where $t^{-1}$ is the $q$ of \cite{SZZ-Kstable} or the $\hbar$ of Okounkov;
also, in their $T^*\PP^1$ example, $e^\alpha=z_2/z_1$.}

As an operator from $V^A(z')\otimes V^A(z'')$ to $V^A(z'')\otimes V^A(z')$,
$\check R(z''/z')$ is an $\aqg$-intertwiner. This, combined with
the normalization $\check R_{ii}^{ii}=1$, implies the following well-known
properties:
\begin{prop}\label{prop:single}
\begin{itemize}
\item Yang--Baxter equation:
\begin{equation}\label{eq:ybe0}
\begin{tikzpicture}[baseline=-3pt,y=2cm]
\draw[d,arrow=0.1,arrow=0.4,arrow=0.7,rounded corners=4mm] (-0.5,0.5) -- (0.75,0) -- (1.5,-0.5) (0.5,0.5) -- (0.2,0) -- (0.5,-0.5) (1.5,0.5) -- (0.75,0) -- (-0.5,-0.5);
\end{tikzpicture}
=
\begin{tikzpicture}[baseline=-3pt,y=2cm]
\draw[d,arrow=0.1,arrow=0.4,arrow=0.7,rounded corners=4mm] (-0.5,0.5) -- (0.25,0) -- (1.5,-0.5) (0.5,0.5) -- (0.8,0) -- (0.5,-0.5) (1.5,0.5) -- (0.25,0) -- (-0.5,-0.5);
\end{tikzpicture}
\end{equation}
\item Unitarity equation:
\begin{equation}\label{eq:unit0}
\begin{tikzpicture}[baseline=-3pt]
\draw[arrow=0.07,arrow=0.57,rounded corners,d] (-0.5,1) -- (0.5,0) -- (-0.5,-1) (0.5,1) -- (-0.5,0) -- (0.5,-1);
\end{tikzpicture}
=
\begin{tikzpicture}[baseline=-3pt]
\draw[arrow=0.1,arrow=0.6,rounded corners,d] (-0.5,1) -- (-0.5,-1) (0.5,1) -- (0.5,-1);
\end{tikzpicture}
\end{equation}
\item Value at equal spectral parameters:
\begin{equation}\label{eq:equal0}
\begin{tikzpicture}[baseline=-3pt,yscale=1.5]
\draw[invarrow=0.3,d] (-0.5,-0.5) -- node[left,pos=0.3] {$\ss z$} (0.5,0.5);
\draw[invarrow=0.3,d] (0.5,-0.5) -- node[right,pos=0.3] {$\ss z$} (-0.5,0.5);
\end{tikzpicture}
=
\begin{tikzpicture}[baseline=-3pt,yscale=1.5]
\draw[invarrow=0.3,rounded corners=4mm,d] (-0.5,-0.5) -- node[left] {$\ss z$} (0,0) -- (-0.5,0.5);
\draw[invarrow=0.3,rounded corners=4mm,d] (0.5,-0.5) -- node[right] {$\ss z$} (0,0) -- (0.5,0.5);
\end{tikzpicture}
\end{equation}
\end{itemize}
\end{prop}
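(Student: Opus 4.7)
The plan is to exploit the fact recorded just above the statement, that $\check R(z''/z')$ is an $\aqg$-intertwiner from $V^A(z')\tensor V^A(z'')$ to $V^A(z'')\tensor V^A(z')$, together with the standard result that a tensor product $V^A(z_1)\tensor \cdots \tensor V^A(z_n)$ of evaluation representations of $\aqg$ is irreducible at generic spectral parameters (automatic in our setting over $\CC(q)(z_1,\ldots,z_n)$; see e.g.~\cite{CP-book}). Schur's lemma will then reduce the first two identities to a scalar, which the normalization $\check R^{ii}_{ii}=1$ pins down by evaluation on a single highest-weight tensor.

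Concretely, for the Yang--Baxter equation~\eqref{eq:ybe0}, both sides are $\aqg$-intertwiners from $V^A(z_1)\tensor V^A(z_2)\tensor V^A(z_3)$ to $V^A(z_3)\tensor V^A(z_2)\tensor V^A(z_1)$, built from the same three elementary crossings with parameter ratios $z_2/z_1$, $z_3/z_1$, $z_3/z_2$ composed in the two allowed orders. By generic irreducibility of the triple tensor product and Schur's lemma, the two are proportional; the matrix entry taking $e_0\tensor e_0\tensor e_0$ to itself is, by~\eqref{eq:Rsingle}, a product of three factors of $\check R^{00}_{00}=1$ on either side, forcing the scalar to be $1$. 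The unitarity relation~\eqref{eq:unit0} is treated the same way: $\check R(z'/z'')\check R(z''/z')$ is an $\aqg$-self-intertwiner of $V^A(z')\tensor V^A(z'')$, hence a scalar by Schur, and its value on $e_0\tensor e_0$ is $1\cdot 1=1$.

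Equation~\eqref{eq:equal0} is the odd one out, since at the non-generic point $z''/z'=1$ the tensor product becomes reducible and the Schur argument does not apply. Instead, I would verify it by direct substitution into~\eqref{eq:Rsingle}: the off-diagonal case $i=l\neq j=k$ vanishes thanks to the factor $1-z''/z'$ in its numerator, while each of the three surviving diagonal cases ($i=j=k=l$; $i=k<j=l$; $i=k>j=l$) reduces to $1$ after cancelling $(1-q^2)$ or $(1-q^2z''/z')$ against the prefactor, yielding $\check R(1)=I$, which matches the right-hand pictorial side (no effective crossing). The one point that needs care is making the generic-irreducibility input for the first two parts precise over our coefficient ring $\CC(q)(z_1,\ldots,z_n)$; but since the $V^A$ are the minuscule vector representations of $\aqg$, this is the best-understood case and no further obstacle is expected.
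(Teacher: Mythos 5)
Your proposal is correct and matches the paper's approach: the paper asserts that Proposition \ref{prop:single} follows from the intertwiner property plus the normalization $\check R^{ii}_{ii}=1$, and the argument it spells out for the analogous Proposition \ref{prop:ybe} is exactly your Schur's-lemma-on-generically-irreducible-tensor-products argument, with the proportionality constant fixed by noting that $e_0\tensor\cdots\tensor e_0$ is sent exactly to itself (the ``$0$s propagate'' step, which you should make explicit: $\check R(e_0\tensor e_0)=e_0\tensor e_0$ with no other terms, so only the all-$0$ path contributes to that matrix entry). The one place you diverge is \eqref{eq:equal0}, where the paper again uses Schur's lemma, invoking irreducibility of $V(z)\tensor V(z)$ at \emph{equal} spectral parameters (a nongeneric fact, cited from the literature), whereas you verify $\check R(1)=\mathrm{Id}$ directly from \eqref{eq:Rsingle}; since that formula is fully explicit, your direct check is complete and arguably more self-contained.
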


\subsection{Motivic Segre classes}
\junk{AK added the following paragraph because the rectangle stuff largely
  comes out of nowhere, at least for people like me who don't understand the
  on-shell/off-shell stuff}
One of the standard ways to get a hold of equivariant Schubert classes
on type $A$ flag manifolds is with {\em double Schubert polynomials,}
which were interpreted in \cite{KM-Schubert} as arising from
$$
\begin{array}{ccccl}
  H^*_T(B_-\dom GL_n)
  &\iso& H^*_{B_-\times T}(B_-\dom GL_n) &\otno& H^*_{B_-\times T}(M_n(\CC))
   % \iso H^*_{T\times T}(pt) 
   \iso  \ZZ[x_1,\ldots,x_n,y_1,\ldots,y_n] \\
\rotatebox{90}{$\in$} &&\rotatebox{90}{$\in$}&& \qquad\rotatebox{90}{$\in$} \\
  {} [B_-\dom \overline{B_- w B_+}] &\mapsfrom&
  [\overline{B_- w B_+} \subseteq GL_n] &\mapsfrom&
  [\overline{B_- w B_+} \subseteq M_n(\CC)] 
%  \text{Schubert class} &&\mapsfrom& & \text{Schubert polynomial}
\end{array}
$$
Meanwhile, under the identification
$M_n(\CC) \iso B^{2n}_-\dom
\left(B^{2n}_- (1\ n+1)(2\ n+2)\cdots (n\ 2n) B^{2n}_+\right)$
(essentially the $A_2$ case of \cite{Zelevinskii85}) we can realize
$\left[ \overline{B_- w B_+} \subseteq M_n(\CC) \right]$ as the restriction
of the equivariant Schubert class $S_{w \oplus Id_n}$ (on the flag manifold
in $\CC^{2n}$) to the point
$(1\ n+1)(2\ n+2)\cdots (n\ 2n)$. We now pursue a parallel story to obtain
the classes $S^\lambda$, where the permutation $(1\ n+1)(2\ n+2)\cdots (n\ 2n)$
will make an appearance as its square wiring diagram. 

Given a single-number string $\lambda$ of length $n$, we define
\begin{equation}\label{eq:defS}
S^\lambda
:=
\begin{tikzpicture}[baseline=1.8cm,scale=0.75]%[baseline=(current  bounding  box.center),scale=0.75]
\foreach\x/\t in {1/z_1,2/z_2,3/\cdots,4/z_n}
\draw[d,invarrow=0.9] (\x,0) node[below] {$\m d$} -- node[right=-1mm,pos=0.1] {$\ss\t$} ++(0,5);
\foreach\y/\t in {1/x_1,2/x_2,3/\vdots,4/x_n}
\draw[d,invarrow=0.9] (0,5-\y) -- node[above=-1mm,pos=0.1] {$\ss\t$} ++(5,0) node[right] {$\m d$};
\draw[decorate,decoration=brace] (-0.3,1) -- node[left] {$\m\omega$} (-0.3,4);
\draw[decorate,decoration=brace] (1,5.3) -- node[above] {$\m\lambda$} (4,5.3);
\end{tikzpicture}
\end{equation}
where we recall that the string $\omega$ is the unique weakly increasing string with the same content as $\lambda$.
Strings are always read left to right and top to bottom (which is consistent with the ordering of tensor products).
\rem[gray]{should we comment on choice of $d$ at the bottom? as far as I remember it's just to agree with Schubert polynomials,
it's irrelevant once one specializes to any fixed point}

\begin{rmk*}
This definition is related to the notion of {\em weight function}
\cite{RTV-K},
or in the language of quantum integrable systems, of {\em off-shell Bethe
vector}. 
\junk{However in an $A_d$-integrable system, one would expect a {\em nested}\/
Bethe vector, i.e., a more complicated definition than the one
above. This discrepancy is due to two factors:
 The first one is the choice of representation 
-- $(\CC^{d+1})^{\otimes n}$ -- which simplifies the Nested Bethe Ansatz.
}
Note however that we are really only interested in the
{\em on-shell}\/ Bethe vector, as will be explained right below:
that is, only in the the class $S^\lambda$ {\em modulo the relations
of $K^{\loc}_{\hat T}(T^*(P_-\backslash G))$} (the Bethe equations for infinite twist).
\junk{maybe refer to
papers like \url{https://arxiv.org/pdf/math/0610517.pdf} or
\url{https://arxiv.org/pdf/math/0610398.pdf} and refs therein.
somehow this is also related to the fact that the symmetric group (in general, Weyl group)
acts transitively on the fixed points. (should be same as minuscule?) which restricts us to $T^*(P_-\backslash G)$ (any other case among quiver
varieties?)}
See also \cite{AO-quasimap} for an extensive discussion of off-shell
Bethe vectors in a geometric context. At the moment, we do not know how to extend to
$T^*(P_-\backslash G)$ the more complicated method
of \cite{artic46,artic68} in order to expand products of off-shell Bethe vectors.
\end{rmk*}

\begin{ex}\label{ex:d1}
  We compute $S^\lambda$ for $\lambda=01,10$:
  \begin{align*}
    S^{01}&=
            \begin{tikzpicture}[baseline=(current  bounding  box.center),scale=0.75]
              \foreach\x/\t in {1/z_1,2/z_2}
              \draw[d,invarrow=0.9] (\x,0) -- node[right=-1mm,pos=0.1] {$\ss\t$} ++(0,3);
              \foreach\y/\t in {1/x_1,2/x_2}
              \draw[d,invarrow=0.9] (0,3-\y) -- node[above=-1mm,pos=0.05] {$\ss\t$} ++(3,0);
              \foreach\c in {(.5,1),(.9,.5),(1.9,.5),(2.5,1),(2.5,2),(1.9,2.5),
                (.9,1.5),(1.9,1.5),(1.5,1),(1.5,2)} { \node at \c {$1$}; }
              \foreach\c in {(.5,2),(.5,2),(.9,2.5)} { \node at \c {$0$}; }
            \end{tikzpicture}
            =\frac{1-q^2}{1-q^2x_1/z_1}
    \\
    S^{10}&=
            \begin{tikzpicture}[baseline=(current  bounding  box.center),scale=0.75]
              \foreach\x/\t in {1/z_1,2/z_2}
              \draw[d,invarrow=0.9] (\x,0) -- node[right=-1mm,pos=0.1] {$\ss\t$} ++(0,3);
              \foreach\y/\t in {1/x_1,2/x_2}
              \draw[d,invarrow=0.9] (0,3-\y) -- node[above=-1mm,pos=0.05] {$\ss\t$} ++(3,0);
              \foreach\c in {(.5,1),(.9,.5),(1.9,.5),(2.5,1),(2.5,2),(.9,2.5),
                (.9,1.5),(1.9,1.5),(1.5,1)} { \node at \c {$1$}; }
              \foreach\c in {(.5,2),(.5,2),(1.9,2.5),(1.5,2)} { \node at \c {$0$}; }
            \end{tikzpicture}
            =\frac{q(1-x_1/z_1)}{1-q^2x_1/z_1}\frac{1-q^2}{1-q^2x_1/z_2}
  \end{align*}
\end{ex}

$S^\lambda$ is a rational function in the parameters $x_1,\ldots,x_n$ and $z_1,\ldots,z_n,t$. 
Furthermore, \rem[gray]{Strictly speaking this lemma is not needed: we only care about specializations}
\begin{lem}\label{lem:Winv}
$S^\lambda$ is invariant under the action of $W_P$ on the variables $x_i$.
\end{lem}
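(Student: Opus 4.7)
The plan is to reduce the claim to invariance under the simple generators of $W_P$. Since $W_P \leq W = \mathcal S_n$ is the stabilizer of $\omega$ under the action on strings by permuting positions, it is generated by the transpositions $s_i = (i,i+1)$ with $\omega_i = \omega_{i+1}$. It therefore suffices to show that $S^\lambda$ is symmetric in $y_i$ and $y_{i+1}$ for each such $i$.

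For a fixed such $i$, I would run a standard train-argument with the Yang--Baxter equation. Insert on the left edge of the diagram \eqref{eq:defS}, between the two horizontal lines carrying $y_i$ and $y_{i+1}$, an extra $R$-vertex $\check R(y_{i+1}/y_i)$. The two boundary labels fed into this vertex are both equal to $\omega_i = \omega_{i+1}$, and from \eqref{eq:Rsingle} we have $\check R_{kk}^{kk} = 1$ for any label $k$ and any ratio of spectral parameters. So inserting this $R$-vertex does not change the value of the diagram.

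Next, apply the Yang--Baxter equation \eqref{eq:ybe0} column by column to slide this extra vertex across each of the $n$ vertical lines, from the left boundary all the way to the right. On reaching the right boundary the vertex joins two equal external labels $d$, and may be removed again using $\check R_{dd}^{dd} = 1$. The net effect is to interchange the worldlines of rows $i$ and $i+1$ throughout the bulk, which amounts to swapping the spectral parameters $y_i$ and $y_{i+1}$ in the remaining diagram. Hence $S^\lambda(\ldots,y_i,y_{i+1},\ldots) = S^\lambda(\ldots,y_{i+1},y_i,\ldots)$, and as $i$ ranges over indices with $\omega_i = \omega_{i+1}$ these transpositions generate $W_P$, giving the lemma.

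I do not anticipate a serious obstacle: the only delicate step is the bookkeeping of labels and orientations along the sliding, so that at both the initial insertion and the final removal the inserted $R$-vertex really does meet a pair of equal external labels, allowing the evaluation $\check R_{kk}^{kk} = 1$ to apply. Modulo this careful matching of boundary conditions, the argument is purely an exercise in integrability.
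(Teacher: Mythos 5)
Your argument is correct and is essentially the paper's own proof: a train argument that inserts a crossing with trivial entry at one boundary (where the two external labels coincide, so only the configuration with all four labels equal contributes and $\check R_{kk}^{kk}=1$), slides it across the grid with the Yang--Baxter equation, and removes it at the other boundary by the same evaluation. The only cosmetic difference is the direction of travel — the paper inserts the crossing at the right boundary (labels $d$) and removes it at the left (labels $\omega_i=\omega_{i+1}$), whereas you go left to right.
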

\begin{proof}
We show invariance under the elementary transposition $i,i+1$ where
$\omega_i=\omega_{i+1}$: 
\begin{align*}
S^\lambda&=
\begin{tikzpicture}[baseline=1.8cm,scale=0.75]%[baseline=(current  bounding  box.center),scale=0.75]
\foreach\x/\t in {1/z_1,2/z_2,3/\cdots,4/z_n}
\draw[d,invarrow=0.9] (\x,0) node[below] {$\m d$} -- node[right=-1mm,pos=0.1] {$\ss\t$} ++(0,5);
\foreach\y/\t in {1/x_1,2/x_i,3/x_{i+1},4/x_n}
\draw[d,invarrow=0.9] (0,5-\y) -- node[above=-1mm,pos=0.1] {$\ss\t$} ++(5,0) node[right] {$\m d$};
\draw[decorate,decoration=brace] (-0.3,1) -- node[left] {$\m\omega$} (-0.3,4);
\draw[decorate,decoration=brace] (1,5.3) -- node[above] {$\m\lambda$} (4,5.3);
\end{tikzpicture}
=
\begin{tikzpicture}[baseline=1.8cm,scale=0.75]%[baseline=(current  bounding  box.center),scale=0.75]
\foreach\x in {1,...,4}
\draw[d,invarrow=0.9] (\x,0) node[below] {$\m d$} -- node[right=-1mm,pos=0.1] {$\ss z_\x$} ++(0,5);
\foreach\y/\s in {1/1,4/n}
\draw[d,invarrow=0.9] (0,5-\y) -- node[above=-1mm,pos=0.07] {$\ss x_\s$} ++(6,0) node[right] {$\m d$};
\draw[d,invarrow=0.9,rounded corners] (0,5-2) -- node[above=-1mm,pos=0.1] {$\ss x_i$} ++(4.5,0) -- ++(1,-1) -- ++(0.5,0) node[right] {$\m d$};
\draw[d,invarrow=0.9,rounded corners] (0,5-3) -- node[above=-1mm,pos=0.1] {$\ss x_{i+1}$} ++(4.5,0) -- ++(1,1) -- ++(0.5,0) node[right] {$\m d$};
\draw[decorate,decoration=brace] (-0.3,1) -- node[left] {$\m\omega$} (-0.3,4);
\draw[decorate,decoration=brace] (1,5.3) -- node[above] {$\m\lambda$} (4,5.3);
\end{tikzpicture}
%&&\text{by proposition~\ref{prop:weight} (2) and property~\ref{pty:norm2}}
\\
&=
\begin{tikzpicture}[baseline=1.8cm,scale=0.75]%[baseline=(current  bounding  box.center),scale=0.75]
\foreach\x in {1,...,4}
\draw[d,invarrow=0.9] (\x,0) node[below] {$\m d$} -- node[right=-1mm,pos=0.1] {$\ss z_\x$} ++(0,5);
\foreach\y/\s in {1/1,4/n}
\draw[d,invarrow=0.9] (-1,5-\y) -- node[above=-1mm,pos=0.1] {$\ss x_\s$} ++(6,0) node[right] {$\m d$};
\draw[d,invarrow=0.9,rounded corners] (-1,5-2) -- node[above=-1mm,pos=0.5] {$\ss x_i$} ++(0.5,0) -- ++(1,-1) -- ++(4.5,0) node[right] {$\m d$};
\draw[d,invarrow=0.9,rounded corners] (-1,5-3) -- node[above=-1mm,pos=0.5] {$\ss x_{i+1}$} ++(0.5,0) -- ++(1,1) -- ++(4.5,0) node[right] {$\m d$};
\draw[decorate,decoration=brace] (-1.3,1) -- node[left] {$\m\omega$} (-1.3,4);
\draw[decorate,decoration=brace] (1,5.3) -- node[above] {$\m\lambda$} (4,5.3);
\end{tikzpicture}
%&&\text{by property~\ref{pty:ybe}, \eqref{eq:ybe0}}
=
\begin{tikzpicture}[baseline=1.8cm,scale=0.75]%[baseline=(current  bounding  box.center),scale=0.75]
\foreach\x in {1,...,4}
\draw[d,invarrow=0.9] (\x,0) node[below] {$\m d$} -- node[right=-1mm,pos=0.1] {$\ss z_\x$} ++(0,5);
\foreach\y/\yy in {1/1,2/i+1,3/i,4/n}
\draw[d,invarrow=0.9] (0,5-\y) -- node[above=-1mm,pos=0.1] {$\ss x_{\yy}$} ++(5,0) node[right] {$\m d$};
\draw[decorate,decoration=brace] (-0.3,1) -- node[left] {$\m\omega$} (-0.3,4);
\draw[decorate,decoration=brace] (1,5.3) -- node[above] {$\m\lambda$} (4,5.3);
\end{tikzpicture}
%&&\text{by proposition~\ref{prop:weight} (2) and property~\ref{pty:norm2}, }\omega_i=\omega_{i+1}
\end{align*}
The argument is identical to that of \cite[lemma~3.11]{artic71}.
\end{proof}

Write $\bar\sigma$ for $\sigma^{-1}$.
\begin{lem}\label{lem:spec}
$S^\lambda$ is well-defined at every specialization $x_i=z_{\bar\sigma(i)}$,
$\sigma\in W$, and given by
\begin{equation}\label{eq:defSfp}
S^\lambda|_\sigma = 
\begin{tikzpicture}[baseline=-3pt,scale=0.9]
\draw (0,-1) rectangle (5,1); \node at (2.5,0) {$\sigma$};
\foreach\x/\t/\u in {1/z_1/z_{\bar\sigma(1)},2/z_2/z_{\bar\sigma(2)},3/\cdots/\cdots,4/z_n/z_{\bar\sigma(n)}}
{
\draw[d,invarrow=0.5] (\x,1) -- node[right,pos=0.5] {$\ss\t$} ++(0,1);
\draw[d,invarrow=0.5] (\x,-2) -- node[right=-1mm,pos=0.4] {$\ss\u$} ++(0,1);
}
\draw[decorate,decoration=brace] (4,-2.3) -- node[below] {$\m\omega$} (1,-2.3);
\draw[decorate,decoration=brace] (1,2.3) -- node[above] {$\m\lambda$} (4,2.3);
\end{tikzpicture}
\end{equation}
where the rectangle labeled $\sigma$ is any wiring diagram of $\sigma$, each crossing being an $R$-matrix. Furthermore, $S^\lambda|_{\sigma}$ only depends on the class of $\sigma$ in $W_P\dom W$.
\end{lem}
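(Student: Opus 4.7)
\emph{Plan.} The three assertions of the lemma are closely linked; I would address them in order.

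\emph{Well-definedness at $y_i=z_{\bar\sigma(i)}$.} The only denominators in the individual $R$-matrix entries \eqref{eq:Rsingle} have the form $1-q^2 z''/z'$; under the substitution each becomes $1-q^2 z_a/z_b$ for a pair of spectral parameters, with the special case $a=b$ yielding $1-q^2$. All such factors are units in the localized base ring $K^{\loc}_{\hat T}(pt)$ of \S\ref{ssec:loc}, so $S^\lambda|_\sigma$ is a well-defined element.

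\emph{The diagrammatic identity \eqref{eq:defSfp}.} I would derive this by a state-sum calculation using only the Yang--Baxter equation \eqref{eq:ybe0} and the equal-parameter identity \eqref{eq:equal0}, processing the horizontal rows one at a time. At the specialization, the crossing at row $i$, column $\bar\sigma(i)$ has coincident spectral parameters, so its $R$-matrix is the identity (only $\check R^{ab}_{ab}$ entries survive in \eqref{eq:Rsingle}), and equivalently \eqref{eq:equal0} allows me to replace it by a ``bounce'' in which each strand stays on its own side. Sliding the resulting fragments using YBE, the left portion of the $y_i$-strand---which carries the boundary label $\omega_i$---gets redirected into a new vertical segment entering the grid from below at column $i$, whose spectral parameter $z_{\bar\sigma(i)}$ matches $y_i$; meanwhile, the crossings of $y_i$ with the other columns $j\ne\bar\sigma(i)$ become the beginnings of the eventual wiring diagram. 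Iterating over $i=1,\ldots,n$, all horizontal rows disappear, and the residual row--column crossings reorganise into a wiring diagram of $\sigma$ joining the original top strands (parameters $z_1,\ldots,z_n$) to the new bottom segments (parameters $z_{\bar\sigma(1)},\ldots,z_{\bar\sigma(n)}$), exactly as on the right-hand side of \eqref{eq:defSfp}. Independence from the particular reduced word (equivalently, wiring diagram) chosen for $\sigma$ then follows from YBE (which realises the braid relations) together with trivial commutation of distant strands.

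\emph{Class independence in $W_P\dom W$.} For $\tau\in W_P$, the substitution $y_i=z_{\overline{\tau\sigma}(i)}$ is obtained from $y_i=z_{\bar\sigma(i)}$ by the relabeling $y_j\mapsto y_{\tau(j)}$, and Lemma~\ref{lem:Winv} says precisely that $S^\lambda$ is invariant under such a relabeling; so the two specialized values agree.

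The main obstacle is the middle step: translating the algebraic triviality of $\check R(1)$ into a clean diagrammatic rewriting that produces a wiring diagram of $\sigma$ with the correct column-to-bottom-label matching. Careful bookkeeping through the bounces and YBE slides is required to verify that the new bottom boundary really reads $\omega_1,\ldots,\omega_n$ in the claimed order, and that the residual crossings genuinely implement $\sigma$ (rather than $\bar\sigma$ or some related permutation). The calculation is in the same spirit as the one invoked at the end of the proof of Lemma~\ref{lem:Winv}.
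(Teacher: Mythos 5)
Your proposal is correct and, for the main identity \eqref{eq:defSfp}, takes the same route as the paper: the paper simply invokes proposition~\ref{prop:single} and cites \cite[Lemma~11]{artic71} for the state-sum manipulation you sketch (bounce at the coincident-parameter crossing via \eqref{eq:equal0}, then reorganise with YBE — and note that unitarity \eqref{eq:unit0} is also typically needed to cancel the double crossings that arise while sliding). Your well-definedness remark about the denominators being units in $K^{\loc}_{\hat T}(pt)$ matches what the paper leaves implicit.

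The one genuine divergence is the last assertion. You deduce $W_P\dom W$-independence from Lemma~\ref{lem:Winv}, i.e.\ from the $W_P$-symmetry of $S^\lambda$ in the $y$-variables prior to specialization; this is valid (both specializations $|_\sigma$ and $|_{\tau\sigma}$ are well-defined by the first part, so the rational-function identity may be specialized). The paper instead argues directly on the already-established formula \eqref{eq:defSfp}: left-multiplying $\sigma$ by $\tau\in W_P$ appends to the wiring diagram crossings whose two incoming strands carry equal labels (since $\omega$ is $W_P$-invariant), and such crossings contribute $\check R_{ii}^{ii}=1$. The paper's route keeps the lemma self-contained and independent of Lemma~\ref{lem:Winv} (which the authors note is otherwise dispensable); yours trades that for reusing an already-proved symmetry. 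Both are correct.
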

Note that time flows downwards, e.g., if $\sigma=(312)$, then the diagram of $\sigma$ is 
\begin{tikzpicture}[baseline=(current  bounding  box.center)]
\draw[d,invarrow] (1,0) -- (2,1);
\draw[d,invarrow] (2,0) -- (3,1);
\draw[d,invarrow] (3,0) -- (1,1);
\end{tikzpicture}.
\begin{proof}
\eqref{eq:defSfp} follows from proposition~\ref{prop:single}
-- the proof is identical to that of \cite[lemma~3.12]{artic71}, and we shall not repeat it here.

Because $\omega$ is invariant under $W_P$, and $\check R_{ii}^{ii}=1$ according to \eqref{eq:Rsingle},
the value of \eqref{eq:defSfp} is unaffected by left multiplication of $\sigma$ by an element of
$W_P$. This shows that $S^\lambda|_{\sigma}$ depends only on the class of $\sigma$ in $W_P\dom W$.
\end{proof}

\begin{ex}\label{ex:d1b}
  We compute $S^\lambda|_{\sigma}$ for $\lambda=01,10$ and $\sigma\in\mathcal S_2$, where we identify the identity permutation with $01$
  and the nontrivial permutation with $10$:
  \begin{align*}
    S^{01}|_{01} &=
    \begin{tikzpicture}[baseline=(current  bounding  box.center),scale=0.75]
      \draw[d,invarrow=.45] (1,0) -- node[xshift=-.15cm,pos=0.7] {$0$} node[xshift=-.15cm,pos=0.3] {$0$} node[right,pos=.9] {$\ss z_1$} (1,2);
      \draw[d,invarrow=.45] (2,0) -- node[xshift=-.15cm,pos=0.7] {$1$} node[xshift=-.15cm,pos=0.3] {$1$} node[right,pos=.9] {$\ss z_2$} (2,2);
    \end{tikzpicture}
    =1
    &
    S^{10}|_{01} &=
    \begin{tikzpicture}[baseline=(current  bounding  box.center),scale=0.75]
      \draw[d,invarrow=.45] (1,0) -- node[xshift=-.15cm,pos=0.7] {$1$} node[xshift=-.15cm,pos=0.3] {$0$} node[right,pos=.9] {$\ss z_1$} (1,2);
      \draw[d,invarrow=.45] (2,0) -- node[xshift=-.15cm,pos=0.7] {$0$} node[xshift=-.15cm,pos=0.3] {$1$} node[right,pos=.9] {$\ss z_2$} (2,2);
    \end{tikzpicture}
                   =0
                   \\
    S^{01}|_{10} &=
    \begin{tikzpicture}[baseline=(current  bounding  box.center),scale=0.75]
      \draw[d,invarrow=.6] (2,0) -- node[xshift=-.15cm,pos=0.7] {$0$} node[xshift=.15cm,pos=0.3] {$1$} node[right,pos=.9] {$\ss z_1$} (1,2);
      \draw[d,invarrow=.6] (1,0) -- node[xshift=.15cm,pos=0.7] {$1$} node[xshift=-.15cm,pos=0.3] {$0$} node[right,pos=.9] {$\ss z_2$} (2,2);
    \end{tikzpicture}
    =\frac{1-q^2}{1-q^2z_2/z_1}
    &
    S^{10}|_{10} &=
    \begin{tikzpicture}[baseline=(current  bounding  box.center),scale=0.75]
      \draw[d,invarrow=.6] (2,0) -- node[xshift=-.15cm,pos=0.7] {$1$} node[xshift=.15cm,pos=0.3] {$1$} node[right,pos=.9] {$\ss z_1$} (1,2);
      \draw[d,invarrow=.6] (1,0) -- node[xshift=.15cm,pos=0.7] {$0$} node[xshift=-.15cm,pos=0.3] {$0$} node[right,pos=.9] {$\ss z_2$} (2,2);
    \end{tikzpicture}
                   =\frac{q(1-z_2/z_1)}{1-q^2z_2/z_1}
  \end{align*}
  
  Compare with example~\ref{ex:d1}.
\end{ex}

By abuse of notation, we suppress
the dependence on the choice of word of $\sigma$ in the diagram, the justification being that the resulting
quantity is independent of the choice of word.

Lemma~\ref{lem:spec} implies that we can consider $S^\lambda|_\sigma$ (a rational function in $z_1,\ldots,z_n,q$ with only poles
at $z_i/z_j=q^2$) as the restriction
of a class in $K^{\loc}_{\hat T}(T^*(P_-\backslash G))$ to a fixed point $\sigma\in W_P \dom W$;
we identify $S^\lambda$ with this class, and call it the {\em motivic Segre class}\/ labeled by
$\lambda$. \rem[gray]{this is where we implicitly use lemma~\ref{lem:Winv} -- to say that
the identification really sends $x_i$ to the Chern root $x_i$}

We define dual classes by reversing all arrows (and conventionally rotating diagrams by 180 degrees); more precisely,
define
\begin{equation}\label{eq:defSd}
S_\lambda
:=
\begin{tikzpicture}[baseline=1.8cm,scale=0.75]%[baseline=(current  bounding  box.center),scale=0.75]
\foreach\x/\t in {1/z_1,2/z_2,3/\cdots,4/z_n}
\draw[d,invarrow=0.9] (\x,0) -- node[right=-1mm,pos=0.1] {$\ss\t$} ++(0,5) node[above] {$\m d$};
\foreach\y/\t in {1/x_n,2/\vdots,3/x_2,4/x_1}
\draw[d,invarrow=0.9] (0,5-\y) node[left] {$\m d$} -- node[above=-1mm,pos=0.1] {$\ss\t$} ++(5,0);
\draw[decorate,decoration=brace] (5.3,4) -- node[right] {$\m\alpha$} (5.3,1);
\draw[decorate,decoration=brace] (4,-0.3) -- node[below] {$\m\lambda$} (1,-0.3);
\end{tikzpicture}
\end{equation}
where we recall that the string $\alpha$ is the unique weakly decreasing string with the same content as $\lambda$.
The fixed point restriction formula reads, using the shorthand notation $\hat\sigma=\sigma^{-1}w_0$,
where $w_0$ is the longest permutation:
\begin{equation}\label{eq:defSfpd}
S_\lambda|_\sigma = 
\begin{tikzpicture}[baseline=-3pt,scale=0.9]
\draw (0,-1) rectangle (5,1); \node at (2.5,0) {$\hat\sigma$};
\foreach\x/\t/\u in {1/z_{\hat\sigma(1)}/z_1,2/z_{\hat\sigma(2)}/z_{2},3/\cdots/\cdots,4/z_{\hat\sigma(n)}/z_{n}}
{
\draw[d,invarrow=0.5] (\x,1) -- node[right=-1mm,pos=0.4] {$\ss\t$} ++(0,1);
\draw[d,invarrow=0.5] (\x,-2) -- node[right,pos=0.5] {$\ss\u$} ++(0,1);
}
\draw[decorate,decoration=brace] (4,-2.3) -- node[below] {$\m\lambda$} (1,-2.3);
\draw[decorate,decoration=brace] (1,2.3) -- node[above] {$\m\alpha$} (4,2.3);
\end{tikzpicture}
\end{equation}

\junk{in view of the polarization issues, one should check (by doing a puzzle
  calculation) whether this is really the normalization we want for dual
  S classes}

Note that reversing arrows in the $R$-matrix is equivalent
to $q,z',z''\mapsto q^{-1},z'^{-1},z''^{-1}$, according to \eqref{eq:Rsingle};
so that there is a simple relationship between $S^\lambda$ and $S_\lambda$:
\begin{lem}\label{lem:inv}
One has
\[
S_\lambda(x_1,\ldots,x_n,z_1\ldots,z_n,q)
=
S^{\lambda^*}(x_1^{-1},\ldots,x_n^{-1},z_n^{-1},\ldots,z_1^{-1},q^{-1})
\]
where $\lambda^*$ denotes the string $\lambda$ read backwards. Similarly,
\[
S_\lambda|_\sigma(z_1,\ldots,z_n,q)
=
S^{\lambda^*}|_{\sigma w_0}(z_n^{-1},\ldots,z_1^{-1},q^{-1})
\]
\end{lem}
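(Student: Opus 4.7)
The plan is to apply, locally at every crossing in the diagram \eqref{eq:defSd} defining $S_\lambda$, the observation made immediately preceding the lemma statement: reversing the arrows of a single $\check R$ vertex has the same effect on its matrix entries as the substitution $(q,z',z'') \mapsto (q^{-1},(z')^{-1},(z'')^{-1})$. I would first verify this by a short case check on \eqref{eq:Rsingle}: the normalization $\check R^{ii}_{ii}=1$ is preserved, and each of the three remaining nonvanishing weights transforms correctly under the substitution into the weight read off the reversed crossing (with source and target swapped), with the common prefactor $(1-q^2 z''/z')^{-1}$ transforming along with the numerators.

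Given this local identity, applying the substitution simultaneously at every vertex of \eqref{eq:defSd} produces a diagram whose vertices are standard (unreversed) $\check R$ crossings, with $z_i$ replaced by $z_i^{-1}$ on the vertical strands, $y_i$ by $y_i^{-1}$ on the horizontal strands, and $q$ by $q^{-1}$ throughout. A rotation by $180^\circ$ then matches this to the defining diagram \eqref{eq:defS} for $S^{\lambda^*}$: the boundary strings transpose as needed since $\alpha$ (weakly decreasing with content of $\lambda$) read backwards is $\omega$ (weakly increasing with content of $\lambda^*$ equal to that of $\lambda$), and $\lambda$ read backwards is $\lambda^*$ by definition; the horizontal strand parameters reverse to $(y_1^{-1},\ldots,y_n^{-1})$ read top-to-bottom, and the vertical ones to $(z_n^{-1},\ldots,z_1^{-1})$ read left-to-right, matching the claimed arguments.

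The second identity is proved by the same substitute-and-rotate strategy applied to the fixed-point diagram \eqref{eq:defSfpd}. The extra combinatorial input is that a wiring diagram for $\hat\sigma = \sigma^{-1}w_0$ rotated by $180^\circ$ realizes the permutation $w_0 \hat\sigma^{-1} w_0 = w_0(w_0^{-1}\sigma)w_0 = \sigma w_0$, and the identity $\hat\sigma(n-i+1) = \bar\sigma(i)$ then shows the bottom row of spectral parameters in the rotated diagram, after inversion, becomes $(z_{\bar\sigma(i)}^{-1})_i$ --- which is precisely the bottom row appearing in $S^{\lambda^*}|_{\sigma w_0}(z_n^{-1},\ldots,z_1^{-1},q^{-1})$ via Lemma \ref{lem:spec}. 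The main obstacle in both parts is not conceptual but combinatorial: careful tracking of the relabelings and reorderings induced by the reflection, beyond which the identities are immediate consequences of the $R$-matrix's inversion symmetry.
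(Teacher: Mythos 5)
Your proposal is correct and is precisely the argument the paper intends: the paper states lemma~\ref{lem:inv} with no proof beyond the one-line observation immediately preceding it (that arrow reversal in \eqref{eq:Rsingle} equals the substitution $q,z',z''\mapsto q^{-1},z'^{-1},z''^{-1}$), and your writeup simply fleshes out the vertex-by-vertex substitution, the $180^\circ$ rotation matching \eqref{eq:defSd} to \eqref{eq:defS}, and the $\hat\sigma\mapsto\sigma w_0$ bookkeeping for the fixed-point version. No discrepancy with the paper's approach.
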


\rem{justify in what sense they're motivic? other properties?
AK wants to add a ref here}

\subsection{Relation to stable classes}\label{sec:stabclass}
\rem[gray]{note that {\em any class}\/ related to $S^\lambda$ by multiplication by some function
of the Chern classes (including $S^\lambda$ itself!) will lead to the same exchange relation
as $\St^\lambda$ with the same $R$-matrix}

We start with the following characterization of motivic Segre classes of $P_-\backslash G$.
Denote by $\tau_i$ the elementary transposition $(i,i+1)$,
and make it act on $K_{\hat T}^{\loc}(pt)$ by permuting variables $z_i$ and $z_{i+1}$.

\begin{prop}\label{prop:charact}
The classes $S^\lambda$ are entirely determined by the following properties:
\begin{enumerate}
\item {\em Triangularity:} $S^\lambda|_\sigma=0$ unless $\sigma\le\lambda$ in the Bruhat order.
\item {\em Diagonal entries:}
\[
S^{\lambda}|_\lambda
%=\prod_{i<j: \sigma(i)>\sigma(j)} 
%\frac{-q(1-z_{\sigma(i)}/z_{\sigma(j)})}{1-q^2 z_{\sigma(i)}/z_{\sigma(j)}}
=\prod_{i<j: \lambda_i>\lambda_j}
\frac{q(1-z_{j}/z_{i})}{1-q^2 z_{j}/z_{i}}
\]
\item {\em Exchange relation:}
\[
\tau_i S^\lambda|_{\sigma\tau_i}
=\begin{cases}
\frac{1-q^2}{1-q^2 z_{i}/z_{i+1}}
S^\lambda|_\sigma 
+ 
\frac{q(1-z_{i}/z_{i+1})}{1-q^2z_{i}/z_{i+1}}
S^{\lambda\tau_i}|_\sigma
&\lambda_i<\lambda_{i+1}
\\
S^\lambda|_\sigma
& \lambda_i=\lambda_{i+1}
\\
\frac{(1-q^2)z_{i}/z_{i+1}}{1-q^2 z_{i}/z_{i+1}}
S^\lambda|_\sigma 
+ 
\frac{q(1-z_{i}/z_{i+1})}{1-q^2 z_{i}/z_{i+1}}
S^{\lambda\tau_i}|_\sigma
& \lambda_i>\lambda_{i+1}
\end{cases}
\]
\end{enumerate}
\end{prop}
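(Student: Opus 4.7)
The plan is to prove both directions of the characterization: first, the classes defined by~\eqref{eq:defS} satisfy properties (1)--(3); second, these properties determine the classes uniquely.

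For existence, I work with the fixed-point formula~\eqref{eq:defSfp}, expanding $S^\lambda|_\sigma$ as a sum over ``mode'' choices at each crossing of the wiring diagram of $\sigma$. From~\eqref{eq:Rsingle}, at a crossing with distinct incoming labels the two available modes are ``swap'' and ``identity'', both of which preserve the multiset of strand labels. \emph{Triangularity} follows: any nonzero labeling corresponds to a subword of a reduced expression for $\sigma$, whose product gives some $\sigma'\le\sigma$ in standard Bruhat order, which matches $\sigma\le\lambda$ in the paper's convention via the identification $W_P\dom W\leftrightarrow$ strings. The \emph{diagonal entry} arises from the unique configuration ``swap at every crossing'' forced when $\lambda=\sigma$; each crossing contributes $q(1-z''/z')/(1-q^2 z''/z')$, and since $\sigma$ is the minimum-length coset representative, its crossings biject with the inversions $(i,j)$ of the string $\lambda$ with matching spectral parameters $z_i,z_j$. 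The \emph{exchange relation} is the key step: factor the wiring of $\sigma\tau_i$ as a single $R$-matrix crossing at positions $i,i+1$ (top spectral parameters $z_i,z_{i+1}$) composed with the wiring of $\sigma$ below. Expanding this top $R$-matrix via~\eqref{eq:Rsingle}, the ``identity'' mode contributes $\tau_i S^\lambda|_\sigma$ and the ``swap'' mode contributes $\tau_i S^{\lambda\tau_i}|_\sigma$ (the $\tau_i$ arises because the spectral parameters entering the wiring of $\sigma$ are swapped); applying $\tau_i$ to both sides and splitting by the sign of $\lambda_i-\lambda_{i+1}$ reproduces the three stated sub-cases.

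For uniqueness, I induct on $\mathrm{inv}(\sigma)$. At the base $\sigma=\omega$, property (1) forces $S^\lambda|_\omega=0$ for $\lambda\ne\omega$ (since $\omega$ is maximal), and (2) gives $S^\omega|_\omega=1$. For the inductive step, take $\sigma\ne\omega$, pick a descent $i$ of $\sigma$ (i.e., $\sigma_i>\sigma_{i+1}$), and set $\sigma':=\sigma\tau_i$ with $\mathrm{inv}(\sigma')=\mathrm{inv}(\sigma)-1$. Applying (3) with the role of ``$\sigma$'' played by $\sigma'$ (so that $\sigma'\tau_i=\sigma$) gives, in each of the three cases, $\tau_i S^\lambda|_\sigma=\alpha\, S^\lambda|_{\sigma'}+\beta\, S^{\lambda\tau_i}|_{\sigma'}$ for explicit rational coefficients $\alpha,\beta$; both RHS terms are inductively known, and applying $\tau_i$ to both sides determines $S^\lambda|_\sigma$ without any division being required.

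The main obstacle is the exchange relation in the existence step, which requires careful tracking of the two off-diagonal nonzero entries of~\eqref{eq:Rsingle} and the $\tau_i$-swap of spectral parameters they induce, repeated over the three cases of $\lambda_i$ vs.\ $\lambda_{i+1}$. Once this identity is in hand, the remaining verifications (triangularity, diagonal, and the inductive uniqueness argument) are essentially routine.
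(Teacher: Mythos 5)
Your existence argument (that the classes defined by \eqref{eq:defS} satisfy (1)--(3)) is essentially identical to the paper's: triangularity via subwords of a reduced word for the minimal representative, the diagonal entry via the unique all-transmission configuration, and the exchange relation by peeling a single $(i,i+1)$ crossing off the top of the wiring diagram of $\sigma\tau_i$ and summing over the two intermediate labelings, exactly as in the paper's diagrammatic identity following lemma~\ref{lem:spec}.

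Where you genuinely diverge is in the uniqueness half. The paper disposes of it with the one-line remark that property (3) lets one define the $S^\lambda$ inductively \emph{in $\lambda$} starting from $S^\alpha$; note that this requires solving the exchange relation for $S^{\lambda\tau_i}|_\sigma$, i.e.\ dividing by the coefficient $q(1-z_i/z_{i+1})/(1-q^2 z_i/z_{i+1})$, which is legitimate only in the localized ring. Your induction is instead on the inversion number of the \emph{fixed point} $\sigma$, with base case $\sigma=\omega$ where triangularity (since $\omega$ is Bruhat-maximal) forces $S^\lambda|_\omega=\delta_{\lambda\omega}$, and inductive step reading the exchange relation at a descent of $\sigma$ so that both right-hand terms are restrictions at the shorter $\sigma'=\sigma\tau_i$. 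This determines every $S^\lambda|_\sigma$ simultaneously, requires no division, and is a complete argument where the paper offers only a sketch. Both routes are valid; yours buys a cleaner, division-free uniqueness proof, while the paper's remark has the advantage of showing that (1) and (2) are only needed for a single $\lambda$.
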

In fact, property (3) allows to define inductively the $S^\lambda$ in terms
of a single one, say $S^\alpha$, 
and then we only need (1) and (2) at $\lambda=\alpha$.
\begin{proof}
For the first two points,
pick a minimal representative $\sigma$ of $\lambda$ in $W_P \dom W$, i.e.,
a permutation of minimal length
that permutes the labels of $\omega$ to those of $\sigma$, and apply lemma~\ref{lem:spec}, i.e.,
consider a reduced decomposition $Q$ for $\sigma$.
\begin{enumerate}
\item According to \eqref{eq:Rsingle}, the summation over intermediate labels in the expression
 \eqref{eq:defSfp} of $S^\lambda|_\sigma$ is a summation over subwords $P$ of $Q$, and therefore results
in strings $\prod P\cdot\omega$ that are less of equal to $\lambda$ in the Bruhat order.
\item In the equality case $\sigma=\lambda$,
the only configuration that contributes to \eqref{eq:defSfp} is made of vertices of the form
\tikz[baseline=0,xscale=0.5]
{
\draw[invarrow=0.75,d] (-1,-1) node[below] {$\m \lambda_j$} -- node[pos=0.75,right] {$\ss z_{j}$} (1,1) node[above] {$\m \lambda_j$};
\draw[invarrow=0.75,d] (1,-1) node[below] {$\m \lambda_i$} -- node[pos=0.75,left] {$\ss z_{i}$} (-1,1) node[above] {$\m \lambda_i$};
} with $i<j$, and $\lambda_i>\lambda_j$.
\item
Finally, the induction formula is derived as follows: given a fixed point $\sigma$ and an elementary transposition
$\tau_i$, diagrammatically,
\[
\begin{tikzpicture}[baseline=-3pt,scale=0.85]
\draw (0,-1) rectangle (5,1); \node at (2.5,0) {$\sigma\tau_i$};
\foreach\x/\t/\u in {1/z_1/z_{\bar\sigma(1)},2/z_{i+1}/,3/z_i/,4/z_n/z_{\bar\sigma(n)}}
{
\draw[d,invarrow=0.5] (\x,1) -- ++(0,1) node[above] {$\ss\t$};
\draw[d,arrow=0.5] (\x,-1) -- ++(0,-1) node[below] {$\ss\u$};
}
\node at (1.4,2.3) {$\sss\cdots$};
\node at (3.6,2.3) {$\sss\cdots$};
\node at (2.5,-2.3) {$\sss\cdots$};
\end{tikzpicture}
\,
=
\,
\begin{tikzpicture}[baseline=-3pt,scale=0.85]
\draw (0,-1) rectangle (5,1); \node at (2.5,0) {$\sigma$};
\foreach\x/\t in {1/z_{\bar\sigma(1)},2/,3/,4/z_{\bar\sigma(n)}}
{
\draw[d,arrow=0.5] (\x,-1) -- ++(0,-1) node[below] {$\ss\t$};
}
\draw[d,invarrow=0.5] (1,1) -- ++(0,1) node[above] {$\ss z_1$};
\draw[d,invarrow=0.5] (4,1) -- ++(0,1) node[above] {$\ss z_n$};
\draw[d,invarrow=0.25,rounded corners] (2,1) -- ++(0,0.3) -- ++(1,0.4) -- ++(0,0.3) node[above] {$\ss z_{i}$};
\draw[d,invarrow=0.25,rounded corners] (3,1) -- ++(0,0.3) -- ++(-1,0.4) -- ++(0,0.3) node[above] {$\ss z_{i+1}$};
\node at (1.4,2.3) {$\sss\cdots$};
\node at (3.6,2.3) {$\sss\cdots$};
\node at (2.5,-2.3) {$\sss\cdots$};
\end{tikzpicture}
\]
%(where the crossing is between the $i^{\rm th}$ and $(i+1)^{\rm th}$ lines counting at the top,
%and all other lines have their normal labeling).
Applying once again lemma~\ref{lem:spec} and performing the sum over the labels of the edges right under
the final $(i,i+1)$ crossing leads to the desired result.
\end{enumerate}
\end{proof}

One similarly proves the corresponding properties for the dual classes:
\begin{prop}\label{prop:charact2}
The classes $S_\lambda$ are entirely determined by the following properties:
\begin{enumerate}
\item {\em Triangularity:} $S_\lambda|_\sigma=0$ unless $\sigma\ge\lambda$ in the Bruhat order.
\item {\em Diagonal entries:}
\[
S_{\lambda}|_\lambda
%=\prod_{i<j: \sigma(i)>\sigma(j)} 
%\frac{-q(1-z_{\sigma(i)}/z_{\sigma(j)})}{1-q^2 z_{\sigma(i)}/z_{\sigma(j)}}
=\prod_{i<j: \lambda_i<\lambda_j}
\frac{q(1-z_{i}/z_{j})}{1-q^2 z_{i}/z_{j}}
\]
\item {\em Exchange relation:}
\[
\tau_i S_\lambda|_{\sigma\tau_i}
=\begin{cases}
\frac{1-q^2}{1-q^2 z_{i+1}/z_{i}}
S_\lambda|_\sigma 
+ 
\frac{q(1-z_{i+1}/z_{i})}{1-q^2z_{i+1}/z_{i}}
S_{\lambda\tau_i}|_\sigma
&\lambda_i<\lambda_{i+1}
\\
S_\lambda|_\sigma
& \lambda_i=\lambda_{i+1}
\\
\frac{(1-q^2)z_{i+1}/z_{i}}{1-q^2 z_{i+1}/z_{i}}
S_\lambda|_\sigma 
+ 
\frac{q(1-z_{i+1}/z_{i})}{1-q^2 z_{i+1}/z_{i}}
S_{\lambda\tau_i}|_\sigma
& \lambda_i>\lambda_{i+1}
\end{cases}
\]
\end{enumerate}
\end{prop}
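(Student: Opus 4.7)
The plan is to mirror the proof of Proposition \ref{prop:charact}, using the alternative fixed-point formula \eqref{eq:defSfpd} with $\hat\sigma=\sigma^{-1}w_0$ and the weakly decreasing string $\alpha$ in place of $\omega$. Accordingly I would fix a reduced word $Q$ for a minimal representative of $\hat\sigma$ in $W/W_P$ (equivalently, an appropriately chosen representative for $\sigma$), interpret the right-hand side of \eqref{eq:defSfpd} via the $R$-matrix sum~\eqref{eq:Rsingle}, and read off each of the three required properties.

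First I would verify well-definedness (independence of the chosen word and stability under $W_P$-multiplication on $\sigma$) exactly as in the proof of Lemma \ref{lem:spec}, noting that the $W_P$-invariance of $\alpha$ together with $\check R_{ii}^{ii}=1$ takes care of both points. For \textbf{triangularity}, I would expand the state sum over subwords $P\subseteq Q$; each subword produces a string of the form $\prod P\cdot\alpha$, and since we are now starting from $\alpha$ (the maximum in the orbit under the convention that $\omega\le\cdots\le\alpha$ in our identification with $W_P\dom W$), the strings produced lie \emph{above} $\lambda$ in Bruhat order, which gives the reversed triangularity $\sigma\ge\lambda$. For \textbf{diagonal entries} at $\sigma=\lambda$, only the configuration where every wire carries a constant label contributes, and the local weights at each crossing with $i<j$ now force $\lambda_i<\lambda_j$ (rather than $\lambda_i>\lambda_j$); the corresponding $R$-matrix entry from \eqref{eq:Rsingle} is $\frac{q(1-z_i/z_j)}{1-q^2z_i/z_j}$, giving the stated product. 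For the \textbf{exchange relation}, I would run the same diagrammatic move as in the proof of Proposition \ref{prop:charact}: pull an elementary crossing $\tau_i$ from the bottom of the rectangle out to the top using Yang--Baxter, then sum over the two intermediate labels under that crossing. Because the arrows in \eqref{eq:defSd} point into the diagram from the opposite side, the roles of the two off-diagonal $R$-matrix entries (the $i=k<j=l$ and $i=k>j=l$ cases) are interchanged, and the spectral parameter ratio appearing in each weight is $z_{i+1}/z_i$ rather than $z_i/z_{i+1}$; this produces precisely the three cases listed.

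As a sanity check and shortcut, one may instead deduce Proposition \ref{prop:charact2} from Proposition \ref{prop:charact} through Lemma \ref{lem:inv}: apply the substitution $q\mapsto q^{-1}$, $z_i\mapsto z_{n+1-i}^{-1}$, $\lambda\mapsto\lambda^*$, $\sigma\mapsto\sigma w_0$ to each of the three properties. The Bruhat condition $\sigma w_0\le\lambda^*$ becomes $\sigma\ge\lambda$ because right multiplication by $w_0$ reverses Bruhat order, and the reindexing $i\leftrightarrow n+1-j$ converts the diagonal product and the exchange coefficients into their stated form. As with the first approach, the characterization by (1) and (2) at $\lambda=\alpha$ plus (3) follows from the same induction argument.

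The main obstacle, in either route, is the careful bookkeeping of orientations and order conventions: one has to be sure that reversing arrows and swapping $\omega$ for $\alpha$ really reverses Bruhat order in the triangularity statement, and that the coefficient swap $z_i/z_{i+1}\leftrightarrow z_{i+1}/z_i$ (together with the swap of the $\lambda_i<\lambda_{i+1}$ and $\lambda_i>\lambda_{i+1}$ cases) is consistent with \eqref{eq:Rsingle} after arrow reversal. Once these conventions are pinned down, each step is parallel to the already-proven Proposition \ref{prop:charact} and essentially mechanical.
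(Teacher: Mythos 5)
Your proposal matches the paper's (implicit) proof: the paper gives no argument for this proposition beyond ``one similarly proves,'' meaning exactly the mirror of the proof of proposition~\ref{prop:charact} via the dual fixed-point formula \eqref{eq:defSfpd}, which is your first route. The second route through lemma~\ref{lem:inv} is a genuinely independent (and arguably safer) derivation, since the substitution $q\mapsto q^{-1}$, $z_i\mapsto z_{n+1-i}^{-1}$, $\lambda\mapsto\lambda^*$, $\sigma\mapsto\sigma w_0$ mechanically transports all three properties from proposition~\ref{prop:charact}, with the swap of the $\lambda_i<\lambda_{i+1}$ and $\lambda_i>\lambda_{i+1}$ cases absorbing the extra monomial $z_{i+1}/z_i$ that appears when inverting the coefficients; this cross-check is worth having precisely because of the convention pitfalls you flag. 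One such pitfall you should fix: in the paper's convention $\alpha$ is the \emph{smallest} element of $W_P\dom W$ and $\omega$ the largest, so in your triangularity argument the subword expansion starting from $\alpha$ produces strings lying \emph{below} $\sigma$, and requiring $\lambda$ to occur among them is what forces $\lambda\le\sigma$, i.e.\ $\sigma\ge\lambda$; your stated conclusion is right but the ``$\alpha$ is the maximum'' justification is inverted. With that corrected, both routes are sound.
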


In both cases, one notes that these properties are very similar to the ones satisfied by the stable basis \cite{Ok-K}.
This motivates the definition
\[
\St^\lambda:=\kappa\,S^\lambda
\]
where
\[
  \kappa=[P_-\backslash G]=\prod_{i,j:\omega_i<\omega_j}(1-q^2x_i/x_j)
\]
is the class of the zero section of $T^*(P_-\backslash G)\to P_-\backslash G$.
We shall see just below that $\St^\lambda$ lives in {\em non-localized}\/ 
$K$-theory, i.e., belongs to $K_{\hat T}(T^*(P_-\backslash G))$.  Inversely,
\[
S^{\lambda}=p^*p_* \St^\lambda
\]
and the denominator of $S^\lambda$ can be blamed on the lack of properness
of the map $p$. 
\junk{should one be more specific? in particular the denominator can only
be related to the weights in the cotangent direction. that means its restrictions at fixed points will still satisfy the divisibility property}
Similarly, introduce $\St_\lambda:=\tilde\kappa\, S_\lambda$,
where
\[
  \tilde\kappa=\prod_{i,j:\omega_i<\omega_j}q(1-q^{-2}x_j/x_i)
\]
differs from $\kappa$ by a monomial (related to choosing an opposite polarization).
We can then state:

\begin{prop}\label{prop:stab}
  $\St^\lambda$ (resp.\ $\St_\lambda$) coincides with the stable basis element associated to $\lambda$,
  to the negative (resp.\ positive) chamber, to the polarization $T^*(P_-\backslash G)$ (resp.\ $T(P_-\backslash G)$),
  and to a line bundle in the positive (resp.\ negative) alcove closest to $0$.
\end{prop}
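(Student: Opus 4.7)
The plan is to invoke the axiomatic characterization of Okounkov's $K$-theoretic stable basis from \cite{Ok-K}, as refined for cotangent bundles of flag varieties in \cite{SZZ-Kstable}. The stable class attached to a fixed point $\lambda$, a chamber $\mathfrak{C}$, a polarization $T^{1/2}$, and a fractional line-bundle shift is uniquely pinned down by three properties: support in the closure of the attracting set (i.e.\ triangularity in Bruhat order), a prescribed diagonal value at $\lambda$, and either a ``smallness'' Newton-polygon condition on off-diagonal restrictions or, equivalently in our setting after \cite{SZZ-Kstable}, an $R$-matrix exchange recursion under elementary transpositions. Propositions \ref{prop:charact} and \ref{prop:charact2} already put our $S^\lambda, S_\lambda$ in precisely this axiomatic shape, so the job is to check that, after multiplying by $\kappa$ (resp.\ $\tilde\kappa$), the three properties are those of the stable basis for the specified data.

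First I would check that $\St^\lambda = \kappa\, S^\lambda$ actually lies in non-localized $K_{\hat T}(T^*(P_-\backslash G))$. This is a fixed-point-by-fixed-point statement: from the explicit formula \eqref{eq:defSfp}, the only denominators in $S^\lambda|_\sigma$ are of the form $1-q^2 z_j/z_i$, and $\kappa|_\sigma$ restricts to a product over exactly those normal weights to the zero section at $\sigma$. The relevant divisibility then reduces to a direct inspection of which pairs $(i,j)$ can contribute to a nonzero $S^\lambda|_\sigma$, using the triangularity Proposition \ref{prop:charact}(1).

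Next I would read off the three defining properties of the stable basis for $\St^\lambda$ directly from Proposition \ref{prop:charact}. The triangularity Proposition \ref{prop:charact}(1) is the support condition for the negative chamber (so that the $B_+$-orbits on $P_-\backslash G$ are the attracting cells and their conormals the attracting sets in $T^*(P_-\backslash G)$). Multiplying the diagonal formula Proposition \ref{prop:charact}(2) by $\kappa|_\lambda$ produces a product over the normal weights to the attracting set at $\lambda$ of the form $q(1-z_j/z_i)$, which is exactly the stable-basis normalization for the polarization $T^{1/2}=T^*(P_-\backslash G)$ (the opposite polarization $T^{1/2}=T(P_-\backslash G)$ would instead produce factors $q(1-q^{-2}z_i/z_j)$, governing the dual statement for $\St_\lambda$ via Proposition \ref{prop:charact2}). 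Finally, the exchange relation Proposition \ref{prop:charact}(3) is, upon multiplying through by $\kappa$ and using $W$-equivariance of $\kappa$, the $R$-matrix recursion of \cite{SZZ-Kstable}; the fact that our recursion has no overall monomial prefactor (as opposed to one involving $z_i/z_{i+1}$ in the dual case) is precisely the statement that the fractional shift is by a line bundle in the positive alcove closest to $0$. The dual claim for $\St_\lambda$ then follows either by the parallel argument with Proposition \ref{prop:charact2}, or directly from Lemma \ref{lem:inv}, which interchanges $(q,z_i)\leftrightarrow(q^{-1},z_i^{-1})$ and thereby simultaneously flips chamber, polarization, and alcove.

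The main obstacle is bookkeeping of conventions: our $q$ versus the $t=q^{-2}$ (or the $\hbar$ of \cite{MO-qg}), the sign of the chamber and its compatibility with the direction of the Bruhat order, the identification of the parameters $z_i$ with the equivariant $K$-theoretic coordinates used in \cite{SZZ-Kstable}, and the precise meaning of ``closest to $0$'' for the fractional shift. All of these are rigidly determined by the three matched properties once one writes them out, but they need to be tracked consistently through the passage from our $R$-matrix \eqref{eq:Rsingle} to the $R$-matrix action on the equivariant $K$-theory of $T^*(P_-\backslash G)$ used in the stable-basis construction.
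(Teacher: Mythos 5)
Your proposal is correct and follows essentially the same route as the paper: both arguments characterize $\St^\lambda$ (resp.\ $\St_\lambda$) by the triangularity, diagonal-normalization, and exchange-relation properties of Proposition \ref{prop:charact} (resp.\ \ref{prop:charact2}) multiplied through by $\kappa$ (resp.\ $\tilde\kappa$), and then match these against the support, normalization/polarization, and wall-crossing ($R$-matrix) axioms of the stable basis in \cite{Ok-K,SZZ-Kstable}. The one point the paper makes more explicit is that the decisive remaining check is the $n=2$ identification of the algebraic $R$-matrix \eqref{eq:Rsingle} with the geometrically defined one (cf.\ \cite[Exercise 9.2.25]{Ok-K}), which you fold into "bookkeeping of conventions"; also note that membership in non-localized $K$-theory is obtained in the paper as a corollary of the proposition rather than as a preliminary divisibility check.
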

(This is $\text{Stab}_-(\lambda)$ (resp.\ $\text{Stab}_+(\lambda)$) in the notations of \cite{SZZ-Kstable}.)

As a corollary, $\St^\lambda,\St_\lambda\in K_{\hat T}(T^*(P_-\backslash G))$.
\begin{proof}
  We provide the proof for $\St^\lambda$.
  $St^\lambda$ is uniquely determined by properties strictly analogous to proposition~\ref{prop:charact}, the only difference being the normalization of the diagonal entries; using
  \begin{equation*}%\label{eq:basefp}
    [P_-\backslash G]|_\lambda = \prod_{i,j:\lambda_i<\lambda_j}(1-q^{2} z_{i}/z_{j})
  \end{equation*}
  leads to
  \begin{equation}\label{eq:pty2}
    \St^{\lambda}|_\lambda
    % \prod_{i<j: \lambda_i>\lambda_j}
    % \frac{-q^{-1}(1-z_{i}/z_{j})}{1-q^{-2}z_i/z_j}
    % \prod_{i,j:\lambda_i>\lambda_j}(1-q^{-2} z_{i}/z_{j})
    =\prod_{i<j,\lambda_i<\lambda_j}(1-q^2z_{i}/z_{j})\prod_{i>j,\lambda_i<\lambda_j}q(1-z_{i}/z_{j})
  \end{equation}
  
  We now claim that the same properties are satisfied by the corresponding stable basis elements.
  Property (1) is a direct consequence of the definition of the stable envelope, and
  more precisely of the support condition \cite[9.1.3 (1)]{Ok-K}.
  
  Similarly, \eqref{eq:pty2}, which is a version of (2), is a consequence of the normalization
  condition on the diagonal \cite[9.1.3 (1)]{Ok-K}, paying attention to the choice of
  polarization, which introduces the extra factor of $-q$ in the second product, cf \cite[9.1.5]{Ok-K}.
  
  So we only need to prove that stable classes satisfy the same exchange relation (3). But this is actually
  the definition of the $R$-matrix according to \cite[9.2.11]{Ok-K}. Indeed, the substitution $S^\lambda|_{\sigma}
  \mapsto \tau_i S^\lambda|_{\tau_i\sigma}$ is equivalent to moving from the negative chamber to a
  neighboring one across the wall $z_i=z_{i+1}$. So the only check left is that our $R$-matrix \eqref{eq:Rsingle}
  coincides with the one as defined geometrically. The beauty of this construction of the $R$-matrix
  is that it needs to be done at $n=2$ only; this is the object of \cite[Exercise 9.2.25]{Ok-K}, and we shall
  not repeat it here (see also \cite[Example 1.4]{SZZ-Kstable}).
  
  The reasoning for $\St_\lambda$ is much the same; we only provide the normalization condition:
  \[
  \St_{\lambda}|_\lambda=
  \prod_{i<j,\lambda_i<\lambda_j}(1-z_j/z_i)
  \prod_{i>j,\lambda_i<\lambda_j}q(1-q^{-2}z_j/z_i)
  \]
\end{proof}

\begin{rmk*}
  The exchange formula above should be not be confused with the
  (torus-equivariant) transition formula of \cite[Proposition~3.6]{SZZ-Kstable};
  the latter only makes sense for full flags, i.e., $T^*(G/B)$.
\end{rmk*}

In particular we have the duality statement (
cf.~\cite[Remark~1.3 (2)]{SZZ-Kstable})
\begin{equation}\label{eq:Kdual}
\left< \St^\lambda \St_\mu \right>=\delta^\lambda_\mu
\end{equation}
where $\left<\cdots\right>$ denotes pushforward to a point in (localized)
equivariant $K$-theory. %\junk{write corresponding statement for $S^\lambda$?}
Such pushforwards are slightly subtle because the map $T^*(P_-\dom G)\to pt$
is not proper; however, the localization formula gives a natural definition
of pushforward whenever the map on {\em fixed points} is proper, with the
only price being the denominators incurred in that formula.

\begin{rmk*}
  If $i$ is the zero section of $T^*(P_-\backslash G)\to P_-\backslash G$,
  then $i^*\St^\lambda$ is the motivic Chern class of \cite{FRW-motivic}.
\end{rmk*}

\section{The puzzle rule}\label{sec:puzzle}
\newcommand\dOne[2]{
\node[gauged] at (1,0) (v1) {#1}; 
\node[gauged] at (2,0) (v2) {#2}; 
\draw (v1) -- (v2);
}
\newcommand\dOneWide[2]{
\node[gauged] at (1,0) (v1) {#1}; 
\node[gauged] at (3,0) (v2) {#2}; 
\draw (v1) -- (v2);
}
\newcommand\dtwo[4]{
\node[gauged] at (1,0) (v1) {#1}; 
\node[gauged] at (2,0) (v2) {#2}; 
\node[gauged] at (3,0) (v3) {#3}; 
\node[gauged] at (2,-1) (v4) {#4}; 
\draw (v1) -- (v2) -- (v3) (v2) -- (v4);
}
\newcommand\dtwoWide[4]{
\node[gauged] at (0,0) (v1) {#1}; 
\node[gauged] at (2,0) (v2) {#2}; 
\node[gauged] at (4,0) (v3) {#3}; 
\node[gauged] at (2,-1) (v4) {#4}; 
\draw (v1) -- (v2) -- (v3) (v2) -- (v4);
}
\newcommand\dthree[6]{
\node[gauged] at (1,0) (v1) {#1}; 
\node[gauged] at (2,0) (v2) {#2}; 
\node[gauged] at (3,0) (v3) {#3}; 
\node[gauged] at (4,0) (v4) {#4}; 
\node[gauged] at (5,0) (v5) {#5}; 
\node[gauged] at (3,-1) (v6) {#6}; 
\draw (v1) -- (v2) -- (v3) -- (v4) -- (v5) (v3) -- (v6);
}
\newcommand\dfour[8]{
\node[gauged] at (1,0) (v1) {#1}; 
\node[gauged] at (2,0) (v2) {#2}; 
\node[gauged] at (3,0) (v3) {#3}; 
\node[gauged] at (4,0) (v4) {#4}; 
\node[gauged] at (5,0) (v5) {#5}; 
\node[gauged] at (6,0) (v6) {#6}; 
\node[gauged] at (7,0) (v7) {#7}; 
\node[gauged] at (5,-1) (v8) {#8};
\draw (v1) -- (v2) -- (v3) -- (v4) -- (v5) -- (v6) -- (v7) (v5) -- (v8);
}

\newcommand\wi{\tikz[script math mode,baseline=-3pt]{
\node[framed]  {\displaystyle w^i }; }}
\newcommand\vi{\tikz[script math mode,baseline=-3pt]{
\node[gauged]  {\displaystyle v^i }; }}

In this section we several times make use of
\begin{itemize}
\item a finite-type Dynkin diagram, 
  to which we associate a quantized loop algebra $\gqg$,
\item a dominant weight, to which we associate a particular representation
  (or more precisely, a parametrized family of representations), and
\item a weight of that representation.
\end{itemize}
Although we defer discussion of Nakajima quiver varieties 
(whose $K_T$-groups will be those corresponding weight spaces) 
to \S \ref{sec:Nakajima}, we will recall here the way these varieties are indexed,
in order to similarly index our weight spaces.

The \defn{Nakajima diagram} associated to a Dynkin diagram $I$ attaches a
univalent \defn{framed vertex} hanging off each vertex of $I$, those called
the \defn{gauged vertices}. For example,
the $A_4$ Nakajima diagram looks like
$
\tikz[script math mode,baseline=0]{
\node[framed] at (0,1) (w1) { }; 
\node[framed] at (1,1) (w2) { }; 
\node[framed] at (2,1) (w3) { }; 
\node[framed] at (3,1) (w4) { }; 
\node[gauged] at (0,0) (v1) { }; 
\node[gauged] at (1,0) (v2) { }; 
\node[gauged] at (2,0) (v3) { }; 
\node[gauged] at (3,0) (v4) { }; 
\draw (w1) -- (v1) -- (v2) -- (v3) -- (v4) -- (w4);
\draw (w2) -- (v2);
\draw (w3) -- (v3);
}
$\ .
When we populate the diagram with natural numbers, the framed vertices
(\wi) indicate a dominant linear combination 
$\sum w^i \vec\omega_i$ of the fundamental weights $\vec\omega_i$ of $\lie{g}$, whereas the
gauged vertices (\vi) indicate a linear combination of the simple roots $\vec\alpha_i$,
to be subtracted from that dominant weight, 
leaving the \defn{weight} $\sum w^i \vec\omega_i - \sum_i v^i \vec\alpha_i$ of the labeled quiver.
(Note that every weight of the $\lie{g}$-irrep $V_{\sum w^i \vec\omega_i}$
is of this latter form.)
These tuples (\wi), (\vi) are called the framed and
gauged \defn{dimension vectors}. When some $w_i$ is $0$ we generally
don't bother to draw that framed vertex.

A basic example occurs when (1) the support of the gauged dimension vector
(\vi) lies on a type $A_d$ subdiagram, and (2) the framed dimension vector
is supported at one vertex, at one end of the type $A_d$ subdiagram.
For reasons to be explained in \S\ref{sec:Nakajima} we call this a
\defn{($d$-step) flag type} case.

To each (\wi) 
is associated a representation of $\gqg$, as will be discussed in more detail
in \S \ref{ssec:reptheory}.
Properly speaking, this is not a finite-dimensional complex vector space but
rather a finite rank module over a subring of $\CC(z_{i,j},\ j=1,\ldots,w^i)$.
For now we say that for general values of the parameters $(z_{i,j})$ this
representation is isomorphic to a tensor product
$\tensor_i \tensor_{j=1}^{w^i} V_{\vec\omega_i}(z_{i,j})$ 
(which, for those general values, is irreducible).

The set of weights in each of our representations is $W_G$-invariant.
We can see the action of a simple reflection $r_i \in W_G$ on a
pair (\wi), (\vi) of dimension vectors thusly: {\bf replace a gauged
label \vi\ by the sum of its neighboring labels (including the \wi\ on 
neighboring framed vertex), minus the original \vi. }

With all this we can introduce the labeled Nakajima quivers we need in
this paper, four for each of $d=1,2,3,4$. 
In figures $\ref{fig:d1}-\ref{fig:d4}$ we give a Dynkin diagram
of rank $2d$, and four labelings of the Nakajima diagram.
In each case, 
\begin{itemize}
\item the sum of the first two weights equals the third weight,
  and also the fourth;
\item the first of the four labelings is of $d$-step flag type; and
\item the second and the fourth weights can be reflected (by iterating
  the bolded recipe in the paragraph above) to be of $d$-step flag type.
  In the figure we indicate sequences of reflections to use to see this.
  The third weight usually cannot be reflected to flag type (only
  for $d=1$ is this possible).
\end{itemize}

Furthermore, the weight of the second quiver is $\tau^2$ times that of the first,
where $\tau$ is a nontrivial order $3$ automorphism of the weight space
that was defined in \cite[\S 2]{artic71}; \footnote{The explicit
form of $\tau$ is irrelevant for the present work;
note that if $d\neq 2$, $\tau$ can be chosen of the form $\chi^{h_d/3}$
where $\chi$ is a certain Coxeter element and
%$h_d$ is the dual Coxeter number of $\mathfrak{x}_{2d}$, i.e.,
$h_d=3,12,30$ for $d=1,3,4$; e.g., at $d=4$,
$\chi=\rr {d'}\rr {a'}\rr a \rr b \rr c \rr d \rr {c'} \rr {b'}$
(compare with figure~\ref{fig:d4}, noting that
$-1 = w_0 = \chi^{h/2}$ implies
$-\tau = \chi^{-5}$).}
so the third = fourth weight is $-\tau$ times the first weight.

For the purposes of this section, the arrows in 
figures $\ref{fig:d1}-\ref{fig:d4}$ refer to the maps on weight spaces
induced from some corresponding $\gqg$-module homomorphisms.
The second arrow will be called the ``fusion'' arrow, following the
notion from the representation theory of quantized affine algebras.

\junk{AK: it's very tempting to refer to ``figure $d$'', but I'll wait
  until we're absolutely sure these are the first four figures.}

\begin{figure}
%  \centering
\begin{gather*}
 \tikz[baseline=0,every label/.style={rt,above},math mode,rotate=180]{\draw (0,0) node[mynode,label=a'] (a') {} -- (1,0) node[mynode,fill=white,label={[name=aa]a}] (a) {}; \node[rectangle,draw=green!50!black,fit=(a) (aa)] {}; }
\qquad \qquad \qquad 
\tikz[script math mode,baseline=0]{\dOne{j}{0} \node[framed] at (1,1) (w1) {n}; \draw (v1) -- (w1); \node at (1.5,-0.5) {(1)};}
\ \times\ 
\tikz[script math mode,baseline=0]{\dOne{n}{j} \node[framed] at (1,1) (w1) {n}; \draw (v1) -- (w1); \node at (1.5,-0.5) {(2)};}
\ \longrightarrow\ 
\tikz[script math mode,baseline=0]{\dOne{n+j}{j} \node[framed] at (1,1) (w1) {2n}; \draw (v1) -- (w1);}
\ \longrightarrow\ 
\tikz[script math mode,baseline=0]{\dOne{j}{j} \node[framed] at (2,1) (w2) {n}; \draw (v2) -- (w2); \node at (1.5,-0.5) {(3)};}
\\[4mm]
\tikz[script math mode,baseline=0]{\dOne{n}{j} \node[framed] at (1,1) (w1) {n}; \draw (v1) -- (w1); \node at (1.5,-0.5) {(2)};}
\quad=\quad
\rr a
\rr {a'}\
\tikz[script math mode,baseline=0]{\dOne{j}{0} \node[framed] at (1,1) (w1) {n}; \draw (v1) -- (w1); \node at (1.5,-0.5) {(1)};}
\qquad\qquad\qquad
\tikz[script math mode,baseline=0]{\dOne{j}{j} \node[framed] at (2,1) (w2) {n}; \draw (v2) -- (w2); \node at (1.5,-0.5) {(3)};}
\quad=\quad
\rr a\
\tikz[script math mode,baseline=0]{\dOne{0}{j} \node[framed] at (2,1) (w2) {n}; \draw (v2) -- (w2); \node at (1.5,-0.5) {(1')};}
\end{gather*}
\caption{The $d=1$ Dynkin diagram, fusion, and $\pi$s needed in proposition \ref{prop:reflect}. 
}
  \label{fig:d1}
\end{figure}

\begin{figure}[b]
%  \centering
\begin{gather*}
 \tikz[baseline=0,every label/.style={rt,above},math mode,rotate=180]{\draw (0,0) node[mynode,label=a'] {} -- ++(1,0) node[mynode,label={[name=aa]a}] (a) {} -- ++(1,0) node[mynode,fill=white,label={[name=bb]b}] (b) {}; \draw (1,0) -- ++(0,1) node[mynode,label={[xshift=2mm]b'}] {}; \node[rectangle,draw=green!50!black,fit=(a) (b) (aa) (bb)] {}; }
\quad
\tikz[script math mode,baseline=0]{\dtwo{k}{j}{0}{0} \node[framed] at (1,1) (w1) {n}; \draw (v1) -- (w1); \node at (1,-1) {(1)};}
\times
\tikz[script math mode,baseline=0]{\dtwo{n}{n+k}{n+j}{k} \node[framed] at (3,1) (w3) {n}; \draw (v3) -- (w3); \node at (1,-1) {(2)};}
\longrightarrow
\tikz[script math mode,baseline=0]{\dtwo{n+k}{\ss n+\atop\ss k+j}{n+j}{k} \node[framed] at (1,1) (w1) {n}; \node[framed] at (3,1) (w3) {n}; \draw (v1) -- (w1) (v3) -- (w3);}
\longrightarrow
\tikz[script math mode,baseline=0]{\dtwo{k}{k+j}{j}{k} \node[framed] at (3,-1) (w4) {n}; \draw (v4) -- (w4); \node at (1,-1) {(3)};}
\\[2mm]
\tikz[script math mode,baseline=0]{\dtwo{n}{n+k}{n+j}{k} \node[framed] at (3,1) (w3) {n}; \draw (v3) -- (w3); \node at (1,-1) {(2)};}
\, =\,
\begin{matrix}
\rr b \rr a \rr b \rr {b'} \quad \\ \quad \rr {a'} \rr a \rr b \rr {b'}  
\end{matrix}\, 
\tikz[script math mode,baseline=0]{\dtwo{0}{j}{k}{0} \node[framed] at (3,1) (w3) {n}; \draw (v3) -- (w3); \node at (1,-1) {(1')};}
\qquad
\tikz[script math mode,baseline=0]{\dtwo{k}{k+j}{j}{k} \node[framed] at (3,-1) (w4) {n}; \draw (v4) -- (w4); \node at (1,-1) {(3)};}
\, =\,
\rr b \rr a \rr b \rr {a'}\
\tikz[script math mode,baseline=0]{\dtwo{0}{j}{0}{k} \node[framed] at (3,-1) (w4) {n}; \draw (v4) -- (w4); \node at (1,-1) {(1'')};}
\end{gather*}
  \caption{The $d=2$ Dynkin diagram, fusion, and $\pi$s needed in proposition \ref{prop:reflect}.}
  \label{fig:d2}
\end{figure}

\begin{figure}
%  \centering
\begin{gather*}
 \tikz[baseline=0,every label/.style={rt,above},math mode,rotate=180]{\draw (0,0) node[mynode,label=c'] {} -- ++(1,0) node[mynode,label=a'] {} -- ++(1,0) node[mynode,label={[name=aa]a}] (a) {} -- ++(1,0) node[mynode,label={[name=bb]b}] (b) {} -- ++(1,0) node[mynode,fill=white,label={[name=cc]c}] (c) {}; \draw (2,0) -- (2,1) node[mynode,label={[xshift=2mm]b'}] {}; \node[rectangle,draw=green!50!black,fit=(a) (b) (c) (aa) (bb) (cc)] {};}
\\
\tikz[script math mode,baseline=0,xscale=1.2]{\dthree{l}{k}{j}{0}{0}{0} \node[framed] at (1,1) (w1) {n}; \draw (v1) -- (w1); \node at (1,-1) {(1)};}
\quad\times\quad
\tikz[script math mode,baseline=0,xscale=1.2]{\dthree{2n}{2n+l}{\ss 2n+\atop\ss l+k}{\ss n+\atop\ss l+j}{l}{n+k} \node[framed] at (1,1) (w1) {n}; \draw (v1) -- (w1); \node at (1,-1) {(2)};}
\qquad\qquad\qquad\qquad
\\
\qquad\longrightarrow\quad
\tikz[script math mode,baseline=0,xscale=1.2]{\dthree{2n+l}{\ss 2n+\atop\ss l+k}{\ss 2n+l\atop\ss +k+j}{\ss n+\atop\ss l+j}{l}{n+k} \node[framed] at (1,1) (w1) {2n}; \draw (v1) -- (w1);}
\quad\longrightarrow \quad
\tikz[script math mode,baseline=0,xscale=1.2]{\dthree{l}{l+k}{\ss l+k\atop\ss +j}{l+j}{l}{k} \node[framed] at (5,1) (w5) {n}; \draw (v5) -- (w5); \node at (1,-1) {(3)};}
\\[2mm]
\tikz[script math mode,baseline=0,xscale=1.2]{\dthree{2n}{2n+l}{\ss 2n+\atop\ss l+k}{\ss n+\atop\ss l+j}{l}{n+k} \node[framed] at (1,1) (w1) {n}; \draw (v1) -- (w1); \node at (1,-1) {(2)};}
\quad=\quad
\junk{
  \begin{matrix}
    \rr c \rr a \rr b \rr {a'} \rr c \rr a \quad \quad \quad \\
    \quad \rr b \rr {a'} \rr {b'} \rr a \rr b \rr c \quad \quad \\
    \quad\quad \rr {a'}  \rr {c'} \rr {a'} \rr {b'} \rr a \rr b \quad \\
    \quad\quad\quad \rr {a'} \rr {b'} \rr a \rr {a'} \rr {b'}
  \end{matrix}
}
(\rr c \rr b \rr a \rr {a'} \rr {b'} \rr {c'})^4 \quad
\tikz[script math mode,baseline=0,xscale=1.2]
{\dthree{l}{k}{j}{0}{0}{0} 
\node[framed] at (1,1) (w1) {n}; \draw (v1) -- (w1); \node at (1,-1) {(1)};}
\\[2mm]
\tikz[script math mode,baseline=0,xscale=1.2]{\dthree{l}{l+k}{\ss l+k\atop\ss +j}{l+j}{l}{k} \node[framed] at (5,1) (w5) {n}; \draw (v5) -- (w5); \node at (1,-1) {(3)};}
\quad=\quad
\begin{matrix}
\rr c \rr b \rr a \rr {a'} \rr b \quad \\
\quad \rr c \rr b \rr {b'} \rr a \rr b \rr {b'}
\end{matrix}\
\tikz[script math mode,baseline=0,xscale=1.2]
{\dthree{0}{0}{j}{k}{l}{0} \node[framed] at (5,1) (w5) {n}; \draw (v5) -- (w5); \node at (1,-1) {(1')};}
\end{gather*}
\caption{The $d=3$ Dynkin diagram, fusion, and $\pi$s needed in proposition \ref{prop:reflect}.}
  \label{fig:d3}
\end{figure}

\begin{figure}
%  \centering
\begin{gather*}
 \tikz[baseline=0,every label/.style={rt,above},math mode,rotate=180]{\draw (0,0) node[mynode,label=c'] {} -- ++(1,0) node[mynode,label=b'] {} -- ++(1,0) node[mynode,label={a'}] (a') {} -- ++(1,0) node[mynode,label={[name=aa]a}] {} -- ++(1,0) node[mynode,label=b] (b) {} -- ++(1,0) node[mynode,label=c] (c) {} -- ++(1,0) node[mynode,fill=white,label=d] (d) {}; \draw (2,0) -- (2,1) node[mynode,label={[xshift=2mm]d'}] {}; 
\node[rectangle,draw=green!50!black,fit=(aa) (b) (c) (d)] {};
}
\\
\begin{array}{ccc}
\tikz[script math mode,baseline=0,xscale=1.25]{\dfour{m}{l}{k}{j}{0}{0}{0}{0} \node[framed] at (1,1) (w1) {n}; \draw (v1) -- (w1); \node at (1,-1) {(1)};}
&
\times
&
% ({7,2,3,4,5,6,0,1},{2n,5n,7n,6n,5n,4n,3n,3n},
% {2n+w+z,5n+w+x+z,7n+w+x+y+z,6n+x+y+z,5n+y+z,4n+z,3n,   3n+x+z})
\tikz[script math mode,baseline=0,xscale=1.25]{
\dfour{3n}{4n+m}{\ss 5n+\atop\ss m+l}{\ss 6n+m\atop\ss +l+k}
{\ss 7n+m\atop\ss +l+k+j}{\ss 5n+m\atop\ss +k+j}{\ss 2n+\atop\ss m+j}{\ss 3n +\atop\ss m+k} 
\node[framed] at (1,1) (w1) {n}; \draw (v1) -- (w1); \node at (1,-1) {(2)};}
\\[1.3cm]
\hspace{5cm}\big\downarrow
\\
\tikz[script math mode,baseline=0,xscale=1.25]{\dfour{3n+m}{\ss 4n+\atop\ss m+l}
{\ss 5n+m\atop\ss +l+k}{\ss 6n+m\atop\ss +l+k+j}{\ss 7n+m\atop\ss +l+k+j}
{\ss 5n+m\atop\ss +k+j}{\ss 2n+\atop\ss m+j}{\ss 3n +\atop\ss m+k} 
 \node[framed] at (1,1) (w1) {2n}; \draw (v1) -- (w1);}
&
\longrightarrow
&
\tikz[script math mode,baseline=0,xscale=1.25]{\dfour{n+m}{\ss n+\atop\ss m+l}
{\ss n+m\atop\ss +l+k}{\ss n+m+\atop\ss l+k+j}{\ss n+m+\atop\ss l+k+j}
{\ss n+m\atop\ss +k+j}{m+j}{m+k} 
 \node[framed] at (1,1) (w1) {n}; \draw (v1) -- (w1); \node at (1,-1) {(3)};}
\end{array}
\\[2mm]
% ({7,2,3,4,5,6,0,1},{2n,5n,7n,6n,5n,4n,3n,3n},
% {2n+w+z,5n+w+x+z,7n+w+x+y+z,6n+x+y+z,5n+y+z,4n+z,3n,   3n+x+z})
\tikz[script math mode,baseline=0,xscale=1.25]{
\dfour{3n}{4n+m}{\ss 5n+\atop\ss m+l}{\ss 6n+m\atop\ss +l+k}
{\ss 7n+m\atop\ss +l+k+j}{\ss 5n+m\atop\ss +k+j}{\ss 2n+\atop\ss m+j}{\ss 3n+\atop\ss m+k} 
\node[framed] at (1,1) (w1) {n}; \draw (v1) -- (w1); \node at (1,-1) {(2)};}
\ = \ %\qquad \qquad\qquad $$ $$
(  \rr {d'} \rr {a'} \rr a \rr b \rr c \rr d \rr {c'} \rr {b'} )^{10}
\ \tikz[script math mode,baseline=0,xscale=.7] % 1.2]
{\dfour{m}{l}{k}{j}{0}{0}{0}{0} \node[framed] at (1,1) (w1) {n}; \draw
  (v1) -- (w1); \node at (1,-1) {(1)};}
\\[2mm]
\tikz[script math mode,baseline=0,xscale=1.25]{\dfour{n+m}{\ss n+\atop\ss m+l}
{\ss n+m\atop\ss +l+k}{\ss n+m+\atop\ss l+k+j}{\ss n+m+\atop\ss l+k+j}
{\ss n+m\atop\ss +k+j}{m+j}{m+k} 
 \node[framed] at (1,1) (w1) {n}; \draw (v1) -- (w1); \node at (1,-1) {(3)};}
\ = %\qquad\qquad \qquad\qquad\qquad $$ $$
( \rr {b'} \rr {c'} \rr d \rr c \rr b \rr a \rr {a'}  \rr {d'} )^{5}
 \tikz[script math mode,baseline=0,xscale=.7] % 1.2]
{\dfour{m}{l}{k}{j}{0}{0}{0}{0} \node[framed] at (1,1) (w1) {n}; \draw
  (v1) -- (w1); \node at (1,-1) {(1)};}
\end{gather*}
\caption{The $d=4$ Dynkin diagram, fusion, and $\pi$s needed in
  proposition \ref{prop:reflect}. }   \label{fig:d4}
\end{figure}

\subsection{The representation theory}\label{ssec:reptheory}
\junk{one should emphasize that the inclusions in the bottom row of
$
\begin{matrix}
A_1&\to&A_2&\to&A_3&\to&A_4
\\
\downarrow&&\downarrow&&\downarrow&&\downarrow
\\
A_2&\to&D_4&\to&E_6&\to&E_8
\end{matrix}
$ of weight lattices
are NOT $\tau$-equivariant}

\junk{
\begin{figure}
\begin{align*}
d&=1:  &&\tikz[baseline=0,every label/.style={rt,above},math mode,rotate=180]{\draw (0,0) node[mynode,label=a'] (a') {} -- (1,0) node[mynode,fill=white,label={[name=aa]a}] (a) {}; \node[rectangle,draw=green!50!black,fit=(a) (aa)] {}; }
\\
d&=2:  &&\tikz[baseline=0,every label/.style={rt,above},math mode,rotate=180]{\draw (0,0) node[mynode,label=a'] {} -- ++(1,0) node[mynode,label={[name=aa]a}] (a) {} -- ++(1,0) node[mynode,fill=white,label={[name=bb]b}] (b) {}; \draw (1,0) -- ++(0,1) node[mynode,label={right:b'}] {}; \node[rectangle,draw=green!50!black,fit=(a) (b) (aa) (bb)] {}; }
\\
d&=3:  &&\tikz[baseline=0,every label/.style={rt,above},math mode,rotate=180]{\draw (0,0) node[mynode,label=c'] {} -- ++(1,0) node[mynode,label=a'] {} -- ++(1,0) node[mynode,label={[name=aa]a}] (a) {} -- ++(1,0) node[mynode,label={[name=bb]b}] (b) {} -- ++(1,0) node[mynode,fill=white,label={[name=cc]c}] (c) {}; \draw (2,0) -- (2,1) node[mynode,label={right:b'}] {}; \node[rectangle,draw=green!50!black,fit=(a) (b) (c) (aa) (bb) (cc)] {};}
\\
d&=4:  &&\tikz[baseline=0,every label/.style={rt,above},math mode,rotate=180]{\draw (0,0) node[mynode,label=c'] {} -- ++(1,0) node[mynode,label=b'] {} -- ++(1,0) node[mynode,label={a'}] (a') {} -- ++(1,0) node[mynode,label={[name=aa]a}] {} -- ++(1,0) node[mynode,label=b] (b) {} -- ++(1,0) node[mynode,label=c] (c) {} -- ++(1,0) node[mynode,fill=white,label=d] (d) {}; \draw (2,0) -- (2,1) node[mynode,label={right:d'}] {}; 
\node[rectangle,draw=green!50!black,fit=(aa) (b) (c) (d)] {};
}
%\tikz[every label/.style={rt,above},math mode]{\draw (0,0) node[mynode,label=c'] {} -- ++(1,0) node[mynode,label=a'] {} -- ++(1,0) node[mynode,label={[name=aa]above right:a}] (a) {} -- ++(1,0) node[mynode,label=b] (b) {} -- ++(1,0) node[mynode,label=c] (c) {} -- ++(1,0) node[mynode,label={[name=dd]d}] (d) {} -- ++(1,0) node[mynode,fill=white,label=d'] {}; \draw (2,0) -- (2,1) node[mynode,label=b'] {}; \node[rectangle,draw=green!50!black,fit=(a) (b) (c) (d) (aa) (dd)] {};}
\\
\end{align*}
\caption{Dynkin diagrams of $\mathfrak{x}_{2d}$. The boxed subdiagrams are $\mathfrak{a}_d$.}
\label{fig:dd}
\end{figure}

}

Let $\lie x_{2d}$ be one of the Lie algebras
$\lie a_2, \lie d_4, \lie e_6, \lie e_8$ corresponding to the Dynkin
diagrams $X_{2d}$ introduced in figures \ref{fig:d1}-\ref{fig:d4},
%in the previous section;
and $\xqg$ the corresponding {\em quantized loop algebra};
%obtained from the the associated untwisted quantized affine algebra $\Uq(\mathfrak x_{2d}^{(1)})$
%by setting the central Cartan element equal to $1$. 
see Appendix~\ref{app:qg} for details.
\junk{PZJ: some comment about $\ZZ$ form? otherwise we need to tensor our
  $K_{\hat T}$ with $\CC$. AK: I don't understand -- the $\CC$-reps of 
  $\CC \tensor_{\ZZ} A$ are the same as those of $A$, so why need we tensor?
  you're right, my mistake. I guess the related issue is, do we tensor
  $K_T(\mathcal M)$ with $\CC$? AK: I don't see any need, since the
  reps are the same}
\junk{AK: This is an ill-defined question, probably, but do we really
  want to set the Cartan element to $1$ not $0$? Certainly I'd call
  these ``level zero'' reps}

In each of figures~\ref{fig:d1}--\ref{fig:d4}, 
the first, second, and fourth quivers each have only one framed vertex.
%(It is the same vertex for the first two, except in figure \ref{fig:d2}.)
%In our setting, 
Denote by $V_a(z)$ the fundamental representation \cite[p399]{CP-book}
associated to that vertex, where $a=1,2,3$ for the 
first, second, and fourth quiver (i.e., the one labeled ($a$)) respectively.
\junk{
  $\calM(w_a,v_a)$, $a=1,2,3$;
  equivalently, $V_a(z):=K_{\hat T}(\bigsqcup_v\calM(w_a,v))$ where $w_a$ is as
  in figures~\ref{fig:d1}--\ref{fig:d4} at $n=1$ (i.e., the associated
  dominant weight is a single fundamental weight), but $v$ is unconstrained 
  (and $z$ is the equivariant parameter associated to the Cartan $T=GL(1)$). 
  Then $K_{\hat T}(\bigsqcup_v\calM(w_a,v)) \cong \bigotimes_{k=1}^n V_a(z_k)$ for
  general $n$.
}
All three representations $V_a(z)$ have dimension $3,8,27,248+1$ for
$d=1,2,3,4$ respectively -- in the last case, the representation is
{\em not}\/ irreducible for the finite quantized algebra
$\Uq(\mathfrak e_8)$, but is rather a direct sum of the fundamental
representation associated to the same vertex ($q$-deformation of the
adjoint representation of $e_8$) and of the trivial representation.

Each $V_a(z)$ possesses a natural weight space decomposition; furthermore, 
every weight space except the zero weight space at $d=4$ is one-dimensional.
We denote the set of weights of $V_1(z)$ by $\W$; the set of weights
of $V_2(z)$ (resp.\ $V_3(z)$) is then given by $\tau^2\W$ (resp.\ $-\tau\W$).
\junk{AK: maybe this $\tau$-relatedness can be relegated to a
  footnote that talks about the connection to paper \#1, and $\tau$ can
  otherwise be excised from this paper. Is it really needed in
  the proof of lemma \ref{lem:easy}?}

We are particularly interested in the subspaces $V_a^A(z)$ of $V_a(z)$
defined by restricting to weight spaces
given by figures~\ref{fig:d1}--\ref{fig:d4}
at $n=1$; that is, $w$ corresponds
to a single fundamental representation as above, and $v$ is constrained to 
be of the form given by quiver $(1)$ of figures~\ref{fig:d1}--\ref{fig:d4}.
We pick basis elements (weight vectors) in each weight space; the normalization
is unimportant for now and will be fixed by the geometry in \S\ref{sec:Nakajima}. We denote them
$e_{a,i}$, $i=0,\ldots,d$, where $e_{a,d}$ is the highest weight vector 
and the weight of $e_{a,i}$ is obtained from the highest weight by subtracting $d-i$ simple roots.%
\begin{ex}
At $d=3$, $a=1$, there are four possible assignments
$v_a$ of dimensions, namely
\[
e_{1,3}:
\tikz[script math mode,baseline=0,scale=0.65]{\dthree{0}{0}{0}{0}{0}{0} \node[framed] at (1,1) (w1) {1}; \draw (v1) -- (w1);}
\quad
e_{1,2}:
\tikz[script math mode,baseline=0,scale=0.65]{\dthree{1}{0}{0}{0}{0}{0} \node[framed] at (1,1) (w1) {1}; \draw (v1) -- (w1);}
\quad
e_{1,1}:
\tikz[script math mode,baseline=0,scale=0.65]{\dthree{1}{1}{0}{0}{0}{0} \node[framed] at (1,1) (w1) {1}; \draw (v1) -- (w1);}
\quad
e_{1,0}:
\tikz[script math mode,baseline=0,scale=0.65]{\dthree{1}{1}{1}{0}{0}{0} \node[framed] at (1,1) (w1) {1}; \draw (v1) -- (w1);}
\]
while at $d=2$, $a=2$, there are three possibilities:
\[
e_{2,2}:
\tikz[script math mode,baseline=0]{\dtwo{1}{1}{1}{0} \node[framed] at (3,1) (w3) {1}; \draw (v3) -- (w3);}
%\dtwo{0}{0}{0}{0} \node[framed] at (1,1) (w1) {1}; \draw (v1) -- (w1);}
\qquad
e_{2,1}:
\tikz[script math mode,baseline=0]{\dtwo{1}{2}{1}{1} \node[framed] at (3,1) (w3) {1}; \draw (v3) -- (w3);}
%\dtwo{1}{0}{0}{0} \node[framed] at (1,1) (w1) {1}; \draw (v1) -- (w1);}
\qquad
e_{2,0}:
\tikz[script math mode,baseline=0]{\dtwo{1}{2}{2}{1} \node[framed] at (3,1) (w3) {1}; \draw (v3) -- (w3);}
%\dtwo{1}{1}{0}{0} \node[framed] at (1,1) (w1) {1}; \draw (v1) -- (w1);}
\]
\end{ex}
\rem[gray]{our convention is, $v_{1,d}$ is the highest weight vector.
careful that the basis is ORDERED. important because trig $R$-matrix is NOT invariant by permutation (Weyl).
must be related to the choice of chamber}
The weights of $e_{a,i}$ for $a=1,2,3$ are denoted $\vf_i$, $\tau^2\vf_i$, $-\tau \vf_i$;
the $\vf_i$ form a subset $\W^A$ of $\W$. We can also define dual weight vectors $e_{a,i}^*$ with $\left< e_{a,i}^*, e_{a,j}\right>=\delta_{ij}$, and opposite weights.
\rem[gray]{note a subtlety, at $d=4$ there isn't a stable basis because of
the zero weight space.
there is however in the single number sector, which is all we care about}

Note that at $d=1,3$, diagrams (1) and (2) correspond to (weight spaces
of) the same representation
of $\xqg$, whereas (3) corresponds to the dual
representation -- it is related by $W$ action to a diagram (1') that is the
mirror image of (1). At $d=2$, diagrams (1), (2) and (3) correspond to
nonisomorphic representations related by triality. Finally, at $d=4$,
the same representation is associated to diagrams (1), (2) and (3).

\subsection{The $R$-matrices}\label{ssec:Rmat}
\rem[gray]{in this paper, we identify once and for all a diagram and its fugacity!!!}
In the previous section, we
\junk{
  recalled that the equivariant $K$-theory of the quiver varieties 
  $\bigsqcup_v\calM(w_a,v)$ comes equipped with an action
}
introduced fundamental representations $V_a(z)$, $a=1,2,3$, of the
quantized loop algebra $\xqg$.  We now consider tensor products of
such representations. As in \S \ref{sec:stable}, we tensor over
$\CC(q)$, and require localization. Extending that of \S \ref{sec:stable},
we invert $1-q^{2k}z''/z'$ for any $k\in\ZZ$
and any spectral parameters $z',z''$ {\em except in the following two
  cases}: if $z',z''$ appear in $V_a(z')$ and $V_b(z'')$,
\begin{enumerate}
\item[(i)] if $a=b$, we disallow $k=0$;
\item[(ii)] if $a=1$, $b=2$ we disallow $k=h_d/3$,
\end{enumerate}
where $h_d$ denotes the dual Coxeter number of $\lie x_{2d}$.

\junk{Let $V_i(z)$ be the representation of $\xqg$ with
the same underlying vector space $V(z)$, but with the action twisted by $\tau^i$.

\rem[gray]{The inclusion of finite Dynkin diagrams $\mathfrak a_d \subset \mathfrak x_{2d}$ described on
Fig.~\ref{fig:dd}
induces an inclusion of quantized loop algebras.
using Drinfeld currents. respects
inclusion of finite quantum groups. but are these *Hopf* algebra embeddings? NO. (particularly
obvious in the RTT formulation -- intermediate states are arbitrary)
and certainly the automorphisms are *not* Hopf algebra automorphisms.}}

For any $z''/z'\not\in q^{2\ZZ}$,
$V_a(z')\otimes V_b(z'')$ and
$V_b(z'')\otimes V_a(z')$ are known
to be irreducible and isomorphic (see \cite{Chari-braid} and \cite[\S3.7]{HL-cluster});
let $\check R_{a,b}(z''/z')$, $a,b=1,2,3$ be the unique $\xqg$ intertwiner from
$V_a(z')\otimes V_b(z'')$ to
$V_b(z'')\otimes V_a(z')$, with the normalization condition
\begin{equation}\label{eq:normR}
\left< e_{b,0}^*\otimes e_{a,0}^*,\check R_{a,b}(z) e_{a,0}\otimes e_{b,0}\right> 
=:\check R_{a,b}(z){}^{00}_{00}=1
\end{equation}
making $\check R_{a,b}(z)$ a rational function of $z$.
We will justify in what follows the slightly stronger statement that $\check R_{a,b}(z''/z')$ is well-defined
in the localization above, i.e., we will explain the exceptions (i) and (ii).

In \cite{MO-qg,Ok-K} Maulik and Okounkov define
$R$-matrices {\em geometrically}\/ using the stable envelope
construction, recalled here in \S\ref{sec:geominterp}.
As we shall see, our normalization \eqref{eq:normR} of the 
$R$-matrices coincides with the one defined geometrically only when $a=b$. 
For now, the choice of normalization for $a\ne b$ is ad hoc and allows
for a simpler formulation of our main theorem.  A proper explanation
(at least in cohomology) will be given in \S\ref{sec:geominterp}.

As in \S \ref{sec:stable}, we represent $R$-matrices diagrammatically
as crossings, except we now distinguish the various $V_a(z)$ by the
color of the line -- green, red, blue for $a=1,2,3$.

For $\check R_{1,2}(z)$, we also use the ``dual'' graphical depiction
which is traditional in Schubert puzzles, namely,
\[
\check R_{1,2}(z) = \rh{}{}{}{}
\]
where the parameter $z$ is implicitly determined by the location of
the rhombus, as will be discussed below; and similarly, matrix
elements in a given basis are denoted by putting labels on the edges
of the rhombus.

The following lemma is key, in that it relates the $\lie x_{2d}$
$R$-matrix we use in puzzles to the $\lie a_d$ $R$-matrix we use to
compute the motivic Segre classes on the (cotangent bundle to the) 
$d$-step flag manifold.
The lemma is an $R$-matrix analogue of the familiar statement that for 
a circle $S \into G$, the two extremal $S$-weight spaces in a $G$-irrep are
(predictable) irreps of the Levi subgroup $C_G(S)$.  As such, this
lemma should have a purely representation-theoretic proof,
\footnote{A sketch of proof is that inclusions of Dynkin diagrams $A_d\subset X_{2d}$
induce inclusions of loop algebras $\aqg \subset \xqg$ by
the Drinfield current presentation;
this immediately implies the property at $a=1$, where the $e_{1,i}$ are the standard basis
of the $(d+1)$-dimensional representation of $\Uq(\lie{a}_d)$, and $\check R_{1,1}$ is uniquely fixed
up to normalization by $\aqg$-invariance. For $a=2,3$, one uses the braid group action \cite{Beck-braid} on
$\xqg$
with any lift of $\tau$ to reduce to the case $a=1$.}
but we found it easier to exploit geometry, 
and defer the proof to \S \ref{sec:quiver} where we provide the
slightly more precise lemma \ref{lem:ressinglev2}.

\begin{lem}\label{lem:ressingle}
  The matrices of the operators $\check R_{a,a}$, $a=1,2,3$, restricted
  to $V_a^A(z_1)\otimes V_a^A(z_2)$,  in the basis $(e_{a,i})_{i=0,\ldots,d}$
  (with a normalization to be explained in \S \ref{sec:Nakajima}),
  are equal to the $R$-matrix from \eqref{eq:Rsingle}.
\end{lem}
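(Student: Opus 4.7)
The plan is to reduce to the case $a=1$, where the statement follows from the inclusion of quantum affine algebras induced by the sub-Dynkin diagram $A_d\subset X_{2d}$ highlighted in figures \ref{fig:d1}--\ref{fig:d4}.

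First I would handle $a=1$. By construction, the weights $\vf_i$, $i=0,\ldots,d$, of the basis $(e_{1,i})$ of $V_1^A(z)$ lie in the $A_d$-sublattice of the $X_{2d}$-weight lattice, and they are precisely the weights of the standard $(d+1)$-dimensional evaluation representation of $\aqg$. The Drinfeld-current inclusion $\aqg\hookrightarrow\xqg$ therefore identifies $V_1^A(z)$ with that standard $\aqg$-module, and its action preserves the subspace. Since $\check R_{1,1}(z''/z')$ is an $\xqg$-intertwiner, it is in particular an $\aqg$-intertwiner, and its matrix entries between $V_1^A(z')\otimes V_1^A(z'')$ and the swapped tensor product land inside the analogous sub-tensor-product because all the intermediate weights lie in the $A_d$-sublattice. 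The space of $\aqg$-intertwiners between these (generically irreducible) evaluation tensor products is one-dimensional, so this restriction must be a scalar multiple of the trigonometric $R$-matrix \eqref{eq:Rsingle}. The normalization \eqref{eq:normR} pins the scalar to $1$, matching the $i=j=k=l=0$ case of \eqref{eq:Rsingle}.

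Next I would handle $a=2,3$ via the braid group action on $\xqg$ (cf.\ \cite{Beck-braid}). Any lift to $\xqg$ of (a power of) the Coxeter element used to realize $\tau$ carries $V_1(z)$ to $V_2(z)$ (and $V_3(z)$, up to the sign) while permuting weight subspaces by $\tau$. In particular it sends the sub-$A_d$ weight subspace $V_1^A(z)\subset V_1(z)$ to $V_2^A(z)\subset V_2(z)$ (resp.\ $V_3^A(z)\subset V_3(z)$), and conjugates $\check R_{1,1}$ into $\check R_{2,2}$ (resp.\ $\check R_{3,3}$) because both sides are characterized as the unique intertwiner satisfying the normalization \eqref{eq:normR}. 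Applied to the restriction to sub-$A_d$ weight spaces, the result is exactly the same $A_d$ matrix \eqref{eq:Rsingle}, since the braid lift identifies the corresponding bases $e_{a,i}$ up to scalars, which are fixed to $1$ by the normalization.

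The main obstacle is choosing the basis vectors $e_{a,i}$ compatibly: the braid-automorphism argument only sees them up to scalars, and matching the scalars to the geometric normalization promised in \S\ref{sec:Nakajima} (where $V_a(z)$ arises as the $K$-theory of a quiver variety) is precisely what makes the representation-theoretic proof awkward. The issue is compounded at $d=4$, where $V_a(z)$ is not minuscule and carries a $9$-dimensional zero weight space, so one must be careful to restrict cleanly to the single-number sector. For these reasons I would, as the authors do, defer the verification to the sharper geometric statement (lemma \ref{lem:ressinglev2}), which directly identifies $V_a^A(z)$ with the $K$-theory of a flag-type sub-quiver-variety already equipped with the standard $\aqg$-action and its trigonometric $R$-matrix, sidestepping the need to track normalizations through a braid lift.
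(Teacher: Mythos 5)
Your proposal matches the paper's treatment: the paper's ``proof'' of this lemma is precisely a deferral to the geometric lemma \ref{lem:ressinglev2} (proved via the identification of the single-number sectors with flag-type quiver varieties and proposition \ref{prop:reflect}), and the representation-theoretic route you elaborate --- the Drinfeld-current inclusion $\aqg\subset\xqg$ for $a=1$ and the braid group action of \cite{Beck-braid} for $a=2,3$ --- is exactly the sketch the authors relegate to a footnote, for the same normalization reasons you identify. No gap.
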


The purpose of the following lemma is to justify exception (ii) of our localization,
and to introduce the
tensors $U$ and $D$; we don't otherwise use the result directly.
\begin{lem}\label{lem:factorR}
  At the value $z=q^{-2h_d/3}$ (where $h_d=3,6,12,30$ for $d=1,2,3,4$), 
  the matrix $\check R_{1,2}$ is well-defined (i.e., has no pole) and factors as
  \[   \check R_{1,2}(q^{-2h_d/3}) = DU   \]
  where $U: V_1(q^{h_d/3}z)\otimes V_2(q^{-h_d/3}z) \to V_{3}(z)$ and
  $D: V_{3}(z)\to V_2(q^{-h_d/3}z)\otimes V_1(q^{h_d/3}z)$ are $\xqg$ intertwiners
  (unique up to scaling one while un-scaling the other).
\end{lem}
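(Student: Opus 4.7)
The plan is to deduce this fusion statement from general results on reducibility of tensor products of fundamental modules for quantized loop algebras. At generic spectral parameters $V_1(z_1)\otimes V_2(z_2)$ is irreducible, so $\mathrm{Hom}_{\xqg}\bigl(V_1(z_1)\otimes V_2(z_2),\, V_2(z_2)\otimes V_1(z_1)\bigr)$ is one-dimensional and $\check R_{1,2}(z_2/z_1)$, defined by the normalization \eqref{eq:normR}, is rational in $z_2/z_1$. The claim is that at the distinguished value $z_2/z_1=q^{-2h_d/3}$ this rational function remains finite and its image drops onto a copy of $V_3(z)$.

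Next I would establish the existence of the intertwiners $U$ and $D$. At the resonance value $z_2/z_1=q^{-2h_d/3}$, by the theory of Chari--Pressley \cite{Chari-braid} (compare \cite[\S 3.7]{HL-cluster}) --- or by direct inspection of the Drinfeld polynomials / $\ell$-weights of the three fundamental modules --- the tensor product $V_1(q^{h_d/3}z)\otimes V_2(q^{-h_d/3}z)$ becomes reducible with $V_3(z)$ appearing as an irreducible quotient; the analogous analysis with the two factors swapped exhibits $V_3(z)$ as a submodule of $V_2(q^{-h_d/3}z)\otimes V_1(q^{h_d/3}z)$. Uniqueness of the Jordan--H\"older constituent labelled by $V_3(z)$ implies that each of these $\mathrm{Hom}$ spaces is one-dimensional, determining $U$ and $D$ up to scalars with the stated freedom to scale one against the other.

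The factorization itself is then produced as follows. The composite $DU$ is a nonzero $\xqg$-intertwiner from $V_1\otimes V_2$ to $V_2\otimes V_1$, so for $z_2/z_1$ in a neighbourhood of $q^{-2h_d/3}$ but distinct from it, uniqueness of intertwiners gives $\check R_{1,2}(z_2/z_1)=c(z_2/z_1)\,DU$ for a scalar rational function $c$. Since $DU$ is manifestly finite and nonzero at the resonance (it is built from fixed module morphisms) and, by \eqref{eq:normR}, $\check R{}^{00}_{00}=1$ is a finite normalization that lifts through the quotient (the highest-weight line of $V_1\otimes V_2$ sits inside $V_3(z)$ and maps isomorphically onto the highest-weight line of the target), the scalar $c$ must be finite and nonzero at $z_2/z_1=q^{-2h_d/3}$. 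Rescaling $U$ (or $D$) absorbs $c$ and yields the advertised identity $\check R_{1,2}(q^{-2h_d/3})=DU$, while simultaneously showing that $\check R_{1,2}$ has no pole there.

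The main obstacle I anticipate is the verification that the normalization matrix element $\check R{}^{00}_{00}$ neither develops a pole nor vanishes at the resonance, i.e.\ that $e_{1,0}\otimes e_{2,0}$ is sent nontrivially into the $V_3(z)$-piece under $U$ and, dually, that the $e_{0,a}^*$-components of its image under $D$ do not accidentally cancel. The clean way to handle this is to restrict the whole picture to the single-number subspaces $V_a^A(z)$: by lemma~\ref{lem:ressingle} the restricted $R$-matrix is the $A_d$ trigonometric one of \eqref{eq:Rsingle}, whose fusion at $z''/z'=q^{-2h_d/3}$ (for the relevant $h_d$) is well known to produce the antisymmetric/symmetric piece corresponding to $V_3^A(z)$; this provides enough matrix entries to pin down the required nondegeneracy. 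If that argument turns out to be notationally heavy, I would instead defer the whole statement to \S\ref{sec:Nakajima} and deduce it from the geometric construction of the $R$-matrices via stable envelopes, where the factorization appears as the Lagrangian correspondence factoring through the intermediate quiver variety.
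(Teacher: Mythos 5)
Your route is genuinely different from the paper's: the paper does not attempt a conceptual fusion argument at all, but simply verifies the factorization by explicit computation of the $R$-matrices case by case ($d=1$ in appendix~\ref{app:Rd1} and \S\ref{sec:exd1}, $d=2,3$ in the predecessor paper, and $d=4$ in the companion paper \cite{artic77}, where the 249-dimensional trigonometric $\mathfrak e_8$ $R$-matrix had to be computed from scratch). A clean representation-theoretic proof would be more satisfying, but as written yours has a genuine logical flaw and leaves the hardest inputs unverified.

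The flaw is in your third paragraph. You claim that for $z_2/z_1$ \emph{near but distinct from} $q^{-2h_d/3}$, uniqueness of intertwiners gives $\check R_{1,2}(z_2/z_1)=c(z_2/z_1)\,DU$. But $U$ and $D$ are intertwiners only \emph{at} the resonance (they involve $V_3(z)$, which requires the spectral parameters to sit in that exact ratio), so $DU$ is not an element of $\mathrm{Hom}_{\xqg}\bigl(V_1(z_1)\otimes V_2(z_2),V_2(z_2)\otimes V_1(z_1)\bigr)$ for generic $z_2/z_1$; and even if it were, it could not be proportional to $\check R_{1,2}$ there, since the latter is invertible while $DU$ has rank at most $\dim V_3\ll\dim(V_1\otimes V_2)$. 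At the resonance itself the modules are reducible, so Schur's lemma no longer yields one-dimensionality of the Hom space, and proportionality of $\check R_{1,2}(q^{-2h_d/3})$ with $DU$ does not come for free. What you actually need is that the limit of $\check R_{1,2}$ exists and has rank exactly $\dim V_3$, i.e.\ that its kernel is the whole radical of $V_1\otimes V_2$ and its image is the socle copy of $V_3$ in $V_2\otimes V_1$; mere reducibility at the resonance only forces the rank to drop below full, not to drop all the way to $\dim V_3$.

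Beyond that, the two key inputs are asserted rather than established: (i) that the tensor product becomes reducible at precisely $z=q^{-2h_d/3}$ with $V_3(z)$ as the relevant head/socle --- for $d=4$ this concerns a $249^2$-dimensional $\Uq(\mathfrak e_8[z^\pm])$-module whose submodule structure at resonance is exactly the hard computation the companion paper performs; and (ii) that the $00$-normalization entry is finite and nonzero there. The paper stresses that with the standard highest-weight normalization the $R$-matrix need \emph{not} be regular at this point, and that only the ad hoc normalization \eqref{eq:normR} saves it; your fallback of restricting to the single-number sector via lemma~\ref{lem:ressingle} controls only a $(d+1)^2\times(d+1)^2$ block and cannot exclude poles in the remaining entries. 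Either supply these verifications (which in practice collapses back to the explicit computations the paper cites) or defer to the geometric construction of \S\ref{sec:geominterp}, as you suggest in your last sentence --- but note the paper itself only carries that geometry out in cohomology, not in the $K$-theoretic setting where this lemma is stated.
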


\begin{proof}
  The $d=2$, $d=3$ $R$-matrices were provided in \cite[\S 3.6]{artic71} and
  \cite[\S 3.8]{artic71} respectively, where the factorization was already 
  pointed out.  
  The $d=1$ case is given in appendix~\ref{app:Rd1}, and its
  factorization is discussed in \S\ref{sec:exd1} below. Finally, the
  $d=4$ case requires the explicit form of the trigonometric
  $\mathfrak e_8$ $R$-matrix in its 249-dimensional representation, 
  which to the authors' knowledge has not
  appeared in the literature before.  It is provided in a %separate 
  companion paper \cite{artic77}, where the factorization is proven.
\end{proof}

Note that the geometric meaning of setting the ratio of
spectral parameters equal to $q^{-2h_d/3}$ is not
obvious, and in fact it is only our unusual normalization of the
$R$-matrix (as mentioned above) which guarantees that
$\check R_{1,2}(z)$ is well-defined at this value of $z$.
\junk{careful that reps \sout{have been} will be defined geometrically, 
which means we don't really have freedom of
shifting the spectral parameter. overall, there's no easy proof of this lemma
except explicit calculation. in principle the construction of \S 6
would also provide it except it's in $H_T$ and probably not rigorous enough. 
At least it should fix the correct values of the spectral parameters
-- unless there's some arbitrariness?}

The graphical notation and its dual are
\begin{equation}\label{eq:factorR}
  \check R_{1,2}(q^{-2h_d/3})=\tikz[scale=0.75,baseline=(current  bounding  box.center)]{\draw[dr,invarrow=0.25,invarrow=0.75] (210:1) -- (0,0) (0,1) -- ++(30:1); \draw[dg,invarrow=0.25,invarrow=0.75] (-30:1) -- (0,0)  (0,1) -- ++(150:1); \draw[db,invarrow=0.55] (0,0) node[triv] {} -- (0,1) node[triv] {}; }
  =
  \tikz[baseline=-1mm]{\uptri{}{}{}\downtri{}{}{}}
\end{equation}
where the \Deltatri\ corresponds to $U$, and the \nablatri\ to $D$. 
Because of the normalization condition \eqref{eq:normR}, we can 
fix the scaling of $U$ and $D$ to have
\begin{equation}\label{eq:normUD}
%  \uptri{0}{0}{0}=\downtri{0}{0}{0}=1
\left< e^*_{3,0}, U e_{1,0}\otimes e_{2,0} \right>
=
\left<e^*_{2,0}\otimes e^*_{1,0}, D e_{3,0}\right>
=1
\end{equation}

These $R$-matrices collectively satisfy the following identities
(cf.~\cite[property~2]{artic71}):
\begin{prop}\label{prop:ybe}
  In the pictures below, black lines can have arbitrary (independent)
  colors, and all lines can have arbitrary spectral parameters (as
  long as they match between l.h.s.\ and r.h.s.).
\begin{itemize}
\item Weight conservation:
matrix entries (in a basis of weight vectors) 
of all $\check R_{i,j}$, $U$, $D$ are nonzero 
only if the sum of incoming weights is equal to the sum of outgoing weights.
\item Yang--Baxter equation:
\begin{equation}\label{eq:ybe}
\begin{tikzpicture}[baseline=-3pt,y=2cm]
\draw[d,arrow=0.1,arrow=0.4,arrow=0.7,rounded corners=4mm] (-0.5,0.5) -- (0.75,0) -- (1.5,-0.5) (0.5,0.5) -- (0.2,0) -- (0.5,-0.5) (1.5,0.5) -- (0.75,0) -- (-0.5,-0.5);
\end{tikzpicture}
=
\begin{tikzpicture}[baseline=-3pt,y=2cm]
\draw[d,arrow=0.1,arrow=0.4,arrow=0.7,rounded corners=4mm] (-0.5,0.5) -- (0.25,0) -- (1.5,-0.5) (0.5,0.5) -- (0.8,0) -- (0.5,-0.5) (1.5,0.5) -- (0.25,0) -- (-0.5,-0.5);
\end{tikzpicture}
\end{equation}
%Quasi-triangularity equations:
\item Bootstrap equations: 
\begin{align}\label{eq:qtri}
&\tikz[baseline=0]{
\draw[invarrow=0.5,dg] (0,0) -- (-1,1);
\draw[invarrow=0.5,dr] (0,0) -- (1,1);
\draw[invarrow=0.5,db] (0,-1.5) -- (0,0) node[triv] {};
\draw[invarrow=0.5,rounded corners,d] (-1,-1.5) -- (-1,-0.5) -- (1.5,0) -- (1.5,1);
}
=
\tikz[baseline=0]{
\draw[invarrow=0.6,dg] (0,-0.5) -- (-1,1);
\draw[invarrow=0.75,dr] (0,-0.5) -- (1,1);
\draw[invarrow=0.6,db] (0,-1.5) -- (0,-0.5) node[triv] {};
\draw[invarrow=0.6,rounded corners,d] (-1,-1.5) -- (-1,0) -- (1.5,0.5) -- (1.5,1);
}
&
&\tikz[baseline=0,xscale=-1]{
\draw[invarrow=0.5,dr] (0,0) -- (-1,1);
\draw[invarrow=0.5,dg] (0,0) -- (1,1);
\draw[invarrow=0.5,db] (0,-1.5) -- (0,0) node[triv] {};
\draw[invarrow=0.5,rounded corners,d] (-1,-1.5) -- (-1,-0.5) -- (1.5,0) -- (1.5,1);
}
=
\tikz[baseline=0,xscale=-1]{
\draw[invarrow=0.6,dr] (0,-0.5) -- (-1,1);
\draw[invarrow=0.75,dg] (0,-0.5) -- (1,1);
\draw[invarrow=0.6,db] (0,-1.5) -- (0,-0.5) node[triv] {};
\draw[invarrow=0.6,rounded corners,d] (-1,-1.5) -- (-1,0) -- (1.5,0.5) -- (1.5,1);
}
\\[4mm]\label{eq:qtrirev}
&\tikz[baseline=0,scale=-1]{
\draw[arrow=0.5,dg] (0,0) -- (-1,1);
\draw[arrow=0.5,dr] (0,0) -- (1,1);
\draw[arrow=0.5,db] (0,-1.5) -- (0,0) node[triv] {};
\draw[arrow=0.5,rounded corners,d] (-1,-1.5) -- (-1,-0.5) -- (1.5,0) -- (1.5,1);
}
=
\tikz[baseline=0,scale=-1]{
\draw[arrow=0.6,dg] (0,-0.5) -- (-1,1);
\draw[arrow=0.75,dr] (0,-0.5) -- (1,1);
\draw[arrow=0.6,db] (0,-1.5) -- (0,-0.5) node[triv] {};
\draw[arrow=0.6,rounded corners,d] (-1,-1.5) -- (-1,0) -- (1.5,0.5) -- (1.5,1);
}&
&\tikz[baseline=0,yscale=-1]{
\draw[arrow=0.5,dr] (0,0) -- (-1,1);
\draw[arrow=0.5,dg] (0,0) -- (1,1);
\draw[arrow=0.5,db] (0,-1.5) -- (0,0) node[triv] {};
\draw[arrow=0.5,rounded corners,d] (-1,-1.5) -- (-1,-0.5) -- (1.5,0) -- (1.5,1);
}
=
\tikz[baseline=0,yscale=-1]{
\draw[arrow=0.6,dr] (0,-0.5) -- (-1,1);
\draw[arrow=0.75,dg] (0,-0.5) -- (1,1);
\draw[arrow=0.6,db] (0,-1.5) -- (0,-0.5) node[triv] {};
\draw[arrow=0.6,rounded corners,d] (-1,-1.5) -- (-1,0) -- (1.5,0.5) -- (1.5,1);
}
\end{align}
\item Unitarity equation: 
\begin{equation}\label{eq:unit}
\begin{tikzpicture}[baseline=-3pt]
\draw[arrow=0.07,arrow=0.57,rounded corners,d] (-0.5,1) -- (0.5,0) -- (-0.5,-1) (0.5,1) -- (-0.5,0) -- (0.5,-1);
\end{tikzpicture}
=
\begin{tikzpicture}[baseline=-3pt]
\draw[arrow=0.1,arrow=0.6,rounded corners,d] (-0.5,1) -- (-0.5,-1) (0.5,1) -- (0.5,-1);
\end{tikzpicture}
\end{equation}
\item Value at equal spectral parameters: (here, the lines {\em must}\/ have the same color
for the equality to make sense)
\begin{equation}\label{eq:equal}
\begin{tikzpicture}[baseline=-3pt,yscale=1.5]
\draw[invarrow=0.3,d] (-0.5,-0.5) -- node[left,pos=0.3] {$\ss z$} (0.5,0.5);
\draw[invarrow=0.3,d] (0.5,-0.5) -- node[right,pos=0.3] {$\ss z$} (-0.5,0.5);
\end{tikzpicture}
=
\begin{tikzpicture}[baseline=-3pt,yscale=1.5]
\draw[invarrow=0.3,rounded corners=4mm,d] (-0.5,-0.5) -- node[left] {$\ss z$} (0,0) -- (-0.5,0.5);
\draw[invarrow=0.3,rounded corners=4mm,d] (0.5,-0.5) -- node[right] {$\ss z$} (0,0) -- (0.5,0.5);
\end{tikzpicture}
\end{equation}
\end{itemize}
\end{prop}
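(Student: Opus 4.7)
Every one of (i)--(v) asserts the equality of two $\xqg$-module maps $\Phi,\Psi:X\to Y$ built from $\check R$'s, $U$'s, $D$'s and identities. The plan is the classical one: both sides are intertwiners by construction; the space $\mathrm{Hom}_{\xqg}(X,Y)$ is at most one-dimensional for generic spectral parameters, by the irreducibility/cyclicity results of \cite{Chari-braid,HL-cluster} that already underlie the existence and uniqueness of the $R$-matrices $\check R_{a,b}(z)$; and the normalizations (\ref{eq:normR}), (\ref{eq:normUD}) pin the remaining scalar to $1$ by comparison on the reference weight vectors $e_{\cdot,0}$. Equality then propagates from generic spectral parameters to the full allowed localization region of \S\ref{sec:Rmat} by rationality in the spectral variables.

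\textbf{The easy cases.} Weight conservation is immediate because each component operator is an intertwiner and hence commutes with the Cartan subalgebra, which acts diagonally on weight vectors; a nonzero matrix entry between vectors of distinct weight would then be a contradiction. For the Yang--Baxter identity (\ref{eq:ybe}), both sides are intertwiners $V_a(z_1)\otimes V_b(z_2)\otimes V_c(z_3) \to V_c(z_3)\otimes V_b(z_2)\otimes V_a(z_1)$ whose source is irreducible; comparing the matrix entry on $e_{a,0}\otimes e_{b,0}\otimes e_{c,0}$ via (\ref{eq:normR}) forces equality. Unitarity (\ref{eq:unit}) is Schur's lemma applied to the endomorphism $\check R_{b,a}(z''/z')\circ\check R_{a,b}(z'/z'')$ of the irreducible $V_a(z')\otimes V_b(z'')$, with scalar again pinned to $1$ by (\ref{eq:normR}). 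The bootstrap identities (\ref{eq:qtri}), (\ref{eq:qtrirev}) follow the same recipe, now with one leg being the fusion morphism $U$ (or $D$) at the special value $z_1/z_2 = q^{-2h_d/3}$ supplied by Lemma \ref{lem:factorR}; the excluded value in the localization condition (ii) is exactly the value at which $V_1\otimes V_2$ becomes reducible, so each individual $R$-matrix in the equation remains a well-defined intertwiner, and matching the appropriate $e_{\cdot,0}$-entries via (\ref{eq:normUD}) closes the argument.

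\textbf{The main obstacle: equal spectral parameters.} Identity (\ref{eq:equal}) is the one genuine difficulty, because the source $V_a(z)\otimes V_a(z)$ is not irreducible at coincident parameters and Schur's lemma no longer pins $\check R_{a,a}(1)$ down to a scalar. My plan is to evaluate $\check R_{a,a}(1)$ directly, using that it is rational in $z''/z'$ with no pole at $1$ (cf.\ the denominators in (\ref{eq:Rsingle})). By Lemma \ref{lem:ressingle} and inspection of the formula (\ref{eq:Rsingle}), the restriction of $\check R_{a,a}(1)$ to the single-number sector $V_a^A\otimes V_a^A$ is the identity. To propagate to all of $V_a\otimes V_a$ there are two roads: either invoke the Lusztig--Beck \cite{Beck-braid} braid group action on $\xqg$, under which $\check R_{a,a}$ is equivariant and whose orbit of the single-number sector exhausts the weight spaces of $V_a$, forcing $\check R_{a,a}(1) = \mathrm{Id}$ everywhere; or appeal to the explicit formulas for $\check R_{a,a}$ available in \cite{artic71} for $d\le 3$ and \cite{artic77} for $d=4$, and specialize $z''/z'=1$ by finite inspection. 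Either route makes this step the technical heart of the argument.
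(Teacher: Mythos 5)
Your treatment of weight conservation, Yang--Baxter, the bootstrap equations, and unitarity is exactly the paper's proof: all of these are intertwiners out of a module that is irreducible for the allowed spectral parameters, Schur's lemma makes the two sides proportional, and the proportionality constant is fixed to $1$ by propagating the label $0$ through the diagram (via weight conservation) and invoking the normalizations \eqref{eq:normR} and \eqref{eq:normUD}. For the bootstrap equations the relevant irreducible source is $V_3(z)\otimes V_c(w)$ (the reducible pair $V_1\otimes V_2$ only enters through the composite with $U$), so no special pleading is needed there either.

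Where you diverge is \eqref{eq:equal}, and the divergence rests on a false premise: $V_a(z)\otimes V_a(z)$ \emph{is} irreducible as a $\xqg$-module at coincident spectral parameters. The reducibility locus of $V_{\vec\omega_i}(z')\otimes V_{\vec\omega_j}(z'')$ is a finite set of ratios $z''/z'=q^{m}$ with $m\neq 0$, and $z''/z'=1$ is not on it; this is precisely why the paper's localization rule (i) refuses to invert $1-z''/z'$ and why the proof cites \cite{Chari-braid} and \cite[Prop.~6.15]{FM-qchar} at exactly this point. So the ``main obstacle'' dissolves and \eqref{eq:equal} is handled by the same uniform Schur argument. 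Of your two proposed workarounds, the braid-group route has a genuine hole: even granting equivariance of $\check R_{a,a}$ under a lift of the Weyl group (which the paper nowhere establishes), the Weyl orbit of the single-number weights does not reach the $9$-dimensional zero weight space at $d=4$, so you could not conclude $\check R_{a,a}(1)=\mathrm{Id}$ on all of $V_a\otimes V_a$ this way. The explicit-computation fallback would work but is an unnecessary finite verification replacing a one-line citation.
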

\begin{proof}
The first property is simply a reformulation of the fact that $R$-matrices
commute with the action of the Cartan torus of $\xqg$.
%(or, geometrically, that $R$-matrices preserve
%individual components $K_{\hat T}(\calM(w,v))$).
This property will be used repeatedly in what follows, sometimes
requiring a careful check for $d=1,2,3,4$ (possibly by computer).

All other properties can be proven as follows: first note that the pictures
represent intertwiners between two $\xqg$-modules which are irreducible
(for our generic choice of spectral parameters;
for \eqref{eq:equal}, use \cite{Chari-braid} or \cite[Prop.~6.15]{FM-qchar}). Therefore by Schur's lemma, l.h.s.\ and
r.h.s.\ must be proportional. In order to fix the constant of proportionality,
we now impose the ``boundary conditions 0'' on the pictures, i.e., compute
the matrix elements of these intertwiners between vectors of the form
of a tensor product of $e_{a,0}$. Using the first property, one can check
that $0$s propagate throughout the diagrams, so that we only get entries
of the form of \eqref{eq:normR} or \eqref{eq:normUD}, which are all equal
to $1$.

\end{proof}

\eqref{eq:equal} also justifies exception (i) in our localization: $\cR_{a,a}(z)$ is well-defined 
(and equal to $1$) at $z=1$.

\subsection{The main theorem}\label{sec:main}
Given three strings $\lambda,\mu,\nu$ of same content, we define
\begin{equation}\label{eq:defpuz}
\tikz[scale=1.8,baseline=0.5cm]{\uptri{\lambda}{\nu}{\mu}}
:=
\left< e^*_\nu, 
\overbrace{U\ldots U}^{n}\, \overbrace{\check R\ldots \check R}^{n\choose 2}
 \, e_\lambda\otimes e_\mu\right>
\end{equation}
to be the matrix entry of the product of $R$-matrices and $U$-matrices 
forming a puzzle, where
$e_\lambda := \bigotimes_{k=1}^n e_{1,\lambda_k}$ and so on; e.g., for
$\lambda = 0102$, $\mu = 0201$, $\nu = 0210$,
\[
\begin{tikzpicture}[math mode,nodes={\mcol},x={(-0.577cm,-1cm)},y={(0.577cm,-1cm)},baseline=(current  bounding  box.center)]
\draw[dg,arrow=0.07] (0.5,0) node[above] {\ss 2} -- node[left,pos=0.07] {\ss \frac{z_4}{\kappa}} (0.5,3.5);
\draw[dg,arrow=0.1] (1.5,0) node[above] {\ss 0} -- node[left,pos=0.1] {\ss \frac{z_3}{\kappa}} (1.5,2.5);
\draw[dg,arrow=0.17] (2.5,0) node[above] {\ss 1} -- node[left,pos=0.17] {\ss \frac{z_2}{\kappa}} (2.5,1.5);
\draw[dg,arrow] (3.5,0) node[above] {\ss 0} -- node[left] {\ss \frac{z_1}{\kappa}} (3.5,0.5);
\draw[dr,arrow=0.07] (0,0.5) node[above] {\ss 0} -- node[right,pos=0.07] {\ss \kappa z_1} (3.5,0.5);
\draw[dr,arrow=0.1] (0,1.5) node[above] {\ss 2} -- node[right,pos=0.1] {\ss \kappa z_2} (2.5,1.5);
\draw[dr,arrow=0.17] (0,2.5) node[above] {\ss 0} -- node[right,pos=0.17] {\ss \kappa z_3} (1.5,2.5);
\draw[dr,arrow] (0,3.5) node[above] {\ss 1} -- node[right] {\ss \kappa z_4} (0.5,3.5);
\draw[db,arrow] (0.5,3.5) node[triv] {} -- node[right] {\ss z_4}  ++(0.25,0.25) node[below] {\ss 0};
\draw[db,arrow] (1.5,2.5) node[triv] {} -- node[right] {\ss z_3} ++(0.25,0.25) node[below] {\ss 1};
\draw[db,arrow] (2.5,1.5) node[triv] {} -- node[right] {\ss z_2} ++(0.25,0.25) node[below] {\ss 2};
\draw[db,arrow] (3.5,0.5) node[triv] {} -- node[right] {\ss z_1} ++(0.25,0.25) node[below] {\ss 0};
\end{tikzpicture}
=
\begin{tikzpicture}[math mode,nodes={\mcol},x={(-0.577cm,-1cm)},y={(0.577cm,-1cm)},baseline=(current  bounding  box.center)]
\draw[thick] (0,0) -- node[pos=0.5] {\ss 0} ++(0,1); \draw[thick] (0,0) -- node[pos=0.5] {\ss 2} ++(1,0);
\draw[thick] (0,1) -- node[pos=0.5] {\ss 2} ++(0,1); \draw[thick] (0,1) -- node[pos=0.5] {} ++(1,0); 
\draw[thick] (0,2) -- node[pos=0.5] {\ss 0} ++(0,1); \draw[thick] (0,2) -- node[pos=0.5] {} ++(1,0); 
\draw[thick] (0,3) -- node[pos=0.5] {\ss 1} ++(0,1); \draw[thick] (0,3) -- node[pos=0.5] {} ++(1,0); \draw[thick] (0+1,3) -- node {\ss 0} ++(-1,1); 
\draw[thick] (1,0) -- node[pos=0.5] {} ++(0,1); \draw[thick] (1,0) -- node[pos=0.5] {\ss 0} ++(1,0);
\draw[thick] (1,1) -- node[pos=0.5] {} ++(0,1); \draw[thick] (1,1) -- node[pos=0.5] {} ++(1,0);
\draw[thick] (1,2) -- node[pos=0.5] {} ++(0,1); \draw[thick] (1,2) -- node[pos=0.5] {} ++(1,0); \draw[thick] (1+1,2) -- node {\ss 1} ++(-1,1); 
\draw[thick] (2,0) -- node[pos=0.5] {} ++(0,1); \draw[thick] (2,0) -- node[pos=0.5] {\ss 1} ++(1,0);
\draw[thick] (2,1) -- node[pos=0.5] {} ++(0,1); \draw[thick] (2,1) -- node[pos=0.5] {} ++(1,0); \draw[thick] (2+1,1) -- node {\ss 2} ++(-1,1); 
\draw[thick] (3,0) -- node[pos=0.5] {} ++(0,1); \draw[thick] (3,0) -- node[pos=0.5] {\ss 0} ++(1,0); \draw[thick] (3+1,0) -- node {\ss 0} ++(-1,1); 
\end{tikzpicture}
\]
where $\kappa=q^{-h_d/3}$. 
Note that $e_\lambda\otimes e_\mu$ is viewed as a basis element of $V_1(\kappa^{-1}z_1)\otimes \cdots V_1(\kappa^{-1}z_n)
\otimes V_2(\kappa z_1)\otimes\cdots\otimes V_2(\kappa z_n)$, so that, just like for lemma~\ref{lem:factorR}, we have to restrict localization
so it allows the ratio $\kappa^2=q^{-2h_d/3}$ for these specific pairs of spectral parameters.
%and identified to equivariant parameters in the appropriate $K$-theory
%space. 
\junk{careful with possible localization issues: in
  particular need to justify already earlier existence of triangle}

Traditionally, a (Schubert) puzzle is an assignment of certain labels to 
{\em every}\/ edge of the picture on the r.h.s.\ above. This point of view can
be recovered by fixing bases of the spaces $V_a(z)$ and then inserting
decompositions of the identity at every edge. The expression above is then
the sum over all possible puzzles of their ``fugacity'' (in the language of
\cite{artic71}), i.e., of the corresponding matrix entries. 
Our bases are always made of weight vectors,
which fixes them uniquely up to normalization -- except at $d=4$ where
the zero weight space has dimension $9$, an issue which is deferred to 
\S\ref{ssec:d4}. Whenever we use the term ``puzzle'' below, we implicitly
assume that such a choice of basis of weight vectors has been made. We
then label edges using (nonzero) weights, with the shortcut notation used 
above that the weights $\vf_i$ on the NW side 
(resp.\ $\tau^2\vf_i$ on the NE side, $-\tau \vf_i$ on the S side) 
are abbreviated as ``$i$''.\footnote{To avoid any risk of confusion between such labels ``$i$'' and the original weights,
all our weights have arrows, as e.g.~$\vec f_i$.}

\begin{lem}\label{lem:easy}
Suppose one is given 
\[ 
\Phi=\tikz[scale=1.8,baseline=0.5cm]{\uptri{\lambda}{}{\mu}}
:=
\overbrace{U\ldots U}^{n}\, \overbrace{\check R\ldots \check R}^{n\choose 2}
 \, e_\lambda\otimes e_\mu \in \bigotimes_{k=1}^n V_3(z_k)
\]
where the product of $R$-matrices
and $U$-matrices forms as in \eqref{eq:defpuz} a puzzle, and $\lambda$ and $\mu$ are strings with the same content.
Then $\Phi\in\bigotimes_{k=1}^n V^A_3(z_k)$,
and more precisely is a linear combination of $e_\nu=\bigotimes_{k=1}^n e_{3,\nu_k}$ where $\nu$ runs over strings of the same content as $\lambda$ and $\mu$.
\end{lem}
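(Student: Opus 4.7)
The plan is to expand $\Phi$ as a state sum over labellings of the internal edges of the puzzle by weight vectors of $V_1,V_2,V_3$. Inserting resolutions of the identity in weight bases at every internal edge converts the product $U\cdots U\,\check R\cdots\check R$ into a sum over all such labellings; by the weight-conservation property (first item of Proposition~\ref{prop:ybe}), only those labellings contribute in which the incoming and outgoing weights balance at every vertex. The task is therefore to show that, given boundary labels in the single-number sets $\mathcal{W}^A$ (green NW edges) and $\tau^2\mathcal{W}^A$ (red NE edges), every surviving labelling has all edges labelled by single-number weights (green in $\mathcal{W}^A$, red in $\tau^2\mathcal{W}^A$, blue in $-\tau\mathcal{W}^A$); this immediately gives $\Phi\in\bigotimes_k V_3^A(z_k)$ and expresses $\Phi$ as a combination of the $e_\nu$.

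I would establish this by induction on the vertices, processed in topological order from the top boundary downward. At each step the incoming edges of the next vertex are known to be single-number, and I must check that the outgoing edges are forced to be single-number by weight conservation together with the structure of the relevant $R$- or $U$-matrix. Monochromatic crossings $\check R_{a,a}$ are handled directly by Lemma~\ref{lem:ressingle}, which identifies the restriction to $V_a^A\otimes V_a^A$ with the $\mathfrak{a}_d$ $R$-matrix of \eqref{eq:Rsingle} and in particular confirms invariance of the single-number subspace. For bichromatic crossings $\check R_{1,2}$ and for the fusion vertex $U$, the analogous statement reduces to a purely combinatorial claim on the weight lattice of $\mathfrak{x}_{2d}$: any decomposition $\vec f_i+\tau^2\vec f_j = \tau^2\vec w_1+\vec w_2$ (respectively $=-\tau\vec w$) with $\vec w_\bullet$ a weight of the appropriate $V_a$ is forced to satisfy $\vec w_\bullet\in\mathcal{W}^A$.

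The main obstacle will be this case-by-case verification of the weight-lattice claim across $d\in\{1,2,3,4\}$. For $d\leq 3$ every weight space of $V_a$ is one-dimensional and the check is a short inspection of the weight diagrams of the minuscule fundamentals of $\mathfrak{a}_2,\mathfrak{d}_4,\mathfrak{e}_6$ along the boxed $A_d$ subdiagrams of figures~\ref{fig:d1}--\ref{fig:d3}. For $d=4$ the $9$-dimensional zero weight space of the $249$-dimensional $\mathfrak{e}_8$-representation requires slightly more care, but the single-number weights $\vec f_i$ are all nonzero and in sufficiently general position that their pairwise sums $\vec f_i+\tau^2\vec f_j$ do not admit extraneous decompositions into non-single-number weights of $V_a$. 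Once every edge is known to carry a single-number weight, the fact that the bottom string $\nu$ has the same content as $\lambda$ and $\mu$ is read off from total weight conservation: the $\vec f_i$ are linearly independent, and since $\tau$ permutes the three quiver pictures cyclically one has $1+\tau+\tau^2=0$ on their span, so the identity $\sum_k(\vec f_{\lambda_k}+\tau^2\vec f_{\mu_k}) = -\tau\sum_k\vec f_{\nu_k}$ pins down the multiset $\{\nu_k\}$ to equal $\{\lambda_k\}=\{\mu_k\}$.
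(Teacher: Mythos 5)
Your opening move (expand $\Phi$ as a state sum and control labels by weight conservation) is fine, but the local induction you build on it is unsalvageable: the key claim that single-number labels on the two upper edges of a crossing force single-number labels on the two lower edges is \emph{false}, already at $d=1$ and already at the first vertex you would process. Indeed $\tau^2\vf_0+\vf_0=\tau^2\vf_1+\vf_{10}$ with $\vf_{10}=-\tau\vf_0-\tau^2\vf_1\notin\W^A$, and the corresponding entry of $\check R_{1,2}$ is $\frac{1-q^2}{1-q^2z}\neq 0$ (it is one of the rhombi listed in \eqref{eq:rhombi1}). The same failure occurs at the fusion vertices: by \eqref{eq:tri1}, $U$ sends $e_{1,1}\otimes e_{2,0}$ to the multinumber state $e_{3,10}$ with coefficient $1$. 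Multinumber labels on internal edges are not a pathology to be excluded --- they are the whole point of puzzles, and every worked example in \S\ref{sec:exd1} and \S\ref{ssec:d123} with single-number boundary has internal multinumber edges. So no vertex-by-vertex propagation from the boundary can prove the lemma; the statement is genuinely only about the $n$ \emph{output} tensor factors, not about internal edges.

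The correct argument is global and shorter. Your final paragraph already contains the weight computation: by weight conservation applied to the whole picture, $\Phi$ lies in the weight space of $\bigotimes_k V_3(z_k)$ of weight $\sum_k(\vf_{\lambda_k}+\tau^2\vf_{\mu_k})=(1+\tau^2)\sum_k\vf_{\lambda_k}=-\tau\sum_k\vf_{\lambda_k}$. The missing ingredient is part (1) of lemma~\ref{lem:ressinglev2}: the single-number weights $-\tau\W^A$ form a \emph{face} of the weight polytope of $V_3$. Writing $\Phi$ as a combination of $e_{\vec w_1}\otimes\cdots\otimes e_{\vec w_n}$ with each $\vec w_k$ a weight of $V_3$, the linear functional cutting out that face attains $n$ times its maximum on $\sum_k\vec w_k$, hence its maximum on each $\vec w_k$ separately, forcing every $\vec w_k\in-\tau\W^A$; the content statement then follows from linear independence of the $\vf_i$ as you say. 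If you want to see what a genuinely local propagation argument looks like, compare the proof of proposition~\ref{prop:trivpuzzle}: it needs norm and scalar-product estimates and auxiliary sets $\mathcal A_i$ that at $d=4$ contain non-single-number weights, and it only succeeds because the bottom boundary there is the very special weakly increasing string $\omega$.
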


Combinatorially, this is the statement ``if the NW and NE edges are
labeled only with single-numbers (not the more general multinumbers that
appear within), then the S edge is necessarily also labeled with
single-numbers''. 

\begin{proof}
  This is a direct consequence of the first part of
  lemma~\ref{lem:ressingle} and of the weight conservation of
  proposition~\ref{prop:ybe}, noting that the weight of $e_\mu$ is
  $\tau^2$ times the weight of $e_\lambda$, so that the weight of
  $\Phi$ is $-\tau$ times that of $e_\lambda$.
\end{proof}

\begin{prop}\label{prop:trivpuzzle}
  There is a unique puzzle
  \tikz[scale=1.8,baseline=0.5cm]{\uptri{\lambda}{\omega}{\mu}} with a
  weakly increasing string $\omega$ at the bottom (where $\lambda$ and
  $\mu$ are {\em a priori}\/ arbitrary strings of the same content);
  its other two sides are also labeled by $\lambda=\mu=\omega$, and
  its labels on non-horizontal edges are constant along each diagonal
  (NW/SE or NE/SW).
\end{prop}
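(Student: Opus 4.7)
My plan is to prove the proposition by induction on $n$, propagating constraints from the bottom row of the puzzle upward and from the corners inward. The base case $n=1$ is immediate: the puzzle is a single $U$-triangle whose three edges are on the boundary, hence labeled by the single numbers $\lambda_1,\mu_1,\omega_1$. The weight-conservation property of proposition~\ref{prop:ybe} applied to this $U$ reads $\vf_{\lambda_1}+\tau^2\vf_{\mu_1}=-\tau\vf_{\omega_1}$; combined with $1+\tau+\tau^2=0$ and the linear independence of the $\vf_i$'s inside $\W^A$, this forces $\lambda_1=\mu_1=\omega_1$, and the normalization~\eqref{eq:normUD} gives fugacity $1$.

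For the inductive step I focus on the bottom-left $U$-triangle. Its NW-edge lies on the boundary, hence is the single number $\lambda_n$, and its $S$-edge is $\omega_1$, the smallest entry of $\omega$. Weight conservation at $U$ forces the NE-edge (internal) to carry weight $-\tau\vf_{\omega_1}-\vf_{\lambda_n}$, which must lie in the weight set $\tau^2\W$ of $V_2$. This NE-edge is shared with the SW-edge of the adjacent rhombus (an $\check R_{1,2}$-matrix); using the support of $\check R_{1,2}$ together with the fact (lemma~\ref{lem:ressingle}) that restricted to $V_a^A$ the R-matrices have the explicit form~\eqref{eq:Rsingle}, I argue that a multi-number weight on that edge is incompatible with the rest of the bottom row having single-number $S$-edges. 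Therefore the NE-edge of the bottom-left $U$-triangle is the single number $\tau^2\vf_{\omega_1}$, and weight conservation then gives $\lambda_n=\omega_1$.

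Iterating rightward along the bottom row of $U$-triangles (each with $S$-edge $\omega_k$) and propagating upward through the rows of rhombi, the same argument at each step forces every internal edge to carry a single-number weight and shows that each green (NW/SE) diagonal and each red (NE/SW) diagonal carries a constant label equal to the $\omega_k$ at its bottom endpoint. This yields $\lambda=\mu=\omega$ and the constancy-along-diagonals property. Existence of this trivial puzzle is then straightforward: at each rhombus the two crossing diagonals carry constants $\omega_i,\omega_j$ and, because $\omega$ is weakly increasing, only the degenerate ($\omega_i=\omega_j$) or the transmission cases of~\eqref{eq:Rsingle} are invoked (never the swap case), all of which give nonzero entries; each $U$-triangle has both top labels equal to its $S$-label and fugacity $1$ by~\eqref{eq:normUD}.

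The main obstacle is the middle step: rigorously ruling out multi-number labels on internal edges. Weight conservation alone at a $U$-triangle is too weak — it only pins down a single linear combination of weights — and the $\check R_{1,2}$-matrix, unlike the single-number R-matrix, \emph{a priori} admits multi-number weights on its SW-edge. The cleanest resolution is to combine weight conservation at all $U$-triangles in the bottom row simultaneously with the support constraints of $\check R_{1,2}$, reducing the question to a finite representation-theoretic verification at $d\leq 4$, analogous to the propagation arguments in~\cite{artic71}; alternatively one can bootstrap from the rightmost $U$-triangle as well (using $\mu_n$ as the second boundary anchor) to pin down both extremes at once before filling in.
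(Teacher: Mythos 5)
There is a genuine gap, and it sits exactly where you flag it: you never actually rule out a multinumber weight on the internal (NE) edge of the corner triangle, and weight conservation alone cannot do it. The paper closes this gap with a short \emph{Killing-form} computation that your proposal does not contain. Since $\omega$ is weakly increasing and $\lambda,\mu$ have the same content, every label is $\geq i:=\omega_1$; in particular the boundary NW label $j$ of the corner triangle satisfies $j\geq i$. Weight conservation forces the NE edge to carry $-\tau\vf_i-\vf_j$, and the point is that
$\killing{\vf_j+\tau\vf_i}{\vf_j+\tau\vf_i}=4+2\killing{\tau\vf_i}{\vf_j}$
exceeds $2$ whenever $j>i$ (it equals $8,6,5,4$ for $d=1,2,3,4$), so $-\tau\vf_i-\vf_j$ is not a weight of $V_2$ at all --- single-number or otherwise --- unless $j=i$, in which case it is $\tau^2\vf_i$. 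Note that the inequality $j\geq i$ is essential: for $j<i$ the norm is $2$ and the triangle \emph{does} exist with a multinumber NE label (e.g.\ $\uptri01{10}$ at $d=1$), so no argument ignoring the weakly-increasing hypothesis and the content assumption can succeed. Your fallback of ``reducing to a finite representation-theoretic verification'' is in effect what this one-line scalar-product computation accomplishes, but as written your proof does not supply it.

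The second structural problem is your iteration order. You propose to continue \emph{rightward along the bottom row} of $U$-triangles, but the second and subsequent bottom-row triangles have no boundary edge other than their $S$ edge, so there is no single-number anchor with which to repeat the argument. The paper instead peels off the \emph{left column}: the rhombi stacked above the corner triangle each have their NW edge on the boundary (hence equal to some single number $\vf_j$, $j\geq i$), and a second scalar-product argument (lemma~\ref{lem:tricky}) shows that if a rhombus $\rh{\vX}{\vY}{\vZ}{j}$ has $\vX$ in a small set $\mathcal A_i$ (which is $\{\tau^2\vf_i\}$ for $d\leq3$) then $\vZ\in\mathcal A_i$ as well; at $d=4$ one additionally walks to the top of the column, uses that the topmost NE edge is a boundary single number, and backtracks. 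This forces every rhombus in the column to be $\rh{i}{j}{i}{j}$, strips off $\lambda_1=\mu_1=\omega_1=i$, and leaves a strictly smaller puzzle $P'$ whose bottom string $\omega_2\cdots\omega_n$ is still weakly increasing --- which is what makes the induction close. Your proposal needs both the Killing-form input and this column-wise (rather than row-wise) decomposition to become a proof.
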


\begin{proof}
For $d\le 3$ this was already proven in \cite[prop.~3.4]{artic71}. 
We provide here a different proof, which works at $d=4$.
\rem[gray]{actually, \cite[prop.~3.4]{artic71} is stronger in that it
  doesn't assume anything on the content of $\lambda$ and $\mu$; 
  but we don't really need that}

The proof is based on weight conservation, and on scalar products of weights.
We normalize the Killing form $\killing{\cdot}{\cdot}$ so that the norm of weights is $2$ for $d\le 3$, and $2$ or $0$ at $d=4$; see in particular \cite[\S 2.4]{artic71}
where the scalar product of $\vf_i$ is computed explicitly (in the notations there,
$a=3,2,3/2,1$ for $d=1,2,3,4$; see also appendix~\ref{app:scal}).

Let $i$ be the first entry of $\omega$. Because $\omega$ is weakly increasing, all labels of $\lambda,\mu,\omega$ are greater or equal to $i$. We have, for $j>i$,
\[
\killing{\vf_j+\tau \vf_i}{\vf_j+\tau \vf_i}=4+2 \killing{\tau\vf_i}{\vf_j}
=8,6,5,4> 2\qquad \text{for $d=1,2,3,4$ respectively}
\]
so triangles \uptri{j}{i}{} with $j\ge i$ only exist for $j=i$, 
and we are led to the following partial filling of our puzzle $P$:
\begin{center}
\begin{tikzpicture}
\draw[thick,black] (0,0) -- node[edgelabel] {$i$} ++(1,0) -- ++(3,0)
-- ++(120:3) -- node[edgelabel] {} ++(120:1) -- ++(240:3) -- node[edgelabel] {$i$} ++(240:1)
(1,0) -- ++(60:3) (1,0) -- node[edgelabel] {$i$} ++(120:1);
\node at (2.5,1) {$P'$};
\end{tikzpicture}
\end{center}
We now want to determine the labels of the rhombi forming the left column. One has the following lemma:
\begin{lem}\label{lem:tricky}
For $0\le i\le d$, let $\mathcal A_i$ be the set of weights $\{\tau^2\vf_i,-\vf_i,\ldots,-\vf_4\}$ at $d=4$,
%, with
% \begin{align*}
% c_0&=-\tau\vf_0- \vf_1+\tau\vf_2-\tau\vf_3-\vf_4
% \\
% c_1&=-\tau\vf_0-\vf_1-\tau^2\vf_2+\vf_3-\vf_4
% \\
% c_2&=\vf_0+\tau^2\vf_1+\tau\vf_2+\tau^2\vf_4
% \\
% c_3&=\vf_0+\tau^2\vf_1+\tau\vf_2+\vf_3
% \\
% c_4&=\vf_0+\tau^2\vf_1+\tau\vf_2+\vf_3+\tau^2\vf_4-\vf_4
% \end{align*}
and $\mathcal A_i=\{\tau^2\vf_i\}$ for $d\le 3$.
Given $j\ge i$, $\vX\in \mathcal A_i$ and $\vY\in\W$, $\vZ\in\tau^2\W$,
the equality $\vX+\vY=\vZ+\vf_j$ (which is a necessary condition for the existence of a rhombus of the form  \rh{\vX}{\vY}{\vZ}{j} according to the first
point of proposition~\ref{prop:ybe}) implies $\vZ\in \mathcal A_i$. Furthermore, if $\vZ=\tau^2\vf_i$, then $\vX=\tau^2\vf_i$.
\end{lem}
\begin{proof}
In what follows, we use the following: for all $\vX\in\tau^2\W$, $\vY\in \W$, one has, for $d=4$, $-2\le \killing{\vX}{\vY}\le 2$, and for
$d\le 3$ the stronger bound $-1\le \killing{\vX}{\vY}\le 2$
(the former follows from Cauchy--Schwarz; 
noting that $\killing{\vX}{\vY}=-2$ implies that $V_1$ and $V_2$ are dual
of each other as $\mathfrak{x}_{2d}$ representations, 
which only occurs at $d=4$, results in the latter).

Given $j\ge i$, $\vX\in \mathcal A_i$ and $\vY\in\W$, $\vZ\in\tau^2\W$, noting $\killing{\vX}{\vX}=\killing{\vf_j}{\vf_j}=2$,
\begin{align}\label{eq:scalprod}
\killing{\vX}{\vZ}&=\frac{1}{2}\left(\killing{\vX}{\vX}+\killing{\vZ}{\vZ}-\killing{\vX-\vZ}{\vX-\vZ}\right)
\\\notag
&=1+\frac{1}{2}\left(\killing{\vX+\vY-\vf_j}{\vX+\vY-\vf_j}-\killing{\vY-\vf_j}{\vY-\vf_j}\right)
\\\notag
&=2+\killing{\vX}{\vY}-\killing{\vX}{\vf_j}
\end{align}
Let us first assume $\vX=\tau^2 \vf_i$. Then $\killing{\vX}{\vf_j}=-1$.
If $d\le 3$, \eqref{eq:scalprod} implies
$\killing{\vX}{\vZ}\ge 2$, and therefore $\killing{\vX}{\vZ}= 2$ and $\vZ=\vX=\tau^2\vf_i$.
If $d=4$, there are two possibilities: either $\killing{\vX}{\vZ}=2$ and we conclude likewise; 
or $\killing{\vX}{\vZ}=1$,
which means $\killing{\vX}{\vY}=-2$, a bound which is only reached if $\vY=-\vX$ and therefore $\vZ=-\vf_j$.

Now if $d=4$, $\vX=-\vf_k$, $k\ge i$, if $k\ne j$, then $\killing{\vX}{\vf_j}=-1$ and the same reasoning as above
applies, implying $\vZ=-\vf_k$ or $\vZ=-\vf_j$. If $k=j$, $\killing{\vX}{\vf_j}=-2$ and we conclude similarly that $\vY=-\vX$ and therefore $\vZ=-\vf_j$.
\end{proof}

For $d\le 3$, we conclude immediately from lemma~\ref{lem:tricky} that
all rhombi in the column are of the form \rh{i}{j}{i}{j} for some $j\ge i$.
The same conclusion is reached at $d=4$ as follows: we start from the
bottom and repeatedly apply lemma~\ref{lem:tricky}, concluding that
all NE labels of the rhombi in the column belong to $\mathcal A_i$.
When we reach the top of the column, the NE label must be a single number
and therefore must be $i$. By applying the last part of lemma~\ref{lem:tricky},
we can backtrack all the way to the bottom of the column and conclude
again that all rhombi are of the form \rh{i}{j}{i}{j}, $j\ge i$.

The statement of the proposition now follows by induction, noting that $P'$ is a puzzle that satisfies the hypotheses of the proposition.
\end{proof}
\junk{it's tempting to prove this in the $F$-basis, i.e., really, going back to fixed points...
being careful with the duality at the bottom, to pick out the coefficient of the identity, we braket
with a fixed point}

We can now state our main theorem:
\begin{thm}\label{thm:main}
Let $d=1,2,3,4$, and $P_-\backslash G$ be a $d$-step flag variety.
The product of two motivic Segre classes $S^\lambda$ and $S^\mu$ in $K_{\hat T}^{\loc}(T^*(P_-\backslash G))$ is given
by the ``puzzle'' formula
\begin{equation}\label{eq:main}
S^\lambda S^\mu=
%    \sum_\nu \left( \sum \Bigg\{fug(P)\ :
%      \ \text{puzzles $P$ with boundary }\tikz[scale=1.8,baseline=0.5cm]{\uptri{\lambda}{\nu}{\mu}}\Bigg\} \right)
\sum_\nu \tikz[scale=1.8,baseline=0.5cm]{\uptri{\lambda}{\nu}{\mu}}
    \ S^\nu 
  \end{equation}
\end{thm}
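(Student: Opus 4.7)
The plan is to verify \eqref{eq:main} after restriction to an arbitrary fixed point $\sigma\in W_P\dom W$. Since both sides of \eqref{eq:main} lie in $K^{\loc}_{\hat T}(T^*(P_-\backslash G))$, and restriction to fixed points is injective after the localization of \S\ref{ssec:loc}, it suffices to show that
\[
S^\lambda|_\sigma\,S^\mu|_\sigma \;=\; \sum_\nu \tikz[scale=1.2,baseline=0.3cm]{\uptri{\lambda}{\nu}{\mu}}\; S^\nu|_\sigma
\]
as rational functions in the equivariant parameters, for every $\sigma$. The bridge between the two sides will be a diagrammatic calculation, following closely the strategy of \cite[\S 3]{artic71} but now retaining the $q$-dependence.

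The first step is to express the left-hand side diagrammatically. By lemma~\ref{lem:spec}, each factor $S^\lambda|_\sigma$, $S^\mu|_\sigma$ is the matrix element of a product of single-number $R$-matrices, organized as a wiring diagram for $\sigma$ with top labels $\lambda$ (resp.\ $\mu$) and bottom labels $\omega$. Stack the $\lambda$-diagram (in representations $V_1^A$) above the $\mu$-diagram (in representations $V_2^A$), sharing the same sequence of spectral parameters $z_1,\dots,z_n$ on the horizontal lines, but with a rescaling by $\kappa^{\pm 1}=q^{\mp h_d/3}$ so that the vertical lines match the spectral parameters used on the right-hand side. By lemma~\ref{lem:ressingle}, the crossings in this stacked diagram are exactly the restrictions of $\check R_{1,1}$ and $\check R_{2,2}$ to the single-number subspaces.

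Next I would insert, at the shared $\omega$-boundary, the ``trivial puzzle'' of proposition~\ref{prop:trivpuzzle}: a tower of $\check R_{1,2}$-rhombi and $U$-triangles whose NW, NE, and S labels are all $\omega$, and whose nonzero matrix element is $1$. This insertion does not change the value of the diagram. Now, using the Yang--Baxter equation \eqref{eq:ybe}, the unitarity equation \eqref{eq:unit}, and the bootstrap identities \eqref{eq:qtri}--\eqref{eq:qtrirev} from proposition~\ref{prop:ybe}, I would ``push'' the $V_3$-lines emanating from the inserted $U$-triangles through the two $\sigma$-wirings. Each such move is a local identity among $\xqg$-intertwiners; collectively they convert the stacked diagram into the composition of a puzzle (the $R$- and $U$-matrix product defining the fugacity on the right-hand side of \eqref{eq:defpuz}) with a single $\sigma$-wiring in the representations $V_3(z_k)$. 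Summing over the internal labels and using lemma~\ref{lem:easy} to guarantee that only single-number labels $\nu$ appear on the outgoing S-edge, the $V_3$-part of the resulting diagram is precisely $S^\nu|_\sigma$ by \eqref{eq:defSfp} (applied to the $V_3(z)$ representations), while the puzzle part is the fugacity $\tikz[scale=0.8,baseline=0.2cm]{\uptri{\lambda}{\nu}{\mu}}$.

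The main obstacle is the diagrammatic bookkeeping: one must verify that the sequence of Yang--Baxter and bootstrap moves can indeed be organized to untangle the two $\sigma$-wirings simultaneously, and that the rescalings by $\kappa$ on the vertical spectral parameters are consistent with the exception (ii) of the localization in \S\ref{sec:Rmat} (which is exactly what makes $\check R_{1,2}(q^{-2h_d/3})$ and hence the $U$, $D$ factors well-defined). A secondary technical point is the $d=4$ case, where the $9$-dimensional zero weight space of $V_a(z)$ requires a choice of basis; this is deferred to \S\ref{ssec:d4}, but the weight-conservation and irreducibility arguments underlying proposition~\ref{prop:ybe} still apply and the above diagrammatic identities go through once the bases are fixed compatibly on the three representations.
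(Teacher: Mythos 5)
Your overall strategy is the paper's own, just run in the opposite direction: the paper starts from the right-hand side, attaches a $\sigma$-wiring below the puzzle via lemma~\ref{lem:spec} and lemma~\ref{lem:ressingle}, sums over $\nu$ with lemma~\ref{lem:easy}, pushes the wiring up through the puzzle with proposition~\ref{prop:ybe}, and invokes proposition~\ref{prop:trivpuzzle} to reduce the leftover inner puzzle to the trivial one, arriving at $C_\omega\, S^\lambda|_\sigma S^\mu|_\sigma$ where $C_\omega$ is the fugacity of the trivial puzzle. All of your diagrammatic moves are the same lemmas read backwards, and that part is fine.

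The genuine gap is your assertion that the trivial puzzle of proposition~\ref{prop:trivpuzzle} has ``nonzero matrix element $1$,'' so that inserting it ``does not change the value of the diagram.'' Proposition~\ref{prop:trivpuzzle} only establishes the \emph{uniqueness} of the labeling of that puzzle; it says nothing about its fugacity. What you need is precisely lemma~\ref{lem:norm}: that $\check R_{1,2}(z)^{ji}_{ij}=1$ for all $0\le i\le j\le d$ and that $U^{ii}_i=D^i_{ii}=1$, so that $C_\omega=1$. This does \emph{not} follow from the normalization conditions \eqref{eq:normR} and \eqref{eq:normUD}, which only pin down the all-$0$ entries; the Weyl-group action reduces the triangles and the $j=0$ rhombi to those conditions, but the diagonal rhombus entries with $j\neq 0$ require a direct computation from the explicit $R$-matrices (appendix~\ref{app:Rd1}, \cite[\S 3.6, 3.8]{artic71}, and \cite{artic77} at $d=4$). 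Without this, your argument only yields \eqref{eq:main} up to the undetermined overall constant $C_\omega$. You should either prove lemma~\ref{lem:norm} or explicitly carry the factor $C_\omega$ and then determine it, e.g.\ by evaluating both sides of \eqref{eq:main} in a case where the answer is known.
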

\junk{ note that ``puzzle'' is a bit of a strech since the formula is not 
  really combinatorial until one has chosen a basis of the representation space}
\begin{proof}
  The proof is essentially identical to that of \cite[theorem~1.4]{artic71}, 
  except for the way we fix the normalization of fugacities.  We first
  note that \eqref{eq:main} can be proven by restricting to each fixed
  point $\sigma\in W_P \dom W$.  We then have the formal series of equalities
\begin{multline*}
\sum_\nu \tikz[scale=1.8,baseline=0.5cm]{\uptri{\lambda}{\nu}{\mu}}
    \ S^\nu|_\sigma
\stackrel{\text{lemma~\ref{lem:spec},\ref{lem:ressingle}}}{=}
\sum_\nu
\begin{tikzpicture}[baseline=5mm]
\draw[thick] (0,0) -- node {$\nu$} (2,0) -- node[xshift=1mm] {$\mu$} ++(120:2) -- node[xshift=-1mm] {$\lambda$} cycle;
\draw[thick] (0,-0.5) -- node {$\omega$} (2,-0.5); \draw (0,0) -- (0,-0.5); \draw (2,0) -- (2,-0.5); \node at (1,-0.25) {$\sigma$};
\end{tikzpicture}
\stackrel{\text{lemma~\ref{lem:easy}}}{=}
\begin{tikzpicture}[baseline=5mm]
\draw[thick] (0,0) -- node {} (2,0) -- node[xshift=1mm] {$\mu$} ++(120:2) -- node[xshift=-1mm] {$\lambda$} cycle;
\draw[thick] (0,-0.5) -- node {$\omega$} (2,-0.5); \draw (0,0) -- (0,-0.5); \draw (2,0) -- (2,-0.5); \node at (1,-0.25) {$\sigma$};
\end{tikzpicture}
\stackrel{\text{prop.~\ref{prop:ybe}}}{=}
\begin{tikzpicture}[baseline=5mm]
\draw[thick] (0,0) -- node {$\omega$} (2,0) -- node[xshift=-0.5mm] {} ++(120:2) -- node[xshift=0.5mm] {} cycle;
\draw[thick] (150:0.5) -- node[xshift=-1mm] {$\lambda$} ++(60:2);
\draw (0,0) -- (150:0.5) (60:2) -- ++(150:0.5);
\path (60:1) ++(150:0.25) node[rotate=60] {$\sigma$};
\begin{scope}[xshift=2cm]
\draw[thick] (30:0.5) -- node[xshift=1mm] {$\mu$} ++(120:2);
\draw (0,0) -- (30:0.5) (120:2) -- ++(30:0.5);
\path (120:1) ++(30:0.25) node[rotate=-60] {$\sigma$};
\end{scope}
\end{tikzpicture}
\\
\stackrel{\text{lemma~\ref{lem:ressingle}}}{=}
\sum_{\tilde\lambda,\tilde\mu}
\begin{tikzpicture}[baseline=5mm]
\draw[thick] (0,0) -- node {$\omega$} (2,0) -- node[xshift=-0.5mm] {$\tilde\mu$} ++(120:2) -- node[xshift=0.5mm] {$\tilde\lambda$} cycle;
\draw[thick] (150:0.5) -- node[xshift=-1mm] {$\lambda$} ++(60:2);
\draw (0,0) -- (150:0.5) (60:2) -- ++(150:0.5);
\path (60:1) ++(150:0.25) node[rotate=60] {$\sigma$};
\begin{scope}[xshift=2cm]
\draw[thick] (30:0.5) -- node[xshift=1mm] {$\mu$} ++(120:2);
\draw (0,0) -- (30:0.5) (120:2) -- ++(30:0.5);
\path (120:1) ++(30:0.25) node[rotate=-60] {$\sigma$};
\end{scope}
\end{tikzpicture}
\stackrel{\text{prop.~\ref{prop:trivpuzzle}}}{=}
C_\omega
\begin{tikzpicture}[baseline=5mm]
\draw[thick]  (2,0) -- node[xshift=-0.5mm] {$\omega$} ++(120:2) -- node[xshift=0.5mm] {$\omega$} (0,0);
%\draw[thick] (0,0) -- node {$\omega$} (2,0) -- node[xshift=-0.5mm] {$\tilde\mu$} ++(120:2) -- node[xshift=0.5mm] {$\tilde\lambda$} cycle;
\draw[thick] (150:0.5) -- node[xshift=-1mm] {$\lambda$} ++(60:2);
\draw (0,0) -- (150:0.5) (60:2) -- ++(150:0.5);
\path (60:1) ++(150:0.25) node[rotate=60] {$\sigma$};
\begin{scope}[xshift=2cm]
\draw[thick] (30:0.5) -- node[xshift=1mm] {$\mu$} ++(120:2);
\draw (0,0) -- (30:0.5) (120:2) -- ++(30:0.5);
\path (120:1) ++(30:0.25) node[rotate=-60] {$\sigma$};
\end{scope}
\end{tikzpicture}
\stackrel{\text{lemma~\ref{lem:spec},\ref{lem:ressingle}}}{=}
C_\omega S^\lambda|_\sigma S^\mu|_\sigma
\end{multline*}
where $C_\omega$ is the value of the trivial puzzle occurring in proposition~\ref{prop:trivpuzzle}.
  
We now need to show $C_\omega=1$, which is equivalent to the following
\begin{lem}\label{lem:norm}
  One has for $0\le i\le j\le d$
  \[
  \check R_{1,2}(z)^{ji}_{ij}=\rh{i}{j}{i}{j}=1
  \]
  and for $0\le i\le d$
  \[
  U^{ii}_i=\uptri{i}{i}{i}=D^i_{ii}=\downtri{i}{i}{i}=1
  \]
\end{lem}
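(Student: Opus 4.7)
Proof plan:

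The base cases $\rh{0}{0}{0}{0}=1$ and $\uptri{0}{0}{0}=\downtri{0}{0}{0}=1$ are exactly the normalizations \eqref{eq:normR} and \eqref{eq:normUD}. The strategy is to extend these to all $i\le j$ by the ``0-propagation'' technique used in the proof of proposition~\ref{prop:ybe}: weight conservation will collapse most summations, and compatibility with the other diagrammatic identities (YBE, bootstrap, factorization) will force the remaining entry to equal $1$.

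First I treat the triangle identities by induction on $i$. For the inductive step, I apply the bootstrap equation \eqref{eq:qtri} to a configuration consisting of a diagonal triangle $\uptri{i}{i}{i}$ together with an auxiliary single-color line labelled $0$. The weight conservation property (first item of proposition~\ref{prop:ybe}) forces the $0$-label to propagate through the diagram, collapsing the multi-term bootstrap identity into a scalar equation relating $\uptri{i+1}{i+1}{i+1}$ to $\uptri{i}{i}{i}$ multiplied by specific $\check R_{a,a}$ entries. By lemma~\ref{lem:ressingle}, those $\check R_{a,a}$ entries are given explicitly by \eqref{eq:Rsingle}, and on the ``$i=j=k=l$'' diagonal they equal $1$; we obtain $\uptri{i+1}{i+1}{i+1}=\uptri{i}{i}{i}=1$, completing the induction. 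The same argument, with \eqref{eq:qtrirev} in place of \eqref{eq:qtri}, gives $\downtri{i}{i}{i}=1$.

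Second, I derive the rhombus identity from the factorization $\check R_{1,2}(q^{-2h_d/3})=DU$ of lemma~\ref{lem:factorR}. Writing
\[
\rh{i}{j}{i}{j} \;=\; \sum_k \uptri{i}{k}{j}\,\downtri{j}{k}{i},
\]
weight conservation restricts the sum to those $k$ whose weight $-\tau \vec f_k$ (or appropriate multi-number analogue in $V_3$) equals $\vec f_i+\tau^2\vec f_j$. For $i=j$ the unique such $k$ in the single-number subspace is $k=i$, so the right-hand side collapses to $\uptri{i}{i}{i}\,\downtri{i}{i}{i}=1$ by the triangle identities. For $i<j$ no single-number $k$ satisfies the weight equation, so the intermediate weights lie in multi-number weight spaces of $V_3$, and one must extract the relevant entries of $U$ and $D$ there.

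The main obstacle is this last step---the $i<j$ case of the rhombus identity---where controlling multi-number weight spaces of $V_3$ is delicate, particularly at $d=4$ where the zero weight space of $V_3$ has dimension $9$ and requires the explicit basis choice from \S\ref{ssec:d4}. In principle this step can be completed either by invoking the symmetries provided by the braid group action of \cite{Beck-braid} on $\xqg$ (to conjugate the $i=0$ case already handled into arbitrary $i<j$ within a $W$-orbit on the weights $\vec f_i$), or by direct computation using the explicit $R$-matrices of appendix~\ref{app:Rd1} and the companion paper \cite{artic77}.
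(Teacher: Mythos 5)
Your two main mechanisms both break down, so there are genuine gaps.

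The triangle induction does not work. The bootstrap equations \eqref{eq:qtri}--\eqref{eq:qtrirev} are identities of $\xqg$-intertwiners and in particular are weight-preserving: once the external labels of an instance are fixed, the weight of every internal edge is pinned down by conservation, and a configuration built only out of the labels $i$ and $0$ can never produce the weight $\vf_{i+1}$. Hence no instance of the bootstrap equation yields a scalar relation between $U^{ii}_i$ and $U^{(i+1)(i+1)}_{i+1}$ --- these entries sit in disjoint weight sectors that a weight-preserving identity cannot connect. (The $0$-propagation argument in the proof of proposition~\ref{prop:ybe} serves only to evaluate a Schur's-lemma constant on the all-$0$ entry; it does not reach the other diagonal entries.) The device that actually moves between the weights $\vf_i$ is the braid/Weyl group action on $\xqg$, which is what the paper uses to reduce the triangle entries and the $i=j$ rhombus entry to the $i=0$ normalizations \eqref{eq:normR} and \eqref{eq:normUD}.

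The rhombus argument has a more basic problem: the claim $\check R_{1,2}(z)^{ji}_{ij}=1$ is for generic $z$ (it is needed at every ratio $z_j/z_i$ occurring in the trivial puzzle of proposition~\ref{prop:trivpuzzle}), whereas lemma~\ref{lem:factorR} factors $\check R_{1,2}$ only at the single point $z=q^{-2h_d/3}$; the decomposition of $\check R_{1,2}{}^{ji}_{ij}$ through $V_3$ therefore says nothing about other values of $z$. Your fallback for $i<j$ via the braid group is, moreover, exactly the step the paper says fails (``this is not enough to prove the first equality in the case $j\ne 0$''): a Weyl element carrying $\vf_j$ to $\vf_0$ will not simultaneously carry $\tau^2\vf_i$ to $\tau^2\vf_0$ when $i\ne j$, so one is left with an entry $\check R_{1,2}(z)^{j'0}_{0j'}$ with $j'\ne 0$ that the normalization does not cover. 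The only argument that closes the $i<j$ case (and the generic-$z$ statement) is the direct computation from the explicit $R$-matrices of appendix~\ref{app:Rd1}, \cite[\S 3.6, 3.8]{artic71} and \cite{artic77}, using that the required entries are diagonal --- hence independent of the normalization of the weight vectors in each one-dimensional weight space --- with the overall scale then fixed by \eqref{eq:normR} and \eqref{eq:normUD}. You mention this computation only as an optional alternative, but it is the substance of the proof; the two mechanisms you propose in order to avoid it do not function.
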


%Unfortunately we have not been able to find a direct geometric proof of this lemma.
By using the Weyl group action as in \S\ref{sec:quiver}, 
it is not hard to reduce to the case $i=0$, with the normalization conditions
\eqref{eq:normR} and \eqref{eq:normUD} in mind;
however this is not enough to prove the first equality in the case $j\ne 0$.
Instead, one must resort to direct calculation (which we skip here) of
the relevant $R$-matrix entries based on the explicit expressions of
appendix~\ref{app:Rd1}, \cite[\S 3.6 and 3.8]{artic71},
\cite{artic77}, respectively, noting that the required entries are
``diagonal'', i.e., independent of the normalization of the weight
vectors inside each one-dimensional weight space; only the overall
normalization of the $R$- and $U$-matrices is left undetermined, and
one then concludes using \eqref{eq:normR} and \eqref{eq:normUD}.
\junk{A direct computation (using explicit expressions for the $R$-matrix) is somewhat difficult (especially at $d=4$),
  \rem[gray]{in particular the varieties involved are NOT $T^*$flags}
  so instead we us
  size examples to reduce all these equalities to the normalization conditions \eqref{eq:normR} and \eqref{eq:normUD}.
  so instead we use the Weyl group action of proposition \ref{prop:reflect} to
  show these identities without calculation.
  This is detailed in Appendix~\ref{app:norm}.
}
\end{proof}

\junk{at the end of day, what we'd have ideally is
three embeddings of the quantized
affine algebras (or loop algebras?).
using Drinfeld currents, one gets embeddings whenever there's embeddings
of {\em finite}\/ Dynkin diagrams
(for $d=1$, cf
\url{https://projecteuclid.org/download/pdf_1/euclid.cmp/1104271413}).
one can then
use the extended affine quantum Weyl (braid) group action 
(which includes Dynkin diagram automorphism).
in fact there's quite a bit of freedom because the translation part of the
action is trivial in our reps. because our reps are minuscule,
we only need to check that the action on the weight space is the right one!
(and then choose a normalization of weight vectors that agrees with the
single-number sector).
for ex for $d=1$ one can just use the affine
Dynkin diagram automorphism of order $3$! for $d=2$ one can use the affine
Dynkin diagram automorphism of order $3$ that cycles $b$, $b'$ and the affine
node, and compose it with $T_a T_{a'}$ (one can probably switch $a'$ and $b'$).
these embeddings
$\Uq(A_d^{(1)})<\Uq(X_{2d}^{(1)})$ should be such that 
the representation $\CC^{d+1}$ of 
$\Uq(A_d^{(1)})$ appears $3$ times in one ($d\ne2$) / three
($d=2$) irrep(s) of $\Uq(X_{2d}^{(1)})$, with the weights related by $\tau$.
that should be enough to formulate a general theorem up to $d=4$.
warning, at $d=4$ there's plenty of 5-dimensions subreps: so either we still need the face argument
to conclude on stability of single-number sector,
or use torus or even better other commuting $A_4$ to conclude
}

There is an obvious dual statement, that we state without proof.
Let
\[
\tikz[scale=1.8,baseline=-0.8cm]{\downtri{\lambda}{\nu}{\mu}}
:=
\left< e^*_\mu\otimes e^*_\lambda,\,
\overbrace{\check R\ldots \check R}^{n\choose 2}\,
\overbrace{D\ldots D}^{n}
%D\ldots D\,\check R\ldots \check R 
\, e_\nu
\right>
\]
to be the entry of the product of $R$-matrices
and $D$-matrices forming a 180 degree rotated puzzle, where time still flows downwards,
\junk{if time still flows up, shouldn't the $D$s be right of the $R$s now? PZJ: we switched, time flows down,
but you're right it was wrong as stated}
all labels are read left to right, and the spectral parameters
at the bottom are (from left to right) $\kappa z_1,\ldots,\kappa z_n,
\kappa^{-1}z_1\,\ldots,\kappa^{-1}z_n$.
Then
\begin{thm}\label{thm:maindual}
Let $d=1,2,3,4$, and $P_-\backslash G$ be a $d$-step flag variety.
\begin{equation}\label{eq:maindual}
S_\lambda S_\mu=
%    \sum_\nu \left( \sum \Bigg\{fug(P)\ :
%      \ \text{puzzles $P$ with boundary }\tikz[scale=1.8,baseline=0.5cm]{\uptri{\lambda}{\nu}{\mu}}\Bigg\} \right)
\sum_\nu \tikz[scale=1.8,baseline=-0.8cm]{\downtri{\lambda}{\nu}{\mu}}
    \ S_\nu 
  \end{equation}
\end{thm}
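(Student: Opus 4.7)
The plan is to mirror the proof of Theorem~\ref{thm:main} with all diagrams rotated by $180^\circ$, using the dual fixed-point formula \eqref{eq:defSfpd} in place of \eqref{eq:defSfp} and the downward bootstrap equations \eqref{eq:qtrirev} in place of the upward version \eqref{eq:qtri}. Concretely, I would restrict \eqref{eq:maindual} to an arbitrary fixed point $\sigma\in W_P\dom W$; by \eqref{eq:defSfpd}, each $S_\nu|_\sigma$ is computed by a diagram with the weakly decreasing string $\alpha$ on top, $\nu$ at the bottom, joined by a wiring diagram of $\hat\sigma := \sigma^{-1} w_0$. Summing over $\nu$ glues the downward puzzle for $\lambda,\mu$ on top of this wiring through its horizontal edge.

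Three auxiliary dual statements are then required. First, the dual of Lemma~\ref{lem:easy}: if the SW and SE edges of a downward puzzle carry single-number labels (i.e.~lie in $V^A_1(z)$ and $V^A_2(z)$), then its top edge does as well (in $V^A_3(z)$); this is immediate from weight conservation (proposition~\ref{prop:ybe}) and Lemma~\ref{lem:ressingle}, noting that the weight of $e_\nu$ is $-\tau$ times that of $e_\lambda$ and hence decomposes into $\vf_\lambda+\tau^2\vf_\mu$ in the appropriate way. Second, the dual of Proposition~\ref{prop:trivpuzzle}: the unique puzzle with $\alpha$ on top has all three sides equal to $\alpha$ and labels constant along diagonals. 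Third, the dual of Lemma~\ref{lem:norm}, asserting that the relevant $180^\circ$-rotated rhombus fugacities and the $D$-triangle fugacities $D^i_{ii}$ all equal $1$; these follow from \eqref{eq:normR}, \eqref{eq:normUD} together with the Weyl-group reduction sketched after Lemma~\ref{lem:norm}, again combined with direct inspection of the tables of \S\ref{app:Rd1}, \cite[\S 3.6 and 3.8]{artic71}, and \cite{artic77}.

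With these dual statements in hand, the chain of equalities in the proof of Theorem~\ref{thm:main} carries through verbatim, read upside down: repeated application of Yang--Baxter \eqref{eq:ybe} and of the downward bootstrap \eqref{eq:qtrirev} slides the two copies of the $\hat\sigma$-wiring through the $D$'s and $\cR$'s of the downward puzzle until they separate into the $\lambda$ and $\mu$ wings, leaving a trivial downward puzzle in the middle whose value is $C_\alpha=1$ by the dual normalization. The right-hand side then collapses to $S_\lambda|_\sigma\cdot S_\mu|_\sigma$, as desired. (One could alternatively attempt to deduce Theorem~\ref{thm:maindual} directly from Theorem~\ref{thm:main} via Lemma~\ref{lem:inv} by substituting $q\mapsto q^{-1}$, $z_i\mapsto z_{n+1-i}^{-1}$, $y_i\mapsto y_i^{-1}$; but then the upward puzzle fugacities must be identified with the downward ones basis by basis, and we find the direct approach cleaner.)

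The main obstacle is the dual of Proposition~\ref{prop:trivpuzzle}. The original argument begins from the leftmost entry of $\omega$, which is the smallest label, and peels off the left column of rhombi by invoking Lemma~\ref{lem:tricky} with the inequality $j\ge i$. For $\alpha$ the smallest label sits at the rightmost position, so the induction must be oriented in reverse, peeling the right column from the top down; correspondingly, the scalar-product bounds in Lemma~\ref{lem:tricky} must be redone with the roles of $\tau$ and $\tau^2$ swapped (and with the auxiliary sets $\mathcal A_i$ replaced by their $\tau^{-1}$-translates, reflecting the fact that downward triangles carry the adjoint weight relation). The computations themselves are entirely parallel, but keeping the conventions straight — in particular, matching the time-downward orientation with the reversed boundary string — is the only place real care is needed.
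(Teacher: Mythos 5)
The paper does not actually prove this theorem: it is introduced with ``There is an obvious dual statement, that we state without proof,'' so there is no argument of record to compare yours against. Your proposal supplies exactly the argument the authors are gesturing at — rerun the proof of theorem~\ref{thm:main} with all diagrams rotated $180^\circ$, using \eqref{eq:defSfpd} in place of \eqref{eq:defSfp}, the reversed bootstrap \eqref{eq:qtrirev}, and dual versions of lemma~\ref{lem:easy}, proposition~\ref{prop:trivpuzzle}, and lemma~\ref{lem:norm} — and the structure is sound. You also correctly locate the only place where genuine work is required, namely the dual of proposition~\ref{prop:trivpuzzle} (equivalently of lemma~\ref{lem:tricky}) at $d=4$; you assert rather than carry out the re-done scalar-product bounds, but since the sets $\mathcal A_i$ and the backtracking argument transport under the symmetry exchanging $\tau$ and $\tau^2$, the claim is credible. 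A cleaner way to dispose of that step (and arguably the reason the authors call the statement obvious) is the route you relegate to parentheses: lemma~\ref{lem:inv} converts theorem~\ref{thm:maindual} into theorem~\ref{thm:main} for the reversed strings under $q\mapsto q^{-1}$, $z_i\mapsto z_{n+1-i}^{-1}$, and the only thing left to check is that this substitution, applied entrywise to the upward puzzle pieces, reproduces the downward pieces as defined — which is the observation made after \eqref{eq:defSd} that arrow reversal is exactly $q,z\mapsto q^{-1},z^{-1}$ on the $R$-matrix, together with the $180^\circ$ rotation convention and \eqref{eq:normR}, \eqref{eq:normUD}. Either route works; the substitution route avoids redoing lemma~\ref{lem:tricky} at the cost of a basis-by-basis identification of fugacities.

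Two small corrections to your write-up. First, the gluing orientation is reversed: in \eqref{eq:defSfpd} the string $\nu$ sits at the \emph{bottom} of the $\hat\sigma$-wiring and $\alpha$ at the top, while the horizontal edge of a downward puzzle is its \emph{top} edge, so the wiring is attached above the puzzle and is slid \emph{down} through it (the mirror image of the original proof, where the wiring sits below and is slid up); the remainder of your description is consistent with the correct orientation, so nothing downstream breaks. Second, the statement you call ``the dual of lemma~\ref{lem:easy}'' can be replaced by the simpler observation that the $\hat\sigma$-wiring preserves $\bigotimes_k V_3^A(z_k)$ (lemma~\ref{lem:ressingle}(3)), which already lets you trade $\sum_\nu e_\nu\otimes e_\nu^*$ for the identity on that sector.
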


\subsection{The hierarchy of bases}\label{ssec:limits}
The various %possible
limits can be summarized in the following diagram:
\begin{center}
\begin{tikzpicture}[xscale=8,yscale=4,every node/.style={align=center}]
\node (a) at (0,0) {motivic Segre classes\\ $K^{\loc}_{\hat T}$};
\node (b) at (1,0) {Schubert classes/dual\\ $K_{T}$};
\node (c) at (0,-1) {SSM classes\\ $H^{*\loc}_{\hat T}$};
\node (d) at (1,-1) {Schubert classes\\$H^*_{T}$};
\node (aa) at (0.6,0.7) {motivic Segre classes\\ $K^{\loc}_{\CC^\times}$};
\node (bb) at (1.6,0.7) {Schubert classes/dual\\ $K$};
\node (dd) at (1.6,-0.3) {Schubert classes\\$H^*$};
\draw[->] (a) -- node[above] {$\ss q^{\pm1}\to0$} (b);
\draw[->] (a) -- node[right,pos=0.4] {$\ss t=e^\hbar\to1$\\$\ss z_i=e^{y_i}\to 1$} (c);
\draw[->] (b) -- node[right] {$\ss z_i=e^{y_i}\to 1$} (d);
\draw[->] (c) -- node[above] {$\ss \hbar\to\infty$} (d);
\draw[->] (a) -- node[above left=-1mm] {$\ss z_i\to1$} (aa);
\draw[->] (b) -- node[above left=-1mm] {$\ss z_i\to1$} (bb);
\draw[->] (d) -- node[above left=-1mm] {$\ss x_i\to0$} (dd);
\draw[->] (aa) -- node[above] {$\ss q^{\pm1}\to0$} (bb);
\draw[->] (bb) -- node[right] {$\ss\mathrm{gr}$} (dd);
\begin{scope}[gray]
\node (cc) at (0.6,-0.3) {SSM classes\\ $H^{*\loc}_{\CC^\times}$};
\draw[->] (c) -- node[above left=-1mm] {$\ss x_i\to0$} (cc);
\draw[->] (aa) -- (cc);
\draw[->] (cc) -- node[above,pos=0.6] {$\ss \hbar\to\infty$} (dd);
\end{scope}
\end{tikzpicture}
\end{center}
% \begin{matrix}
% \text{motivic Segre classes} & \xrightarrow{q^{\pm1} \to 0} & \text{$K_{\hat T}$ Schubert classes/dual}
% \\
% \quad\qquad\Bigg\downarrow\,\scriptstyle t=e^\hbar,z_i=e^{y_i}\to 1 & & \qquad\Bigg\downarrow \scriptstyle z_i=e^{y_i}\to 1
% \\
% \text{SSM classes} & \xrightarrow{\ \hbar\to\infty\ } & \text{$H_{\hat T}$ Schubert classes}
% \end{matrix}
% \]
\junk{one needs to rethink this carefully in view of the polarization issues -- there seems to
  be a hidden Serre duality in there}
%The localization is: inverting $k\hbar+z_i-z_j$, $k\ne 0$, for $H_{\hat T}^{*\loc}$;
%$1-q^{2k}$ for $K_{\CC^\times}^{\loc}$; $\hbar$ for $H_{\CC^\times}^{*\loc}$.

The vertical direction is going from $K$-theory to cohomology
and will be discussed in more detail in \S \ref{sec:coho}.

The Northeast direction corresponds to forgetting the Cartan torus
equivariance, keeping only the $\CC^\times$ action, e.g., working in
$K_{\CC^\times}(T^*(P_-\backslash G))$. By a slight abuse of language,
we shall call the resulting specialization of theorem~\ref{thm:main} the
``nonequivariant rule'' (note that it makes no sense\footnote{Ginzburg's
  formula \cite{Ginzburg86} for Chern--Schwartz--MacPherson classes
  hides its use of dilation equivariance in a rather sneaky way, by
  specializing $\hbar\to -1$. In the cohomological limit (bottom layer
  of the hierarchy cube), the $\hbar$s can be recovered by re-homogenizing,
  but this trick is unavailable in $K$-theory.}
to remove the equivariance altogether, because the definition of motivic
Segre classes requires localization).
In this limit, the rhombi composing a puzzle each break into triangles
according to \eqref{eq:factorR}, so that a nonequivariant puzzle looks like
\begin{center}
\begin{tikzpicture}[math mode,nodes={edgelabel},x={(-0.577cm,-1cm)},y={(0.577cm,-1cm)}] %,scale=\thescale]
\draw[thick] (0,0) -- node {} ++(0,1); \draw[thick] (0,0) -- node {} ++(1,0); \draw[thick] (0+1,0) -- node {} ++(-1,1); 
\draw[thick] (0,1) -- node {} ++(0,1); \draw[thick] (0,1) -- node {} ++(1,0); \draw[thick] (0+1,1) -- node {} ++(-1,1); 
\draw[thick] (0,2) -- node {} ++(0,1); \draw[thick] (0,2) -- node {} ++(1,0); \draw[thick] (0+1,2) -- node {} ++(-1,1); 
\draw[thick] (0,3) -- node {} ++(0,1); \draw[thick] (0,3) -- node {} ++(1,0); \draw[thick] (0+1,3) -- node {} ++(-1,1); 
\draw[thick] (1,0) -- node {} ++(0,1); \draw[thick] (1,0) -- node {} ++(1,0); \draw[thick] (1+1,0) -- node {} ++(-1,1); 
\draw[thick] (1,1) -- node {} ++(0,1); \draw[thick] (1,1) -- node {} ++(1,0); \draw[thick] (1+1,1) -- node {} ++(-1,1); 
\draw[thick] (1,2) -- node {} ++(0,1); \draw[thick] (1,2) -- node {} ++(1,0); \draw[thick] (1+1,2) -- node {} ++(-1,1); 
\draw[thick] (2,0) -- node {} ++(0,1); \draw[thick] (2,0) -- node {} ++(1,0); \draw[thick] (2+1,0) -- node {} ++(-1,1); 
\draw[thick] (2,1) -- node {} ++(0,1); \draw[thick] (2,1) -- node {} ++(1,0); \draw[thick] (2+1,1) -- node {} ++(-1,1); 
\draw[thick] (3,0) -- node {} ++(0,1); \draw[thick] (3,0) -- node {} ++(1,0); \draw[thick] (3+1,0) -- node {} ++(-1,1); 
\end{tikzpicture}
\end{center}

We now discuss the horizontal arrows, those going from left to right
side of the cube.

\rem[gray]{Note that the commutativity of the diagram is somewhat nontrivial
because naively the two limits $q\to 0,\infty$ and $t=q^{-2}\to 1$ do not
commute.}

\newcommand\Sc{\underline{S}}
\subsection{Inversion numbers and Drinfeld twist}\label{sec:twist}
The horizontal arrows were the object of \cite[\S 3.6.2]{artic71};
this limit is related to getting rid of the fiber
of the bundle $T^*(P_-\backslash G)\to P_-\backslash G$ by sending the
equivariant parameter $q$ scaling it to $0$ (or $\infty$).
We briefly review this limit here, describing how to recover the
results of \cite{artic71} as a limit of theorem~\ref{thm:main}.

We first show how to obtain Schubert classes as limits of motivic
Segre classes.  Return to the setup of \S\ref{sec:stable}. It is
convenient to extend the Cartan action of (the quantized loop algebra
of) $\mathfrak{sl}_{d+1}$ on $V^A=\CC^{d+1}=\left<e_0,\ldots,e_d\right>$
to that of $\mathfrak{gl}_{d+1}$, i.e., introduce operators $h_i$,
$i=0,\ldots,d$, such that $h_i e_j = \delta_{i,j} e_j$.
\rem[gray]{can we really define the twist using only $sl$, as Kolya suggested?}

Introduce next the {\em Drinfeld twist}
\cite{Drinfeld-QG,Drinfeld-twist} acting in $V^A\otimes V^A$ by
\[
  \Omega = q^{\frac{1}{2}\sum_{i,j=0}^d B_{i,j}h_i\otimes h_j}
\]
where $B_{i,j}=\text{sign}(i-j)=-1,0,1$ depending on whether $i<j$, $i=j$, $i>j$,
\rem[gray]{note that the twist is by powers of $q$, not $-q$ -- that's a choice, it's whether
we want to include the $(-1)^{inv}$ in the twist or not. we choose NOT TO -- the fugacity
after twisting will still include that sign}
and use it to conjugate the $R$-matrix \eqref{eq:Rsingle}:
\[
\check R(z)_{twist}=\Omega \check R(z) \Omega^{-1}
\]

Define more generally the operator $\Omega_n$ acting on $(V^A)^{\otimes n}$ by
\[
\Omega_n = q^{\frac{1}{2}\sum_{1\le k< \ell\le n}\sum_{i,j=0}^d B_{i,j}h^{(k)}_i h^{(\ell)}_j}
\]
where $h^{(k)}_i$ is $h_i$ acting on the $k^{\rm th}$ factor of the tensor product. (In particular,
$\Omega=\Omega_2$.)

Because of the cocycle property satisfied by $\Omega$, one has symbolically
\[
\begin{tikzpicture}[baseline=-3pt,scale=0.9]
\draw (0,-1) rectangle (5,1); \node at (2.5,0) {$\sigma^{-1}$};
\node[above left] at (5,-1) {$\ss twist$};
\foreach\x/\t/\u in {1/z_1/z_{\sigma(1)},2/z_2/z_{\sigma(2)},3/\cdots/\cdots,4/z_n/z_{\sigma(n)}}
{
\draw[d,invarrow=0.5] (\x,1) -- node[right,pos=0.5] {$\ss\t$} ++(0,1);
\draw[d,invarrow=0.5] (\x,-2) -- node[right=-1mm,pos=0.4] {$\ss\u$} ++(0,1);
}
%\draw[decorate,decoration=brace] (4,-2.3) -- node[below] {$\m\omega$} (1,-2.3);
%\draw[decorate,decoration=brace] (1,2.3) -- node[above] {$\m\lambda$} (4,2.3);
\end{tikzpicture}
=
\begin{tikzpicture}[baseline=-3pt,scale=0.9]
\draw (0,-1) rectangle (5,1); \node at (2.5,0) {$\sigma^{-1}$};
\draw (0,-1.5) rectangle (5,-2.5); \node at (2.5,-2) {$\Omega_n$};
\draw (0,1.5) rectangle (5,2.5); \node at (2.5,2) {$\Omega_n^{-1}$};
\foreach\x/\t/\u in {1/z_1/z_{\sigma(1)},2/z_2/z_{\sigma(2)},3/\cdots/\cdots,4/z_n/z_{\sigma(n)}}
{
\draw[d] (\x,1) -- ++(0,0.5);
\draw[d,invarrow=0.5] (\x,2.5) -- node[right,pos=0.5] {$\ss\t$} ++(0,0.5);
\draw[d] (\x,-1.5) -- ++(0,0.5);
\draw[d,invarrow=0.5] (\x,-3) -- node[right=-1mm,pos=0.4] {$\ss\u$} ++(0,0.5);
}
%\draw[decorate,decoration=brace] (4,-2.3) -- node[below] {$\m\omega$} (1,-2.3);
%\draw[decorate,decoration=brace] (1,2.3) -- node[above] {$\m\lambda$} (4,2.3);
\end{tikzpicture}
\]
where the ``twisted'' rectangle uses $\check R(z)_{twist}$ at each crossing of the diagram of $\sigma^{-1}$.

With \eqref{eq:defSfp} in mind, 
we compute $\Omega_n^{-1} e_\lambda=q^{-\ell(\lambda)+D/2} e_\lambda$, 
where $D=\dim(P_-\dom G)$ (cf \cite[lemma 2.4]{artic71}), 
and similarly $e_\omega^* \Omega_n=q^{-D/2}e_\omega^*$,
so that
\begin{equation}\label{eq:q0res}
\begin{tikzpicture}[baseline=-3pt,scale=0.9]
\draw (0,-1) rectangle (5,1); \node at (2.5,0) {$\sigma^{-1}$};
\foreach\x/\t/\u in {1/z_1/z_{\sigma(1)},2/z_2/z_{\sigma(2)},3/\cdots/\cdots,4/z_n/z_{\sigma(n)}}
{
\draw[d,invarrow=0.5] (\x,1) -- node[right,pos=0.5] {$\ss\t$} ++(0,1);
\draw[d,invarrow=0.5] (\x,-2) -- node[right=-1mm,pos=0.4] {$\ss\u$} ++(0,1);
}
\draw[decorate,decoration=brace] (4,-2.3) -- node[below] {$\m\omega$} (1,-2.3);
\draw[decorate,decoration=brace] (1,2.3) -- node[above] {$\m\lambda$} (4,2.3);
\node[above left] at (5,-1) {$\ss twist$};
\end{tikzpicture}
=
q^{-\ell(\lambda)}
\begin{tikzpicture}[baseline=-3pt,scale=0.9]
\draw (0,-1) rectangle (5,1); \node at (2.5,0) {$\sigma^{-1}$};
\foreach\x/\t/\u in {1/z_1/z_{\sigma(1)},2/z_2/z_{\sigma(2)},3/\cdots/\cdots,4/z_n/z_{\sigma(n)}}
{
\draw[d,invarrow=0.5] (\x,1) -- node[right,pos=0.5] {$\ss\t$} ++(0,1);
\draw[d,invarrow=0.5] (\x,-2) -- node[right=-1mm,pos=0.4] {$\ss\u$} ++(0,1);
}
\draw[decorate,decoration=brace] (4,-2.3) -- node[below] {$\m\omega$} (1,-2.3);
\draw[decorate,decoration=brace] (1,2.3) -- node[above] {$\m\lambda$} (4,2.3);
\end{tikzpicture}
=
q^{-\ell(\lambda)}
S^\lambda|_\sigma 
\end{equation}

Let us now compute explicitly from \eqref{eq:Rsingle}
\[
\lim_{q\to0}\check R_{twist}(z)
=
\begin{cases}
1 & i=k\le j=l
\\
1-z& i=l<j=k
\\
z
& i=k>j=l
\\
0 & \text{else}
\end{cases}
\]

As explained in \cite[\S 3.6]{artic71}, this {\em nilHecke}\/ $R$-matrix
is directly related to the $K$-theory of partial flag varieties (as opposed
to their cotangent bundles); paying attention to the fact (cf \ref{sssec:K} and \cite[\S 2.1]{artic71}) that
inverse equivariant parameters and classes of line bundles are used there,
we can compute the limit as $q\to 0$ of
the l.h.s.\ of \eqref{eq:q0res} to be
the restriction $\Sc^\lambda|_\sigma$ to the fixed point $\sigma$
of the Schubert class $\Sc^\lambda$ associated to $\lambda$, composed with the map $\vee$
that takes classes of vector bundles to classes of their duals (i.e., $x_i\mapsto x_i^{-1}$, $z_i\mapsto z_i^{-1}$).
We conclude that
\[
\Sc^\lambda = \left(\lim_{q\to 0} q^{-\ell(\lambda)} S^\lambda\right)^{\vee}
\]
as elements of $K_{\hat T}(P_-\backslash G) \cong K_{\hat T}(T^*(P_-\backslash G))$.
$\vee$, being a ring map, does not affect product rules.

Now consider any $\tau$-invariant alternating form $B$ on the weight space of
$\mathfrak{x}_{2d}$ that extends $(B_{ij})$ above,
i.e., $B(\vf_i,\vf_j)=B_{ij}$.\footnote{In practice, it is convenient to augment
the weight space in a similar way as we switched from $\mathfrak{sl}_{d+1}$
to $\mathfrak{gl}_{d+1}$ above; this is implicit in \cite{artic71}.}
Use it to twist all $R$-matrices and $U$, $D$:
\[
\Omega=q^{\frac{1}{2}B(H,H)},
\qquad
\check R(z)_{ij,twist}=\Omega \check R(z)_{ij} \Omega^{-1},
\qquad
U_{twist}=U \Omega^{-1},
\qquad
D_{twist}=\Omega D
\]
where $H$ stands for the collection of Cartan generators,
and assign to puzzles twisted fugacities, which we denote with
a subscript ``twist''.
We then have
\begin{prop}
Let $d=1,2,3,4$ and $P_-\backslash G$ be a $d$-step flag variety. The product of two Schubert classes
$\Sc^\lambda$ and $\Sc^\mu$ in $K_T(P_-\backslash G)$ is given by
\[
\Sc^\lambda \Sc^\mu \ =\ 
%    \sum_\nu \left( \sum \Bigg\{fug(P)\ :
%      \ \text{puzzles $P$ with boundary }\tikz[scale=1.8,baseline=0.5cm]{\uptri{\lambda}{\nu}{\mu}}\Bigg\} \right)
\lim_{q\to0}
\sum_\nu \tikz[scale=1.8,baseline=0.5cm]{\uptri{\lambda}{\nu}{\mu}}^\vee_{twist}
 \   \Sc^\nu 
\]
\end{prop}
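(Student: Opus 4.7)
The plan is to deduce this from theorem \ref{thm:main} by applying the Drinfeld twist at every vertex of the puzzle, exploiting the cocycle property of $\Omega$ to push the twists to the boundary, and then using \eqref{eq:q0res} to identify $q\to 0$ limits of twisted motivic Segre classes with Schubert classes.

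First I would start with the equality of theorem \ref{thm:main} for each fixed point restriction: $S^\lambda|_\sigma\, S^\mu|_\sigma = \sum_\nu F_\nu^{\lambda\mu}\, S^\nu|_\sigma$, where $F_\nu^{\lambda\mu}$ denotes the (untwisted) puzzle fugacity. Next I would simultaneously twist each $R$-crossing and each $U$-vertex inside the puzzle by $\Omega$ according to the recipe $\check R_{twist} = \Omega \check R \Omega^{-1}$, $U_{twist} = U\Omega^{-1}$ (and $D_{twist}=\Omega D$ for the dual). Because $\Omega=q^{\frac12 B(H,H)}$ is the exponential of a bilinear form in Cartan generators, and $B$ is $\tau$-invariant and compatible with the $\mathfrak a_d \subset \mathfrak x_{2d}$ inclusion, the $\Omega$'s attached to internal edges telescope: on every internal edge, the $\Omega$ contributed by the vertex above cancels against the $\Omega^{-1}$ contributed by the vertex below. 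What survives is a net twist acting only on the three boundary sides of the triangular puzzle.

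The resulting boundary twist is diagonal on pure tensors of weight vectors, so as in \eqref{eq:q0res} one can read off the $q$-power it contributes. Applied on the NW side to $e_\lambda$ it produces $q^{-\ell(\lambda)}$ times a $\lambda$-independent normalization $q^{D/2}$, and similarly $q^{-\ell(\mu)}$ on the NE side; on the bottom side the bra $e_\nu^*$ produces $q^{\ell(\nu)-D/2}$ (these are the same computations as in \cite[lemma 5]{artic71}, applied to each of the three one-dimensional tensor factors of the puzzle boundary). Collecting these,
\[
F_{\nu,twist}^{\lambda\mu} \;=\; q^{\,\ell(\nu)-\ell(\lambda)-\ell(\mu)}\, F_\nu^{\lambda\mu} .
\]
Substituting into theorem \ref{thm:main} and multiplying both sides by $q^{-\ell(\lambda)-\ell(\mu)}$ gives
\[
\bigl(q^{-\ell(\lambda)} S^\lambda\bigr)\bigl(q^{-\ell(\mu)} S^\mu\bigr) \;=\; \sum_\nu F_{\nu,twist}^{\lambda\mu}\,\bigl(q^{-\ell(\nu)} S^\nu\bigr) .
\]
Now by \eqref{eq:q0res}, $\lim_{q\to 0} q^{-\ell(\lambda)} S^\lambda = \Sc^\lambda$ in $K_T$, and similarly for $\mu,\nu$. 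Finally I would verify that $F_{\nu,twist}^{\lambda\mu}$ itself has a finite $q\to 0$ limit: this follows because the twisted single-number $R$-matrix limit, displayed just after \eqref{eq:q0res}, is the nilHecke $R$-matrix, which has polynomial (hence regular at $q=0$) entries; the same regularity for the $U$- and $D$-vertices is inherited from the explicit expressions referenced in lemmas \ref{lem:factorR} and \ref{lem:norm} after twisting. Taking $q\to 0$ on both sides yields the stated formula.

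The main obstacle is the bookkeeping of the cocycle cancellation together with the $q$-powers, in particular verifying that the twist on the S-boundary of the puzzle (whose labels live in $-\tau\W\subset V_3$) contributes exactly $q^{\ell(\nu)}$ up to the same normalization $q^{-D/2}$ that balances the NW and NE contributions. This requires choosing $B$ compatibly with $\tau$-symmetry on the weight lattice of $\mathfrak x_{2d}$, which is exactly the hypothesis that $B$ be a $\tau$-invariant alternating form extending $(B_{ij})$; the consistency of this choice with lemma \ref{lem:ressingle} is what makes the three boundary contributions organize themselves into matching $S\to \Sc$ conversions on both sides of the equation.
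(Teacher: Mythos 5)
Your proposal is correct and follows exactly the route the paper intends (the paper states this proposition without a written proof, relying on the twist machinery of \S\ref{sec:twist}): conjugate every vertex by $\Omega$, telescope the internal twists via the cocycle property, read off the boundary powers $q^{-\ell(\lambda)},q^{-\ell(\mu)},q^{\ell(\nu)}$ as in \eqref{eq:q0res}, and identify the $q\to0$ limits with Schubert classes via the nilHecke $R$-matrix. The one point you flag as "bookkeeping" is resolved by noting that the NW--NE cross terms in $\Omega_{2n}$ equal $\tfrac12 B\bigl(\sum_k\vf_{\lambda_k},\,\tau^2\sum_\ell\vf_{\mu_\ell}\bigr)$, which depends only on the (fixed) contents, so it contributes a constant absorbed by the normalization of the trivial puzzle.
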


This expression is not entirely satisfactory because the summation is not positive, so that there may be compensations.
A natural question is whether one can choose bases\footnote{\label{foot:norm}Note that for $d\le 3$,
the representations $V_a(z)$ being minuscule, the only real freedom in the choice of bases is normalization of the
weight vectors.} of the $V_a(z)$ and an alternating form $B$ \junk{the freedom in $B$ is in going beyond single number -- though really isn't it the same as gauge freedom?} in such a way
the the fugacity of every (twisted) rhombus and triangle has a finite (positive) limit as $q\to0$. 

The case $d=1$, which was only sketched in \cite{artic71} because it did not lead to any new results, will be developed in
\S\ref{sec:schubd1} based on our more general framework. For $d=2$, \cite{artic71} provides a positive answer to the question above, leading to a positive $K_T$ puzzle rule.
At $d=3$, the situation is more subtle: it seems impossible to keep the fugacity
of equivariant rhombi finite as $q\to 0$. One can however find a limit
for the (nonequivariant) fugacity of every triangle, leading to a positive $K$ puzzle rule. 
In particular every triangle (with nonzero fugacity) has a nonnegative inversion number.

The situation is worse at $d=4$: even nonequivariantly, it seems impossible to get rid of triangles
with negative inversion number.
This does not preclude from formulating a puzzle rule in nonequivariant $K$-theory, but the answer is sufficiently
complicated that we prefer not to write it explicitly, providing instead in \S\ref{ssec:d4} a (mildly nonpositive) rule in nonequivariant cohomology only.

\rem[gray]{how natural is the choice of basis made in paper I? is it the stable basis? apparently not, cf.~$d=1$ below}

The limit $q\to\infty$ can be treated similarly. In view of
lemma~\ref{lem:inv}, this is actually the same as taking $q\to0$ in
the dual theorem~\ref{eq:maindual}. Explicitly, if one applies the
opposite twist $\Omega^{-1}$ to $S_\lambda$, one obtains in the $q\to 0$ limit the classes $\Sc_{\lambda}^\vee$
where the $\Sc_\lambda$ are {\em dual}\/ Schubert classes; and we derive this way
puzzle rules for the latter.

\section{Example: $d=1$}\label{sec:exd1}
\rem[gray]{mention integral formulae? stability as a function of $n$? maybe not}
\junk{provide the $A_1\to A_2$ version of things, sticking to the nondegenerate
case. of course the $R$-matrix looks very much like the $d=2$ single-number
$R$-matrix, but the devil is in the details: labeling, normalization, conjugation. signs still need careful check}
\subsection{The $d=1$ setup}\label{ssec:d1setup}
We now provide the full details of the simplest case $d=1$, i.e.,
$P_-\backslash G$ is a Grassmannian.
The explicit definition of the quantized affine algebra $\Uq(\mathfrak{a}_2^{(1)})$ is given
in appendices~\ref{app:qg} and \ref{app:Rd1}.

We consider the three $\Uq(\mathfrak{a}_2[z^\pm])$-modules $V_a(z)$, $a=1,2,3$ 
($z$ is a formal parameter); 
even though $V_1(z)$ and $V_2(z)$
are isomorphic, it is useful to differentiate them. They have bases $e_{a,X}$,
$a=1,2,3$, labeled by $X\in \{1,0,10\}$, where the first two vectors
form the usual bases of $V^A_a(z)$, while the third vector is the remaining weight vector with
some convenient normalization (the label ``10'' is traditional, 
see e.g.~\cite[\S 2.3]{artic71} for a justification);
and their weights are given by $\vf_X$ (resp.\ $\tau^2 \vf_X$, $-\tau \vf_X$). The representation
matrices are given explicitly in appendix~\ref{app:Rd1}.

\junk{
We now define $R$-matrices as follows: 
for generic $u,v$, $V_i(u)\otimes V_j(v)$ and $V_j(v)\otimes V_i(u)$
are irreducible and isomorphic, and
$\check R_{ij}(u,v)$ is the unique
intertwiner from $V_i(u)\otimes V_j(v)$ to $V_j(v)\otimes V_i(u)$  with the
normalization condition
\begin{equation}\label{eq:norm}
\check R_{ij}(u,v)^{0,0}_{0,0}=1
\end{equation}
In fact, its entries
only depend on $u/v$, and we also write $\check R_{ij}(u,v)=\check R_{ij}(u/v)$.

\begin{rmk*}
  The normalization condition \eqref{eq:norm} looks quite natural,
but a word of caution is needed.
If $i=j$, this condition is equivalent to asking that the highest-weight-to-highest-weight entry 
of the $R$-matrix be $1$, which is indeed quite natural and in particular,
is the normalization coming out of the geometry \cite{MO-qg}
However for $i\ne j$, it is not the case, due to the
fact that identical labels in different representations have no obvious
relation to each other (in particular their weights differ by powers of $\tau$);
this remains the correct normalization for our purposes, but it
will introduce some subtleties in the geometric interpretation of our construction 
(see in particular \S \ref{sec:geominterp}).
\end{rmk*}

Now all properties
are easily proven as follows: first one checks that boths l.h.s.\ and r.h.s.\ 
are quantum group intertwiners. For generic parameters, these intertwiners
are unique up to normalization. Normalization condition \eqref{eq:norm}
ensures that the particular configuration entirely made of $0$s has equal
l.h.s.\ and r.h.s. 
%end junk
}

The relevant $R$-matrices are also given in appendix
\ref{app:Rd1}. In particular, one checks that
if restricted to the single-number sector $\{0,1\}$, all three $R$-matrices $\check R_{a,a}(z)$
coincide with the one in \eqref{eq:Rsingle}, in accordance with lemma~\ref{lem:ressingle}.

At the particular ratio $z''/z'=q^{-2}$, $V_1(z')\otimes V_2(z'')$
and $V_2(z'')\otimes V_1(z')$ become reducible, and the $R$-matrix factorizes
$
\check R_{1,2}(q^{-2})=DU
$
as in lemma~\eqref{lem:factorR}.
%where $D: V_3(q\,z)\otimes V_1(q^{-1}z)\to V_2(z)$ and
%$U: V_2(z) \to V_1(q^{-1}z)\otimes V_3(q\,z)$ are intertwiners. 
The nonzero entries of $U$ (resp.\ $D$)
are depicted as up-pointing (resp.\ down-pointing) triangles:
\begin{equation}\label{eq:tri1}
\begin{aligned}
&\uptri000=\uptri111=\uptri01{10}=\uptri{10}01=\uptri1{10}0=
1
&\quad&
\uptri{10}{10}{10}=-q^{-1}
\\
&\downtri000=\downtri111=\downtri01{10}=\downtri{10}01=\downtri1{10}0=
1
&&
\downtri{10}{10}{10}=-q
\end{aligned}
\end{equation}
Away from the ratio $q^{-2}$ of parameters, we represent the nonzero
entries of the matrix $\check R_{1,2}$ as rhombi,
and use the parametrization $\check R_{1,2}(q^{-2}z^{-1})$:
\begin{equation}\label{eq:rhombi1}
\begin{gathered}
\rh0000=
\rh0101=
\rh1111=
\rh1{10}1{10}=
\rh{10}0{10}0=
\rh{10}{10}{10}{10}=1
\\
\rh1{10}{0}{0}=
\rh{10}011=\frac{1-q^2}{1-q^2z}
\qquad
\rh001{10}=
\rh11{10}0=
\frac{(1-q^2)z}{1-q^2z}
\\
\rh1010=
\rh0{10}0{10}=
\rh{10}1{10}1=
\frac{q(1-z)}{1-q^2z}
\\
\rh01{10}{10}=-q^{-1}\frac{(1-q^2)z}{1-q^2z}
\qquad
\rh{10}{10}01=-q\frac{1-q^2}{1-q^2z}
\end{gathered}
\end{equation}
The parameter $z$ should be set to $z_j/z_i$ for a rhombus at location $i<j$.

\begin{ex}
Consider the simplest nontrivial example, of $T^*\PP^1$.  
The restrictions of motivic Segre classes at each fixed point were computed in example~\ref{ex:d1b}:
\[
\begin{matrix}
&|_{01}&\ &|_{10}\\
S^{01}=(&1&&\qquad ({1-q^2})\big/({1-q^2 z_2/z_1})&)
\\ 
S^{10}=(&0&&{q(1-z_2/z_1)}\big/({1-q^2 z_2/z_1})&)
\end{matrix}
\]
%where $s=s_1$ is (one minus) the Chern class in $K_T(\PP^1)\cong\ZZ[s,z_1,z_2,q]/((s-z_1)(s-z_2))$.

The possible products are:
\begin{itemize}
\def\thescale{1.4}
\def\posa{0.5}\def\posb{0.5}
\item $S^{01}S^{01}=S^{01}-\frac{q(1-q^2)}{1-q^2z_2/z_1}S^{10}$,
with corresponding puzzles
\[
\begin{tikzpicture}[math mode,nodes={edgelabel},x={(-0.577cm,-1cm)},y={(0.577cm,-1cm)},scale=\thescale]
\draw[thick] (0,0) -- node[pos=\posa] {0} ++(0,1); \draw[thick] (0,0) -- node[pos=\posb] {1} ++(1,0); \draw[thick,lightgray] (0+1,0) -- node[lightgray] {10} ++(-1,1); 
\draw[thick] (0,1) -- node[pos=\posa] {1} ++(0,1); \draw[thick] (0,1) -- node[pos=\posb] {1} ++(1,0); \draw[thick] (0+1,1) -- node {1} ++(-1,1); 
\draw[thick] (1,0) -- node[pos=\posa] {0} ++(0,1); \draw[thick] (1,0) -- node[pos=\posb] {0} ++(1,0); \draw[thick] (1+1,0) -- node {0} ++(-1,1); 
\end{tikzpicture}
\raisebox{1cm}{\quad and\quad}
\begin{tikzpicture}[math mode,nodes={edgelabel},x={(-0.577cm,-1cm)},y={(0.577cm,-1cm)},scale=\thescale]
\draw[thick] (0,0) -- node[pos=\posa] {0} ++(0,1); \draw[thick] (0,0) -- node[pos=\posb] {1} ++(1,0); \draw[thick,lightgray] (0+1,0) -- node[lightgray] {10} ++(-1,1); 
%\node[invlabel] at (0+0.7,0+0.7) {1};
\draw[thick] (0,1) -- node[pos=\posa] {1} ++(0,1); \draw[thick] (0,1) -- node[pos=\posb] {10} ++(1,0); \draw[thick] (0+1,1) -- node {0} ++(-1,1); 
\draw[thick] (1,0) -- node[pos=\posa] {10} ++(0,1); \draw[thick] (1,0) -- node[pos=\posb] {0} ++(1,0); \draw[thick] (1+1,0) -- node {1} ++(-1,1); 
\end{tikzpicture}
\]
(we indicated in light gray the corresponding {\em nonequivariant}\/ puzzles, see \S \ref{ssec:loop}).

\item $S^{01}S^{10}=S^{10}S^{01}=\frac{1-q^2}{1-q^2z_2/z_1}S^{10}$, with one puzzle
for each of the two computations:
\[
\begin{tikzpicture}[math mode,nodes={edgelabel},x={(-0.577cm,-1cm)},y={(0.577cm,-1cm)},scale=\thescale]
\draw[thick] (0,0) -- node[pos=\posa] {1} ++(0,1); \draw[thick] (0,0) -- node[pos=\posb] {1} ++(1,0); \draw[thick,lightgray] (0+1,0) -- node[lightgray] {1} ++(-1,1); 
\draw[thick] (0,1) -- node[pos=\posa] {0} ++(0,1); \draw[thick] (0,1) -- node[pos=\posb] {0} ++(1,0); \draw[thick] (0+1,1) -- node {0} ++(-1,1); 
\draw[thick] (1,0) -- node[pos=\posa] {10} ++(0,1); \draw[thick] (1,0) -- node[pos=\posb] {0} ++(1,0); \draw[thick] (1+1,0) -- node {1} ++(-1,1); 
\end{tikzpicture}
\raisebox{1cm}{\quad and\quad}
\begin{tikzpicture}[math mode,nodes={edgelabel},x={(-0.577cm,-1cm)},y={(0.577cm,-1cm)},scale=\thescale]
\draw[thick] (0,0) -- node[pos=\posa] {0} ++(0,1); \draw[thick] (0,0) -- node[pos=\posb] {0} ++(1,0); \draw[thick,lightgray] (0+1,0) -- node[lightgray] {0} ++(-1,1); 
\draw[thick] (0,1) -- node[pos=\posa] {1} ++(0,1); \draw[thick] (0,1) -- node[pos=\posb] {10} ++(1,0); \draw[thick] (0+1,1) -- node {0} ++(-1,1); 
\draw[thick] (1,0) -- node[pos=\posa] {1} ++(0,1); \draw[thick] (1,0) -- node[pos=\posb] {1} ++(1,0); \draw[thick] (1+1,0) -- node {1} ++(-1,1); 
\end{tikzpicture}
\]

\item $S^{10}S^{10}=\frac{q(1-z_2/z_1)}{1-q^2z_2/z_1}S^{10}$, with puzzle
\[
\begin{tikzpicture}[math mode,nodes={edgelabel},x={(-0.577cm,-1cm)},y={(0.577cm,-1cm)},scale=\thescale]
\draw[thick] (0,0) -- node[pos=\posa] {1} ++(0,1); \draw[thick] (0,0) -- node[pos=\posb] {0} ++(1,0); %\node[blue] at (0+0.5,0+0.5) {4}; 
\draw[thick] (0,1) -- node[pos=\posa] {0} ++(0,1); \draw[thick] (0,1) -- node[pos=\posb] {0} ++(1,0); \draw[thick] (0+1,1) -- node {0} ++(-1,1); 
\draw[thick] (1,0) -- node[pos=\posa] {1} ++(0,1); \draw[thick] (1,0) -- node[pos=\posb] {1} ++(1,0); \draw[thick] (1+1,0) -- node {1} ++(-1,1); 
\end{tikzpicture}
\]
(this one only contributes equivariantly, as its fugacity vanishes at
$z_1=z_2$).

\end{itemize}
The same exercise can be repeated for the dual classes:
\[
\begin{matrix}
&|_{01}&\ &|_{10}\\
S_{01}=(&\ {q(1-z_1/z_2)}/\left(1-q^2 z_1/z_2\right)&&0&)
\\
S_{10}=(&{(1-q^2)z_1/z_2}/\left(1-q^2 z_1/z_2\right)&&1&)
\end{matrix}
\]
the puzzles being 180 degree rotations of the ones above.
\junk{careful that lines are always numbered left to right
and strings read left to right, of course}
\end{ex}

\subsection{Back to Schubert calculus in $Gr(k,n)$}\label{sec:schubd1}
We now consider the limit $q^{\pm 1}\to 0$, cf.~the 
top horizontal arrow of the diagram of \S\ref{ssec:limits}. As discussed in \S\ref{sec:twist}, 
one must first twist the triangle/rhombi 
according to their inversion number. We list only those with nonzero inversion numbers:
\begin{align*}
\uptri[1]{10}{10}{10}&=-q^{-1\pm 1}
&
\downtri[1]{10}{10}{10}&=-q^{1\pm 1}
\\
\rh[1]01{10}{10}&=-q^{-1\pm 1}\frac{(1-q^2)z}{1-q^2z}
&
\rh[1]{10}{10}01&=-q^{1\pm 1}\frac{1-q^2}{1-q^2z}
&
\rh[2]{10}{10}{10}{10}&=q^{\pm 2}
\\
\rh[-1]1010&=
\frac{q^{1\mp 1}(1-z)}{1-q^2z}
&
\rh[1]0{10}0{10}&=
\rh[1]{10}1{10}1=
\frac{q^{1\pm 1}(1-z)}{1-q^2z}
\end{align*}
If we pick the $+$ sign and send $q$ to $0$, the first of the two
triangles survive, as well as the first rhombi of each row, and we recover
exactly the puzzle rule for Schubert classes in the equivariant $K$-theory
of the Grassmannian as formulated in \cite{artic68} (related
to symmetric Grothendieck polynomials). 
If we pick the $-$ sign and send $q$ to $\infty$, the second triangle and the second rhombus of the second row survive, as well as the same first rhombus in the third row;
rotating the pieces 180 degrees, we recognize the same rule as in the first limit, with the substitution $z\to z^{-1}$, cf lemma~\ref{lem:inv}. This is nothing but the usual reflection of duality in the quantum integrable setting; 
we recover this way the puzzle rule for {\em dual}\/ 
Schubert classes (i.e., classes of ideal sheaves of boundaries of Schubert varieties \rem[gray]{actually, in the Grassmannian case, they're also $O(-1)$}) 
in the equivariant $K$-theory
of the Grassmannian as formulated in \cite{artic68}.
In this sense, the finite $q$ rule interpolates between Schubert and dual
Schubert puzzle rules.
\junk{what about the subtlety of structure vs canonical sheaves of Schubert
varieties? well, there's Serre duality, plus some freedom in tensoring
by a $O(a)$ line bundle. TODO: explain this better. see also Mihalcea's slides for conventions}

\newcommand\tile[2][0]{%
\begin{tikzpicture}[math mode,x={(-0.577cm,-1cm)},y={(0.577cm,-1cm)},rotate=#1]
\draw[thick] (0,0) -- (0,1) -- (1,0) -- cycle;
\clip (0,0) -- (0,1) -- (1,0) -- cycle;
#2
\end{tikzpicture}%
}
\subsection{Nonequivariant rule and loop model}\label{ssec:loop}
Let us now discuss the nonequivariant rule (NE direction in the diagram of \S \ref{ssec:limits}).
\rem[gray]{AK: that's cool that NE is for NonEquivariant. PZJ: totally intentional}
First, note that setting $z=1$ in equation~\eqref{eq:rhombi1},
either the fugacity of the equivariant rhombi vanishes,
or it factors as a product of fugacities of triangles of
equation~\eqref{eq:tri1}, in accordance with \eqref{eq:factorR}.

The resulting $d=1$ nonequivariant puzzles can be thought of as the partition
function (with particular boundary conditions) of a quantum integrable model
which already appeared in a different context. 
In \cite{Resh-On}, Reshetikhin considered the so-called $O(n)$ loop model on the
honeycomb lattice, and observed that if loops cover the entire lattice,
then the model is exactly solvable as an $A_2$ integrable system.
We reconnect to this loop model here (in the dual graphical description).

A \defn{loop puzzle} for the Grassmannian $Gr(k,n)$
is an assignment to each elementary
triangle of a size $n$ equilateral triangle of one of the tiles
\[
\foreach\rot in {0,60,120,180,240,300} {\tile[\rot]{
\draw[line width=1.5mm,line cap=round] (0,0.5) [bend left] to (0.5,0);
}\quad}
\]
as well as $k$ incoming arrows on the North-East side, $n-k$ outgoing arrows on the North-West side, and $k$ outgoing arrows
and $n-k$ incoming arrows on the South side,
in such a way that
\begin{itemize}
\item At each internal edge, the black lines are continuous.
\item Lines end on the sides exactly at arrows, and the directions of the arrows at opposite ends must match (i.e.,
lines can only connect NW and NE side, or NW to incoming arrows on the S side, or NE to outgoing arrows on the S side, or opposite arrows on the S side).
\end{itemize}

For example, if $k=3$, $n=7$, a valid loop puzzle is
\begin{center}
\tikzset{arrow/.style={}}
\def\thescale{0.67}
\begin{tikzpicture}[math mode,nodes={edgelabel},x={(-0.577cm,-1cm)},y={(0.577cm,-1cm)},line width=1mm,scale=\thescale]
\foreach \x in {0,1,3,4}
\fill (\x+0.333,0) -- ++(0.333,0) -- ++(0,-0.333) -- cycle;
\foreach \y in {0,2,4}
\fill (0.14,\y+0.43) -- ++(-0.34,0) -- ++(0,0.34) -- cycle;
\foreach \z in {0,1,3,5}
\fill (6.43-\z,\z+0.43) -- ++(0.34,0) -- ++(-0.34,0.34) -- cycle;
\foreach \z in {2,4,6}
\fill (6.33-\z,\z+0.67) -- ++(0.34,-0.34) -- ++(0,0.34) -- cycle;
\draw[black,thick] (0,0) -- ++(0,1) -- ++(1,-1) -- cycle;
\draw[arrow,bend left] (0.0,0.5) to (0.5,0.0);
\draw[arrow,bend right] (0.5,1.0) to (1.0,0.5);
\draw[black,thick] (0,1) -- ++(0,1) -- ++(1,-1) -- cycle;
\draw[arrow,bend right] (0.5,1.5) to (0.5,1.0);
\draw[arrow,bend left] (0.5,2.0) to (0.5,1.5);
\draw[black,thick] (0,2) -- ++(0,1) -- ++(1,-1) -- cycle;
\draw[arrow,bend left] (0.0,2.5) to (0.5,2.0);
\draw[arrow,bend right] (0.5,3.0) to (1.0,2.5);
\draw[black,thick] (0,3) -- ++(0,1) -- ++(1,-1) -- cycle;
\draw[arrow,bend right] (0.5,3.5) to (0.5,3.0);
\draw[arrow,bend right] (1.0,3.5) to (0.5,3.5);
\draw[black,thick] (0,4) -- ++(0,1) -- ++(1,-1) -- cycle;
\draw[arrow,bend right] (0.0,4.5) to (0.5,4.5);
\draw[arrow,bend right] (0.5,4.5) to (0.5,5.0);
\draw[black,thick] (0,5) -- ++(0,1) -- ++(1,-1) -- cycle;
\draw[arrow,bend left] (0.5,5.0) to (0.5,5.5);
\draw[arrow,bend right] (0.5,5.5) to (0.5,6.0);
\draw[black,thick] (0,6) -- ++(0,1) -- ++(1,-1) -- cycle;
\draw[arrow,bend left] (0.5,6.0) to (0.5,6.5);
\draw[black,thick] (1,0) -- ++(0,1) -- ++(1,-1) -- cycle;
\draw[arrow,bend left] (1.0,0.5) to (1.5,0.0);
\draw[arrow,bend right] (1.5,1.0) to (2.0,0.5);
\draw[black,thick] (1,1) -- ++(0,1) -- ++(1,-1) -- cycle;
\draw[arrow,bend right] (1.5,1.5) to (1.5,1.0);
\draw[arrow,bend right] (2.0,1.5) to (1.5,1.5);
\draw[black,thick] (1,2) -- ++(0,1) -- ++(1,-1) -- cycle;
\draw[arrow,bend right] (1.0,2.5) to (1.5,2.5);
\draw[arrow,bend right] (1.5,2.5) to (1.5,3.0);
\draw[black,thick] (1,3) -- ++(0,1) -- ++(1,-1) -- cycle;
\draw[arrow,bend right] (1.5,3.0) to (1.0,3.5);
\draw[arrow,bend right] (1.5,4.0) to (2.0,3.5);
\draw[black,thick] (1,4) -- ++(0,1) -- ++(1,-1) -- cycle;
\draw[arrow,bend right] (1.5,4.5) to (1.5,4.0);
\draw[arrow,bend left] (1.5,5.0) to (1.5,4.5);
\draw[black,thick] (1,5) -- ++(0,1) -- ++(1,-1) -- cycle;
\draw[arrow,bend right] (1.5,5.5) to (1.5,5.0);
\draw[black,thick] (2,0) -- ++(0,1) -- ++(1,-1) -- cycle;
\draw[arrow,bend right] (2.0,0.5) to (2.5,0.5);
\draw[arrow,bend right] (2.5,0.5) to (2.5,1.0);
\draw[black,thick] (2,1) -- ++(0,1) -- ++(1,-1) -- cycle;
\draw[arrow,bend right] (2.5,1.0) to (2.0,1.5);
\draw[arrow,bend right] (2.5,2.0) to (3.0,1.5);
\draw[black,thick] (2,2) -- ++(0,1) -- ++(1,-1) -- cycle;
\draw[arrow,bend right] (2.5,2.5) to (2.5,2.0);
\draw[arrow,bend right] (3.0,2.5) to (2.5,2.5);
\draw[black,thick] (2,3) -- ++(0,1) -- ++(1,-1) -- cycle;
\draw[arrow,bend right] (2.0,3.5) to (2.5,3.5);
\draw[arrow,bend right] (2.5,3.5) to (2.5,4.0);
\draw[black,thick] (2,4) -- ++(0,1) -- ++(1,-1) -- cycle;
\draw[arrow,bend left] (2.5,4.0) to (2.5,4.5);
\draw[black,thick] (3,0) -- ++(0,1) -- ++(1,-1) -- cycle;
\draw[arrow,bend right] (3.5,0.5) to (3.5,0.0);
\draw[arrow,bend left] (3.5,1.0) to (3.5,0.5);
\draw[black,thick] (3,1) -- ++(0,1) -- ++(1,-1) -- cycle;
\draw[arrow,bend left] (3.0,1.5) to (3.5,1.0);
\draw[arrow,bend left] (4.0,1.5) to (3.5,2.0);
\draw[black,thick] (3,2) -- ++(0,1) -- ++(1,-1) -- cycle;
\draw[arrow,bend right] (3.5,2.0) to (3.0,2.5);
\draw[arrow,bend right] (3.5,3.0) to (4.0,2.5);
\draw[black,thick] (3,3) -- ++(0,1) -- ++(1,-1) -- cycle;
\draw[arrow,bend right] (3.5,3.5) to (3.5,3.0);
\draw[black,thick] (4,0) -- ++(0,1) -- ++(1,-1) -- cycle;
\draw[arrow,bend right] (4.5,0.5) to (4.5,0.0);
\draw[arrow,bend right] (5.0,0.5) to (4.5,0.5);
\draw[black,thick] (4,1) -- ++(0,1) -- ++(1,-1) -- cycle;
\draw[arrow,bend left] (4.5,1.5) to (4.0,1.5);
\draw[arrow,bend right] (5.0,1.5) to (4.5,1.5);
\draw[black,thick] (4,2) -- ++(0,1) -- ++(1,-1) -- cycle;
\draw[arrow,bend right] (4.0,2.5) to (4.5,2.5);
\draw[black,thick] (5,0) -- ++(0,1) -- ++(1,-1) -- cycle;
\draw[arrow,bend left] (5.5,0.5) to (5.0,0.5);
\draw[arrow,bend right] (6.0,0.5) to (5.5,0.5);
\draw[black,thick] (5,1) -- ++(0,1) -- ++(1,-1) -- cycle;
\draw[arrow,bend left] (5.5,1.5) to (5.0,1.5);
\draw[black,thick] (6,0) -- ++(0,1) -- ++(1,-1) -- cycle;
\draw[arrow,bend left] (6.5,0.5) to (6.0,0.5);
\end{tikzpicture}
\end{center}

To each such loop puzzle, one associates the fugacity
\[
(-q-q^{-1})^{\#\text{closed loops}}
(-q)^{\#\text{paths oriented rightward with endpoints on the S side}}
\]

\begin{prop}
The coefficient of $S^\nu$ in the expansion of $S^\lambda S^\mu$ in $K_{\CC^\times}^{\loc}(Gr(k,n))$ is
the sum of fugacities of loop puzzles such that
$1$s of $\lambda$ correspond to arrows on the NW side, 
$0$s of $\mu$ to arrows on the NE side,
$1$s (resp.\ $0$s) of $\nu$ to incoming (resp.\ outgoing) arrows on the S side.
\end{prop}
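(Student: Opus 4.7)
The plan is to interpret the puzzle sum as a sum over loop configurations given by the positions of the $10$-labeled edges, with the remaining $0/1$ labels summed out to produce the loop fugacity. The key combinatorial fact, read off from~\eqref{eq:tri1}, is that every allowed triangle has $0$, $1$, or $3$ edges labeled $10$; hence the set of $10$-edges in any puzzle forms a collection of paths and closed loops that can only branch at the $(10,10,10)$ triangles, while the $0$ and $1$ labels on the remaining edges play the role of arrows.

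First I would set up a fugacity-preserving correspondence between puzzles and \emph{oriented} loop puzzles (where each closed loop comes equipped with an orientation, and boundary-to-boundary paths inherit their orientations from the prescribed boundary arrows). Given a puzzle, one resolves its $(10,10,10)$ branch points into arcs to obtain the tiles, and the $0/1$ labels of non-$10$ edges encode the induced arrow directions; conversely, from an oriented loop puzzle, one labels each loop edge by $10$ and each remaining edge by $0$ or $1$ as dictated by the arrow direction. This matches boundary conditions by construction: arrows on the NW, NE, and S sides are exactly the $0$/$1$ edges there, and $10$-edges on the boundary would correspond to arc endpoints that are forbidden by the rules.

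Second, using~\eqref{eq:tri1}, the only nonunit triangle fugacities are $-q^{-1}$ for an up $(10,10,10)$ and $-q$ for a down $(10,10,10)$. Gathering the product of fugacities along each closed loop yields $-q$ or $-q^{-1}$ depending on the chosen orientation, so summing over the two orientations produces the advertised factor $-q-q^{-1}$ per loop; each rightward-oriented boundary path, whose orientation is fixed by the boundary arrows, contributes a single factor of $-q$. The hardest part will be the local bookkeeping at the $(10,10,10)$ branch points: one must verify that the three possible arc resolutions of such a triangle, when summed against the admissible labels of adjacent edges, produce exactly the tile structure demanded by the $O(n)$ loop model, and that the $-q^{\pm1}$ factors are distributed among the passing loops so that each loop acquires precisely one such factor per orientation, with no double-counting between loops that share a branch-point triangle.
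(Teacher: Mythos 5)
The central identification in your plan is wrong: in the loop puzzles of this proposition the loops are \emph{not} the set of $10$-labelled edges. A loop puzzle is a \emph{fully packed} configuration --- by definition every elementary triangle carries exactly one arc joining two of its three edges --- and the bijection the paper uses (Reshetikhin's vertex-model/loop-model correspondence) sends each of the six allowed label-triples of an up-triangle in \eqref{eq:tri1}, including $(0,0,0)$ and $(1,1,1)$, to one of the six \emph{oriented} arcs through that triangle; the arc's position and orientation are determined by the whole triple, not by where the $10$s sit. Two observations show your $10$-skeleton cannot be the right object. First, a triangle of type $(0,1,10)$ contains exactly one $10$-edge, so that edge is a dead end inside the puzzle: a collection of edges meeting each face in $0$, $1$, or $3$ members does not decompose into paths and closed loops, and "resolving'' only the $(10,10,10)$ faces does not repair this. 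Second, the boundary of a loop puzzle carries $n-k$, $k$, and $n$ arrows on the NW, NE, and S sides respectively, each of which must be an endpoint of a line; these arrows sit at single-number positions of $\lambda,\mu,\nu$ (no $10$ ever appears on the boundary), so if the lines were $10$-strands no line could ever reach the boundary, contradicting the very definition of the objects you are supposed to be counting.

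The fugacity bookkeeping built on this identification fails for the same reason. In the correct correspondence the factor $-q-q^{-1}$ for a closed loop is not localized at $(10,10,10)$ branch points: for a fixed orientation of a closed loop, the product of the fugacities of \emph{all} triangles it traverses equals $-q$ or $-q^{-1}$ (a global turning-number statement, realized locally by which labelled triangle each oriented arc corresponds to), and summing the two orientations gives $-q-q^{-1}$; boundary-to-boundary paths have their orientation forced by the arrows and contribute $-q$ only when oriented rightward with both endpoints on the S side. The actual proof is short once the dictionary is right: the six up-triangles (resp.\ six down-triangles) of \eqref{eq:tri1} biject with the six oriented arc-tiles, so labelled puzzles biject with oriented loop puzzles; forgetting orientations is $2^{\#\text{closed loops}}$-to-one, and summing fugacities over the fibers yields the stated weight. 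I would suggest redoing the argument starting from that triangle-by-triangle dictionary rather than from the $10$-edges.
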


\tikzset{arrowmod/.style={postaction={decorate,thin,decoration={markings,mark = at position #1 with {\arrow{Latex}}}}},arrowmod/.default=0.65}
\begin{proof}
There is a bijection between $d=1$ puzzles made of the triangles of \eqref{eq:tri1} and {\em oriented}\/ loop puzzles, given by:
\def\oriloop{\draw[bend right,line width=1.5mm,line cap=round] (0,0.5) to (0.5,0.5); \fill[shift={(0.25,0.25)},scale=0.33,rotate=25] (0,0) -- (1,0) -- (0,1) -- cycle;}
\def\oriloopb{\begin{scope}[xscale=-1]\oriloop\end{scope}}
\[
\begin{matrix}
\uptri000&\uptri111&\uptri01{10}&\uptri{10}01&\uptri1{10}0&\uptri{10}{10}{10}
\\[5mm]
\tile{\oriloop}
&
\tile[240]{\oriloop}
&
\tile[-240]{\oriloopb}
&
\tile{\oriloopb}
&
\tile[-120]{\oriloopb}
&
\tile[120]{\oriloop}
\\[5mm]
\downtri000&\downtri111&\downtri01{10}&\downtri{10}01&\downtri1{10}0&\downtri{10}{10}{10}\\[5mm]
\tile[-60]{\oriloopb}
&
\tile[-180]{\oriloopb}
&
\tile[180]{\oriloop}
&
\tile[60]{\oriloop}
&
\tile[300]{\oriloop}
&
\tile[-300]{\oriloopb}
\end{matrix}
\]
Starting from an ordinary puzzle and erasing the arrows except at the boundaries, 
one obtains a loop puzzle. Inversely, given a loop puzzle,
one can orient each individual triangle starting from the boundaries, except
for closed loops, which have two possible orientations. The formula for
the fugacity follows.
\end{proof}

As a corollary, we find that
the coefficient of $S^\nu$ in the expansion of $S^\lambda S^\mu$ in $K_{\CC^\times}^{\loc}(Gr(k,n))$
is a polynomial with positive coefficients in $-q$ and $-(q+q^{-1})$; these coefficients
are in fact unique, and given by the numbers of puzzles with fixed number of closed loops and S side rightward oriented paths.

{\em Example.} By direct computation, one finds the coefficient $-3q-q^{-1}$ for $\lambda=\mu=0101$, $\nu=1010$.
Indeed, there are three loop puzzles:
\long\def\puz#1{%
\begin{tikzpicture}[math mode,x={(-0.577cm,-1cm)},y={(0.577cm,-1cm)}]
\draw[thick] (0,0) -- ++(0,1); \draw[thick] (0,0) --  ++(1,0); \draw[thick] (0+1,0) -- ++(-1,1); 
\draw[thick] (0,1) -- ++(0,1); \draw[thick] (0,1) --  ++(1,0); \draw[thick] (0+1,1) -- ++(-1,1); 
\draw[thick] (0,2) -- ++(0,1); \draw[thick] (0,2) --  ++(1,0); \draw[thick] (0+1,2) -- ++(-1,1); 
\draw[thick] (0,3) -- ++(0,1); \draw[thick] (0,3) --  ++(1,0); \draw[thick] (0+1,3) -- ++(-1,1); 
\draw[thick] (1,0) -- ++(0,1); \draw[thick] (1,0) --  ++(1,0); \draw[thick] (1+1,0) -- ++(-1,1); 
\draw[thick] (1,1) -- ++(0,1); \draw[thick] (1,1) --  ++(1,0); \draw[thick] (1+1,1) -- ++(-1,1); 
\draw[thick] (1,2) -- ++(0,1); \draw[thick] (1,2) --  ++(1,0); \draw[thick] (1+1,2) -- ++(-1,1); 
\draw[thick] (2,0) -- ++(0,1); \draw[thick] (2,0) --  ++(1,0); \draw[thick] (2+1,0) -- ++(-1,1); 
\draw[thick] (2,1) -- ++(0,1); \draw[thick] (2,1) --  ++(1,0); \draw[thick] (2+1,1) -- ++(-1,1); 
\draw[thick] (3,0) -- ++(0,1); \draw[thick] (3,0) --  ++(1,0); \draw[thick] (3+1,0) -- ++(-1,1); 
\fill (0.33,0) -- (0.67,0) -- (0.67,-0.33) -- cycle;
\fill (2.33,0) -- (2.67,0) -- (2.67,-0.33) -- cycle;
\fill (0.14,0.43) -- (-0.2,0.43) -- (-0.2,0.77) -- cycle;
\fill (0.14,2.44) -- (-0.2,2.44) -- (-0.2,2.77) -- cycle;
\fill (3.43,0.43) -- (3.77,0.43) -- (3.43,0.77) -- cycle;
\fill (2.33,1.67) -- (2.67,1.33) -- (2.67,1.67) -- cycle;
\fill (1.43,2.43) -- (1.77,2.43) -- (1.43,2.77) -- cycle;
\fill (0.33,3.67) -- (0.67,3.33) -- (0.67,3.67) -- cycle;
\clip (0,0) -- (4,0) -- (0,4) -- cycle;
#1
\end{tikzpicture}%
}
\[
\puz{
\begin{scope}[line width=1.5mm,line cap=round]
\draw (0.5,0) [bend right] to (0,0.5);
\draw (2.5,0) [bend right] to (2,0.5) [bend right] to (1.5,0.5) [bend left] to (1,0.5) [bend left] to (0.5,1) [bend left] to (0.5,1.5) [bend left] to (1,1.5) [bend right] to (1.5,1.5) [bend right] to (1.5,2) [bend left] to (1.5,2.5);
\draw (0,2.5) [bend right] to (0.5,2.5) [bend right] to (0.5,3) [bend left] to (0.5,3.5);
\draw[bend left,purple!80!black] (3.5,0.5) to (3,0.5) to (2.5,1) to (2.5,1.5);
\end{scope}
}
\quad
\puz{
\begin{scope}[line width=1.5mm,line cap=round]
\draw (0.5,0) [bend right] to (0,0.5);
\draw (2.5,0) [bend left] to (2.5,0.5) [bend left] to (3,0.5) [bend right] to (3.5,0.5);
\draw (0,2.5) [bend right] to (0.5,2.5) [bend right] to (0.5,3) [bend left] to (0.5,3.5);
\draw (2.5,1.5) [bend left] to (2,1.5) [bend left] to (1.5,2) [bend left] to (1.5,2.5);
\draw [bend right,brown!80!black] (1,0.5) to (1.5,0.5) to (1.5,1) to (1,1.5) to (0.5,1.5) to (0.5,1) to cycle;
\end{scope}
}
\quad
\puz{
\begin{scope}[line width=1.5mm,line cap=round,xscale=-1]
\draw (0.5,0) [bend right] to (0,0.5);
\draw (2.5,0) [bend right] to (2,0.5) [bend right] to (1.5,0.5) [bend left] to (1,0.5) [bend left] to (0.5,1) [bend left] to (0.5,1.5) [bend left] to (1,1.5) [bend right] to (1.5,1.5) [bend right] to (1.5,2) [bend left] to (1.5,2.5);
\draw (0,2.5) [bend right] to (0.5,2.5) [bend right] to (0.5,3) [bend left] to (0.5,3.5);
\draw[bend left,purple!80!black] (3.5,0.5) to (3,0.5) to (2.5,1) to (2.5,1.5);
\end{scope}
}
\]
where the loop in brown results in a fugacity of $-q-q^{-1}$, whereas
the two purple paths result in fugacities $-q$.

One defines similarly a dual fugacity to a loop puzzle, conventionally drawn upside-down, as
\[
(-q-q^{-1})^{\#\text{closed loops}}
(-q^{-1})^{\#\text{paths oriented leftward with endpoints on the N side}}
\]
Then we have the obvious dual statement:
\begin{prop}
The coefficient of $S_\nu$ in the expansion of $S_\lambda S_\mu$ in $K_{\CC^\times}^{\loc}(Gr(k,n))$ is
the sum of dual fugacities of loop puzzles such that
$1$s of $\lambda$ correspond to arrows on the SW side, 
$0$s of $\mu$ to arrows on the SE side,
$1$s (resp.\ $0$s) of $\nu$ to incoming (resp.\ outgoing) arrows on the N side.
\end{prop}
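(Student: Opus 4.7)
The plan is to deduce this as a direct corollary of the preceding proposition by invoking Lemma~\ref{lem:inv}, which states that
\[
S_\lambda|_\sigma(z_1,\ldots,z_n,q) = S^{\lambda^*}|_{\sigma w_0}(z_n^{-1},\ldots,z_1^{-1},q^{-1}).
\]
Specializing to the nonequivariant setting $z_i=1$ (and thus $z_i/z_j=1$, for which $q$-inversion is still nontrivial), the previous proposition expresses the coefficient of $S^{\nu^*}$ in $S^{\lambda^*} S^{\mu^*}$, evaluated at $q^{-1}$, as a sum of loop fugacities of the form $(-q-q^{-1})^{\#\text{loops}}(-q^{-1})^{\#\text{rightward paths with S-side endpoints}}$, since $(-q-q^{-1})$ is invariant under $q \mapsto q^{-1}$ while $(-q) \mapsto (-q^{-1})$.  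Reading the loop puzzle upside-down (i.e., applying a $180^\circ$ rotation), the reversal $\lambda \mapsto \lambda^*$ and the permutation $\sigma \mapsto \sigma w_0$ together with the rotation convert arrows on the NW, NE, S sides to arrows on the SE, SW, N sides, and convert rightward-oriented S-side paths into leftward-oriented N-side paths.  This yields exactly the dual fugacity in the statement.

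An equivalent direct route is to run the argument of the previous proof in parallel with Theorem~\ref{thm:maindual} in place of Theorem~\ref{thm:main}.  Specializing the dual puzzle formula to the nonequivariant limit and using \eqref{eq:factorR}, each equivariant rhombus either vanishes or factorizes into a pair of elementary triangles whose weights are read off from the ``downtri'' row of \eqref{eq:tri1} (and, combined with the rhombus's bottom triangle, the ``uptri'' row).  The same table of oriented-tile correspondences as in the previous proof then applies verbatim, but with the whole picture inverted so that the input boundary sits at the bottom and the output at the top.  The single sign difference, namely $\downtri{10}{10}{10}=-q$ versus $\uptri{10}{10}{10}=-q^{-1}$, is exactly what produces $(-q^{-1})$ per leftward N-side path in place of $(-q)$ per rightward S-side path.

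The main technical point to verify is that the per-loop factor remains the symmetric $-q-q^{-1}$ in the dual setting.  Each closed loop arises from the two possible global orientations contributing two terms, and one needs that the product of $D$- and $U$-triangle weights around a closed loop is the same for both orientations, so that the sum becomes $(-q)+(-q^{-1}) = -(q+q^{-1})$ times that common factor.  This is the loop-model observation already used implicitly in the previous proof and in \cite{Resh-On}; it follows from the fact that along a closed loop, the non-trivial $(-q^{\pm 1})$ contributions come solely from the $\{(10),(10),(10)\}$-triangles traversed, and these triangles come in a matched up/down pair (one $-q^{-1}$ and one $-q$) regardless of orientation, so the orientation-dependent sign factors cancel in the product around the loop.
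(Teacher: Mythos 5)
Your first two paragraphs give a correct derivation, and it is essentially the one the paper intends: the paper offers no explicit proof (it calls this ``the obvious dual statement''), and the intended justification is exactly the $180^\circ$ rotation combined with the $q\mapsto q^{-1}$ duality that you make precise via Lemma~\ref{lem:inv}. Route 1 is clean: the per-loop weight $-q-q^{-1}$ is invariant under $q\mapsto q^{-1}$, the rightward-$S$-path weight $-q$ becomes $-q^{-1}$, and the rotation turns rightward $S$-paths into leftward $N$-paths, which is precisely the dual fugacity. (There is a harmless $\lambda\leftrightarrow\mu$ / SW$\leftrightarrow$SE swap relative to the stated boundary conventions, which is immaterial since the ring is commutative.) Route 2 is also fine in outline.

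Your third paragraph, however, is internally inconsistent and should be dropped. You assert both that the two orientations of a closed loop contribute $-q$ and $-q^{-1}$ respectively, and that the $(-q^{\pm1})$-weighted triangles ``come in a matched up/down pair\dots so the orientation-dependent sign factors cancel in the product around the loop.'' But the weight of an oriented loop \emph{is} the product of the weights of the triangles it traverses; if those factors cancelled, each orientation would contribute $1$ and the loop would be worth $2$, not $-q-q^{-1}$. The correct statement is that for one orientation the product of traversed triangle weights equals $-q$ and for the reversed orientation it equals $-q^{-1}$ (this is the content of the previous proposition's fugacity formula, not something to be re-derived here). For your purposes no new verification is needed: in route 1 the value $-q-q^{-1}$ is simply inherited from the previous proposition under $q\mapsto q^{-1}$, and in route 2 the $180^\circ$ rotation permutes the set of unoriented tiles traversed by a closed loop while exchanging the two special weights $-q^{-1}$ and $-q$, so the two-orientation sum is again unchanged.
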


\junk{ref to Nadeau/me about FPLs?}
\junk{note that in both $q^\pm\to0$, the loop weight goes to infinity, forbidding closed loops}

\junk{There remains only the bottom right case of the commutative diagram.
The right vertical arrow is well-known, and one recovers this way
the puzzle rule for Schubert classes in the 
equivariant cohomology of the Grassmannian \cite{KT} (related to Grassmannian
Schubert polynomials a.k.a.\ factorial Schur polynomials).
One easily checks that the bottom horizontal 
arrow produces the same result, being careful
to twist by $\hbar^{-\text{inversion number}}$ before taking the limit
$\hbar\to\infty$.
--- orphaned paragraph; maybe remove entirely?}

\section{The limit to ordinary cohomology}\label{sec:coho}
We now discuss the transition from $K$-theory to ordinary cohomology,
corresponding to the vertical arrows of the diagram of \S\ref{ssec:limits}.
There is an arbitrariness in the sign of $q$, since only its square
$q^2=t^{-1}$ is geometrically meaningful; this choice of sign is
effectively equivalent to a choice of polarization in the cohomology
limit. The conventional choice is $q\to 1$; here we choose to send $q$
to $-1$, which has some technical advantages (in particular, it
trivializes the polarization, which simplifies slightly the geometric
discussion in \S\ref{sec:geominterp}).  
\rem[gray]{PZJ: for the Euler--Poincar\'e theorem, seems we want $q=1$.
  oh well. AK: does $q=-1$ makes every $K$-piece worth $-1$?
  (By a $B$-calculation we can show that) the number of $K$-pieces
  matches the $-1$ exponent we currently need in that theorem, $\bmod 2$,
  so the theorem would be cleaner at $q=-1$ if this is the case.
  (Of course we'd include a comment that the puzzles are each worth
  $1$ or each worth $-1$, predictably.) Update: PZJ says that with $q=-1$
  we get thm \ref{thm:EP} as stated, so, it should just get a comment
  about how $q=1$ would change things.
}

\subsection{Localization revisited}
In \S\ref{ssec:loc} and \S\ref{ssec:Rmat}, we have introduced the localisation that is convenient for most of the paper.
However, it prevents us from specializing at $q=\pm1$. We briefly sketch how one can get around this difficulty.

What we are formalizing here is the following procedure:
in order to obtain formul\ae\ in
$H^*_{\hat T}(P_-\dom G)$, where
$H^*_{\hat T}(pt) =\ZZ[\hbar,y_1,\ldots,y_n]$, we need to set
$q=-e^{-\hbar/2}$, $z_i=e^{y_i}$ and to expand at first nontrivial
order in $\hbar,y_1,\ldots,y_n$.

Of course, a practical point of view is that this procedure produces a well-defined result (i.e., a $\hbar\to0$ limit)
in all cases of interest to us.
A more formal way to {\em guarantee} that this construction is well-defined is to change the base ring by allowing less localisation.

The first point is that clearly one shouldn't tensor with $\CC(q)$; instead, one can for example use $\CC[q^\pm,[a]^{-1}_q,a\ge2]$,
where $[a]_q=(q^a-q^{-a})/(q-q^{-1})$. This means we can specialise at all $q$s except roots of unity distinct from $\pm1$. 
This corresponds to the standard lore that the representation theory of quantum groups is uniform except at roots of unity.

The second point is to consider the ratios $[(k\hbar-y_i+y_j)/\hbar]_q:=(1-q^{2k}z_i/z_j)/(1-q^2)$ for all $k,i,j$ and to add them to our
base ring. Let $S$ be the set of $(i,j,k)$ for which we previously made $1-q^{2k}z_i/z_j$ invertible;
we now instead make $[(k\hbar-y_i+y_j)/\hbar]_q$ invertible for all $(i,j,k)\in S$.
One can check that all expressions that we have considered so far live in this ring.

The specialisation $q\mapsto \pm1$ is now well-defined:
it is easy to see that there is a well-defined map into $\CC[\hbar^\pm,y_1,\ldots,y_n,(k\hbar-y_i+y_j)^{-1},(i,j,k)\in S]$
which sends $q$ to $\pm1$ and $[(k\hbar-y_i+y_j)/\hbar]$ to $(k\hbar-y_i+y_j)/\hbar$ for all $i,j,k$.

\subsection{Segre--Schwartz--MacPherson classes}
In this limit to cohomology, the $S^\lambda$ defined in \S\ref{sec:stable}
turn into classes $S^\lambda_H\in H^{*\loc}_{\hat T}(T^*(P_-\backslash G))$ which
are called Segre--Schwartz--MacPherson (SSM) classes
(see e.g. \cite{FeherRimanyi}); explicitly, they are given by
the same diagram \eqref{eq:defS} in which $K$-theoretic parameters
$x_1,\ldots,x_n,z_1,\ldots,z_n$ have been replaced with their $H^*$
analogues, namely Chern roots $t_1,\ldots,t_n$ and equivariant
parameters $y_1,\ldots,y_n$, and in which the $R$-matrix entries
\eqref{eq:Rsingle} become
\begin{equation}\label{eq:RsingleH}
\tikz[baseline=0,xscale=0.5]
{
\draw[invarrow=0.75,d] (-1,-1) node[below] {$\m i$} -- node[pos=0.75,right] {$\ss y''$} (1,1) node[above] {$\m l$};
\draw[invarrow=0.75,d] (1,-1) node[below] {$\m j$} -- node[pos=0.75,left] {$\ss y'$} (-1,1) node[above] {$\m k$};
}
=
\check R_{ij}^{kl}(y',y'')=
\check R_{ij}^{kl}(y''-y')=
\begin{cases}
1& i=j=k=l
\\
\frac{\textstyle y''-y'}{\textstyle \hbar+y'-y''} & i=l\ne j=k
\\
\frac{\textstyle \hbar}{\textstyle \hbar+y'-y''} & i=k\ne j=l
\\
0 & \text{else.}
\end{cases}
\end{equation}
\rem[gray]{why in \url{https://arxiv.org/pdf/1709.08697.pdf}
  they divide by $e(TX)$, not $e(T^*X)$? probably a $h\to -h$
  problem. in fact SSM/CSM are usually defined in $P_-\backslash G$
  where there's no $h$. AK: hopefully the \S\ref{ssec:SSM} discussion
  suffices for this}

We can define $S_\lambda^H$ analogously as limit to cohomology of
$S_\lambda$ (defined by \eqref{eq:defSd}); but since the entries
\eqref{eq:RsingleH} of the $R$-matrix are invariant under reversal of
both arrows (or equivalently by $180^\circ$ rotation),
lemma~\ref{lem:inv} simplifies to
\begin{equation}\label{eq:Hdual1}
  S^H_\lambda(t_1,\ldots,t_n,y_1,\ldots,y_n) = S_H^{\lambda^*}(t_1,\ldots,t_n,y_n,\ldots,y_1)
\end{equation}
where we recall that $\lambda^*$ denotes the string $\lambda$ read
backwards. Similarly, one has for stable classes
\begin{equation}\label{eq:Hdual2}
  \St^H_\lambda(t_1,\ldots,t_n,y_1,\ldots,y_n) = \St_H^{\lambda^*}(t_1,\ldots,t_n,y_n,\ldots,y_1)
\end{equation}

Also note that the $S_H^\lambda$ 
are degree $0$ classes (which is possible because of localization).

The rest of this section being devoted to cohomology only, we omit the sub/superscripts ``$\scriptstyle H$'' and simply write
$S^\lambda$, etc.

\subsection{\texorpdfstring{$H^*_{\hat T}$}{H T hat}
  puzzle rules for \texorpdfstring{$d\le 3$}{d<=3}}
\label{ssec:d123}
We now discuss puzzle rules satisfied by the $S^\lambda$.
More specifically, let us consider the leftmost vertical arrow of the
diagram of \S\ref{ssec:limits}.
In this limit, the fugacities simplify slightly,
which is the sign of the $\mathfrak{x}_{2d}$-invariance of the underlying
$R$-matrix. 

In the rest of this section, we assume $d\le 3$.
We start by fixing a basis of weight vectors of each $V_a(z)$, $a=1,2,3$,
given by the stable envelope construction (with, as mentioned above, the trivial choice of 
polarization).
We recall that we then label triangles and rhombi using the corresponding weights, i.e., write
\rh{\vX}{\vY}{\vZ}{\vW} for the matrix entry between basis vectors with weights $\vX,\vZ\in\tau^2 \W$ and $\vY,\vW\in \W$,
and similarly for triangles.

All $R$-matrices for $d\le 3$ were provided in \cite{artic71} and \S\ref{sec:exd1}, and we simply
state the results.

The triangles all have fugacity $1$.\footnote{The sign convention
is somewhat unusual from the point of view of invariant
theory. E.g., at $d=1$, the
triangles are supposed to represent the fully antisymmetric tensor of $\mathfrak{sl}_3$; to recover
the usual permutation signs, one needs to set $q=1$ (as opposed to $q=-1$),
and then for example change the sign of states labeled $10$.
}\junk{proof?
it's a consequence of the rhombi weights
below plus the 180 degree invariance which should've been mentioned somewhere, up to a sign}

As to the rhombi, we can parametrize them as follows. We assume $\vX+\vY=\vW+\vZ$ (otherwise the matrix entry is zero).
We use the Killing form $\killing{\cdot}{\cdot}$
with the normalization that all weights have squared norm $2$.
For ease of comparison between different values of $d$,
we also denote $a=3$, $2$, $3/2$, $1$ for $d=1,2,3,4$ 
(see \cite[\S 2.5]{artic71} for a justification), 
and use it to parametrize our scalar products.

The matrix entry above can then only depend on the scalar products of
the various weights.  Because weight vectors have squared norm $2$,
there are only two independent scalar products, namely
$s=\killing{\vX}{\vW}=\killing{\vY}{\vZ}$ 
and 
$t=\killing{\vX}{\vY}=\killing{\vZ}{\vW}$, 
the third scalar product being given by
$r=\killing{\vX}{\vZ}=\killing{\vY}{\vW}=t-s+2$ (see \eqref{eq:scalprod}
for an identical calculation).
Because the latter is less than or equal to $2$, one has $t\le s$.
A table of scalar products can be found in appendix~\ref{app:scal}.

At $d=1$, one can easily deduce from the results of \S\ref{sec:exd1}
the following table \junk{AK: should the missing entry be $0$? Or is there
  at least a quick note to put as to why this and the $d=3$ table
  are lower triangular. The $d=4$ table is endowed with $0$s.
PZJ: right above it says $t\le s$. the zeroes in the $d=4$ table are a mistake}
\begin{equation}\label{eq:tab1}
  \begin{array}{c|ccc}
    s\backslash t & -1 & -1+a
    \\
    \hline
    -1 & 1 &  \\
    -1+a & \frac{\hbar}{\hbar-y}& \frac{y}{\hbar-y}\\
  \end{array}
  \qquad\qquad
  \rh{\vX}{\vY}{\vZ}{\vW},
  \ 
  s=\killing{\vX}{\vW},
  \ 
  t=\killing{\vX}{\vY}
\end{equation}
where the parameter $y$ should be understood as the difference $y_j-y_i$
of equivariant parameters in $H^*_{\hat T}(pt)\cong \ZZ[\hbar,y_1,\ldots,y_n]$
related to the coordinates $i<j$ of the rhombus (and recall $a=3$).

It is not hard to see that the exact same table \eqref{eq:tab1} holds at $d=2$ (except $a=2$ now);
this can in principle be derived from \cite[appendix~A]{artic71}.

Instead we jump straight to $d=3$.
The rhombi are all the ones that are allowed by weight conservation; there are 3591 of them.
There are six different types of entries, because
imposing $r,s,t\in\{-1,1/2,2\}$ leads to 6 solutions.

Diagonal rhombi, i.e.,
of the form \rh{\vX}{\vY}{\vX}{\vY}, implying $r=2$ and $s=t$, fall into 3 classes:
\begin{itemize}
\item There are 27 entries for which $s=t=2$,
i.e., $\vX=\vY=\vZ=\vW$,
of the form
$
\frac{-y(3\hbar-y)}{(\hbar-y)(4\hbar-y)}
$.
\item 270 entries with $s=t=-1$ which are equal to $1$.
\item 432 entries with $s=t=1/2$, of the form
$
\frac{y}{\hbar-y}
$.
\end{itemize}
Similarly, the nondiagonal rhombi are
\begin{itemize}
\item 2160 entries with $s=1/2$, $t=-1$, of the form
$
\frac{\hbar}{\hbar-y}
$.
\item 432 entries with $s=2$, $t=1/2$, with fugacity
$
\frac{\hbar y}{(\hbar-y)(4\hbar-y)}
$.
\item 270 entries with $s=2$, $t=-1$, with fugacity
$
\frac{4\hbar^2}{(\hbar-y)(4\hbar-y)}
$.
\end{itemize}
In the last two cases note that we have $\vX=\vW$, $\vY=\vZ$.

We can summarize these results in the table:
\begin{equation}\label{eq:tab3}
\begin{array}{c|ccc}
s\backslash t & -1 & -1+ a & -1+2a
\\
\hline
-1 & 1 &  \\
-1+a & \frac{\hbar}{\hbar-y}& \frac{y}{\hbar-y}\\
-1+2a & \frac{4\hbar^2}{(\hbar-y)(4\hbar-y)} & \frac{\hbar y}{(\hbar-y)(4\hbar-y)} & \frac{-y(3\hbar-y)}{(\hbar-y)(4\hbar-y)}\\
\end{array}
\qquad
\rh{\vX}{\vY}{\vZ}{\vW},
\ 
s=\killing{\vX}{\vW},
\ 
t=\killing{\vX}{\vY}
\end{equation}
where $a=3/2$.
The diagonal entries correspond to diagonal rhombi; the first column corresponds to ``nonequivariant'' rhombi (i.e., whose contribution does not vanish at $y=0$ and then factors into a product of two triangles).

Comparing tables \eqref{eq:tab1} and \eqref{eq:tab3}, we note that one
is a subtable of the other. Another way of understanding this is that
were we to consider a $d$-step flag variety as a $d'$-step flag
variety with $d\le d'\le 3$ (and some trivial steps), the puzzle rule
would remain the same. This is related to the functoriality property
of \cite[\S 2.2]{artic71}. (As will be explained in the next
subsection, this table containment does {\em not} continue to $d<d'=4$.)

\begin{ex}
Let us compute the product $S^{3201}S^{2013}$ in $H^*_T(T^*(\text{3-step}))$. 
One has, using the shorthand notations $y_{ij}=y_i-y_j$ and $y'_{ij}=\hbar+y_i-y_j$,
\begin{align*}
\begin{matrix}
&|_{3201}&|_{3210}\\
S^{3201}=(&
%\frac{(y_2-y_1)(y_3-y_1)(y_3-y_2)(y_4-y_1)(y_4-y_2)}{(\hbar+y_1-y_2)(\hbar+y_1-y_3)(\hbar+y_2-y_3)(\hbar+y_1-y_4)(\hbar+y_2-y_4)}
\frac{y_{21}y_{31}y_{32}y_{41}y_{42}}{y'_{12}y'_{13}y'_{23}y'_{14}y'_{24}}
&
%\frac{\hbar(y_2-y_1)(y_3-y_1)(y_3-y_2)(y_4-y_1)(y_4-y_2)}{(\hbar+y_1-y_2)(\hbar+y_1-y_3)(\hbar+y_2-y_3)(\hbar+y_1-y_4)(\hbar+y_2-y_4)(\hbar+y_3-y_4)}
\frac{\hbar y_{21}y_{31}y_{32}y_{41}y_{42}}{y'_{12}y'_{13}y'_{23}y'_{14}y'_{24}y'_{34}}
&)
\\ 
S^{2013}=(&
%\frac{\hbar^3(y_3-y_1)(y_4-y_1)}{(\hbar+y_1-y_2)(\hbar+y_1-y_3)(\hbar+y_2-y_3)(\hbar+y_1-y_4)(\hbar+y_2-y_4)}
\frac{\hbar^3y_{31}y_{41}}{y'_{12}y'_{13}y'_{23}y'_{14}y'_{24}}
&
%\frac{\hbar^2(y_3-y_1)(y_4-y_1)(\hbar^2+y_2y_3-y_3^2-y_2y_4+y_3y_4)}{(\hbar+y_1-y_2)(\hbar+y_1-y_3)(\hbar+y_2-y_3)(\hbar+y_1-y_4)(\hbar+y_2-y_4)(\hbar+y_3-y_4)}
\frac{\hbar^2 y_{31}y_{41}(\hbar^2+y_2y_33^22y_4+y_3y_4)}{y'_{12}y'_{13}y'_{23}y'_{14}y'_{24}y'_{34}}
&)
\end{matrix}
\end{align*}
all other entries being irrelevant. From this we conclude
\[
S^{3201}S^{2013}=
%\frac{\hbar^3(y_1-y_3)(y_1-y_4)}{(\hbar+y_1-y_2)(\hbar+y_1-y_3)(\hbar+y_2-y_3)(\hbar+y_1-y_4)(\hbar+y_2-y_4)}
\frac{\hbar^3 y_{13}y_{14}}{y'_{12}y'_{13}y'_{23}y'_{14}y'_{24}} 
S^{3201}
%\\&
+ 
%\frac{\hbar^3(y_1-y_3)(\hbar-y_2+y_3)(y_1-y_4)}{(\hbar+y_1-y_2)(\hbar+y_1-y_3)(\hbar+y_2-y_3)(\hbar+y_1-y_4)(\hbar+y_2-y_4)(\hbar+y_3-y_4)}
\frac{\hbar^3 y_{13}y'_{32}y_{14}}{y'_{12}y'_{13}y'_{23}y'_{14}y'_{24}y'_{34}} 
S^{3210}
\]

\junk{more $\tau$ here... but maybe people who care enough to look at
  these details are the same who can be told to look at paper \#1 for $\tau$}
In order to draw the corresponding puzzles, it is convenient to provide weights using
the traditional multinumber notation. A label $X$ translates into the weight $\vf_X$
multiplied by $1,-\tau,\tau^2$ for NE--SW, NW--SE, horizontal edges, 
where $\vf_X$ is defined inductively starting from
the single-number labels $0,\ldots,d$ by $\vf_{YX}=-\tau \vf_X-\tau^2 \vf_Y$
(see \cite[\S 2.3]{artic71} for details). We also provide the scalar products
of weights, under the form of a blue number $k \in \{0,1,2\}$ where the scalar product is $-1+k a$;
as well as the corresponding nonequivariant triangles (in light gray).
\begin{center}
\def\thescale{1.4}
\def\posa{0.5}\def\posb{0.5}
\begin{tikzpicture}[math mode,nodes={edgelabel},x={(-0.577cm,-1cm)},y={(0.577cm,-1cm)},scale=\thescale]\useasboundingbox (0,0) -- (4+0.5,0) -- (0,4+0.5);
\draw[thick] (0,0) -- node[pos=\posa] {2} ++(0,1); \draw[thick] (0,0) -- node[pos=\posb] {1} ++(1,0); \node[blue] at (0+0.1,0+0.1) {\ss 1}; \node[blue] at (0+0.9,0+0.1) {\ss 1}; 
\draw[thick] (0,1) -- node[pos=\posa] {0} ++(0,1); \draw[thick] (0,1) -- node[pos=\posb] {1} ++(1,0); \node[blue] at (0+0.1,1+0.1) {\ss 0}; \draw[thick,lightgray] (0+1,1) -- node {10} ++(-1,1); \node[blue] at (0+0.9,1+0.1) {\ss 0}; 
\draw[thick] (0,2) -- node[pos=\posa] {1} ++(0,1); \draw[thick] (0,2) -- node[pos=\posb] {1} ++(1,0); \node[blue] at (0+0.1,2+0.1) {\ss 0}; \draw[thick,lightgray] (0+1,2) -- node {1} ++(-1,1); \node[blue] at (0+0.9,2+0.1) {\ss 1}; 
\draw[thick] (0,3) -- node[pos=\posa] {3} ++(0,1); \draw[thick] (0,3) -- node[pos=\posb] {31} ++(1,0); \node[blue] at (0+0.1,3+0.1) {\ss 0}; \draw[thick] (0+1,3) -- node {1} ++(-1,1); 
\draw[thick] (1,0) -- node[pos=\posa] {2} ++(0,1); \draw[thick] (1,0) -- node[pos=\posb] {0} ++(1,0); \node[blue] at (1+0.1,0+0.1) {\ss 1}; \node[blue] at (1+0.9,0+0.1) {\ss 1}; 
\draw[thick] (1,1) -- node[pos=\posa] {0} ++(0,1); \draw[thick] (1,1) -- node[pos=\posb] {0} ++(1,0); \node[blue] at (1+0.1,1+0.1) {\ss 0}; \draw[thick,lightgray] (1+1,1) -- node {0} ++(-1,1); \node[blue] at (1+0.9,1+0.1) {\ss 1}; 
\draw[thick] (1,2) -- node[pos=\posa] {3} ++(0,1); \draw[thick] (1,2) -- node[pos=\posb] {30} ++(1,0); \node[blue] at (1+0.1,2+0.1) {\ss 0}; \draw[thick] (1+1,2) -- node {0} ++(-1,1); 
\draw[thick] (2,0) -- node[pos=\posa] {2} ++(0,1); \draw[thick] (2,0) -- node[pos=\posb] {2} ++(1,0); \node[blue] at (2+0.1,0+0.1) {\ss 0}; \draw[thick,lightgray] (2+1,0) -- node {2} ++(-1,1); \node[blue] at (2+0.9,0+0.1) {\ss 1}; 
\draw[thick] (2,1) -- node[pos=\posa] {3} ++(0,1); \draw[thick] (2,1) -- node[pos=\posb] {32} ++(1,0); \node[blue] at (2+0.1,1+0.1) {\ss 0}; \draw[thick] (2+1,1) -- node {2} ++(-1,1); 
\draw[thick] (3,0) -- node[pos=\posa] {3} ++(0,1); \draw[thick] (3,0) -- node[pos=\posb] {3} ++(1,0); \node[blue] at (3+0.1,0+0.1) {\ss 0}; \draw[thick] (3+1,0) -- node {3} ++(-1,1); 
\end{tikzpicture}
\begin{tikzpicture}[math mode,nodes={edgelabel},x={(-0.577cm,-1cm)},y={(0.577cm,-1cm)},scale=\thescale]\useasboundingbox (0,0) -- (4+0.5,0) -- (0,4+0.5);
\draw[thick] (0,0) -- node[pos=\posa] {2} ++(0,1); \draw[thick] (0,0) -- node[pos=\posb] {1} ++(1,0); \node[blue] at (0+0.1,0+0.1) {\ss 1}; \node[blue] at (0+0.9,0+0.1) {\ss 1}; 
\draw[thick] (0,1) -- node[pos=\posa] {0} ++(0,1); \draw[thick] (0,1) -- node[pos=\posb] {1} ++(1,0); \node[blue] at (0+0.1,1+0.1) {\ss 0}; \draw[thick,lightgray] (0+1,1) -- node {10} ++(-1,1); \node[blue] at (0+0.9,1+0.1) {\ss 1}; 
\draw[thick] (0,2) -- node[pos=\posa] {1} ++(0,1); \draw[thick] (0,2) -- node[pos=\posb] {3(10)} ++(1,0); \node[blue] at (0+0.1,2+0.1) {\ss 0}; \draw[thick,lightgray] (0+1,2) -- node {(31)0} ++(-1,1); \node[blue] at (0+0.9,2+0.1) {\ss 1}; 
\draw[thick] (0,3) -- node[pos=\posa] {3} ++(0,1); \draw[thick] (0,3) -- node[pos=\posb] {31} ++(1,0); \node[blue] at (0+0.1,3+0.1) {\ss 0}; \draw[thick] (0+1,3) -- node {1} ++(-1,1); 
\draw[thick] (1,0) -- node[pos=\posa] {2} ++(0,1); \draw[thick] (1,0) -- node[pos=\posb] {0} ++(1,0); \node[blue] at (1+0.1,0+0.1) {\ss 1}; \node[blue] at (1+0.9,0+0.1) {\ss 1}; 
\draw[thick] (1,1) -- node[pos=\posa] {3} ++(0,1); \draw[thick] (1,1) -- node[pos=\posb] {0} ++(1,0); \node[blue] at (1+0.1,1+0.1) {\ss 1}; \node[blue] at (1+0.9,1+0.1) {\ss 1}; 
\draw[thick] (1,2) -- node[pos=\posa] {0} ++(0,1); \draw[thick] (1,2) -- node[pos=\posb] {0} ++(1,0); \node[blue] at (1+0.1,2+0.1) {\ss 0}; \draw[thick] (1+1,2) -- node {0} ++(-1,1); 
\draw[thick] (2,0) -- node[pos=\posa] {2} ++(0,1); \draw[thick] (2,0) -- node[pos=\posb] {2} ++(1,0); \node[blue] at (2+0.1,0+0.1) {\ss 0}; \draw[thick,lightgray] (2+1,0) -- node {2} ++(-1,1); \node[blue] at (2+0.9,0+0.1) {\ss 1}; 
\draw[thick] (2,1) -- node[pos=\posa] {3} ++(0,1); \draw[thick] (2,1) -- node[pos=\posb] {32} ++(1,0); \node[blue] at (2+0.1,1+0.1) {\ss 0}; \draw[thick] (2+1,1) -- node {2} ++(-1,1); 
\draw[thick] (3,0) -- node[pos=\posa] {3} ++(0,1); \draw[thick] (3,0) -- node[pos=\posb] {3} ++(1,0); \node[blue] at (3+0.1,0+0.1) {\ss 0}; \draw[thick] (3+1,0) -- node {3} ++(-1,1); 
\end{tikzpicture}
\begin{tikzpicture}[math mode,nodes={edgelabel},x={(-0.577cm,-1cm)},y={(0.577cm,-1cm)},scale=\thescale]\useasboundingbox (0,0) -- (4+0.5,0) -- (0,4+0.5);
\draw[thick] (0,0) -- node[pos=\posa] {2} ++(0,1); \draw[thick] (0,0) -- node[pos=\posb] {1} ++(1,0); \node[blue] at (0+0.1,0+0.1) {\ss 1}; \node[blue] at (0+0.9,0+0.1) {\ss 1}; 
\draw[thick] (0,1) -- node[pos=\posa] {0} ++(0,1); \draw[thick] (0,1) -- node[pos=\posb] {1} ++(1,0); \node[blue] at (0+0.1,1+0.1) {\ss 0}; \draw[thick,lightgray] (0+1,1) -- node {10} ++(-1,1); \node[blue] at (0+0.9,1+0.1) {\ss 1}; 
\draw[thick] (0,2) -- node[pos=\posa] {1} ++(0,1); \draw[thick] (0,2) -- node[pos=\posb] {3(10)} ++(1,0); \node[blue] at (0+0.1,2+0.1) {\ss 0}; \draw[thick,lightgray] (0+1,2) -- node {(31)0} ++(-1,1); \node[blue] at (0+0.9,2+0.1) {\ss 1}; 
\draw[thick] (0,3) -- node[pos=\posa] {3} ++(0,1); \draw[thick] (0,3) -- node[pos=\posb] {30} ++(1,0); \node[blue] at (0+0.1,3+0.1) {\ss 0}; \draw[thick] (0+1,3) -- node {0} ++(-1,1); 
\draw[thick] (1,0) -- node[pos=\posa] {2} ++(0,1); \draw[thick] (1,0) -- node[pos=\posb] {0} ++(1,0); \node[blue] at (1+0.1,0+0.1) {\ss 1}; \node[blue] at (1+0.9,0+0.1) {\ss 1}; 
\draw[thick] (1,1) -- node[pos=\posa] {3} ++(0,1); \draw[thick] (1,1) -- node[pos=\posb] {0} ++(1,0); \node[blue] at (1+0.1,1+0.1) {\ss 1}; \node[blue] at (1+0.9,1+0.1) {\ss 1}; 
\draw[thick] (1,2) -- node[pos=\posa] {10} ++(0,1); \draw[thick] (1,2) -- node[pos=\posb] {0} ++(1,0); \node[blue] at (1+0.1,2+0.1) {\ss 0}; \draw[thick] (1+1,2) -- node {1} ++(-1,1); 
\draw[thick] (2,0) -- node[pos=\posa] {2} ++(0,1); \draw[thick] (2,0) -- node[pos=\posb] {2} ++(1,0); \node[blue] at (2+0.1,0+0.1) {\ss 0}; \draw[thick,lightgray] (2+1,0) -- node {2} ++(-1,1); \node[blue] at (2+0.9,0+0.1) {\ss 1}; 
\draw[thick] (2,1) -- node[pos=\posa] {3} ++(0,1); \draw[thick] (2,1) -- node[pos=\posb] {32} ++(1,0); \node[blue] at (2+0.1,1+0.1) {\ss 0}; \draw[thick] (2+1,1) -- node {2} ++(-1,1); 
\draw[thick] (3,0) -- node[pos=\posa] {3} ++(0,1); \draw[thick] (3,0) -- node[pos=\posb] {3} ++(1,0); \node[blue] at (3+0.1,0+0.1) {\ss 0}; \draw[thick] (3+1,0) -- node {3} ++(-1,1); 
\end{tikzpicture}
\begin{tikzpicture}[math mode,nodes={edgelabel},x={(-0.577cm,-1cm)},y={(0.577cm,-1cm)},scale=\thescale]\useasboundingbox (0,0) -- (4+0.5,0) -- (0,4+0.5);
\draw[thick] (0,0) -- node[pos=\posa] {2} ++(0,1); \draw[thick] (0,0) -- node[pos=\posb] {1} ++(1,0); \node[blue] at (0+0.1,0+0.1) {\ss 1}; \node[blue] at (0+0.9,0+0.1) {\ss 1}; 
\draw[thick] (0,1) -- node[pos=\posa] {0} ++(0,1); \draw[thick] (0,1) -- node[pos=\posb] {1} ++(1,0); \node[blue] at (0+0.1,1+0.1) {\ss 0}; \draw[thick,lightgray] (0+1,1) -- node {10} ++(-1,1); \node[blue] at (0+0.9,1+0.1) {\ss 1}; 
\draw[thick] (0,2) -- node[pos=\posa] {1} ++(0,1); \draw[thick] (0,2) -- node[pos=\posb] {10} ++(1,0); \node[blue] at (0+0.1,2+0.1) {\ss 0}; \draw[thick,lightgray] (0+1,2) -- node {0} ++(-1,1); \node[blue] at (0+0.9,2+0.1) {\ss 1}; 
\draw[thick] (0,3) -- node[pos=\posa] {3} ++(0,1); \draw[thick] (0,3) -- node[pos=\posb] {30} ++(1,0); \node[blue] at (0+0.1,3+0.1) {\ss 0}; \draw[thick] (0+1,3) -- node {0} ++(-1,1); 
\draw[thick] (1,0) -- node[pos=\posa] {2} ++(0,1); \draw[thick] (1,0) -- node[pos=\posb] {0} ++(1,0); \node[blue] at (1+0.1,0+0.1) {\ss 1}; \node[blue] at (1+0.9,0+0.1) {\ss 1}; 
\draw[thick] (1,1) -- node[pos=\posa] {10} ++(0,1); \draw[thick] (1,1) -- node[pos=\posb] {0} ++(1,0); \node[blue] at (1+0.1,1+0.1) {\ss 0}; \draw[thick,lightgray] (1+1,1) -- node {1} ++(-1,1); \node[blue] at (1+0.9,1+0.1) {\ss 1}; 
\draw[thick] (1,2) -- node[pos=\posa] {3} ++(0,1); \draw[thick] (1,2) -- node[pos=\posb] {31} ++(1,0); \node[blue] at (1+0.1,2+0.1) {\ss 0}; \draw[thick] (1+1,2) -- node {1} ++(-1,1); 
\draw[thick] (2,0) -- node[pos=\posa] {2} ++(0,1); \draw[thick] (2,0) -- node[pos=\posb] {2} ++(1,0); \node[blue] at (2+0.1,0+0.1) {\ss 0}; \draw[thick,lightgray] (2+1,0) -- node {2} ++(-1,1); \node[blue] at (2+0.9,0+0.1) {\ss 1}; 
\draw[thick] (2,1) -- node[pos=\posa] {3} ++(0,1); \draw[thick] (2,1) -- node[pos=\posb] {32} ++(1,0); \node[blue] at (2+0.1,1+0.1) {\ss 0}; \draw[thick] (2+1,1) -- node {2} ++(-1,1); 
\draw[thick] (3,0) -- node[pos=\posa] {3} ++(0,1); \draw[thick] (3,0) -- node[pos=\posb] {3} ++(1,0); \node[blue] at (3+0.1,0+0.1) {\ss 0}; \draw[thick] (3+1,0) -- node {3} ++(-1,1); 
\end{tikzpicture}
\end{center}
\end{ex}
For example, the first puzzle could be written more explicitly as
\begin{center}
\def\thescale{1.4}
\def\posa{0.5}\def\posb{0.5}
\begin{tikzpicture}[math mode,nodes={node font=\scriptsize,align=left},x={(-0.577cm,-1cm)},y={(0.577cm,-1cm)},scale=\thescale]\useasboundingbox (0,0) -- (4+0.5,0) -- (0,4+0.5);
\draw[thick] (0,0) -- node[pos=\posa] {$ \tau^2\vf_2$} ++(0,1); \draw[thick] (0,0) -- node[pos=\posb] {$ \vf_1$} ++(1,0); 
\draw[thick] (0,1) -- node[pos=\posa] {$ \tau^2\vf_0$} ++(0,1); \draw[thick] (0,1) -- node[pos=\posb] {$ \vf_1$} ++(1,0); 
\draw[thick] (0,2) -- node[pos=\posa] {$ \tau^2\vf_1$} ++(0,1); \draw[thick] (0,2) -- node[pos=\posb] {$ \vf_1$} ++(1,0); 
\draw[thick] (0,3) -- node[pos=\posa] {$ \tau^2\vf_3$} ++(0,1); \draw[thick] (0,3) -- node[pos=\posb] {$-\tau\vf_1$\\[-0.5mm]$-\tau^2\vf_3$} ++(1,0); \draw[thick] (0+1,3) -- node {$-\tau\vf_1$} ++(-1,1); 
\draw[thick] (1,0) -- node[pos=\posa] {$ \tau^2\vf_2$} ++(0,1); \draw[thick] (1,0) -- node[pos=\posb] {$ \vf_0$} ++(1,0); 
\draw[thick] (1,1) -- node[pos=\posa] {$ \tau^2\vf_0$} ++(0,1); \draw[thick] (1,1) -- node[pos=\posb] {$ \vf_0$} ++(1,0); 
\draw[thick] (1,2) -- node[pos=\posa] {$ \tau^2\vf_3$} ++(0,1); \draw[thick] (1,2) -- node[pos=\posb] {$-\tau\vf_0$\\[-0.5mm]$-\tau^2\vf_3$} ++(1,0); \draw[thick] (1+1,2) -- node {$-\tau\vf_0$} ++(-1,1); 
\draw[thick] (2,0) -- node[pos=\posa] {$ \tau^2\vf_2$} ++(0,1); \draw[thick] (2,0) -- node[pos=\posb] {$ \vf_2$} ++(1,0); 
\draw[thick] (2,1) -- node[pos=\posa] {$ \tau^2\vf_3$} ++(0,1); \draw[thick] (2,1) -- node[pos=\posb] {$-\tau\vf_2$\\[-0.5mm]$-\tau^2\vf_3$} ++(1,0); \draw[thick] (2+1,1) -- node {$-\tau\vf_2$} ++(-1,1); 
\draw[thick] (3,0) -- node[pos=\posa] {$ \tau^2\vf_3$} ++(0,1); \draw[thick] (3,0) -- node[pos=\posb] {$ \vf_3$} ++(1,0); \draw[thick] (3+1,0) -- node {$-\tau\vf_3$} ++(-1,1); 
\end{tikzpicture}
\end{center}
We leave it as an exercise to the reader to check that the fugacities of the
puzzles sum up to the correct coefficients in the expansion of $S^{3201}S^{2013}$
(a table of scalar products can be found in appendix~\ref{app:scal}).

Note that neither the rhombus \rh{0}{31}{1}{3(10)}, nor the triangle
\uptri{3(10)}{(31)0}{1}, exist in the puzzle rule for $3$-step Schubert
classes as formulated in \cite{artic71}; they are suppressed as
$\hbar\to\infty$.

The value of the scalar product $s=-1+2a$ only occurs in
cases where manual computation would be difficult (and such examples would be too long to fit in a paper).
Here is {\em one}\/ puzzle (among the 30) contributing to the coefficient of $S^{2310}$ in $S^{0213}S^{0321}$:
\begin{center}
\def\thescale{1.6}
\def\posa{0.42}\def\posb{0.58}
\begin{tikzpicture}[math mode,nodes={edgelabel},x={(-0.577cm,-1cm)},y={(0.577cm,-1cm)},scale=\thescale]\useasboundingbox (0,0) -- (4+0.5,0) -- (0,4+0.5);
\draw[thick] (0,0) -- node[pos=\posa] {0} ++(0,1); \draw[thick] (0,0) -- node[pos=\posb] {3} ++(1,0); \node[blue] at (0+0.1,0+0.1) {\ss 0}; \draw[thick,lightgray] (0+1,0) -- node {30} ++(-1,1); \node[blue] at (0+0.9,0+0.1) {\ss 0}; 
\draw[thick] (0,1) -- node[pos=\posa] {3} ++(0,1); \draw[thick] (0,1) -- node[pos=\posb] {3} ++(1,0); \node[blue] at (0+0.1,1+0.1) {\ss 0}; \draw[thick,lightgray] (0+1,1) -- node {3} ++(-1,1); \node[blue] at (0+0.9,1+0.1) {\ss 0}; 
\draw[thick] (0,2) -- node[pos=\posa] {2} ++(0,1); \draw[thick] (0,2) -- node[pos=\posb] {3} ++(1,0); \node[blue] at (0+0.1,2+0.1) {\ss 0}; \draw[thick,lightgray] (0+1,2) -- node {32} ++(-1,1); \node[blue] at (0+0.9,2+0.1) {\ss 1}; 
\draw[thick] (0,3) -- node[pos=\posa] {1} ++(0,1); \draw[thick] (0,3) -- node[pos=\posb] {10} ++(1,0); \node[blue] at (0+0.1,3+0.1) {\ss 0}; \draw[thick] (0+1,3) -- node {0} ++(-1,1); 
\draw[thick] (1,0) -- node[pos=\posa] {0} ++(0,1); \draw[thick] (1,0) -- node[pos=\posb] {1} ++(1,0); \node[blue] at (1+0.1,0+0.1) {\ss 0}; \draw[thick,lightgray] (1+1,0) -- node {10} ++(-1,1); \node[blue] at (1+0.9,0+0.1) {\ss 1}; 
\draw[thick] (1,1) -- node[pos=\posa] {3} ++(0,1); \draw[thick] (1,1) -- node[pos=\posb] {((32)1)0} ++(1,0); \node[blue] at (1+0.1,1+0.1) {\ss 2}; \node[blue] at (1+0.9,1+0.1) {\ss 2}; 
\draw[thick] (1,2) -- node[pos=\posa] {(32)(10)} ++(0,1); \draw[thick] (1,2) -- node[pos=\posb] {((32)1)0} ++(1,0); \node[blue] at (1+0.1,2+0.1) {\ss 0}; \draw[thick] (1+1,2) -- node {1} ++(-1,1); 
\draw[thick] (2,0) -- node[pos=\posa] {(32)0} ++(0,1); \draw[thick] (2,0) -- node[pos=\posb] {2} ++(1,0); \node[blue] at (2+0.1,0+0.1) {\ss 0}; \draw[thick,lightgray] (2+1,0) -- node {3(20)} ++(-1,1); \node[blue] at (2+0.9,0+0.1) {\ss 1}; 
\draw[thick] (2,1) -- node[pos=\posa] {3} ++(0,1); \draw[thick] (2,1) -- node[pos=\posb] {3} ++(1,0); \node[blue] at (2+0.1,1+0.1) {\ss 0}; \draw[thick] (2+1,1) -- node {3} ++(-1,1); 
\draw[thick] (3,0) -- node[pos=\posa] {20} ++(0,1); \draw[thick] (3,0) -- node[pos=\posb] {0} ++(1,0); \node[blue] at (3+0.1,0+0.1) {\ss 0}; \draw[thick] (3+1,0) -- node {2} ++(-1,1); 
\end{tikzpicture}
\end{center}

\tikzset{vertex/.style={}}
\tikzset{empty/.style={circle,fill,inner sep=1pt}}
\tikzset{edge/.style={d}}
\tikzset{diag/.style={execute at end picture={\useasboundingbox ([shift={(5mm,5mm)}]current bounding box.north east) rectangle ([shift={(-5mm,-5mm)}]current bounding box.south west);},scale=0.7}}
\newcommand\crossing[4]{\tikz[baseline=-3pt,d,diag]{
\draw (-1,1) node[vertex,above] {#4} -- (-0.1,0.1) (0.1,-0.1) -- (1,-1)  node[vertex,below] {#2};
\draw (-1,-1) node[vertex] {} -- (1,1)  node[vertex] {};
}}
\newcommand\invcrossing[1][1]{\tikz[baseline=-3pt,d,diag]{
\draw (1,1) node[vertex] {} -- (0.1,0.1) (-0.1,-0.1) -- (-1,-1)  node[vertex] {};
\draw (1,-1) node[vertex] {} -- (-1,1)  node[vertex] {};
}}
\newcommand\flatcrossing[4]{\tikz[baseline=-3pt,d,diag]{
\draw (1,1) node[vertex,above] {$#4$} -- (-1,-1)  node[vertex,below] {$#2$};
\draw (1,-1) node[vertex,below] {$#3$} -- (-1,1)  node[vertex,above] {$#1$};
}}
\newcommand\tri[1][1]{\tikz[baseline=1cm]{\draw[bend left=60] (1,1) node[vertex] {} to (-1,1) node[vertex] {}; \draw (0,1) node[vertex] {} -- (0,0.5);}}
\newcommand\triinv[1][1]{\tikz[baseline=0cm,scale=-#1]{\draw[bend left=60] (1,1) node[vertex] {} to (-1,1) node[vertex] {}; \draw (0,1) node[vertex] {} -- (0,0.5);}}
\newcommand\mycap[1][1]{\tikz[baseline=-3pt]{\draw[bend left=60] (-1,0) node[vertex] {} to (1,0) node[vertex] {};}}
\newcommand\mycup[1][1]{\tikz[baseline=-3pt]{\draw[bend left=60] (1,0) node[vertex] {} to (-1,0) node[vertex] {};}}
\newcommand\D[1][1]{\tikz[baseline=0cm,d,diag]{
\draw (-1,-1) node[vertex] {} -- (0,-0.2) -- (1,-1) node[vertex] {};
\draw (0,-0.2) -- (0,1) node[vertex] {};
}}
\newcommand\U[1][1]{\tikz[baseline=0cm,d,diag]{
%\draw (-1,-1) -- (0,-0.5) -- (1,-1);
\draw (0,-1) node[vertex] {} -- (0,0.2);
\draw (-1,1) node[vertex] {} -- (0,0.2) -- (1,1) node[vertex] {};
}}
\newcommand\full[4]{\tikz[baseline=-3pt,d,diag]{\draw[bend right=45] (-1,-1) node[vertex,below] {$#2$} to (-1,1) node[vertex,above] {$#1$} (1,1) node[vertex,above] {$#4$} to (1,-1) node[vertex,below] {$#3$};}}
\newcommand\identity[1][1]{\tikz[baseline=-3pt,d,diag]{\draw[bend right=45,dashed,diag] (-1,-1) node[halfvertex] {} to (-1,1) node[halfvertex] {} (1,1) node[halfvertex] {} to (1,-1) node[halfvertex] {};}}
\newcommand\cupcap[4]{\tikz[baseline=-3pt,d,diag]{\draw[bend left=45] (-1,-1) node[vertex,below] {$#2$} to (1,-1) node[vertex,below] {$#3$} (1,1) node[vertex,above] {$#4$} to (-1,1) node[vertex,above] {$#1$} ;}}
\newcommand\vertical[4]{\tikz[baseline=-3pt,d,diag]{
\draw (-1,-1) node[vertex,below] {$#2$} -- (0,-0.5) -- (1,-1) node[vertex,below] {$#3$} ;
\draw (-1,1) node[vertex,above] {$#1$} -- (0,0.5) -- (1,1) node[vertex,above] {$#4$} ;
\draw (0,-0.5) -- (0,0.5);
}}
\newcommand\horizontal[4]{\tikz[baseline=-3pt,d,diag,rotate=90]{
\draw (-1,-1) node[vertex,below] {$#3$} -- (0,-0.5) -- (1,-1) node[vertex,above] {$#4$} ;
\draw (-1,1) node[vertex,below] {$#2$} -- (0,0.5) -- (1,1) node[vertex,above] {$#1$} ;
\draw (0,-0.5) -- (0,0.5);
%\draw (-1,-1) -- (0,0) -- (-1,1);
%\draw (2,-1) -- (1,0) -- (2,1);
%\draw (0,0) -- (1,0);
}}
\newcommand\mysquare[1][1]{\tikz[baseline=-3pt,d,diag]{
\draw (-1,-1) node[vertex] {} -- (-0.5,-0.5) -- (-0.5,0.5) -- (-1,1) node[vertex] {};
\draw (1,-1) node[vertex] {} -- (0.5,-0.5) -- (0.5,0.5) -- (1,1) node[vertex] {};
\draw (-0.5,-0.5) -- (0.5,-0.5) (-0.5,0.5) -- (0.5,0.5);
}}
\newcommand\doublesquare[1][1]{\tikz[baseline=-3pt,d,diag]{
\draw (-1,-1) node[vertex] {} -- (-0.5,-0.5) -- (0.5,-0.5) -- (1,-1) node[vertex] {};
\draw (-1,1) node[vertex] {} -- (-0.5,0.5) -- (0.5,0.5) -- (1,1) node[vertex] {};
\draw (-0.5,-0.5) -- (-0.25,0) -- (-0.5,0.5) (0.5,-0.5) -- (0.25,0) -- (0.5,0.5) (-0.25,0) -- (0.25,0);
}}
\newcommand\leftid[1][1]{\tikz[baseline=-3pt,d,diag]{
\draw[bend right=45] (-1,-1) node[vertex] {} to (-1,1) node[vertex] {} (1,1);
\node[empty] at (1,-1) {};
\node[empty] at (1,1) {};
}}
\newcommand\diagida[1][1]{\tikz[baseline=-3pt,d,diag]{
\draw (-1,1) node[vertex] {} to (1,-1) node[vertex] {};
\node[empty] at (-1,-1) {};
\node[empty] at (1,1) {};
}}
\newcommand\diagidb[1][1]{\tikz[baseline=-3pt,d,diag]{
\draw (1,1) node[vertex] {} to (-1,-1) node[vertex] {};
\node[empty] at (1,-1) {};
\node[empty] at (-1,1) {};
}}
\newcommand\rightid[1][1]{\tikz[baseline=-3pt,d,diag]{
\draw[bend right=45] (1,1) node[vertex] {} to (1,-1) node[vertex] {};
\node[empty] at (-1,-1) {};
\node[empty] at (-1,1) {};
}}
\newcommand\Yaa[1][1]{\tikz[baseline=-3pt,d,diag]{
\draw (-1,1) node[vertex] {} to (1,-1) node[vertex] {};
\node[empty] at (-1,-1) {};
\draw (1,1) node[vertex] {} -- (0,0);
}}
\newcommand\Yba[1][1]{\tikz[baseline=-3pt,d,diag]{
\draw (1,1) node[vertex] {} to (-1,-1) node[vertex] {};
\node[empty] at (1,-1) {};
\draw (-1,1) node[vertex] {} -- (0,0);
}}
\newcommand\Yab[1][1]{\tikz[baseline=-3pt,d,diag]{
\draw (-1,1) node[vertex] {} to (1,-1) node[vertex] {};
\node[empty] at (1,1) {};
\draw (-1,-1) node[vertex] {} -- (0,0);
}}
\newcommand\Ybb[1][1]{\tikz[baseline=-3pt,d,diag]{
\draw (1,1) node[vertex] {} to (-1,-1) node[vertex] {};
\node[empty] at (-1,1) {};
\draw (1,-1) node[vertex] {} -- (0,0);
}}
\newcommand\capa[1][1]{\tikz[baseline=-3pt,d,diag]{
\draw[bend left=45] (-1,-1) node[vertex] {} to (1,-1) node[vertex] {};
\node[empty] at (-1,1) {};
\node[empty] at (1,1) {};
}}
\newcommand\cupa[1][1]{\tikz[baseline=-3pt,d,diag]{
\draw[bend left=45] (1,1) node[vertex] {} to (-1,1) node[vertex] {} ;
\node[empty] at (-1,-1) {};
\node[empty] at (1,-1) {};
}}
\newcommand\myempty[1][1]{\tikz[baseline=-3pt,d,diag]{
\node[empty] at (-1,1) {};
\node[empty] at (1,1) {};
\node[empty] at (-1,-1) {};
\node[empty] at (1,-1) {};
}}
\subsection{Some details on $d=4$}\label{ssec:d4}
$4$-step puzzles involve several complications which have already been
mentioned; for details on the underlying representation theory of
$\Uq (\lie{e}_{8}[z^\pm])$, we refer to the companion paper
\cite{artic77}. These complications are present even in
cohomology. Note that all examples of $d=4$ puzzles are relegated to
appendix~\ref{app:exd4}.

In order to simplify the discussion, we first consider the case $q=1$,
where the $q$-deformation disappears (the required sign changes to set
$q=-1$ will be provided shortly).  The spaces $V_a(z)$ decompose under
the action of $\mathfrak{e}_8$ as the direct sum of the adjoint
representation and the trivial representation.
Each of the 240 nonzero weight spaces is one-dimensional and has a
natural basis vector which is for example provided by the stable
envelope construction; in contrast, the zero weight space is
nine-dimensional, and only has a natural decomposition as
$\CC \oplus \mathfrak{t}$, where we identify the Cartan subalgebra
$\mathfrak t$ of $\mathfrak e_8$ with its dual via the Killing form.

We start with the $R$-matrix provided in appendix~C of \cite{artic77}. We shall analyze it piece by piece.
First there is the part living in the tensor square of the adjoint representation; with our choice
\eqref{eq:normR} of normalization, and shifting the spectral parameter by $y\mapsto y-10\hbar$ to match
the difference of equivariant parameters,
it is given diagrammatically by
\begin{multline}\label{eq:Rd4}
\check R_{ad}(y)=
\frac{6\hbar^2y}{(\hbar-y)(5\hbar-y)(10\hbar-y)}
\full{}{}{}{}
+
\frac{6\hbar^2}{(\hbar-y)(5\hbar+y)}
\cupcap{}{}{}{}
\\
+
\frac{\hbar}{\hbar-y}
\vertical{}{}{}{}
+
\frac{\hbar y}{(\hbar-y)(5\hbar-y)}
\horizontal{}{}{}{}
+
\frac{y}{\hbar-y}
\flatcrossing{}{}{}{}
\end{multline}
These diagrams have the following meaning in our setting. The ``cup'' and ``cap'' are the Killing form and its inverse.
The trivalent vertex is associated to the Lie bracket on $\mathfrak e_8$. The crossing is the permutation of factors
of the tensor product.
We now provide their explicit expressions.
Given two roots $\vX$ and $\vY$, define the sign
\[
\epsilon_{\vX,\vY}=\prod_{\substack{1\le i<j\le 8\\[1pt]
\killing{\vec\alpha_i}{\vec\alpha_j}
=-1}} (-1)^{\killing{\vec\alpha_i}{\vX}\killing{\vec\alpha_j}{\vY}}
\]
where the $\vec\alpha_i$ are the simple roots of $\mathfrak e_8$.
Note that it satisfies
\[
\epsilon_{\vX,\vY}\, \epsilon_{\vY,\vX}=(-1)^{\killing{\vX}{\vY}}
\]
Given roots $\vX,\vY,\vZ$, we then have
\tikzset{edgelabel/.style={}}
\begin{align*}
\tikz[baseline=-3pt]{\draw[edge,arrow=0.3,invarrow=0.7] (0,0) node[edgelabel,left] {$\vX$} -- (1,0) node[edgelabel,right] {$\vY$};}
&=\delta_{\vX+\vY,0}\epsilon_{\vX,\vX}
&
\tikz[baseline=-3pt]{\draw[edge,arrow] (0,0) -- (0.5,0) node[right,edgelabel] {$\vZ$}; \draw[edge,arrow] (120:0.5) node[left,edgelabel] {$\vX$} -- (0,0); \draw[edge,arrow] (-120:0.5) node[left,edgelabel] {$\vY$} -- (0,0); } &= \delta_{\vX+\vY,\vZ} \epsilon_{\vX,\vY}
\\
\intertext{which also implies}
\tikz[baseline=-3pt]{\draw[edge,invarrow=0.3,arrow=0.7] (0,0) node[left,edgelabel] {$\vX$} -- (1,0) node[edgelabel,right] {$\vY$};}&=\delta_{\vX+\vY,0}\epsilon_{\vX,\vX}
&
\tikz[baseline=-3pt]{\draw[edge,invarrow] (0,0) -- (0.5,0) node[right,edgelabel] {$\vZ$}; \draw[edge,invarrow] (120:0.5) node[left,edgelabel] {$\vX$} -- (0,0); \draw[edge,invarrow] (-120:0.5) node[left,edgelabel] {$\vY$} -- (0,0); } &= \delta_{\vX+\vY,\vZ} \epsilon_{\vY,\vX}
\end{align*}
These can be plugged into the diagrams of \eqref{eq:Rd4}, where all
external lines are implicitly oriented downwards, resulting in
\begin{gather*}
\full{\vW}{\vX}{\vY}{\vZ} 
= \delta_{s,2} \qquad\qquad
\flatcrossing{\vW}{\vX}{\vY}{\vZ} 
=\delta_{r,2}\qquad\qquad
\cupcap{\vW}{\vX}{\vY}{\vZ} 
=\delta_{t,2}\epsilon_{\vX,\vX}\epsilon_{\vZ,\vZ}
\\
\horizontal{\vW}{\vX}{\vY}{\vZ} 
=(\delta_{s,1}-\delta_{s,2}\,r)\epsilon_{\vX,\vY}\epsilon_{\vW,\vZ}
\qquad
\vertical{\vW}{\vX}{\vY}{\vZ} 
=(\delta_{r+s,1}-\delta_{r+s,0}\,r)\epsilon_{\vX,\vY}\epsilon_{\vW,\vZ}
\end{gather*}
where we have defined as usual the scalar products $r=\killing{\vX}{\vZ}=\killing{\vY}{\vW}$, $s=\killing{\vX}{\vW}=\killing{\vY}{\vZ}$ and $t=\killing{\vX}{\vY}=\killing{\vZ}{\vW}$
with $r=t-s+2$.

It is convenient to switch to $q=-1$ now. It is not hard to see that it 
corresponds to twisting (in the same sense as in \S\ref{sec:twist})
with $\epsilon_{\cdot,\cdot}$; explicitly, multiplying the above by $\epsilon_{\vX,\vY}\epsilon_{\vW,\vZ}$ results in
\begin{gather*}
\full{\vW}{\vX}{\vY}{\vZ} 
= \delta_{s,2} \qquad\qquad
\flatcrossing{\vW}{\vX}{\vY}{\vZ} 
=\delta_{r,2}(-1)^s \qquad\qquad
\cupcap{\vW}{\vX}{\vY}{\vZ} 
=\delta_{t,2}
\\
\horizontal{\vW}{\vX}{\vY}{\vZ} 
=\delta_{s,1}-\delta_{s,2}\,r
\qquad
\vertical{\vW}{\vX}{\vY}{\vZ} 
=\delta_{r+s,1}-\delta_{r+s,0}\,r
\end{gather*}

If all weight vectors $\vX,\vY,\vZ,\vW$ are nonzero, there is no contribution from the trivial sub-representation,
and summing the diagrams above results in the following table:
\begin{equation}\label{eq:tab4}
\!\!\!\!\!\!\!\!\!\!\!\!\!\!\!\!\!\!
\begin{array}{c|cccccc}
s\backslash t &
-1-a & -1 & -1+ a & -1+2a & -1+3a\\ \hline
%-2 & -1 & 0 & 1 & 2\\ \hline
-1 -a & 
-\frac{4 \hbar+y}{5 \hbar+y} %& 0 & 0 & 0 & 0 
\\ \\
-1 & 
\frac{\hbar}{5 \hbar+y} & 1 %& 0 & 0 & 0 
\\ \\
-1+a & 
\frac{6 \hbar^2}{(\hbar-y) (5 \hbar+y)} & \frac{\hbar}{\hbar-y} & \frac{y}{\hbar-y} %& 0 & 0 
\\ \\
-1+2a & 
\frac{\hbar^2 (55 \hbar-y)}{(\hbar-y) (5 \hbar-y) (5 \hbar+y)} & \frac{5 \hbar^2}{(\hbar-y) (5 \hbar-y)} & \frac{\hbar y}{(\hbar-y) (5 \hbar-y)} & -\frac{y (4 \hbar-y)}{(\hbar-y) (5 \hbar-y)} & 0 \\ \\
-1+3a & 
\frac{2 \hbar^2 \left(400 \hbar^2-5 \hbar y+y^2\right)}{(\hbar-y) (5 \hbar-y) (5 \hbar+y) (10 \hbar-y)} & 
%\frac{\hbar(50\hbar^2-31\hbar y+2y^2)}{(\hbar-y) (5 \hbar-y) (10 \hbar-y)} 
\frac{\hbar^2(50\hbar+y)}{(\hbar-y) (5 \hbar-y) (10 \hbar-y)} 
& \frac{6 \hbar^2 y}{(\hbar-y) (5 \hbar-y) (10 \hbar-y)}  & -\frac{\hbar y (4 \hbar-y)}{(\hbar-y) (5 \hbar-y) (10 \hbar-y)} & \frac{y (4 \hbar-y)(9\hbar-y)}{(\hbar-y) (5 \hbar-y) (10 \hbar-y)} 
\end{array}
\end{equation}
where as usual
$s=\killing{\vX}{\vW}$, 
$t=\killing{\vX}{\vY}$.

Some comments are in order. Firstly, as we shall see shortly when we discuss
triangles,
the first (resp.\ second) column corresponds to rhombi which nonequivariantly can be split as two triangles, with zero (resp.\ nonzero) weight diagonal.

Secondly, comparing the table above the one in the previous section
for $d\le 3$, we notice that the latter, namely \eqref{eq:tab3}, is
{\em not}\/ a subtable of the former, \eqref{eq:tab4}.  In fact, the
functoriality of \cite[\S 2.2]{artic71} does not apply at $d=4$; this
can be traced back to the fact that the inclusion
$X_{2(d-1)}\subset X_{2d}$, which was implicitly respected in
the construction of \cite{artic71} for $d\le 3$, cannot be respected at $d=4$.
See \ref{app:exd4a} for an example of a $3$-step problem that has a
different puzzle solution at $d=4$.

The case where some weight vectors are zero can be treated similarly. 
One needs to make a choice of basis of the zero weight space (i.e., the Cartan subalgebra); for example, in the basis of simple (co)roots $h_i$ and its
dual basis $h^i$ of fundamental weights, \footnote{Elsewhere in the paper, fundamental weights are
denoted $\vec\omega_i$, but the notation cannot be used here since we already label edges using weights,
and here we want to think of the fundamental weights as forming a basis of the Cartan subalgebra.}
one has
$\tikz[baseline=-3pt]{\draw[edge,arrow=0.3,invarrow=0.7] (0,0) node[edgelabel,left] {$h_i$} -- (1,0) node[edgelabel,right] {$h_j$};}
=C_{i,j}$ and therefore
$\tikz[baseline=-3pt]{\draw[edge,invarrow=0.2,arrow=0.8] (0,0) node[left,edgelabel] {$h^i$} -- (1,0) node[edgelabel,right] {$h^j$};}=C^{i,j}$ 
(where $C_{i,j}$ is the Cartan matrix, and $C^{i,j}$ its inverse). One then has
$\tikz[baseline=-3pt]{\draw[edge,arrow=0.6] (0,0) -- (0.5,0) node[right,edgelabel] {$\vY$}; \draw[edge,arrow] (120:0.5) node[left,edgelabel] {$h_i$} -- (0,0); \draw[edge,arrow] (-120:0.5) node[left,edgelabel] {$\vX$} -- (0,0); } = \delta_{\vX,\vY} u_i =\delta_{\vX,\vY} C_{i,j}u^j$,
$\tikz[baseline=-3pt]{\draw[edge,invarrow=0.4] (0,0) -- (0.5,0) node[right,edgelabel] {$\vY$}; \draw[edge,invarrow=0.4] (120:0.5) node[left,edgelabel] {$h^i$} -- (0,0); \draw[edge,invarrow] (-120:0.5) node[left,edgelabel] {$\vX$} -- (0,0); } = -\delta_{\vX,\vY} C^{i,j}u_j = -\delta_{\vX,\vY} u^i$ (and the opposite if one switches $h_i$
and $\vX$), where $\vX = \sum_i u_i h^i = \sum_i u^i h_i$. Trivalent vertices involving two or three $h$s are zero.

These rules allow to compute the fugacity of any rhombus. 
For example, if one of the four weights is zero, say $\vY$, and the basis vector there is $h_i$,
then the last two terms of $\check R$ contribute, with coefficients 
$-w_i \delta_{\vX,\vZ+\vW}$ and $-u_i \delta_{\vX,\vZ+\vW}$ respectively.

Finally, we need to include the contribution
of the trivial representation; this is given by \cite{artic77}
\begin{multline*}
\check R_{rest}
=
\frac{60 \hbar^3}{(\hbar-y) (5 \hbar-y) (10 \hbar-y)}
\left(
\leftid
+
\rightid
\right)
\\
+
\frac{(6 \hbar-y) (\hbar+y)}{(\hbar-y) (5 \hbar-y)}
\left(
\diagida
+
\diagidb
\right)
\\
+
\frac{60 \hbar^3}{(\hbar-y) (5 \hbar-y) (5 \hbar+y)}
\left(
\capa
+
\cupa
\right)
\\
+
\frac{\sqrt{30} \hbar^2}{(\hbar-y) (5\hbar-y)}
\left(
\Yaa
+
\Yab
+
\Yba
+
\Ybb
\right)
\\
%+\frac{1200 \hbar^4+280 \hbar^3 y-31 \hbar^2 y^2-10 \hbar y^3+y^4}{(\hbar-y) (5 \hbar-y) (10 \hbar-y) (5 \hbar+y)}
+\frac{(11\hbar-y)(4\hbar-y)(5\hbar+y)y+60\hbar^3(20\hbar+y)}{(\hbar-y) (5 \hbar-y) (10 \hbar-y) (5 \hbar+y)}
\myempty
\end{multline*}
where the isolated dots correspond to trivial representations. The graphical rules are the same as before.

\rem[gray]{nonequivariant examples are just too big}

We discuss triangles now. At $x=0$, in accordance with
lemma~\ref{lem:factorR}, the $R$-matrix factorizes as a product of two
trilinear operators. At $q=-1$ these two operators are in fact
identical up to the orientation of the lines; we have
\[
U=
\U+\sqrt{6/5}\left( \tikz[baseline=0cm,yscale=-1,diag]{
\draw[bend left=60,d] (-1,-1) node[vertex] {} to (1,-1) node[vertex] {};
\node[empty] at (0,1) {};
}
+
\tikz[baseline=0cm,yscale=-1,diag]{
\draw[bend right=40,d] (-1,-1) node[vertex] {} to (0,1) node[vertex] {};
\node[empty] at (1,-1) {};
}
+
\tikz[baseline=0cm,yscale=-1,diag]{
\draw[bend left=40,d] (1,-1) node[vertex] {} to (0,1) node[vertex] {};
\node[empty] at (-1,-1) {};
}
\right)
+2\sqrt{6/5} 
\tikz[baseline=0cm,yscale=-1,diag]{
\node[empty] at (1,-1) {};
\node[empty] at (-1,-1) {};
\node[empty] at (0,1) {};
}
\]
(all lines pointing downwards) and $D$ its $180^\circ$ rotation with
orientation reversed.

Finally, this allows us to give a relatively simple
{\em nonequivariant}\/ cohomology rule at $d=4$.
Instead of breaking a puzzle along all edges into triangles,
and multiplying the fugacities of these triangles (which depend on the
choice of basis of the weight zero space), we break only along
non-zero-weight edges (whose singleton bases are canonical).
The resulting triangles have fugacity $1$ and can be discarded;
we just need give a formula for the fugacities of the larger regions
(now independent of the choice of basis of the weight zero space), 
which we then multiply together to give that of the puzzle.
\junk{AK added most of the paragraph above, howdya like it. PZJ: OK}

\begin{prop}\label{prop:d4}
Nonequivariant $d=4$ puzzles can be described as follows.
Their boundaries form $d$-ary strings encoded as roots of $\mathfrak{e}_8$, as in the general
setup of \S\ref{sec:main}. Their insides (i.e., the internal edges in the underlying triangular lattice)
are labeled in all possible ways with weights (i.e., roots, or zero) in such a way
that weight conservation is satisfied at each elementary triangle. %Their fugacity is then computed by performing the following operation.
To compute their fugacity, one
considers separately each connected component of the subgraph of the dual of the puzzle made of zero weight edges.
This graph can only have 3-valent vertices or 1-valent vertices (endpoints). For each component, one sums over all possible
sets of nonintersecting paths connecting pairs of endpoints (unpaired endpoints are allowed)
the fugacity
\[
{
(6/5)^{\frac{1}{2}\#\{\text{vertices with some empty edges}\}} 
\atop
2^{\#\{\text{vertices with all empty edges}\}}
}
\prod_{\text{paired endpoints $p$ and $q$}} (-\text{weight}(p),\text{weight}(q))
\]
where an empty edge is an edge not traversed by a path, and 
the weight of an endpoint is conventionally the weight of the edge counterclockwise from its zero weight edge. 
The fugacity of the puzzle
is then the product of fugacities of these connected components.
\end{prop}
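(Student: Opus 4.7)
The plan is to compute the nonequivariant $d=4$ puzzle fugacity as a tensor contraction of the explicit operators $U$ and $D$ given just before the statement of the proposition, one per upward and downward triangle of the puzzle, using the $R$-matrix factorization $\check R_{1,2}(0)=DU$ at $q=-1$. For each internal edge I would insert a resolution of the identity in a weight basis: a canonical one-dimensional basis on every nonzero weight space, and on the nine-dimensional zero weight space of $V_a(z)$ the decomposition $\mathfrak{t}\oplus\CC$ into the Cartan subalgebra of $\mathfrak{e}_8$ and the trivial summand. Using the explicit formulas for $U$ and $D$, a triangle with three nonzero-weight legs retains only the Lie-bracket term $\U$, and after the twist by $\epsilon_{\cdot,\cdot}$ its fugacity reduces to $1$; such triangles therefore contribute trivially, and everything reduces to the dual subgraph $G$ whose vertices are the puzzle triangles meeting the zero-weight sector and whose edges are the zero-weight puzzle edges.

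The next step is the degree analysis of $G$. Weight conservation at each puzzle triangle forbids exactly two zero-weight legs (the third would then be forced to zero weight as well), so every vertex of $G$ has degree $1$ or $3$. At a degree-$3$ vertex each of the three zero-weight legs is independently labelled by an element of $\mathfrak{t}$ or by the trivial generator, and I would go through the terms of $U$ one by one: the Lie-bracket term vanishes on any triple of Cartan elements; each of the three ``cap + dot'' terms survives only when the dot sits on the trivial leg and the other two legs are Cartan, producing $\sqrt{6/5}\,\langle h_i,h_j\rangle$; and the ``three dots'' term survives only when all three legs are trivial, producing $2\sqrt{6/5}$. It follows that every degree-$3$ vertex of $G$ meets either $0$ or $2$ Cartan-labelled edges, which is precisely the condition defining a family of non-intersecting paths in $G$ whose endpoints lie at the degree-$1$ vertices.

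At a degree-$1$ vertex the two root legs $\alpha,\beta$ must satisfy $\alpha+\beta=0$ by weight conservation at the triangle, and inspection of the surviving terms of $U$ shows that the zero-weight leg is either a Cartan element $h_i$ (producing, from $\U$, a coefficient proportional to $\langle [e_\alpha,e_{-\alpha}],h^i\rangle$) or the trivial generator (producing, from the appropriate ``cap + dot'' term, a scalar proportional to $\langle\alpha,\beta\rangle$). When a path joins two endpoints $p$ and $q$, summing the propagating Cartan index along the path via $\sum_i h_i\otimes h^i$ contracts the two endpoint factors into the single Killing-form pairing $-\langle\mathrm{wt}(p),\mathrm{wt}(q)\rangle$; when an endpoint is unpaired its zero-weight leg is forced to carry the trivial generator, contributing only to the overall $\sqrt{6/5}$-count. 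The identical analysis applied to $D$-vertices gives the mirror-image statement via the $180^\circ$ rotation relating $U$ and $D$.

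The main obstacle will be the clean bookkeeping of the normalization constants: one must verify that each degree-$3$ vertex contributes exactly one factor of $\sqrt{6/5}$ whether or not it sits on a path, so that after combining with the extra factor $2$ at each all-empty vertex and with the Cartan contractions $\sum_i h_i\otimes h^i$ along paths, the exponents collapse to the expression $(6/5)^{\frac{1}{2}\#\{\text{some empty}\}}\cdot 2^{\#\{\text{all empty}\}}$ of the statement. A subsidiary subtlety is the sign: checking that the pairing at paired endpoints produces $-\langle\mathrm{wt}(p),\mathrm{wt}(q)\rangle$ rather than $+\langle\mathrm{wt}(p),\mathrm{wt}(q)\rangle$ will require tracking the orientation convention (``counterclockwise from the zero weight edge'') through the explicit structure constants $[e_\alpha,e_{-\alpha}]$ and their pairings with Cartan elements.
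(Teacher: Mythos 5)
Your proposal is correct and follows exactly the route the paper intends: the paper's own ``proof'' consists of the single remark that the proposition ``is just a reformulation of the expression for $U$ and $D$ above,'' and your expansion---splitting zero-weight edges along $\mathfrak{t}\oplus\CC$, observing that the bracket term dies on Cartan triples so that only the cap/dot terms survive at trivalent vertices (yielding the path picture with $\sqrt{6/5}$ per vertex and an extra $2$ for all-empty trivalent vertices), and contracting $\sum_i h_i\otimes h^i$ along paths into $-\killing{\mathrm{wt}(p)}{\mathrm{wt}(q)}$---is precisely the omitted computation, consistent with the two worked examples following the proposition. The only caution is the one you already flag: the sign and normalization bookkeeping (and the implicit restriction of the $2^{\#}$ factor to trivalent vertices) must be checked against those examples.
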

The statement is just a reformulation of the expression for $U$ and $D$ above, and we skip the details of the proof.

\begin{rmk*}
Note that the rule is independent of the parameter $\hbar$, by homogeneity.
\end{rmk*}

\begin{ex}
The simplest situation is when a single edge has zero weight, i.e.,
\[
\tikz[scale=0.75,baseline=(current  bounding  box.center)]{\draw[dr] (210:1) node[below] {$\vY$} -- (0,0) (0,1) -- ++(30:1) node[above] {$\vX$}; \draw[dg] (-30:1) node[below] {$-\vY$} -- (0,0)  (0,1) -- ++(150:1) node[above] {$-\vX$}; \draw[db] (0,0) node[triv] {} -- node[right] {$\vec0$} (0,1) node[triv] {}; }
\quad\Leftrightarrow\quad
\tikz[baseline=-1mm]{\uptri{-\vX}{\vec0}{\vX}\downtri{-\vY}{}{\vY}}
\]
or its $120^\circ$ rotations.
There can be either a path or none, so that the fugacity is
\[
\tikz[baseline=-1mm]{\uptri{-\vX}{\vec0}{\vX}\downtri{-\vY}{}{\vY}}
=
\frac{6}{5}-\killing{\vX}{\vY}
\qquad
\vX,\vY\ne\vec0
\]
We reach the important conclusion that fugacities may not only be rational numbers, but also negative:
$6/5-\killing{\vX}{\vY}\in \{-4/5,1/5,6/5,11/5,16/5\}$ (this is also the first column of \eqref{eq:tab4} at $y=0$).

See Appendix~\ref{app:exd4b} for a full example involving such a zero weight situation.

The next simplest situation is a vertex with only zero weights:
\[
\tikz[scale=0.75,baseline=(current  bounding  box.center)]{
\coordinate (O) at (0,0);
\coordinate (A) at (90:1);
\coordinate (B) at (210:1);
\coordinate (C) at (-30:1);
\draw[dr] (A) -- ++(30:1) node[above] {$\vX$} (B) -- node[below] {$\vec0$} ++(30:1) (C) -- ++(30:1) node[right] {$-\vZ$};
\draw[dg] (A) -- ++(150:1) node[above] {$-\vX$} (B) -- ++(150:1) node[left] {$\vY$} (C) -- node[below] {$\vec0$} ++(150:1);
\draw[db] (A) -- node[right] {$\vec0$} ++(-90:1) (B) -- ++(-90:1) node[below] {$-\vY$} (C) -- ++(-90:1) node[below] {$\vZ$};
\node[triv] at (O) {};
\node[triv] at (A) {};
\node[triv] at (B) {};
\node[triv] at (C) {};
}
\quad\Leftrightarrow\quad
\def\posa{0.5}\def\posb{0.5}\def\thescale{1}
\begin{tikzpicture}[math mode,nodes={edgelabel},x={(-0.577cm,-1cm)},y={(0.577cm,-1cm)},scale=\thescale,baseline=(current  bounding  box.center)]
\draw[thick] (0,0) -- node[pos=\posa,xshift=1mm] {\vX} ++(0,1); \draw[thick] (0,0) -- node[pos=\posb] {-\vX} ++(1,0); \draw[thick] (0+1,0) -- node {\vec0} ++(-1,1); 
\draw[thick] (0,1) -- node[pos=\posa] {-\vZ} ++(0,1); \draw[thick] (0,1) -- node[pos=\posb] {\vec0} ++(1,0); \draw[thick] (0+1,1) -- node[yshift=-0.5mm] {\vZ} ++(-1,1); 
\draw[thick] (1,0) -- node[pos=\posa] {\vec0} ++(0,1); \draw[thick] (1,0) -- node[pos=\posb] {\vY} ++(1,0); \draw[thick] (1+1,0) -- node[yshift=-0.5mm] {-\vY} ++(-1,1); 
\end{tikzpicture}
\]
There can be either no path, or a single path connecting any two of the outermost vertices:
\[
\def\posa{0.5}\def\posb{0.5}\def\thescale{1}
\begin{tikzpicture}[math mode,nodes={edgelabel},x={(-0.577cm,-1cm)},y={(0.577cm,-1cm)},scale=\thescale,baseline=(current  bounding  box.center)]
\draw[thick] (0,0) -- node[pos=\posa,xshift=1mm] {\vX} ++(0,1); \draw[thick] (0,0) -- node[pos=\posb] {-\vX} ++(1,0); \draw[thick] (0+1,0) -- node {\vec0} ++(-1,1); 
\draw[thick] (0,1) -- node[pos=\posa] {-\vZ} ++(0,1); \draw[thick] (0,1) -- node[pos=\posb] {\vec0} ++(1,0); \draw[thick] (0+1,1) -- node[yshift=-0.5mm] {\vZ} ++(-1,1); 
\draw[thick] (1,0) -- node[pos=\posa] {\vec0} ++(0,1); \draw[thick] (1,0) -- node[pos=\posb] {\vY} ++(1,0); \draw[thick] (1+1,0) -- node[yshift=-0.5mm] {-\vY} ++(-1,1); 
\end{tikzpicture}
=2(6/5)^2 - 6/5(\killing{\vX}{\vY}+\killing{\vY}{\vZ}+\killing{\vZ}{\vX})
\qquad
\vX,\vY,\vZ\ne\vec0
\]
\end{ex}

\junk{comments on limit to Schubert -- explain why
we can't take the $\hbar\to\infty$ as for lower $d$, but that (nonequivariantly)
one can just pick the puzzles with the right power of $\hbar$. (of course this is also a consequence of the SSM/EP theorem)
conjecture on positivity
(and how it's not a conjecture at $d\le3$). some of these comments may be redundant with next section}

\subsection{An interpretation of the
  nonequivariant-cohomology puzzle rule}\label{ssec:SSM}
In the previous sections we have provided explicit expressions for
puzzle pieces computing products of SSM classes both equivariantly
(in $H^{* \loc}_{\hat T}$) and ``nonequivariantly'' (in $H_{\CC^\times}^{* \loc}$).
In the latter case, there is a particularly appealing geometric
interpretation of the structure constants:\footnote{We thank L.~Mihalcea for pointing out and explaining
\cite{Schurmann} to us.}

\begin{thm}\label{thm:EP}
  Consider three Schubert cells $g X_o^\lambda$, $g' X_o^\mu$, $g'' X_o^\nu$
  in general position in a $d$-step flag variety $P_-\backslash G$ for
  $d\le 4$. Then the Euler %--Poincar\'e 
  characteristic\footnote{It is slightly more natural to think in terms
    of ``compactly supported Euler characteristic'', but the real reason
    we use the notation $\chi_c$ is to distinguish from the Coxeter elements
    $\chi$ mentioned later.}
  $\chi_c$ of their intersection is given by
\[
\chi_c\left(g X_o^\lambda \cap g' X_o^\mu \cap g'' X_o^\nu\right)
= (-1)^\delta \tikz[scale=1.8,baseline=0.5cm]{\uptri{\lambda}{\overleftarrow\nu}{\mu}
%\draw[<-] (-0.2,-0.1) -- ++(0.4,0);
%\draw[->]
}
= (-1)^\delta \sum_{\substack{\text{nonequivariant}\\\text{puzzles $P$ with}\\\text{sides } \lambda, \mu, \nu\\\text{read clockwise}}} \text{fug}(P)
\]
  where $\overleftarrow\nu=(\nu_n,\ldots,\nu_1)$, $\delta={\dim(P_-\backslash G)-(\ell(\lambda)+\ell(\mu)+\ell(\nu))}$, and
  the contribution $\text{fug}(P)$ of each puzzle is simply $1$ for
  $d\le 3$, and given by proposition~\ref{prop:d4} for $d=4$. (For $d>4$ one
  still enjoys the equation relating Euler %--Poincar\'e
  characteristics and SSM structure constants $c_{\lambda\mu}^{\overleftarrow\nu}$,
  but has no puzzles with which to compute the latter.)
\end{thm}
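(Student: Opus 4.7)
My plan is to deduce Theorem~\ref{thm:EP} by combining three ingredients: the cohomological specialization of Theorem~\ref{thm:main}, the dual pairing \eqref{eq:Kdual} in its cohomological avatar, and a transversality formula of Schürmann \cite{Schurmann} which expresses Euler characteristics of intersections as integrals of Chern--Schwartz--MacPherson classes.

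First, I would follow the leftmost vertical and then bottom-horizontal arrows of the hierarchy in \S\ref{ssec:limits}, i.e.\ specialize Theorem~\ref{thm:main} at $q\to-1$, $z_i\to 1$ to obtain in nonequivariant cohomology (where $T^*(P_-\backslash G)$ is homotopic to $P_-\backslash G$) the expansion
\[
S^\lambda \cdot S^\mu \ =\ \sum_\nu\ c_{\lambda\mu}^\nu\, S^\nu,
\]
with $c_{\lambda\mu}^\nu$ equal to the nonequivariant puzzle sum on the right-hand side of the theorem (the fugacities being those of \S\ref{ssec:d123}--\S\ref{ssec:d4}). To single out one coefficient I would use the nonequivariant limit of \eqref{eq:Kdual}, $\langle S^\lambda\cdot S_\mu\rangle = \delta^\lambda_\mu$, giving $c_{\lambda\mu}^\nu = \langle S^\lambda\cdot S^\mu\cdot S_\nu\rangle$. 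The cohomological duality \eqref{eq:Hdual1} identifies $S_\nu$ with $S^{\nu^*}$ (after a reversal of equivariant parameters that becomes trivial here), so the puzzle sum equals the completely symmetric triple integral $\langle S^\lambda\cdot S^\mu\cdot S^{\nu^*}\rangle$, matching the $120^\circ$-symmetry of the picture.

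Second, I would invoke Schürmann's transversality formula: for locally closed subvarieties $A,B,C$ of a smooth projective $X$ meeting transversally,
\[
c^{\mathrm{SM}}(A\cap B\cap C) \ =\ \frac{c^{\mathrm{SM}}(A)\, c^{\mathrm{SM}}(B)\, c^{\mathrm{SM}}(C)}{c(TX)^2},
\]
so that taking the pushforward to a point and using $s^{\mathrm{SM}}(Y) = c^{\mathrm{SM}}(Y)/c(TX)$,
\[
\chi_c(A\cap B\cap C) \ =\ \int_X s^{\mathrm{SM}}(A)\, s^{\mathrm{SM}}(B)\, s^{\mathrm{SM}}(C)\, c(TX).
\]
Kleiman transversality guarantees that generic translates $gX_o^\lambda, g'X_o^\mu, g''X_o^{\nu^*}$ meet this hypothesis. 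Matching $S^\lambda$ with the honest SSM class $s^{\mathrm{SM}}(X_o^\lambda)$ (absorbing $c(TX)$ via the factor $[P_-\backslash G]$ relating $S^\lambda$ to the stable basis element $\St^\lambda$), I would reduce $\langle S^\lambda\cdot S^\mu\cdot S^{\nu^*}\rangle$ to $\pm \chi_c(gX_o^\lambda\cap g'X_o^\mu\cap g''X_o^{\nu^*})$. Finally, the Bruhat cell $X_o^{\nu^*}$ is just $X_o^\nu$ with respect to a Weyl-conjugate torus, so the generic-position hypothesis lets us replace $\nu^*$ by $\nu$ on the geometric side without changing the Euler characteristic.

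The main obstacle will be to pin down the sign $(-1)^\delta$ with $\delta=\dim(P_-\backslash G)-(|\lambda|+|\mu|+|\nu|)$. Neither the conversion $c^{\mathrm{SM}}\leftrightarrow s^{\mathrm{SM}}$ nor the identification $S_\nu\leftrightarrow S^{\nu^*}$ is degree-sensitive; the parity $\delta$ must instead come from two sources combined, (i) the factor $[P_-\backslash G]$ relating $S^\lambda$ to the true stable/SSM normalization, which in cohomology degenerates to an Euler-class-type factor carrying a sign $(-1)^{\dim(P_-\backslash G)}$, and (ii) the polarization choice $q\to-1$ of \S\ref{sec:coho}, which dresses the $R$-matrix diagonal entries with $(-1)^{\mathrm{inv}}$ twists that compound, over the triple pairing, into a factor $(-1)^{|\lambda|+|\mu|+|\nu|}$. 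The cleanest route is to verify the sign in the toy case $P_-\backslash G = \mathbb{P}^1$ (or on a single $d=1$ example from \S\ref{sec:exd1}, where all ingredients are known explicitly) and then propagate by the recursive structure (proposition~\ref{prop:charact}) that generates arbitrary $\lambda,\mu,\nu$.
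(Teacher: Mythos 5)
Your route is essentially the paper's: both express the structure constant as a triple integral via the stable/Segre duality pairing, identify the stable classes with characteristic cycles of Bruhat cells, invoke Kleiman transversality, and apply Sch\"urmann's multiplicativity (which the paper packages as lemma~\ref{lem:multCSM}, proved by quoting \cite[eq.\ (1.3)]{Schurmann} after homogenizing) twice to collapse the product onto the triple intersection. One bookkeeping slip: the correct dual-basis statement is $\int_{P_-\dom G}\St^\lambda\, S^\mu=\delta_{\lambda,\mu^*}$ (one stable class against one Segre class), not $\langle S^\lambda S_\mu\rangle=\delta^\lambda_\mu$; your version is off by a factor of $e(T^*M)$, though this washes out if you track normalizations as the paper does in \eqref{eq:cfromint}.

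The one place you genuinely fall short is the sign. You propose to hunt $(-1)^\delta$ in the normalization $[P_-\backslash G]$ and in the $q\to-1$ polarization twist, and then to pin it down by checking $\PP^1$ and ``propagating.'' Neither source is where the sign lives, and example-checking plus an appeal to the recursion of proposition~\ref{prop:charact} does not constitute a derivation (the recursion controls restrictions of individual classes, not the parity of a triple product). The sign comes out in one clean step: after lemma~\ref{lem:multCSM} is applied twice, the two factors of $e(T^*M)^2$ cancel against the denominator of \eqref{eq:cfromint}, leaving the class $[cc(\calO_{gX^\lambda_\circ\cap g'X^\mu_\circ\cap g''X^\nu_\circ})]$; setting $\hbar\to-1$ and using Ginzburg's relation $csm(1_A)=(-1)^{\dim A}\,[cc(\calO_A)]|_{\hbar\mapsto-1}$ for the single locally closed manifold $A=gX^\lambda_\circ\cap g'X^\mu_\circ\cap g''X^\nu_\circ$ produces exactly $(-1)^{\dim A}=(-1)^\delta$, since transversality forces $\dim A=\dim(P_-\backslash G)-(|\lambda|+|\mu|+|\nu|)$. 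With that replacement your argument closes.
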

\junk{careful that notation should be consistent so codimension is $|\cdot|$}

In the simpler version with Schubert classes $\{[X^\lambda]\}$,
this $3$-fold symmetric calculation computes the number of points in the
triple intersection {\em when finite,} and gives $0$ when infinite.
That simpler statement is based on the dual-basis statement
$$ \int_{P_-\dom G} [X^\lambda] [X^\mu] = \delta_{\lambda,\overleftarrow\mu} $$
of the Schubert basis. The corresponding statement for us is (combining
the cohomology limit of \eqref{eq:Kdual}
and the nonequivariant version of \eqref{eq:Hdual2})
$$ \delta_{\lambda,\overleftarrow\mu} = 
\int_{T^*(P_-\dom G)} St^\lambda St^\mu = \int_{P_-\dom G} St^\lambda St^\mu
\frac{1}{e_{\CC^\times}(T^*(P_-\dom G))} = \int_{P_-\dom G} St^\lambda S^\mu $$
where the first integral is defined using AB/BV equivariant localization.
In particular,
$$ \alpha = \sum_\nu c_\nu S^\nu
\quad \iff\quad
c_\nu = \int_{P_-\dom G} \alpha\, St^{\overleftarrow\nu}\quad \forall \nu $$
hence
\begin{equation}
  \label{eq:cfromint}
  c_{\lambda\mu}^{\overleftarrow\nu} =
  \int_{P_-\dom G} S^\lambda\, S^\mu\, St^{\nu} =
  \int_{P_-\dom G} St^\lambda\, St^\mu\, St^{\nu} / e(T^*M)^2 
\end{equation}

The proof of theorem \ref{thm:EP} is based on two properties of the
{\em Chern--Schwartz--MacPherson} natural transformation
$csm:\, \{$constructible functions on $M\} \to H_*(M)$, although we
will work in cohomology $H^*(M)$.  We recall a little of this theory here,
primarily to provide pegs on which to hang the sign conventions that
one need grapple with in comparing CSM classes with our SSM classes.
The first key property of this natural transformation, suggesting its utility
for the statement above, is that $\int_M csm(1_A) = \chi_c(A)$ for
$A \subseteq M$ a locally closed subvariety of a proper smooth variety $M$.

\newcommand\calD{{\mathcal D}}
\newcommand\calO{{\mathcal O}}
\newcommand\Union\bigcup

Every constructible function on $M$ is (nonuniquely) a linear combination of
characteristic functions $1_A$ of locally closed sub{\em manifolds} of $M$.
In \cite{Ginzburg86} is given a formula for these $csm(1_A)$,
using the class
$[cc(\calO_A)] \in H^*_{\CC^\times}(T^*M) \iso H^*_{\CC^\times}(M) \iso H^*(M)[\hbar]$
of the characteristic cycle of the $\mathcal D_M$-module $\calO_A$, namely
$$ csm(1_A) = (-1)^{\dim A}\, [cc(\calO_A)]\,|_{\, \hbar \,\mapsto\, -1} $$

The dehomogenization $\hbar\mapsto -1$ is not so important, but the
$(-1)^{\dim_M A}$ is crucial in order to make the CSM class independent
of the expansion into characteristic functions, and more generally,
to make the natural transformation additive.
(The sign on $\hbar$ then serves to make
$csm(1_A) = +\big[\,\overline A\,\big]$,
up to higher degree terms in cohomology.) Essentially, an Euler
characteristic computation (not in topology -- rather, of a
Grothendieck--Cousin complex of $\calD$-modules)
is equipped with signs in order to become simply {\em a sum}
rather than its more usual alternating sum.
One effect of these signs is that when $A=M$, the class $csm(1_A)$
is the total Chern class of the {\em tangent} bundle of $M$,
despite its derivation from the dilation-equivariant Euler class of
the {\em cotangent} bundle.

\newcommand\phbar{{\phantom{\hbar}}}
For a small example, taking $\iota:\CP^1 \into T^*\CP^1$
to be the inclusion of the zero section, consider
$$
\begin{array}{r|ccccc} \\
  A & \CP^1 && \{\infty\} && \CC \\ \hline
  \dim A & 1 && 0 && 1 \\ 
  cc(\calO_A) & \iota(\CP^1) && T^*_\infty && \iota(\CP^1)\cup T^*_\infty \\
  \iota^*([cc(\calO_A)]) & \hbar[\CP^1] - 2[pt] && [pt] && \hbar[\CP^1]-[pt] \\
  csm(1_A) & \phbar [\CP^1] + 2[pt] &&  [pt] && \phbar [\CP^1] + [pt] 
\end{array}
$$
verifying the additivity $csm(1_{\CP^1}) = csm(1_\infty) + csm(1_{\CC^1})$,
despite the geometric equality
$[cc(\calO_{\CP^1})] = [cc(\calO_{\CC^1})] - [cc(\calO_{\infty})]$
derivable from the Grothendieck--Cousin complex.

\begin{lem}\label{lem:multCSM}
  Let $M$ be a smooth compact complex variety, and $A,B$ two
  locally closed smooth subvarieties, such that their closures
  $\overline A,\overline B$ are ``stratified-transverse'', i.e. each
  stratum in $\overline A$ is transverse to each one in $\overline B$.
  Then
  $$ [cc(\calO_A)]\, [cc(\calO_B)] = [cc(\calO_{A\cap B})]\, [M \subseteq T^*M] $$
  as elements of $H^*_{\CC^\times}(T^* M)$. If we pull these classes
  back to the base, $[M \subseteq T^*M]$ becomes the Euler class $e(T^*M)$.
\end{lem}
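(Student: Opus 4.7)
The plan is to prove the identity by interpreting both sides as intersection products in $H^*_{\CC^\times}(T^*M)$ and showing that, under the stratified-transversality hypothesis, both compute the class of $A\cap B$ embedded via the zero section. Recall that for a locally closed smooth subvariety $A\subseteq M$, the characteristic cycle $cc(\calO_A)$ is (up to the closure) the conormal variety $T^*_A M$, a Lagrangian of dimension $\dim M$ inside $T^*M$; the same holds for $B$ and, by the hypothesis, for $A\cap B$. The zero section $[M\subseteq T^*M]$ is a further Lagrangian. So both products in the lemma are intersections of pairs of Lagrangian cycles.

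First I would compute $[T^*_A M]\cdot[T^*_B M]$. Over a point $p\in A\cap B$, the two fiber Lagrangians are the annihilators of $T_pA$ and of $T_pB$. Stratified-transversality ensures $T_pA+T_pB=T_pM$, so the two annihilators intersect only at the origin of $T^*_pM$; thus the set-theoretic intersection of $T^*_A M$ and $T^*_B M$ is precisely the zero section over $A\cap B$. The dimension count ($\dim M + \dim M - 2\dim M = 0$ in each fiber) matches the expected codimension, so the intersection is transverse and the product equals $[A\cap B]$ embedded in $T^*M$ via the zero section. I would then compute $[T^*_{A\cap B}M]\cdot[M]$ similarly: the zero section meets any conormal variety transversally in its base, so this product is also $[A\cap B]$. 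Comparing the two calculations proves the identity on the nose for the smooth locally closed pieces. The pull-back statement then follows from the self-intersection formula for the zero section: $i^*[M\subseteq T^*M]=e(N_{M/T^*M})=e(T^*M)$.

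The main obstacle is that $[cc(\calO_A)]$ need not coincide with $[\overline{T^*_A M}]$ when $\overline A$ is singular; in general it is a positive combination of the closures of conormals to the strata $S$ of $\overline A\smallsetminus A$, with Euler-obstruction-type multiplicities, and similarly for $B$. To handle this I would invoke stratified-transversality twice: each deeper stratum $S\subseteq\overline A\smallsetminus A$ is transverse to every stratum of $\overline B$, so the same fiberwise calculation as above shows $[T^*_S M]\cdot[T^*_{S'}M]=[S\cap S'\subseteq T^*M]$, while on the right side these same contributions arise from the analogous stratification of $\overline{A\cap B}$ (whose strata are exactly the intersections $S\cap S'$ by the hypothesis). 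Matching multiplicities strata-by-strata then reduces the full identity to the smooth case treated above. Alternatively, one may appeal to the fact that the characteristic-cycle map is a ring homomorphism with respect to the natural product on constructible sheaves under transverse intersection (a standard consequence of Kashiwara's microlocal calculus, see e.g.\ the multiplicativity of $cc$ under non-characteristic pullback); the claimed identity is then the incarnation, at the level of cycles in $T^*M$, of the trivial equality $1_A\cdot 1_B=1_{A\cap B}$ together with the fact that multiplication by the zero section corresponds to restriction to $M$.
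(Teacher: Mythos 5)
Your route is genuinely different from the paper's (which observes that all three classes are homogeneous of the same degree, specializes $\hbar\mapsto -1$, and quotes Sch\"urmann's transversality formula $csm(1_A)\,csm(1_B)=csm(1_{A\cap B})\,c(TM)$ verbatim), but as written it has a real gap in its central computation. You assert that the intersection of $T^*_AM$ and $T^*_BM$ is transverse because the fiberwise annihilators meet only at the origin, and conclude that the product is $[A\cap B\subseteq T^*M]$ via the zero section. The fiberwise statement is correct, but globally the set-theoretic intersection is (the closure of) $A\cap B$ sitting inside the zero section, which has dimension $\dim(A\cap B)$ rather than the expected dimension $n+n-2n=0$; so this is an \emph{excess} intersection, not a transverse one. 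Degree counting already rules out your answer: the product of two classes of degree $2\dim_{\CC}M$ lies in degree $4\dim_{\CC}M$, whereas $[A\cap B\subseteq T^*M]$ has degree $2(2\dim_{\CC}M-\dim_{\CC}(A\cap B))$. The correct value of both sides is $[A\cap B\subseteq T^*M]$ times the ($\CC^\times$-equivariant) Euler class of the excess bundle, which in both cases works out to $T^*(A\cap B)$ --- this is exactly what the extra factor $[M\subseteq T^*M]$ on the right-hand side of the lemma is encoding. Your two computations happen to be wrong by the same omitted factor, so the comparison lands on a true statement, but the justification does not establish it; you would need the excess intersection formula (and the identification of the two excess bundles) to make this step correct.

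The second gap is the reduction from singular closures to smooth strata. Since $[cc(\calO_A)]$ is a $\ZZ$-combination of conormal varieties to the strata of $\overline A$ with multiplicities governed by local Euler obstructions, matching the two sides ``strata-by-strata'' requires knowing that the multiplicity of $cc(\calO_{A\cap B})$ along $T^*_{S\cap S'}M$ is the product of the corresponding multiplicities for $A$ and $B$; that multiplicativity under transverse intersection is essentially the content of the lemma itself (equivalently, of Sch\"urmann's theorem), so as stated you are assuming what is to be proved. Your fallback --- citing multiplicativity of $cc$ under non-characteristic pullback from microlocal sheaf theory --- is the legitimate way to close this, and is morally the same external input the paper obtains by passing to CSM classes; but then the direct fiberwise calculation becomes redundant, and the proof collapses to ``cite the known transversality formula,'' which is what the paper does after its homogeneity reduction.
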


\begin{proof}
  Since every term is homogeneous (of, in fact, the same degree
  $\dim_{\RR} M$) it suffices to prove the specialization at $\hbar=-1$.
  After canceling signs, the equation becomes
  $$ csm(1_A)\, csm(1_B) = csm(1_{A\cap B})\, c(TM) $$
  which is \cite[theorem 10.5]{AMSS17}.
  % \cite[equation (1.3)]{Schurmann}.
  (Being able to quote this lemma is the other reason, besides the
  connection to Euler characteristics, that we need the
  connection to CSM classes.)
\end{proof}

\begin{proof}[Proof of theorem \ref{thm:EP}.]
  \junk{note that the proof (relate SSM and $\chi_c$) is really for
    any $d$, but the puzzle formula only holds at $d\le 4$.
  }
  In the case at hand, the ambient manifold $M$ is the flag manifold
  $P_-\dom G$, the submanifold $A$ is the Bruhat cell
  $X^\lambda_\circ := P_-\dom P_- \lambda B_+$,
  and its $[cc(\calO_A)]$ is the stable basis element $St^\lambda$ from
  \S\ref{sec:stabclass}. One reference for this connection is \cite{AMSS17}.

  Using Kleiman transversality (finitely many times),
  we pick $g,g'$ generic enough that the stratified varieties
  $gX^\lambda,g'X^{\lambda'}$ are stratified-transverse, i.e. each $B_+$-orbit
  in $X^\lambda$ has been moved to be transverse to each $B_+$-orbit
  in $X^{\lambda'}$. Then we pick $g''$ generic enough so that $g'' X^{\lambda''}$
  is stratifed-transverse to $gX^\lambda \cap g'X^{\lambda'}$.
  In each case, the dilation-equivariant homology class of the characteristic
  cycle is unaffected by moving the Bruhat cell using these $g,g',g''$.

  We have all the pieces in place:
  % \begin{eqnarray*}
  $$     \renewcommand{\arraystretch}{1.5}
 \begin{array}{rclc}
     c_{\lambda\mu}^{\overleftarrow\nu} &=&
  \int_{P_-\dom G} St^\lambda\, St^\mu\, St^\nu / e(T^*M)^2 
  \qquad \text{from \eqref{eq:cfromint}} \\
  &=&   \int_{P_-\dom G}
  [cc(\calO_{X^\lambda_\circ})]\,[cc(\calO_{X^\mu_\circ})]\,[cc(\calO_{X^\nu_\circ})]
  / e(T^*M)^2  \\
  &=&   \int_{P_-\dom G}
  [cc(\calO_{gX^\lambda_\circ})]\,[cc(\calO_{g'X^\mu_\circ})]\,[cc(\calO_{g''X^\nu_\circ})]
  / e(T^*M)^2  \\
  &=&   \int_{P_-\dom G}
  [cc(\calO_{gX^\lambda_\circ \cap g'X^\mu_\circ \cap g''X^\nu_\circ})]
  e(T^*M)^2  / e(T^*M)^2  
      \qquad \text{from lemma \ref{lem:multCSM}, twice} \\
  &\mapsto& \int_{P_-\dom G}
             (-1)^{\dim (gX^\lambda_\circ \cap g'X^\mu_\circ \cap g''X^\nu_\circ)}
             csm(gX^\lambda_\circ \cap g'X^\mu_\circ \cap g''X^\nu_\circ)
             \qquad\text{ setting $\hbar\to -1$}\\
  &=& (-1)^\delta\ \chi_c(gX^\lambda_\circ \cap g'X^\mu_\circ \cap g''X^\nu_\circ)
 &\qedhere
  \end{array} $$
%\end{eqnarray*}
\end{proof}

\junk{
For a non-example, consider $M = (\PP^1)^2$, $A = \AA^1 \times \{\infty\}$,
$B = \{\infty\} \times \AA^1$, whose CSM classes multiply to the point class
despite $A,B$ having empty (hence transverse) intersection.
Even $\wt A = \overline A,\wt B = \overline B$ meet transversely.
The issue is that the {\em lower} strata in $\wt A,\wt B$ are not transverse.
}

\begin{cor}
At $d\le 3$, the sign of 
$\chi_c(g X_o^\lambda \cap g' X_o^\mu \cap g'' X_o^\nu)$
(if nonzero) is $(-1)^{\dim(P_-\backslash G)-(\ell(\lambda)+\ell(\mu)+\ell(\nu))}$.
\end{cor}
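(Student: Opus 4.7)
The proof plan is essentially immediate from Theorem \ref{thm:EP}, since at $d\le 3$ the statement has already been set up so that the puzzle contributions are manifestly nonnegative. I would first invoke Theorem \ref{thm:EP} to rewrite
\[
\chi_c\left(g X_o^\lambda \cap g' X_o^\mu \cap g'' X_o^\nu\right) = (-1)^\delta \sum_{P} \text{fug}(P),
\]
where the sum is over nonequivariant puzzles $P$ with the prescribed boundary labels $\lambda,\mu,\nu$ (read clockwise), and $\delta = \dim(P_-\backslash G) - (|\lambda|+|\mu|+|\nu|)$.

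Next I would observe that the theorem itself explicitly records that for $d\le 3$ one has $\text{fug}(P) = 1$ for every such puzzle $P$. Consequently
\[
\sum_{P} \text{fug}(P) = \#\{\text{nonequivariant puzzles with boundary } (\lambda,\mu,\nu)\},
\]
which is a nonnegative integer. Therefore $\chi_c$ equals $(-1)^\delta$ times a nonnegative integer, so either it vanishes or its sign is exactly $(-1)^\delta$, as claimed.

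The only potential subtlety, and the place where any real work lives, is the input Theorem \ref{thm:EP}: one needs to know that at $d\le 3$ the puzzle fugacities appearing in proposition \ref{prop:d4} (or rather their $d\le 3$ analogues from the tables \eqref{eq:tab1} and \eqref{eq:tab3}) are all $+1$ once we have collapsed every rhombus into triangles via \eqref{eq:factorR}. This positivity is visible in the first columns of those tables, which give the nonequivariant (i.e.\ $x=0$) rhombus fugacities as products of triangle fugacities. Since this fact is already folded into the statement of Theorem \ref{thm:EP}, no further argument is needed; the corollary follows. I do not anticipate any real obstacle, since the positivity at $d=4$ (which is where things would become genuinely delicate, because of the negative entries appearing in proposition \ref{prop:d4}) is explicitly excluded from the corollary's hypothesis and is instead left open as the conjecture mentioned earlier in the introduction.
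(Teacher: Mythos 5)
Your proof is correct and is exactly the argument the paper intends: the corollary is an immediate consequence of Theorem \ref{thm:EP} together with the fact (built into the theorem's statement) that each nonequivariant puzzle has fugacity $1$ for $d\le 3$, so $\chi_c$ is $(-1)^\delta$ times a nonnegative puzzle count. The paper gives no separate proof, and your handling of the degenerate case $\chi_c=0$ is the right caveat.
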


This is one of those rare situations where positivity in (generalized)
Schubert calculus was discovered through explicit formula before
being given a geometric proof (others being Lesieur's observation that
the Littlewood-Richardson rule computes Grassmannian Schubert calculus
decades before Kleiman transversality, and \cite{Buch} antedating
\cite{Brion-KPos}). Since releasing this paper, a geometric proof
has been found that applies to all $G/P$ \cite{SSW}.

\newcommand\ul\underline
\begin{ex}
  Consider the case $\lambda=\mu=\nu = 0101$, where the Bruhat cell
  w.r.t. a flag $F^\bullet$ is
  \begin{align*}
    X^{0101}_\circ(F)
    &= X^{0101} \setminus (X^{0110} \cup X^{1001}) \\
    &= \{V\in Gr(2,4)\, :\, \dim(V\cap F^2) \geq 1\} \ \setminus\ 
        \left( \{V\, :\, V \geq F^1\} \cup \{V\, :\, V \leq F^3\} \right)
  \end{align*}
  If we take $F^\bullet,G^\bullet,H^\bullet$ generic flags, then by dimension count 
  $X^{0101}(F) \cap X^{0101}(G) \cap X^{0101}(H)$ is a curve.
  By Kleiman--Bertini it is normal, hence smooth.
  Using $K$-theory puzzles one can determine it to be arithmetic genus $0$.
  Since we want however to intersect the open cells $X^{0101}_\circ$,
  we need to rip out the points
  $$ X^{0101}(F) \cap X^{0101}(G) \cap X^{\ul{0110}}(H) \qquad
  X^{0101}(F) \cap X^{0101}(G) \cap X^{\ul{1001}}(H) \qquad 
  X^{0101}(F) \cap X^{\ul{0110}}(G) \cap X^{0101}(H) $$ 
  $$ X^{0101}(F) \cap X^{\ul{1001}}(G) \cap X^{0101}(H) \qquad
  X^{\ul{1001}}(F) \cap X^{0101}(G) \cap X^{0101}(H) \qquad 
  X^{\ul{0110}}(F) \cap X^{0101}(G) \cap X^{0101}(H)
  $$
  where the underlining points out the superscript changed from $0101$.
  Hence $X^{0101}_\circ(F) \cap X^{0101}_\circ(G) \cap X^{0101}_\circ(H)$
  is $\PP^1$ minus $6$ points, with $\chi_c = -4$. The four
  relevant puzzles are these:
%\rem[red]{obvious ex: box$^3$ in $Gr(2,4)$ which is $\PP^1$ minus 6 points. AK can you write the explanation for this example and the next}
\begin{center}
\def\thescale{1.4}
\def\posa{0.5}\def\posb{0.5}
\begin{tikzpicture}[math mode,nodes={edgelabel},x={(-0.577cm,-1cm)},y={(0.577cm,-1cm)},scale=\thescale]
\draw[thick] (0,0) -- node[pos=\posa] {0} ++(0,1); \draw[thick] (0,0) -- node[pos=\posb] {1} ++(1,0); \draw[thick] (0+1,0) -- node {10} ++(-1,1); 
\draw[thick] (0,1) -- node[pos=\posa] {1} ++(0,1); \draw[thick] (0,1) -- node[pos=\posb] {1} ++(1,0); \draw[thick] (0+1,1) -- node {1} ++(-1,1); 
\draw[thick] (0,2) -- node[pos=\posa] {0} ++(0,1); \draw[thick] (0,2) -- node[pos=\posb] {1} ++(1,0); \draw[thick] (0+1,2) -- node {10} ++(-1,1); 
%\node[invlabel] at (0+0.7,2+0.7) {1};
\draw[thick] (0,3) -- node[pos=\posa] {1} ++(0,1); \draw[thick] (0,3) -- node[pos=\posb] {10} ++(1,0); \draw[thick] (0+1,3) -- node {0} ++(-1,1); 
\draw[thick] (1,0) -- node[pos=\posa] {0} ++(0,1); \draw[thick] (1,0) -- node[pos=\posb] {0} ++(1,0); \draw[thick] (1+1,0) -- node {0} ++(-1,1); 
\draw[thick] (1,1) -- node[pos=\posa] {1} ++(0,1); \draw[thick] (1,1) -- node[pos=\posb] {10} ++(1,0); \draw[thick] (1+1,1) -- node {0} ++(-1,1); 
\draw[thick] (1,2) -- node[pos=\posa] {10} ++(0,1); \draw[thick] (1,2) -- node[pos=\posb] {0} ++(1,0); \draw[thick] (1+1,2) -- node {1} ++(-1,1); 
\draw[thick] (2,0) -- node[pos=\posa] {1} ++(0,1); \draw[thick] (2,0) -- node[pos=\posb] {1} ++(1,0); \draw[thick] (2+1,0) -- node {1} ++(-1,1); 
\draw[thick] (2,1) -- node[pos=\posa] {0} ++(0,1); \draw[thick] (2,1) -- node[pos=\posb] {0} ++(1,0); \draw[thick] (2+1,1) -- node {0} ++(-1,1); 
\draw[thick] (3,0) -- node[pos=\posa] {10} ++(0,1); \draw[thick] (3,0) -- node[pos=\posb] {0} ++(1,0); \draw[thick] (3+1,0) -- node {1} ++(-1,1); 
\end{tikzpicture}\qquad
\begin{tikzpicture}[math mode,nodes={edgelabel},x={(-0.577cm,-1cm)},y={(0.577cm,-1cm)},scale=\thescale]
\draw[thick] (0,0) -- node[pos=\posa] {0} ++(0,1); \draw[thick] (0,0) -- node[pos=\posb] {1} ++(1,0); \draw[thick] (0+1,0) -- node {10} ++(-1,1); 
\draw[thick] (0,1) -- node[pos=\posa] {1} ++(0,1); \draw[thick] (0,1) -- node[pos=\posb] {1} ++(1,0); \draw[thick] (0+1,1) -- node {1} ++(-1,1); 
\draw[thick] (0,2) -- node[pos=\posa] {0} ++(0,1); \draw[thick] (0,2) -- node[pos=\posb] {0} ++(1,0); \draw[thick] (0+1,2) -- node {0} ++(-1,1); 
\draw[thick] (0,3) -- node[pos=\posa] {1} ++(0,1); \draw[thick] (0,3) -- node[pos=\posb] {10} ++(1,0); \draw[thick] (0+1,3) -- node {0} ++(-1,1); 
\draw[thick] (1,0) -- node[pos=\posa] {0} ++(0,1); \draw[thick] (1,0) -- node[pos=\posb] {0} ++(1,0); \draw[thick] (1+1,0) -- node {0} ++(-1,1); 
\draw[thick] (1,1) -- node[pos=\posa] {10} ++(0,1); \draw[thick] (1,1) -- node[pos=\posb] {0} ++(1,0); \draw[thick] (1+1,1) -- node {1} ++(-1,1); 
\draw[thick] (1,2) -- node[pos=\posa] {1} ++(0,1); \draw[thick] (1,2) -- node[pos=\posb] {1} ++(1,0); \draw[thick] (1+1,2) -- node {1} ++(-1,1); 
\draw[thick] (2,0) -- node[pos=\posa] {0} ++(0,1); \draw[thick] (2,0) -- node[pos=\posb] {1} ++(1,0); \draw[thick] (2+1,0) -- node {10} ++(-1,1); 
%\node[invlabel] at (2+0.7,0+0.7) {1};
\draw[thick] (2,1) -- node[pos=\posa] {1} ++(0,1); \draw[thick] (2,1) -- node[pos=\posb] {10} ++(1,0); \draw[thick] (2+1,1) -- node {0} ++(-1,1); 
\draw[thick] (3,0) -- node[pos=\posa] {10} ++(0,1); \draw[thick] (3,0) -- node[pos=\posb] {0} ++(1,0); \draw[thick] (3+1,0) -- node {1} ++(-1,1); 
\end{tikzpicture}\qquad
\begin{tikzpicture}[math mode,nodes={edgelabel},x={(-0.577cm,-1cm)},y={(0.577cm,-1cm)},scale=\thescale]
\draw[thick] (0,0) -- node[pos=\posa] {0} ++(0,1); \draw[thick] (0,0) -- node[pos=\posb] {1} ++(1,0); \draw[thick] (0+1,0) -- node {10} ++(-1,1); 
\draw[thick] (0,1) -- node[pos=\posa] {1} ++(0,1); \draw[thick] (0,1) -- node[pos=\posb] {1} ++(1,0); \draw[thick] (0+1,1) -- node {1} ++(-1,1); 
\draw[thick] (0,2) -- node[pos=\posa] {0} ++(0,1); \draw[thick] (0,2) -- node[pos=\posb] {0} ++(1,0); \draw[thick] (0+1,2) -- node {0} ++(-1,1); 
\draw[thick] (0,3) -- node[pos=\posa] {1} ++(0,1); \draw[thick] (0,3) -- node[pos=\posb] {10} ++(1,0); \draw[thick] (0+1,3) -- node {0} ++(-1,1); 
\draw[thick] (1,0) -- node[pos=\posa] {0} ++(0,1); \draw[thick] (1,0) -- node[pos=\posb] {0} ++(1,0); \draw[thick] (1+1,0) -- node {0} ++(-1,1); 
\draw[thick] (1,1) -- node[pos=\posa] {10} ++(0,1); \draw[thick] (1,1) -- node[pos=\posb] {10} ++(1,0); \draw[thick] (1+1,1) -- node {10} ++(-1,1); 
%\node[invlabel] at (1+0.3,1+0.3) {1};
\draw[thick] (1,2) -- node[pos=\posa] {1} ++(0,1); \draw[thick] (1,2) -- node[pos=\posb] {1} ++(1,0); \draw[thick] (1+1,2) -- node {1} ++(-1,1); 
\draw[thick] (2,0) -- node[pos=\posa] {1} ++(0,1); \draw[thick] (2,0) -- node[pos=\posb] {1} ++(1,0); \draw[thick] (2+1,0) -- node {1} ++(-1,1); 
\draw[thick] (2,1) -- node[pos=\posa] {0} ++(0,1); \draw[thick] (2,1) -- node[pos=\posb] {0} ++(1,0); \draw[thick] (2+1,1) -- node {0} ++(-1,1); 
\draw[thick] (3,0) -- node[pos=\posa] {10} ++(0,1); \draw[thick] (3,0) -- node[pos=\posb] {0} ++(1,0); \draw[thick] (3+1,0) -- node {1} ++(-1,1); 
\end{tikzpicture}\qquad
\begin{tikzpicture}[math mode,nodes={edgelabel},x={(-0.577cm,-1cm)},y={(0.577cm,-1cm)},scale=\thescale]
\draw[thick] (0,0) -- node[pos=\posa] {0} ++(0,1); \draw[thick] (0,0) -- node[pos=\posb] {1} ++(1,0); \draw[thick] (0+1,0) -- node {10} ++(-1,1); 
%\node[invlabel] at (0+0.7,0+0.7) {1};
\draw[thick] (0,1) -- node[pos=\posa] {1} ++(0,1); \draw[thick] (0,1) -- node[pos=\posb] {10} ++(1,0); \draw[thick] (0+1,1) -- node {0} ++(-1,1); 
\draw[thick] (0,2) -- node[pos=\posa] {0} ++(0,1); \draw[thick] (0,2) -- node[pos=\posb] {0} ++(1,0); \draw[thick] (0+1,2) -- node {0} ++(-1,1); 
\draw[thick] (0,3) -- node[pos=\posa] {1} ++(0,1); \draw[thick] (0,3) -- node[pos=\posb] {10} ++(1,0); \draw[thick] (0+1,3) -- node {0} ++(-1,1); 
\draw[thick] (1,0) -- node[pos=\posa] {10} ++(0,1); \draw[thick] (1,0) -- node[pos=\posb] {0} ++(1,0); \draw[thick] (1+1,0) -- node {1} ++(-1,1); 
\draw[thick] (1,1) -- node[pos=\posa] {0} ++(0,1); \draw[thick] (1,1) -- node[pos=\posb] {1} ++(1,0); \draw[thick] (1+1,1) -- node {10} ++(-1,1); 
\draw[thick] (1,2) -- node[pos=\posa] {1} ++(0,1); \draw[thick] (1,2) -- node[pos=\posb] {1} ++(1,0); \draw[thick] (1+1,2) -- node {1} ++(-1,1); 
\draw[thick] (2,0) -- node[pos=\posa] {1} ++(0,1); \draw[thick] (2,0) -- node[pos=\posb] {1} ++(1,0); \draw[thick] (2+1,0) -- node {1} ++(-1,1); 
\draw[thick] (2,1) -- node[pos=\posa] {0} ++(0,1); \draw[thick] (2,1) -- node[pos=\posb] {0} ++(1,0); \draw[thick] (2+1,1) -- node {0} ++(-1,1); 
\draw[thick] (3,0) -- node[pos=\posa] {10} ++(0,1); \draw[thick] (3,0) -- node[pos=\posb] {0} ++(1,0); \draw[thick] (3+1,0) -- node {1} ++(-1,1); 
\end{tikzpicture}
\end{center}
\end{ex}

\setcounter{MaxMatrixCols}{20}

\begin{ex}
The geometry of $X^{0101}_\circ(F) \cap X^{0101}_\circ(G) \cap X^{0011}_\circ(H)$
is more complicated, but still approachable. We list the triples
$\lambda \geq 0101$, $\mu \geq 0101$, $\nu \geq 0011$ such that
$X^\lambda(F) \cap X^\mu(G) \cap X^\nu(H)$ intersect:
$$
\begin{array}{c|ccccccccccccc}
\lambda & 0101 & 1001 & 0110 & 0101 & 0101 & 0101 \\ 
\mu     & 0101 & 0101 & 0101 & 0110 & 1001 & 0101 \\
\nu     & 0011 & 0011 & 0011 & 0011 & 0011 & 0101 \\
  \hline
\cap&\PP^1\times\PP^1& A & B & C & D & E 
\end{array}
$$
where $A \iso B \iso C \iso D \iso E \iso \PP^1$, and point intersections
$$
\begin{array}{c|ccccccccccccc}
\lambda & 1010 & 0101 & 0110 & 1001 & 1001 & 0110 & 0101 & 0101 \\
\mu     & 0101 & 1010 & 1001 & 0110 & 0101 & 0101 & 0110 & 1001 \\
\nu     & 0011 & 0011 & 0011 & 0011 & 0101 & 0101 & 0101 & 0101 \\
  \hline              
\cap    &A\cap B &C\cap D &B\cap D &A\cap C &A\cap E &B\cap E &C\cap E &D\cap E
\end{array}
$$
(note that $A\cap D = B\cap C = \emptyset$) giving
$\chi_c = \chi_c(\PP^1\times\PP^1) -5\chi_c(\PP^1) + 8\chi_c(pt) = 4 -10 + 8=2$.
The two puzzles are these:

\junk{PZJ: more complicated ex: box$^2$ times empty in $Gr(2,4)$ which
  should give $\chi_c=2$.  one finds a quadric surface (4) minus 5
  $\PP^1$ plus 8 points.  indeed two of these $\PP^1$ have empty
  intersection (can't sit inside a plane and go through a point), and
  ${5\choose 2}-2=8$.}
\begin{center}
\def\thescale{1.4}
\def\posa{0.5}\def\posb{0.5}
\begin{tikzpicture}[math mode,nodes={edgelabel},x={(-0.577cm,-1cm)},y={(0.577cm,-1cm)},scale=\thescale]
\draw[thick] (0,0) -- node[pos=\posa] {0} ++(0,1); \draw[thick] (0,0) -- node[pos=\posb] {1} ++(1,0); \draw[thick] (0+1,0) -- node {10} ++(-1,1); 
\draw[thick] (0,1) -- node[pos=\posa] {1} ++(0,1); \draw[thick] (0,1) -- node[pos=\posb] {1} ++(1,0); \draw[thick] (0+1,1) -- node {1} ++(-1,1); 
\draw[thick] (0,2) -- node[pos=\posa] {0} ++(0,1); \draw[thick] (0,2) -- node[pos=\posb] {0} ++(1,0); \draw[thick] (0+1,2) -- node {0} ++(-1,1); 
\draw[thick] (0,3) -- node[pos=\posa] {1} ++(0,1); \draw[thick] (0,3) -- node[pos=\posb] {10} ++(1,0); \draw[thick] (0+1,3) -- node {0} ++(-1,1); 
\draw[thick] (1,0) -- node[pos=\posa] {0} ++(0,1); \draw[thick] (1,0) -- node[pos=\posb] {0} ++(1,0); \draw[thick] (1+1,0) -- node {0} ++(-1,1); 
\draw[thick] (1,1) -- node[pos=\posa] {10} ++(0,1); \draw[thick] (1,1) -- node[pos=\posb] {10} ++(1,0); \draw[thick] (1+1,1) -- node {10} ++(-1,1); 
%\node[invlabel] at (1+0.3,1+0.3) {1};
%\node[invlabel] at (1+0.7,1+0.7) {1};
\draw[thick] (1,2) -- node[pos=\posa] {1} ++(0,1); \draw[thick] (1,2) -- node[pos=\posb] {10} ++(1,0); \draw[thick] (1+1,2) -- node {0} ++(-1,1); 
\draw[thick] (2,0) -- node[pos=\posa] {1} ++(0,1); \draw[thick] (2,0) -- node[pos=\posb] {1} ++(1,0); \draw[thick] (2+1,0) -- node {1} ++(-1,1); 
\draw[thick] (2,1) -- node[pos=\posa] {10} ++(0,1); \draw[thick] (2,1) -- node[pos=\posb] {0} ++(1,0); \draw[thick] (2+1,1) -- node {1} ++(-1,1); 
\draw[thick] (3,0) -- node[pos=\posa] {10} ++(0,1); \draw[thick] (3,0) -- node[pos=\posb] {0} ++(1,0); \draw[thick] (3+1,0) -- node {1} ++(-1,1); 
\end{tikzpicture}\qquad
\begin{tikzpicture}[math mode,nodes={edgelabel},x={(-0.577cm,-1cm)},y={(0.577cm,-1cm)},scale=\thescale]
\draw[thick] (0,0) -- node[pos=\posa] {0} ++(0,1); \draw[thick] (0,0) -- node[pos=\posb] {1} ++(1,0); \draw[thick] (0+1,0) -- node {10} ++(-1,1); 
%\node[invlabel] at (0+0.7,0+0.7) {1};
\draw[thick] (0,1) -- node[pos=\posa] {1} ++(0,1); \draw[thick] (0,1) -- node[pos=\posb] {10} ++(1,0); \draw[thick] (0+1,1) -- node {0} ++(-1,1); 
\draw[thick] (0,2) -- node[pos=\posa] {0} ++(0,1); \draw[thick] (0,2) -- node[pos=\posb] {0} ++(1,0); \draw[thick] (0+1,2) -- node {0} ++(-1,1); 
\draw[thick] (0,3) -- node[pos=\posa] {1} ++(0,1); \draw[thick] (0,3) -- node[pos=\posb] {10} ++(1,0); \draw[thick] (0+1,3) -- node {0} ++(-1,1); 
\draw[thick] (1,0) -- node[pos=\posa] {10} ++(0,1); \draw[thick] (1,0) -- node[pos=\posb] {0} ++(1,0); \draw[thick] (1+1,0) -- node {1} ++(-1,1); 
\draw[thick] (1,1) -- node[pos=\posa] {0} ++(0,1); \draw[thick] (1,1) -- node[pos=\posb] {1} ++(1,0); \draw[thick] (1+1,1) -- node {10} ++(-1,1); 
%\node[invlabel] at (1+0.7,1+0.7) {1};
\draw[thick] (1,2) -- node[pos=\posa] {1} ++(0,1); \draw[thick] (1,2) -- node[pos=\posb] {10} ++(1,0); \draw[thick] (1+1,2) -- node {0} ++(-1,1); 
\draw[thick] (2,0) -- node[pos=\posa] {1} ++(0,1); \draw[thick] (2,0) -- node[pos=\posb] {1} ++(1,0); \draw[thick] (2+1,0) -- node {1} ++(-1,1); 
\draw[thick] (2,1) -- node[pos=\posa] {10} ++(0,1); \draw[thick] (2,1) -- node[pos=\posb] {0} ++(1,0); \draw[thick] (2+1,1) -- node {1} ++(-1,1); 
\draw[thick] (3,0) -- node[pos=\posa] {10} ++(0,1); \draw[thick] (3,0) -- node[pos=\posb] {0} ++(1,0); \draw[thick] (3+1,0) -- node {1} ++(-1,1); 
\end{tikzpicture}
\end{center}
\end{ex}

As mentioned in the proof, in the special case $\delta=0$, or 
$\ell(\lambda)+\ell(\mu)+\ell(\nu)=\dim(P_-\backslash G)$,
Theorem~\ref{thm:EP} reduces to ordinary Schubert calculus, i.e., counting points in the intersection of three
Schubert varieties in general position. In fact, for $d\le 3$, the puzzles of theorem~\ref{thm:EP} reduce to ordinary puzzles as formulated in e.g.~\cite{artic71}.
The reason is that triangles at $d\le 3$ can only have nonnegative inversion charge, which means that for $\ell(\lambda)+\ell(\mu)+\ell(\nu)=\dim(P_-\backslash G)$ to hold, the triangles with positive inversion charge cannot occur, and excluding them exactly turns our puzzles
into ordinary puzzles. 
In contradistinction, at $d=4$, triangles of both positive and negative charge exist, so no simplification occurs in the case
of ordinary Schubert calculus (furthermore, as already pointed out in the previous section, the rule is nonpositive).

If $\ell(\lambda)+\ell(\mu)+\ell(\nu)>\dim(P_-\backslash G)$, theorem~\ref{thm:EP} implies that the structure constant, i.e.,
$\sum_P \text{fug}(P)$, is zero. Because of nonpositivity at $d=4$, this does not imply that there aren't any puzzles
with such boundaries; see appendix~\ref{app:exd4c} for a counterexample.

Finally, if one considers the $q \mapsto 1$ specialization instead of
$q \mapsto -1$, some puzzle pieces at $d\leq 3$ acquire fugacity $-1$
(which is why we usually don't), making each $fug(P) = (-1)^\delta$.
In that sense, the formula becomes simpler, as the prefactor has
been absorbed.

\section{Positivity}\label{sec:positivity}
Recall from \S\ref{ssec:mainres} that a \defn{positivity notion} in Schubert calculus
is a submonoid $M$ (under $+$) of the coefficient
ring (either the cohomology of a point, or a localization thereof) s.t.
$M \cap -M = 0$.

While it is frequently required that $M$ be closed under multiplication
as well as addition,
there are sometimes stronger positivity statements to be made when this
is not done. For example Graham positivity \cite{Graham},
in which $M$ is any sum of products of simple roots, can be
tightened up to sums of products of {\em distinct positive} roots.
(This follows from a careful reading of Graham's original proof.
Note too that the formul\ae\ in \cite{KT,artic71} for Schubert
structure constants in $H^*_T(Gr(k,n))$ and $H^*_T(Fl(j,k;\ n))$ are
manifestly $M$-positive for this tighter $M$.)
[this whole paragraph should be moved

We now show that our main theorem~\ref{thm:main} provides a {\em positive}\/ product rule up to $d=2$
equivariantly, and up to $d=3$ nonequivariantly.

\subsection{Positivity in equivariant $K$-theory}
The explicit entries of the $R$-matrices involved in theorem~\ref{thm:main} are given in \S\ref{ssec:d1setup}
at $d=1$ and in \cite[appendix~A]{artic71} at $d=2$.

% The $d=3$ $R$-matrix is not provided because it is too large
% to fit in a paper; we refer to \cite{artic82} where $d=3$ puzzles are implemented in {\it Macaulay2}.
% Up to powers of $-q$ and of $z$, there are exactly 8 nonzero distinct entries of it, namely
% \begin{gather*}
%   \frac{1-\qq}{1-q^2z}
%   \qquad
%   \frac{z-1}{1-q^2z}    % NOT negated for being eqvt, we don't do this in II
%   \qquad
%   \frac{1-\qq}{1-q^2z}\frac{z-1}{1-q^8z}
%   \qquad
%   \frac{(1-z) \left(1-q^6 z\right)}{\left(1-q^2 z\right) \left(1-q^8 z\right)}
%   \\
%   \frac{\left(1-z\right)\left(1-q^8\right)}{\left(1-q^{2}z\right)\left(1-q^{8}z\right)}
%   \quad
%   \frac{\left(1-q^2\right)\left(1-q^{4}\right)\left(1+q^{4}z\right)}{\left(1-q^{2}z\right)\left(1-q^{8}z\right)}
%   \\
%   \frac{\left(1-q^2\right)^{2}\left(1+q^2+q^4+q^{6}z\right)}{\left(1-q^{2}z\right)\left(1-q^{8}z\right)}
%   \quad
%   \frac{\left(1-q^2\right)^{2}\left(1+q^2z+q^4z+q^{6}z\right)}{\left(1-q^{2}z\right)\left(1-q^{8}z\right)}
% \end{gather*}

% It can be checked by explicit calculation that any of the rational functions of the second and third rows
% can be expressed as sums of products
% of the rational functions of the first two times powers of $-q$ and $z$.
% [should I write it explicitly?] [If you write $A:=$, $B:=$, $C:=$, $D:=$
% in the first row, then there should be plenty of room to write
% $= AC-2q/B$ or whatever in the second and third. Then this paragraph
% becomes a line in the following proof, instead: ``...and each $R$-matrix
% entry is a sum of products of these.'']

\newcommand\qq{q^2\phantom{z}}%silly
We introduce the following
\begin{lem}\label{lem:M}
  Let $M$ be the set of sums of products of factors
  $$
  -q^{\pm}
  \qquad
  z^{\pm}
  \qquad
  \frac{1-\qq}{1-q^2z}
  \qquad
  \frac{z-1}{1-q^2z}    % NOT negated for being eqvt, we don't do this in II
  % \qquad
  % \frac{1-\qq}{1-q^2z}\frac{1-z}{1-q^8z}
  % \qquad
  % \frac{1-\qq}{1-q^2z}\frac{1-q^6z}{1-q^8z}
  $$
  where $z$ varies over distinct $z_j/z_i$, $i<j$.
  Then $M$ is a positivity notion, i.e. $M\cap -M = 0$.
\end{lem}
\begin{proof}
  If we specialize to $z_i = 2^i$, $q=-2^{-n/2}$, then every factor is a
  positive real number.
\end{proof}

For $d=1,2$, by inspection, any nonzero $R$-matrix entry can be expressed as a product of the factors
of lemma~\ref{lem:M} only. We conclude that our puzzle rule for equivariant motivic Chern class is positive
for $d=1,2$.

It seems impossible to find such an elementary proof of positivity at $d=3$. This is in line with the fact
that in \cite{artic71}, we were unable to provide a $d=3$ equivariant rule in ordinary Schubert calculus.
It is not obvious to find an explicit counter-example to positivity, so the issue remains open.

We do not expect any positivity in $d=4$ equivariant $K$-theory.

\subsection{Positivity in equivariant cohomology}
The exact same statements hold in cohomology:
\begin{lem}\label{lem:MH}
  Let $M$ be the set of sums of products of factors
  $$
  \frac{\hbar}{\hbar-y}
  \qquad
  \frac{y}{\hbar-y}    % NOT negated for being eqvt, we don't do this in II
  % \qquad
  % \frac{1-\qq}{1-q^2z}\frac{1-z}{1-q^8z}
  % \qquad
  % \frac{1-\qq}{1-q^2z}\frac{1-q^6z}{1-q^8z}
  $$
  where $y$ varies over distinct $y_j-y_i$, $i<j$.
  Then $M$ is a positivity notion, i.e. $M\cap -M = 0$.
\end{lem}
\begin{proof}
  Specialize at $y_i=i$, $\hbar=n$.
\end{proof}

The nonzero entries of the rational $R$-matrices for $d=1,2$ are precisely one of the two factors
of lemma~\ref{lem:MH}. We conclude that our puzzle rule for equivariant SSM classes is positive
for $d=1,2$.

\subsection{Positivity in nonequivariant $K$-theory and  cohomology}
In nonequivariant $K$-theory, one can easily check that $d\le 3$ rules only involve nonzero fugacities that
are powers of $-q$, so positivity follows immediately for our rule for nonequivariant motivic Chern classes
for $d=1,2,3$.

In particular, as pointed out in theorem \ref{thm:EP}, the fugacity of nonequivariant puzzles
for SSM classes is just $1$, making the positivity statement trivial.

As already discussed in \S\ref{ssec:d4}, the $d=4$ cohomology puzzle rule is not positive, cf \S\ref{app:exd4c}
for an example of puzzles whose sum of fugacities equals zero. The same puzzles also provide a counter-example
to positivity in $d=4$ nonequivariant $K$-theory.

\section{A couple of results on
  Nakajima quiver varieties}\label{sec:Nakajima}
\subsection{The varieties}\label{sec:quiver}
A Nakajima quiver variety $\calM_I(w,v,\theta)$ depends
on five data (see e.g. \cite{Ginz-naka}):
\begin{itemize}
\item a quiver (directed graph) $I$ of ``gauge vertices'' $\{\alpha_i\}$
  spanning a lattice called $\lie{t}^*$, 
  to each of which we attach a ``framed vertex'' of degree $1$, 
  giving the dual basis in $\lie{t}$,
\item a ``dimension vector'' ($\NN$-valued) $w=(w^i)$
  on the framing vertices,
%  giving a dominant weight $\sum_i w^i \vec\omega_i$,
\item a similar dimension vector $v=(v^i)$ on the gauged vertices,
%  giving a weight $\sum_i w^i \vec\omega_i - \sum_i v^i \vec\alpha_i$, 
\item the ``complex moment'' $\theta_\CC \in \CC\tensor\lie{t}$ that will be zero
  (hence ignored) until \S\ref{sec:geominterp}, and
\item the ``stability vector'' $\theta \in \RR\tensor \lie{t}$
  we only partly explain below.
\end{itemize}
The points in $\calM_I(w,v,\theta)$ index certain
equivalence classes of representations of the (doubled) Nakajima quiver. 
Specifically, the representations satisfy a closed ``moment map'' condition
at the gauged vertices, and an open ``stability'' condition 
depending on $\theta$.
We won't detail these conditions (punting, again, to \cite{Ginz-naka})
but summarize what we need about them here.

Write $\vec\alpha_i \in \lie{t}^*$ for the basis vector corresponding
to the gauge vertex $\alpha_i$.
Call the stability vector $\theta$ \defn{positive} if
$\langle \theta, \vec\alpha_i \rangle > 0$ for all $\vec\alpha$, 
and just
\defn{$v$-positive} if $\langle \theta, \vec\alpha_i \rangle > 0$ for those
$\vec\alpha_i$ with coefficient $v^i \neq 0$ (a generalization of (3.2.2) from
\cite{Ginz-naka}). The $\theta$s we use will always be $v$-positive,
in which case we omit $\theta$ from the notation.
The quiver will eventually be one of four Dynkin diagrams $X_{2d}$, 
$d=1,\ldots,4$, and will similarly be suppressed in the notation, $\calM(w,v)$.
Since each $X_{2d}$ is ADE, we can identify $\lie{t}^*$ with the
corresponding root lattice, and make reference to fundamental weights
$\vec\omega_i \in \QQ\tensor\lie{t}^*$.

Call $(w,v)$ of \defn{flag type} if $w^i \neq 0$ at only one vertex $\fbox c$, 
and $(v^i)$ is supported on a type $A_d$ subdiagram $S$ of $I$ with
$\fbox c$ attached to one end of $S$, which we call its \defn{head}.
For examples, see the first quiver in each of figures \ref{fig:d1}-\ref{fig:d4}.

\begin{prop}\label{prop:flagquiver}
  \begin{enumerate}
  \item \cite[Lemma 3.2.3(i)]{Ginz-naka} 
    If $\theta$ is $v$-positive, then a representation is $\theta$-stable
    iff it contains no subrepresentations supported on the gauged vertices.
  \item \cite[\S 7]{Nakaj-quiv1}
    If $(w,v)$ is of flag type and $\theta$ is $v$-positive, then
    $\calM_I(w,v,\theta)$ is isomorphic to 
    the cotangent bundle of a $d$-step flag variety.
    If $S$ is oriented toward its head,
    then the steps are given by $(v^i, i \in S)$.    
  \item Assume $Q$'s underlying graph is an $ADE$ Dynkin diagram.
    With $w$ fixed and $\theta$ positive, the set of vectors $v$ with
    $M_Q(w,v,\theta) \neq \emptyset$ forms the lattice points in a
    polytope $P(w)$ with integral vertices.
    \junk{
    Moreover, $P(w_1+w_2) = P(w_1)+P(w_2)$, where the r.h.s.
    is the Minkowski sum.}
  \end{enumerate}
\end{prop}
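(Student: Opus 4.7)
The proposition has three distinct parts. Parts (1) and (2) I would handle briefly, as they are standard consequences of the GIT setup, and then focus the main effort on part (3), which requires genuine input from representation theory.

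For (1), I would unpack the King-style stability condition directly: a representation of the doubled quiver is $\theta$-stable iff every proper, nonzero doubled-quiver subrepresentation $V' \subsetneq V$ satisfies $\langle \theta, \dim V'\rangle < 0$ (with the convention that $\theta$ only pairs against the gauge coordinates). If $\theta$ is $v$-positive, then any subrepresentation $V'$ supported entirely on the gauged vertices has $\dim V'$ a nonnegative combination of the $\vec\alpha_i$ with $v^i \neq 0$, and so pairs strictly positively with $\theta$---destabilizing the representation. Conversely, a destabilizing $V'$ can be truncated to its gauge-supported part by discarding its framing component, without increasing $\langle\theta,\dim V'\rangle$, producing a gauge-supported destabilizer.

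For (2), let $(w,v)$ be of flag type with framing at a single vertex $\fbox{c}$ and gauged vertices $1,2,\ldots,d$ along $S$, with $1$ the head adjacent to $\fbox{c}$. Part (1) forces each forward map $V_i \to V_{i-1}$ to be injective, since any kernel would be a nonzero gauge-supported subrepresentation. The images $F_i := \mathrm{image}(V_i \to V_{i-1} \to \cdots \to V_0 = \CC^n)$ then form a strict flag with $\dim F_i = v^i$, giving exactly the $d$-step partial flag variety $P_-\dom G$ modulo the residual $\prod_i GL(V_i)$-gauge. The complex moment-map equations $\phi\psi = \psi\phi$ at each gauged vertex combine the backward maps into a single endomorphism $X \in \mathrm{End}(\CC^n)$ with $X \cdot F_i \subseteq F_{i+1}$; this recovers the Springer-resolution description of $T^*(P_-\dom G)$, i.e.\ Nakajima's original flag-variety example.

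For (3), the essential input is to identify $P(w) \cap \ZZ^I$, as a subset of dimension vectors, with the weights of the irreducible $\lie{g}$-representation $V_\mu$ of highest weight $\mu := \sum_i w^i \vec\omega_i$, via the affine bijection $v \leftrightarrow \mu - \sum_i v^i\vec\alpha_i$. One inclusion uses Nakajima's action of $\lie{g}$ on the total (co)homology $\bigoplus_v H^*(\calM(w,v))$ realizing $V_\mu$: nonemptiness of $\calM(w,v)$ forces the weight space $\mu - \sum_i v^i\vec\alpha_i$ to be nonzero in $V_\mu$. For the reverse inclusion, $v = 0$ gives $\calM(w,0)$ a point, so the highest weight $\mu$ itself is attained, and Lusztig--Maffei reflection functors identify the set of nonempty $v$ with its $W$-translate, producing the full orbit $W\mu$. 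The convexity theorem for weight polytopes then gives $\{\text{weights of } V_\mu\} = (\mu + Q) \cap \mathrm{conv}(W\mu)$, whose vertices are the integral points $W\mu$; pulling this back through the bijection yields the claimed integral polytope $P(w)$. The main obstacle is that reflection functors change the stability parameter (Maffei: $\calM(w,v,\theta) \iso \calM(w, s_iv, s_i\theta)$), so a fixed positive $\theta$ is not $W$-stable; I would circumvent this by the affinization map $\calM(w,v,\theta) \onto \calM_0(w,v)$, which is surjective for positive $\theta$, so nonemptiness is a chamber-independent property and the Weyl-orbit argument can be carried out on $\calM_0$.
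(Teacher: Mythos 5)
Your parts (1) and (2) are fine and are essentially the arguments underlying the references the paper cites (the paper itself merely observes that one may delete the vertices with $v^i=0$ to reduce $v$-positivity to positivity, and then quotes \cite{Ginz-naka} and \cite{Nakaj-quiv1}). One small caution in (1): ``discarding the framing component'' of a destabilizing subrepresentation does not automatically yield a subrepresentation, because the gauge-to-framing arrows need not kill the gauge part; the correct statement, as in Ginzburg's Lemma 3.2.3, is that a gauge-supported subrepresentation is by definition one annihilated by those arrows, and the reduction of a general destabilizer to such an object needs a sentence of justification.

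For (3) your overall strategy coincides with the paper's --- identify $\{v : \calM(w,v)\neq\emptyset\}$ with the set of weights of $V_\mu$, $\mu=\sum_i w^i\vec\omega_i$, and then invoke the standard convexity of the weight set of an irreducible representation --- but your handling of the reverse inclusion has a genuine gap. Establishing that the nonempty set is contained in the weights of $V_\mu$ and that it contains the extremal orbit $W\mu$ does \emph{not} pin it down: a set sandwiched between $W\mu$ and the full weight set need not be either one, so convexity of the weight set cannot be ``pulled back.'' Moreover the proposed circumvention via the affinization map is incorrect: $\calM_0(w,v)$ always contains the image of the zero representation, hence is always nonempty, so surjectivity of $\calM(w,v,\theta)\to\calM_0(w,v)$ cannot be used to make nonemptiness of the stable locus chamber-independent (the standard route is hyperk\"ahler rotation / diffeomorphism of the $\calM_\theta$ over generic $\theta$, or simply Nakajima's Theorem 10.16, which the paper cites and which is an honest ``iff''). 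The good news is that your own first tool already closes the gap: once one knows $\bigoplus_v H_{\mathrm{top}}(\mathfrak{L}(w,v))\cong V_\mu$ with the $v$-grading matching the weight grading, a nonzero weight space forces the corresponding Lagrangian, hence the quiver variety, to be nonempty --- no reflection functors needed. I would restructure (3) to quote that isomorphism for both inclusions and delete the $W$-orbit detour.
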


\begin{proof}
  For both (1) and (2), observe that the quiver variety is unchanged if
  we throw away all vertices $\alpha \in I$ with $v_i = 0$, then follow
  the stated references.

  For (3), since $I$ is $ADE$, it has an associated simply-laced Lie
  algebra $\lie{g}_I$.  We use \cite[Theorem 10.16]{Nakaj-quiv1} to observe that
  $M_I(w,v,\theta) \neq \emptyset$ iff a certain $\lie{g}_I$-irrep
  determined by $w$ has a nonzero weight space, weight determined by $v$.
  Then the result is standard from representation theory, e.g. see
  \cite{GuilleminLermanSternbergSFandMD}.
\end{proof}

% ... In each case we use the labeled quiver to indicate the quiver
% variety with (any) $v$-positive stability vector $\theta$.

We start with $d=1$ in figure \ref{fig:d1}, where conveniently
we need only consider $(w,v)$ of flag type, and three of the varieties are
$$
\tikz[script math mode,baseline=0]{\dOne{j}{0} 
  \node[framed] at (1,1) (w1) {n}; \draw (v1) -- (w1);}
\quad\iso\quad
\tikz[script math mode,baseline=0]{\dOne{n}{j} 
  \node[framed] at (1,1) (w1) {n}; \draw (v1) -- (w1);}
\quad\iso\quad
\tikz[script math mode,baseline=0]{\dOne{j}{j} 
  \node[framed] at (2,1) (w2) {n}; \draw (v2) -- (w2);}
\quad\iso\quad
T^*\, Gr(j,n)
$$

In each of $d=2,3,4$ (figures \ref{fig:d2}--\ref{fig:d4}) we will need
several $(w,v)$ that are {\em not} of flag type, but whose quiver varieties
are nonetheless isomorphic to cotangent bundles of $d$-step flag varieties.
% (We will also need one variety that isn't, for each of $d=2,3,4$.)
To verify these isomorphisms, we will use the following proposition.

\begin{prop}\label{prop:reflect}
  \begin{enumerate}
  \item \cite{Naka-Weyl} Let $\pi\in W$, and define $v'$ by
    $$ \pi \cdot \left(
      \sum_i w^i \vec\omega_i - \sum_i v^i \vec\alpha_i \right)
    =
    \sum_i w^i \vec\omega_i - \sum_i v'^i \vec\alpha_i. $$
    Then $\calM_I(w,v',\pi\cdot\theta) \iso \calM_I(w,v,\theta)$
    as complex varieties, equivariantly w.r.t.\ the framing group action
    $\prod_{i\in I} GL(w^i)$ on both sides. \\
    To calculate $v'$ from $v$ when $\pi = r_{\vec\alpha_i}$,
    one replaces the $\vec\alpha_i$ label $v^i$ by the sum of all adjacent labels
    {\em including} labels on framed vertices, minus the original label $v^i$.
  \item Fix $\pi\in W$, and let $w,v,v',S$ be as in (1).
    Assume that $\pi\cdot \theta$ is positive.
    Assume that $(w,v)$ is of flag type, with $v$ supported on a type $A$ 
    subdiagram $S$, and that $\pi$ is chosen minimal in its $W/W_S$ coset. 
    Then
    \begin{enumerate}
    \item each entry of $v'$ is a positive combination of the entries
      of $w$ and $v$, when expressed using the reflection algorithm from (1),
      and
    \item $\theta$ is $v$-positive.
    \end{enumerate}
    Then by (1) and proposition \ref{prop:flagquiver} (2), we have
    $\calM_I(w,v',\pi\cdot\theta) \iso \calM_I(w,v,\theta) \iso$ \,
    the cotangent bundle of a $d$-step flag variety,
    where (assuming $S$ is oriented toward its head)
    the steps are given by $(v^i, i\in S)$. \\
    If $v$ is nonzero on $S$, then $\pi$'s minimality in its coset doesn't
    merely imply condition (a), but can be inferred from it.
  \end{enumerate}
\end{prop}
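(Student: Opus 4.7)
The plan is to prove (2) by first establishing (b) (which is logically independent of (a)), then (a) by a direct calculation of $\pi\Lambda$, and finally extracting the converse from that calculation. The representation-theoretic input throughout is the standard characterization of minimal coset representatives: $\pi$ is minimal in $\pi W_S$ iff $\ell(\pi r_s)>\ell(\pi)$ for every simple $r_s\in W_S$, iff $\pi\vec\alpha_s$ is a positive root for every simple root $\vec\alpha_s$ with $s\in S$ (and hence, by induction on height, $\pi(\Delta_S^+)\subset\Delta^+$).

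For (b), the plan is to show the stronger statement that $\langle\theta,\vec\alpha_i\rangle>0$ for every $i\in S$. The hypothesis that $\pi\cdot\theta$ is positive means $\langle\pi\theta,\vec\alpha_k\rangle>0$ for every simple $\vec\alpha_k$. By $W$-invariance of the Killing form, $\langle\theta,\vec\alpha_i\rangle=\langle\pi\theta,\pi\vec\alpha_i\rangle$. For $i\in S$, minimality of $\pi$ gives $\pi\vec\alpha_i=\sum_k c_{ik}\vec\alpha_k$ with $c_{ik}\in\NN$ and not all zero, so $\langle\theta,\vec\alpha_i\rangle>0$. Since $v$ is supported on $S$, this is stronger than $v$-positivity, and Proposition~\ref{prop:flagquiver}(2) then supplies the final isomorphism $\calM_I(w,v,\theta)\iso T^*(\text{$d$-step flag})$.

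For (a), the plan is to compute $\pi\Lambda=\sum_i w^i(\pi\vec\omega_i)-\sum_i v^i(\pi\vec\alpha_i)$ directly, where $\Lambda$ is the weight read off from $(w,v)$ and $v'^k$ is by definition the coefficient of $-\vec\alpha_k$ in $\pi\Lambda$. Two standard facts make every contribution a non-negative combination of simple roots: first, for each $s\in S$, $\pi\vec\alpha_s=\sum_k c_{sk}\vec\alpha_k$ with $c_{sk}\geq 0$ by the argument used for (b); second, for the unique head $j_0$ at which $w$ is supported, $\pi\vec\omega_{j_0}$ is a $W$-translate of the dominant weight $\vec\omega_{j_0}$ and hence itself a weight of the irrep $V_{\vec\omega_{j_0}}$ of the Lie algebra $\lie{g}_I$, so it is of the form $\vec\omega_{j_0}-\sum_k d_k\vec\alpha_k$ with $d_k\in\NN$. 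Combining the two, $v'^k=w^{j_0}d_k+\sum_{s\in S}v^s c_{sk}$, a non-negative combination of the entries of $w$ and $v$. The converse in the last sentence then follows by reversal: if $\pi$ is not minimal in $\pi W_S$, some simple root $\vec\alpha_s$ with $s\in S$ has $\pi\vec\alpha_s$ a \emph{negative} root, producing some coefficient $c_{sk_0}<0$; whenever $v^s\neq 0$ this gives a negative coefficient in the expression for $v'^{k_0}$, contradicting (a). I do not anticipate a serious technical obstacle; the only real care required is in keeping left/right coset and Weyl-action conventions straight.
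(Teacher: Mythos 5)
Your proposal is correct and follows essentially the same route as the paper: minimality is translated into ``$\pi\vec\alpha_s$ is a positive root for each simple $s\in S$,'' part (2a) comes from combining this with the fact that $\vec\omega_{j_0}-\pi\vec\omega_{j_0}$ is a nonnegative combination of simple roots (since $\pi\vec\omega_{j_0}$ is a weight of $V_{\vec\omega_{j_0}}$), and part (2b) from $W$-invariance of the pairing together with positivity of $\pi\cdot\theta$. The only difference is that you spell out the converse in the last sentence (some $c_{sk_0}<0$ forces a negative coefficient in the unique formal expression for $v'^{k_0}$), which the paper leaves as a remark.
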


%\rem[gray]{Recheck reasoning about this last claim, since the $\chi^4$ we're
%  using in $d=3$ doesn't have that minimality unless we mult by $\rr {c'}$}
% It's fine! c' isn't in nor connected to the S here.

\begin{proof}
  The calculation at the end of (1), based
  on $r_{\vec\alpha} \lambda = \lambda - \langle \vec\alpha,\lambda \rangle \vec\alpha$,
  is straightforward. It remains to prove (2).

  The minimality condition on $\pi$ is equivalent to
  ``$\pi\cdot \vec\alpha_i$ is a positive root for each $i \in S$'',
  which we will use in both (2a) and (2b). Note that this is a
  rephrasing of the last statement of the proposition.

  First observe that $\pi\cdot\vec\omega_i$ is a weight of $V_{\vec\omega_i}$,
  hence $\vec\omega_i - \pi\cdot\vec\omega_i$ is a positive combination of
  simple roots. This and the quote in the previous paragraph prove (2a).

  For (2b) we compute
  $$ \langle \theta, \vec\alpha \rangle
  = \langle \pi\cdot \theta, \pi\cdot \vec\alpha \rangle $$
  then use the quote above and the assumed positivity of $\pi\cdot\theta$.  
\end{proof}

%\junk{Define the individual quiver varieties.}

%\junk{These roundups fig 2-5 should probably include the equations defining
%  the fusion, i.e. the composite = identity and other matrix equations}

We give an example of proposition \ref{prop:reflect}, 
one of the cases appearing in figure \ref{fig:d2}.
At each stage we color the label(s) where we're about to 
perform (commuting) reflections,
using the recipe at the end of proposition \ref{prop:reflect} (1).
\begin{multline*}
\tikz[script math mode,baseline=0]
{\dtwo{\color{red!80!black} n}{n+k}{n+j}{k} \node[framed] at (3,1) (w3) {n}; \draw (v3) -- (w3);}
\quad\xmapsto{\rr {b}}\quad
\tikz[script math mode,baseline=0]
{\dtwo{k}{\color{red!80!black}{n+k}}{n+j}{k} \node[framed] at (3,1) (w3) {n};\draw (v3) -- (w3);}
\quad\xmapsto{\rr {a}}\quad
\tikz[script math mode,baseline=0]
{\dtwo{\color{red!80!black} k}{j+k}{\color{red!80!black}{n+j}}{\color{red!80!black} k} \node[framed] at (3,1) (w3) {n};\draw (v3) -- (w3);}
\\
\quad\xmapsto{\rr {b} \rr {b'} \rr {a'}}\quad
\tikz[script math mode,baseline=0]
{\dtwo{j}{\color{red!80!black}{j+k}}{k}{j} \node[framed] at (3,1) (w3) {n};\draw (v3) -- (w3);}
\quad\xmapsto{\rr {a}}\quad
\tikz[script math mode,baseline=0]
{\dtwo{\color{red!80!black} j}{j}{k}{\color{red!80!black} j} \node[framed] at (3,1) (w3) {n};\draw (v3) -- (w3);}
\quad\xmapsto{\rr b \rr {b'}}\quad
\tikz[script math mode,baseline=0]
{\dtwo{0}{j}{k}{0} \node[framed] at (3,1) (w3) {n};\draw (v3) -- (w3);}
\end{multline*}

The $v$-coefficients $n,n+k,n+j,k$ in the $D_4$ quiver variety
are positive combinations of $n,j,k$, so by the last statement in the
proposition, this sequence $\rr b \rr a \rr b \rr {a'} \rr {b'} \rr a \rr b \rr {b'}$
of reflections defines a $\pi$ with the minimality
required to apply part (2). Hence this sequence demonstrates that 
(with $\theta$ chosen positive) the $D_4$ quiver variety
first listed is isomorphic to $T^* Fl(j,k;\ n)$.

\subsection{The geometry of figures \ref{fig:d1}--\ref{fig:d4}}
We can now fully explain the meaning of figures \ref{fig:d1}--\ref{fig:d4},
(excepting the arrows, which will come in \S\ref{sec:geominterp}).
In figure~$d$ we list a Dynkin diagram $X_{2d}$, 
a quiver variety $\calM(w_{(1)},v_{(1)})$ of flag type, a quiver variety $\calM(w_{(2)},v_{(2)})$
not usually of flag type but susceptible to proposition \ref{prop:reflect},
an ``intermediate'' quiver variety $\calM(w_{(1)}+w_{(2)},v_{(1)}+v_{(2)})$ usually
neither of flag type nor susceptible to proposition \ref{prop:reflect}
(except at $d=1$), and finally another quiver variety $\calM(w_{(3)},v_{(3)})$
not usually of flag type but susceptible to proposition \ref{prop:reflect}.
%(The arrows will be explained in \S\ref{sec:geominterp}.)
\junk{
Note that in all cases, the weight $\sum_i w^i \vec\omega_i - \sum_i v^i \vec\alpha_i$
is the same for $\calM(w_{(1)}+w_{(2)},v_{(1)}+v_{(2)})$ as for $\calM(w_{(3)},v_{(3)})$.
In each case the second weight is $\tau^2$ times the first,
so the third = fourth weight is $-\tau$
times the first weight. }
For each of the non-intermediate quiver varieties,
we give a reduced word for the $\pi$ 
used in proposition \ref{prop:reflect} to show that those quiver varieties are
just cotangent bundles.)
\junk{
  The reader may enjoy discovering these $\pi$
  by the same algorithm we did: always apply a simple reflection if it
  keeps the coefficients positive, while lessening the dimension vector 
  w.r.t.\ the lexicographic order $n > m > l > k > j$.
}
%As explained in \cite[appendix C]{artic71},

\junk{

\begin{figure}
\begin{align*}
&\tikz[script math mode,baseline=0]{\dOne{w}{0} \node[framed] at (1,1) (w1) {n}; \draw (v1) -- (w1); \node at (1,-1) {(1)};}
&&
\times
&&
\tikz[script math mode,baseline=0]{\dOne{n}{w} \node[framed] at (1,1) (w1) {n}; \draw (v1) -- (w1); \node at (1,-1) {(2)};}
&&
\longrightarrow
&&
\tikz[script math mode,baseline=0]{\dOne{n+w}{w} \node[framed] at (1,1) (w1) {2n}; \draw (v1) -- (w1);}
&&
\longrightarrow
&&
\tikz[script math mode,baseline=0]{\dOne{w}{w} \node[framed] at (2,1) (w2) {n}; \draw (v2) -- (w2); \node at (1,-1) {(3)};}
\\[2mm]
&\tikz[script math mode,baseline=0]{\dtwo{x}{w}{0}{0} \node[framed] at (1,1) (w1) {n}; \draw (v1) -- (w1); \node at (1,-1) {(2)};}
&&
\times
&&
\tikz[script math mode,baseline=0]{\dtwo{n}{n+x}{n+w}{x} \node[framed] at (3,1) (w3) {n}; \draw (v3) -- (w3); \node at (1,-1) {(1)};}
&&
\longrightarrow
&&
\tikz[script math mode,baseline=0]{\dtwo{n+x}{\ss n+\atop\ss x+w}{n+w}{x} \node[framed] at (1,1) (w1) {n}; \node[framed] at (3,1) (w3) {n}; \draw (v1) -- (w1) (v3) -- (w3);}
&&
\longrightarrow
&&
\tikz[script math mode,baseline=0]{\dtwo{x}{x+w}{w}{x} \node[framed] at (3,-1) (w4) {n}; \draw (v4) -- (w4);}
\end{align*}
\begin{gather*}
\begin{array}{ccc}
\tikz[script math mode,baseline=0,xscale=1.2]{\dthree{y}{x}{w}{0}{0}{0} \node[framed] at (1,1) (w1) {n}; \draw (v1) -- (w1); \node at (1,-1) {(1)};}
&
\times
&
\tikz[script math mode,baseline=0,xscale=1.2]{\dthree{2n}{2n+y}{\ss 2n+\atop\ss y+x}{\ss n+\atop\ss y+w}{y}{n+x} \node[framed] at (1,1) (w1) {n}; \draw (v1) -- (w1);}
\\[1.3cm]
\big\downarrow
\\
\tikz[script math mode,baseline=0,xscale=1.2]{\dthree{2n+y}{\ss 2n+\atop\ss y+x}{\ss 2n+y\atop\ss +x+w}{\ss n+\atop\ss y+w}{y}{n+x} \node[framed] at (1,1) (w1) {2n}; \draw (v1) -- (w1); \node at (1,-1) {(2)};}
&
\longrightarrow
&\tikz[script math mode,baseline=0,xscale=1.2]{\dthree{y}{y+x}{\ss y+x\atop\ss +w}{y+w}{y}{x} \node[framed] at (5,1) (w5) {n}; \draw (v5) -- (w5);}
\end{array}
\\
\begin{array}{ccc}
\tikz[script math mode,baseline=0,xscale=1.2]{\dfour{z}{y}{x}{w}{0}{0}{0}{0} \node[framed] at (1,1) (w1) {n}; \draw (v1) -- (w1);}
&
\times
&
\tikz[script math mode,baseline=0,xscale=1.2]{\dfour{3n}{4n+z}{\ss 5n+\atop\ss z+y}{\ss 6n+z\atop\ss +y+x}{\ss 7n+z+\atop\ss y+x+w}{\ss 5n+\atop\ss y+x}{2n+y}{\ss 3n+\atop\ss z+x} \node[framed] at (1,1) (w1) {n}; \draw (v1) -- (w1);}
\\[1.3cm]
\hspace{2.4cm}\big\downarrow
\\
\tikz[script math mode,baseline=0,xscale=1.2]{\dfour{3n+z}{\ss 4n+\atop\ss z+y}{\ss 5n+z\atop\ss +y+x}{\ss 6n+z+\atop\ss y+x+w}{\ss 7n+z+\atop\ss y+x+w}{\ss 5n+\atop\ss y+x}{2n+y}{\ss 3n+\atop\ss z+x} \node[framed] at (1,1) (w1) {2n}; \draw (v1) -- (w1);}
&
\longrightarrow
&\tikz[script math mode,baseline=0,xscale=1.2]{\dfour{n+z}{\ss n+\atop\ss z+y}{\ss n+z\atop\ss +y+x}{\ss n+z+\atop\ss y+x+w}{\ss n+z+\atop\ss y+x+w}{\ss n+\atop\ss y+x}{y}{z+x} \node[framed] at (1,1) (w1) {n}; \draw (v1) -- (w1);}
\end{array}
\end{gather*}
\caption{The quiver varieties xxx. This has been 
partitioned into figs 2-5}\label{fig:fusions}
\end{figure}

endjunk}

\junk{It might be fun to figure out in which $d=3$ special cases is the
  intermediate quiver variety actually susceptible to prop \ref{prop:reflect}.
  Then we'd have a $d=3$ rule explicable to people who don't know quiver
  varieties, like the $d=1$ rule. But there's no $d=2$ analogue, due to
  $\omega_1 \neq \omega_2$!}

\junk{the figure represents in the middle the quiver variety, on the left one component in the $\CC^\times$ fixed set, on the right
the effect of fusion (though at this stage we don't describe yet the second correspondence because we don't need it --
all we care about is, it preserves the weight)}

Each $\calM(w,v)$ carries an action of $\prod_i GL(w^i)$ changing basis on
the framed vertices, which we shrink to a maximal torus 
$T := \prod_i T^{w^i}$. There is also a commuting action of $\CC^\times$ that 
scales the ``backward'' maps (between gauged vertices against $I$'s orientation,
or downward from framed to gauged) and $\theta_\CC$
that one should think about differently: in terms of the complex sympletic
form naturally borne by $\calM(w,v)$, $T$ {\em preserves} the symplectic
form whereas the $\CC^\times$-action {\em scales} the form.
We put the tori together into
$$ \hat T := T \times \CC^\times $$

In \cite{Nakaj-quiv3}, Nakajima defines an action of $\gqg$ on the $K$-theory
$K_{\hat T}(\bigsqcup_v\calM(w,v))$ (which we will take with complex coefficients) of a quiver variety,
and identifies the latter with
the tensor product $\bigotimes_{k=1}^{n} V_{\omega_{m_k}}(z_k)$ of
certain fundamental representations, where $n=\sum_i w^i$,
$\#\{m_k=i\}=w^i$, and the $z_k$ (resp.\ $t$) are the equivariant
parameters associated to the Cartan torus $T\cong (\CC^\times)^n$
(resp.\ to the scaling action).
These were the representations mentioned at the start of \S \ref{sec:puzzle}.

\rem[gray]{is that true? is it only a conjecture?
  generic spectral parameters? Quoth Nakajima:
  ``We also show that $M_{x,a}$ is a finite dimensional l-highest
  weight module. As a usual argument for Verma modules, $M_{x,a}$ has the
  unique (nonzero) simple quotient. The author conjectures that $M_{x,a}$
  is a tensor product of l-fundamental representations in some
  order. This is proved when the parameter is generic in \S 14.1.''
  I think the being-a-tensor-product is provable nowadays using
  stable envelopes.
  The def of $l$-fundamental is hiding in remark 1.2.17 on p9.
}

In the present case, fixing the framed dimension vector to be of the form
of figure~$d$, quiver $(1)$, and summing over all gauged dimension vectors,
we obtain an action of $\xqg$ on $V_a(z_1)\otimes \cdots \otimes V_a(z_n)$,
recovering the representation theory discussed in \S \ref{ssec:reptheory}.

In particular, at $n=1$, for all one-dimensional weight spaces of
$V_a(z)$ (which includes all of $V_a^A(z)$, and in fact, is
all weight spaces except the zero weight space at $d=4$),
one has a natural basis given
by the stable envelope construction (where we always make the same
``canonical'' choice of chamber, polarization and line bundle as for
$\St^\lambda$ in proposition~\ref{prop:stab}).

%\junk{a word about localization?}

\begin{lem}\label{lem:ressinglev2}
  Let $d=1,2,3,4$ and $a=1,2,3$, with $w_{(a)}$ the framed dimension vector
  on quiver $a$ of figure $d$.
  In particular $w_{(a)}$ has \fbox{$n$} at exactly one vertex.
  Define \defn{the $a$th single-number sector}\footnote{This
    denomination comes from the labels $0,\ldots,d$ of weight vectors
    of $V_a^A(z)$ which are single numbers, as opposed to other weight
    spaces of $V_a(z)$ which are traditionally labeled in Schubert
    puzzles by multinumbers.} for that quiver in that figure as the
  set of $v_{(a)}$ pictured, a $d$-parameter rather than $2d$-parameter set.

  \begin{enumerate}
  \item The $a$th single-number sector forms a face of the polytope from
    proposition \ref{prop:flagquiver} (3).
  \item If $v_{(a)}$ runs over a single-number sector, then
    % in figures~\ref{fig:d1}--\ref{fig:d4}, then
    \[
 K_{\hat T}\left(\bigsqcup_{v_{(a)}}\calM(w_{(a)},v_{(a)})\right) \iso \Otimes_{k=1}^n V_a^A(z_k)
    \]
  \item The subspace $V_a^A(z_1)\otimes V_a^A(z_2)$
    is invariant under the operator $\check R_{a,a}$.
  \item The matrix of $\check R_{a,a}$ restricted
    to $V_a^A(z_1)\otimes V_a^A(z_2)$, in the basis $(e_{a,i})_{i=0,\ldots,d}$,
    matches the $R$-matrix from \eqref{eq:Rsingle}. (Note the subtlety
    mentioned before the lemma, that not all weight spaces come with
    natural bases, but those in the single-number sector do.)
  \end{enumerate}
\end{lem}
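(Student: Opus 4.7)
The plan is to dispatch the four parts in order, with (1) providing the geometric underpinning that makes (2)--(4) essentially forced.

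For (1), I would note that by the description of the $a$th single-number sector given in the statement (equivalently, by inspection of quiver~($a$) in figure~$d$), the associated weights in $V_a$ are exactly $\vec f_i$ (resp.\ $\tau^2 \vec f_i$, $-\tau \vec f_i$) for $i=0,\ldots,d$, i.e.\ the set $\W^A$ (resp.\ $\tau^2\W^A$, $-\tau\W^A$).  These are obtained from the highest weight of $V_a$ by successively subtracting the simple roots indexed by the $A_d$ subdiagram $S \subset X_{2d}$.  They form an orbit of the parabolic subgroup $W_S \leq W$ corresponding to $S$, and since this orbit consists of extremal weights of $V_a$ along any linear functional that pairs negatively with every simple root \emph{not} in $S$, they form a face $F$ of the weight polytope from proposition~\ref{prop:flagquiver}~(3).

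For (2), recall Nakajima's identification $K_{\hat T}(\bigsqcup_v \calM(w_{(a)},v)) \iso \bigotimes_{k=1}^n V_a(z_k)$, which decomposes on the left according to $v$ and on the right according to weight spaces.  The key point is the extremality of the face $F$: if a weight $\sum_k \mu_k$ of the tensor product lies in $F$, then each $\mu_k$ must also lie in $F$ (this is the standard fact that a sum of points in a convex set lies on a face only if each summand does).  Combined with the observation that all weights in $\W^A$ (resp.\ $\tau^2 \W^A$, $-\tau \W^A$) are one-dimensional weight spaces (even at $d=4$), this precisely identifies the subspace indexed by single-number sectors with $\bigotimes_k V_a^A(z_k)$.

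Parts (3) and (4) are then the representation-theoretic punchline.  For (3), $\check R_{a,a}$ is a $\xqg$-intertwiner, so in particular it commutes with the action of the Cartan torus and hence preserves weight spaces of the tensor product; by (2) it restricts to $\bigotimes V_a^A(z_k)$.  For (4), following the strategy outlined in the footnote before the lemma, the inclusion of Dynkin diagrams $A_d \subset X_{2d}$ gives, via the Drinfeld current presentation (cf.\ Appendix~\ref{app:qg}), an inclusion $\aqg \hookrightarrow \xqg$.  Under this, $V_1^A(z)$ becomes the standard $(d+1)$-dimensional evaluation representation of $\aqg$, with the $e_{1,i}$ as standard basis; the cases $a=2,3$ follow from the case $a=1$ by composing with an appropriate braid-group lift of $\tau$ acting on $\xqg$ (see \cite{Beck-braid}), and then noting that all three sectors are related to single-number sectors of flag-type quivers by proposition~\ref{prop:reflect}.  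The $\aqg$-intertwiner $V_a^A(z_1)\otimes V_a^A(z_2)\to V_a^A(z_2)\otimes V_a^A(z_1)$ is unique up to scalar, and the scalar is fixed by the normalization $\check R(z)^{00}_{00}=1$ from \eqref{eq:normR}, matching the normalization of \eqref{eq:Rsingle}.  The main obstacle is the verification, for $a=2,3$, that the chosen braid group elements do send the canonical (stable-envelope) basis vectors $e_{a,i}$ to the standard basis vectors of the $A_d$ representation with the correct normalization; but this only requires tracking a single vector (say $e_{a,0}$) under the reflections, since the relation \eqref{eq:normR} together with $\aqg$-equivariance propagates the normalization to all $e_{a,i}$.
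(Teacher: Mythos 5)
Your parts (1)--(3) track the paper's argument in substance, though with two deviations worth flagging. In (1), your characterization of the single-number sector as a $W_S$-orbit of weights extremal along a functional negative on the simple roots outside $S$ is only literally correct for $a=1$: for $a=2,3$ the weights $\tau^2\vec f_i$ (resp.\ $-\tau\vec f_i$) differ by roots of $\tau^2(S)$ (resp.\ $-\tau(S)$), not of $S$, so the sector is a Levi-orbit for a \emph{translated} parabolic, cut out by a translated functional. The conclusion survives --- the paper gets it by transporting the (obvious) $a=1$ face via the explicit reflection sequences of figures~\ref{fig:d1}--\ref{fig:d4}, using that Weyl images of faces are faces --- but your functional needs to be conjugated accordingly. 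Your convexity argument for (2) (a sum of weights lies on a face of the Minkowski sum iff each summand lies on the corresponding face) is a clean alternative to the paper's route, which instead invokes proposition~\ref{prop:reflect} to identify each single-number-sector quiver variety with a cotangent bundle of a $d$-step flag variety and then quotes the $A_d$ computation from \cite{Nakaj-quiv3}; the paper's version produces the geometric input it needs for part (4), while yours is more self-contained for the bare statement of (2).

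The real divergence is in (4). You follow the representation-theoretic route sketched in the footnote before lemma~\ref{lem:ressingle} (Drinfeld-current embedding $\aqg\subset\xqg$, braid-group lift of $\tau$ for $a=2,3$, uniqueness of intertwiners plus the normalization \eqref{eq:normR}), whereas the paper deliberately avoids this and argues geometrically: by part (2) at $n=2$ the single-number sector is $K_{\hat T}$ of cotangent bundles of flag varieties, the restriction of $\check R_{a,a}$ is then the geometrically defined stable-envelope $R$-matrix, and proposition~\ref{prop:stab} (via \cite[Exercise 9.2.25]{Ok-K}) identifies that with \eqref{eq:Rsingle} once the normalization $\check R^{00}_{00}=1$ is checked. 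The gap in your route --- and presumably the reason the authors write that they ``found it easier to exploit geometry'' --- is that the Drinfeld-current embedding is an embedding of algebras but \emph{not} of Hopf algebras, so the fact that $\check R_{a,a}$ intertwines the $\xqg$-module structures does not immediately imply that its restriction to $V_a^A(z_1)\otimes V_a^A(z_2)$ intertwines the $\aqg$-coproduct action; one must argue that the discrepancy terms in the coproduct vanish on the extremal face. Without that, the ``unique intertwiner up to scalar'' step is unjustified. On the other hand, your worry about propagating the normalization of the $e_{a,i}$ is a non-issue: every nonzero entry of \eqref{eq:Rsingle} has $\{k,l\}=\{i,j\}$ as multisets, so the entries are invariant under rescaling the weight vectors, and only the overall scalar, fixed by \eqref{eq:normR}, matters.
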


\begin{proof}
  \begin{enumerate}
  \item This statement is obvious for $a=1$ -- the conditions on $v$
    are that certain entries $\langle v, \vec\alpha \rangle$, obviously
    bounded below by $0$, are in fact $0$. To see it for $a=2,3$
    we use the sequences of reflections in figure $d$ to rotate the $a=1$ face
    to the purported $a=2,3$ faces.
  \item For $a=1$, this is essentially an $A_d$ calculation, using
    \cite{Nakaj-quiv3} as above. For $a=2,3$ we use
    proposition~\ref{prop:reflect} (and the sequences of reflections
    provided in the end of each figure) to identify the varieties with
    those from the $a=1$ case. 
  \junk{  
    This is a consequence of the results of \S \ref{sec:stabclass} and
    \S \ref{sec:quiver}, and in particular of proposition~\ref{prop:reflect}.  
    When $v_{(a)}$ is constrained to be of the form given in
    figures~\ref{fig:d1}--\ref{fig:d4}, each $\calM(w_{(a)},v_{(a)})$ is a
    cotangent bundle of a $d$-step flag variety. 
    % The first part of the lemma then follows from the general theory of stable envelopes, cf.~the proof of proposition~\ref{prop:stab}.
    The first part of the lemma then follows in principle from the
    general theory of stable envelopes, cf.~the proof of
    proposition~\ref{prop:stab}.  We provide here an alternative proof
    based on the fact that $\W^A$ is a face of $\W$, cf.~the proof of
    \cite[proposition~3.4]{artic71}.
    \rem[red]{AK: complete; add the face argument at $d=4$}
  }
\item This follows from part (1) of the lemma and $\check R_{a,a}$'s
  $T$-equivariance.
\item By part (2), at $n=2$,
  $V_a^A(z_1)\otimes V_a^A(z_2)\cong K_{\hat T}(\bigsqcup_{v_{(a)}}\calM(w_{(a)},v_{(a)}))$,
  and the $R$-matrix then agrees with the geometrically
  defined $R$-matrix of \S \ref{sec:stabclass}, cf.~the proof of
  proposition~\ref{prop:stab}, up to normalization. Finally, one
  verifies the normalization $\check R^{00}_{00}=1$ in \eqref{eq:Rsingle}.
\end{enumerate}
\end{proof}

\rem[gray]{the fact that $V_2$ is dual of $V_3,V_1$ for $d\ne2$ is somewhat confusing: geometrically it's clear that both defining rep and its dual produce
a $T^*$ flag variety, so the $R$-matrices should be equal; but the reps are nonisomorphic (we break the symmetry in the $RTT$ formalism
when we choose the node for the auxiliary space)}

\section{Geometric interpretation of puzzles}
\label{sec:geominterp}

The proof of theorem \ref{thm:main} comes down to equivariant localization
and variants of YBE (found in proposition \ref{prop:ybe}). 
In particular, that proof does not make clear why one might {\em expect} 
$\xqg$ $R$-matrices to be of use when studying partial flag varieties,
and indeed we didn't know ``why'' they proved so useful in \cite{artic71}
at the time of writing. 
In this section we provide a retrodiction, deriving the puzzle rules
directly from geometry. (Unfortunately we only understand the geometry
well enough to work in cohomology, rather than $K$-theory.)
We emphasize that our principal results do not depend on those
of this section, and due to that, some of the proofs will be abbreviated.

\junk{probably requires some intro on what we're trying to achieve.
  proof above not purely geometric, combinatorial part. fill the gap
  in $H_{\hat T}$. AK: how's this. Please correct history as
  needed. PZJ: history's fine, but we still need some transition
  between the rest of the paper and this section}

It was first noted in \cite[proposition 1]{artic46} that the equivariant
puzzle rule of \cite{KT} is based on an $R$-matrix. Drinfel$'$d and Jimbo
showed that many $R$-matrices arise from
the representation theory of quantized loop algebras. Nakajima
\cite{Nakaj-quiv3} showed that many representations of quantized
loop algebras arise on the $K$-theory of quiver varieties,
and Varagnolo \cite{Varagnolo} gave the corresponding result in cohomology.%
\footnote{This is a rare case -- like Atiyah--Bott's equivariant
  localization formul\ae\ -- where the $K$-theory result predates the
  cohomology result (see the arXiv references).  Publication took
  place in the opposite order. We thank Sachin Gautam for setting us
  straight on the history.}  Maulik and Okounkov \cite{MO-qg}
interpreted $R$-matrices directly in cohomology, using their ``stable
envelope'' construction of certain Lagrangian relations between quiver
varieties. We review some of this latter work.
% to give a quiver-variety-based retrodiction of puzzles.
One novel feature is that we need to mix in some Lagrangian relations
other than stable envelopes; in particular for $d=1$ we need a
Hamiltonian reduction.

\newcommand\COMfld{{\text{\bf COMfld}}}
\newcommand\Inner{{\text{\bf Inner}}}
\renewcommand\Vec{{\text{\bf Vec}}}
\newcommand\calK{{\mathcal K}}

\subsection{The ``category'' of correspondences}
Let $\calK$ denote the ``category'' (terminology due to \cite{weinstein}) 
whose objects are compact oriented manifolds, with morphisms 
$Hom_\calK(A,B) := \{$oriented cycles $K$ in $A\times B\}$. 
It is not an actual category,
because we only define a composition $K_1 \star K_2$ of
$K_1 \in Hom_\calK(A,B)$ with $K_2 \in Hom_\calK(B,C)$ when
% (first condition of two)
$K_1 \times C$ and $A \times K_2$ are transverse inside $A\times B\times C$.
The composite is defined as 
$K_1 \star K_2 := \pi_{AC}((K_1 \times C) \cap (A \times K_2))$,
where $\pi_{AC}$ is the projection $A\times B\times C \to A\times C$.
Later, when we allow $A,B,C$ to be noncompact, to define a composition
we will also require that $\pi_{AC}$ be proper on 
that intersection. Only under these two conditions do we say that
$K_1,K_2$ are \defn{composable}. (We will do something very weird
in proposition \ref{prop:rightsquare} %and \ref{prop:commuteset},
and compose two ``non-composable'' relations.)

The cohomology functor factors through this ``category'', as follows. 
Consider $H^*(A) = H^*(A;\RR)$ as an inner product space, 
with pairing $\langle \alpha,\beta\rangle := \int_A (\alpha\cup \beta)$. 
Then $H^*$ is a functor from the category $\COMfld$ = (compact
oriented manifolds, smooth maps) to the category $\Inner$ of real
inner product spaces.
This $\Inner$ is no different from $\Vec$ as a category, but is endowed
with a contravariant endofunctor `transpose'. We can be ambiguous
about whether $H^*$ is covariant or contravariant, thanks to this transpose.
The ``category'' $\calK$ has an obvious transpose as well, unlike $\COMfld$.

Now we factor $H^*$ as
$$ 
\begin{array}{rcccl}
\COMfld &\xrightarrow{\mathop{graph}} &
\calK  &\xrightarrow{\bullet-\text{transform}}\quad & \Inner \\[.5em]
A&\mapsto&A &\mapsto& H^*(A) \\
(f:A\to B)&\mapsto&graph(f) \\[.2em]
&& K\subseteq A\times B &\mapsto& \Upsilon_K 
:= (\alpha \mapsto P.D.\left( (\pi_B)_*( \pi_A^*(\alpha) \cap [K]) \right))
\end{array}
$$
The principal results to know, at this level of generality, are 
(1) when two correspondences are composable, 
the $(K_1\star K_2)$-transform is the composite of the two
individual transforms, and
(2) the $graph(f)$-transform is the pushforward $f_*$ in cohomology,
whose transpose is the pullback $f^*$.
In particular, the composite of the two functors above is $H^*$.

\junk{
  Our interest is actually in noncompact manifolds (more specifically,
  quiver varieties) and the ``category'' $\calK$ generalizes just
  fine to those as long as we continue to ask, when composing $K_1$ and $K_2$, 
  that the map $\pi_{AC} : (K_1 \times C) \cap (A \times K_2) \to A\times C$ 
  be proper.
}

\subsection{Weinstein's ``category'' of symplectic manifolds}
\label{ssec:weinstein}
\newcommand\calC{{\mathcal C}}
Our actual interest is in the ``category'' $\calC$ of (holomorphic)
symplectic manifolds and Lagrangian correspondences, i.e.
$$ Hom_\calC(M,N) \ :=\  \{L \subseteq M\times N\ 
:\ L\text{ is a Lagrangian cycle in } (-M) \times N\} $$
where $(-M)$ denotes $M$ with the symplectic form negated. 
\junk{
  Two morphisms
  $M \xrightarrow L N \xrightarrow{L'} P$ are composable only when
  $L\times P$ intersects $M\times L'$ transversely inside $M\times N\times P$. 
  In that case, the composition (denoted $L' \circ L$) is the projection
  to $M\times P$ of $(L\times P) \cap (M\times L')$. }
This ``category'' was introduced in \cite{weinstein} and has seen much
development since then, e.g. \cite{wehrheimwoodward,weinstein2,weinstein3}.
Note that it again enjoys a transpose.

The dimension of $L$ is the average of the dimensions of $M$ and $N$.
When $\dim M \geq \dim N$, we might expect the projection $L\to M$ to
be an immersion and $L\to N$ to be a submersion; when these hold
we call the Lagrangian relation $L \subseteq M\times N$ a \defn{reduction}.
The transpose of such an $L$ is called a \defn{co-reduction}.
\junk{
Using the trivial bijection
$$ Hom_\calC(M,N) \iso Hom_\calC(N,M) $$
based on $M\times N \iso N\times M$, we can reverse the conditions, in
which case $L$ is 
So $N$ carries two bundles:
  the subbundle $ker\ T(L\to N) \leq TL$ and the normal bundle $N_L N$.
  Is there a relation between them, in our examples?
  I guess in symplectic reduction the normal bundle is always trivial,
  and the subbundle is trivial, at least for abelian group actions.
  The normal bundle to $N$ in $M\times L$ is $T^* N$, but I'm not sure
  how that relates to these two bundles.}

We mention five examples of Lagrangian relations, four of which date from
the introduction of this ``category'' and one of which is much more recent.
Each but the fourth is a reduction or a co-reduction.
\begin{enumerate}
\item If $\phi:M\to N$ is a symplectomorphism, then 
  $\mathop{graph}(\phi) \in Hom_\calC(M,N)$.
\item Elements $L\in Hom_\calC(pt,N)$ are simply Lagrangian cycles in $N$.
\item If $G$ acts on $M$ Hamiltonianly with moment map $\Phi_G:M\to \lie{g}^*$,
  and $N = \Phi^{-1}(c)//G$ is the GIT quotient of a central level set
  (i.e. $c\in (\lie{g}^*)^G$), then $\Phi^{-1}(c) \in Hom_\calC(M,N)$.
  This is essentially the Marsden-Weinstein theorem.
\item If $R \subseteq X \times Y$ is a submanifold of a product, 
  e.g. the graph of a function,   then its conormal bundle
  $CR \subseteq T^*(X\times Y) \iso T^*X\times T^*Y$
  gives an element of $Hom_\calC(T^*X, T^*Y)$.
\item If $\CC^\times \actson M$ symplectically, with $F \subseteq M^{\CC^\times}$
  a fixed-point component, then
  $$ \attr(F) := \{m\in M\ :\ \lim_{z\to 0}\ z\cdot m \in F \} $$
  has obvious maps $\attr(F) \into M$, 
  $\attr(F) \xrightarrow{\lim_{z\to 0} z\cdot} F$ which together give an 
  inclusion $attr(F) \into M\times F$, whose image turns out to be Lagrangian. 
  In the rare occurrence that $\attr(F)$ is {\em closed} in $M$, it
  defines an element of $Hom_\calC(M,F)$.
\end{enumerate}

A basic example of the last construction\footnote{Intriguingly,
  it is also an example of the third construction, if one considers the
  symplectic reduction of $G\cdot\lambda$ by the maximal unipotent
  subgroup of $G$. The zero level set is $\coprod_W NwT/T \subseteq G/T$.
}
has $M = G \cdot \lambda \subseteq \lie{g}^*$, a generic coadjoint orbit
of the complex group, so $M \iso G/T$ as a homogeneous space.
(Don't confuse this with the projective variety $G/B$; $G/T$ is a bundle
over $G/B$ with contractible fibers, but no holomorphic sections $G/B \to G/T$,
since $G/T \iso G\cdot \lambda$ is affine.)
If the $\CC^\times$-action on this $M$ is by a regular dominant coweight, 
then the fixed points are the finite set $N(T)/T$, and for each $w\in W$
we have $\attr(wT/T) = BwT/T$. Since $B\leq G$ is a closed subgroup 
we know that $B/T \subseteq G/T$ is closed. Then since the {\em right}
action of $W$ on $G/T$ transitively permutes the submanifolds $\{BwT/T\}$,
each of those submanifolds is closed, and of the same (Lagrangian) dimension. 
This situation is in strong contrast to the $B$-orbits on $G/B$!

To compare the graph of $f:Y\to X$ and its conormal bundle, 
as correspondences, we observe the following equation in the ``category'':

\begin{lem}\label{lem:zerosection}
  Let $f:Y\to X$ be a smooth map of compact manifolds, 
  and $\iota_X,\iota_Y$ the inclusions of $X,Y$ respectively into their
  cotangent bundles, as the zero sections. The following square of
  correspondences commutes (in particular, both compositions exist):
  $$
  \begin{matrix}
    &T^*X &\xrightarrow{Cgraph(f)^T}& T^* Y \\
    graph(\iota_X)&\uparrow &=& \uparrow &graph(\iota_Y) \\
    &X &\xrightarrow{graph(f)^T}& Y 
  \end{matrix}
  $$
\end{lem}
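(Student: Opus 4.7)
The plan is to verify both composites set-theoretically, check composability, and then match them as oriented cycles in $X\times T^*Y$.

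First I would write out the two composites as sets. The bottom composite $graph(f)^T \star graph(\iota_Y)$ in $X\times T^*Y$ is
\[
\{(x,\eta)\,:\,\exists y\in Y,\ (x,y)\in graph(f)^T \text{ and } (y,\eta)\in graph(\iota_Y)\}
=\{(f(y),(y,0))\,:\,y\in Y\}.
\]
For the top composite, I need the description of $Cgraph(f)^T$. Since the tangent space to $graph(f)^T=\{(f(y),y)\}\subseteq X\times Y$ at $(f(y),y)$ is $\{(df_y(v),v)\,:\,v\in T_yY\}$, the conormal fiber consists of pairs $(\xi,\eta)$ with $\xi(df_y(v))+\eta(v)=0$ for all $v$, i.e. $\eta=-f^*\xi$. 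Thus
\[
Cgraph(f)^T=\{((f(y),\xi),(y,-f^*\xi))\,:\,y\in Y,\ \xi\in T^*_{f(y)}X\}\subseteq T^*X\times T^*Y.
\]
The top composite $graph(\iota_X)\star Cgraph(f)^T$ picks out the points with $\xi=0$ on the $T^*X$ side, which forces $\eta=-f^*\xi=0$, giving exactly $\{(f(y),(y,0))\,:\,y\in Y\}$ again. So both composites equal the smooth submanifold $L:=\{(f(y),(y,0))\,:\,y\in Y\}\subseteq X\times T^*Y$, the image of $Y$ under the embedding $y\mapsto (f(y),(y,0))$.

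Next I would verify composability of both routes, i.e. that the relevant transverse intersection and proper pushforward conditions from the definition of $\calK$ are satisfied. For the bottom route, $graph(f)^T\times T^*Y$ and $X\times graph(\iota_Y)$ inside $X\times Y\times T^*Y$ meet in (a diffeomorphic copy of) $Y$; a dimension count gives $(\dim Y+2\dim Y)+(\dim X+2\dim Y)-(\dim X+3\dim Y)=\dim Y$, matching the actual intersection, and transversality follows from writing down tangent spaces and observing that the $T^*Y$-direction in $X\times graph(\iota_Y)$ already surjects onto the cotangent fibre. For the top route, inside $X\times T^*X\times T^*Y$ the intersection of $graph(\iota_X)\times T^*Y$ with $X\times Cgraph(f)^T$ is cut out by the three equations $x=x'$, $\xi=0$, $x=f(y)$ (the condition $\eta=-f^*\xi$ then being automatic); these are independent, so the intersection is transverse and is again a copy of $Y$. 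In both cases the projection to $X\times T^*Y$ is a closed embedding with image $L$, and hence proper (with $Y$ compact, or more generally when $f$ is proper).

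Finally I would match orientations. Both composites, pulled back through their intermediate fiber products, identify naturally with $Y$: for the bottom route through the $Y$-factor of $graph(f)^T$; for the top route through the $y$-parameter in the description of $Cgraph(f)^T$ (after setting $\xi=0$). Using the standard conventions for the orientation of a conormal bundle and for zero sections, one checks that the induced orientations on $L\cong Y$ agree—this is essentially the statement that reducing the conormal bundle of $graph(f)^T$ by setting the $X$-momenta to zero recovers the zero section over the image. Hence the two composites agree as oriented cycles and the square commutes.

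The main obstacle I anticipate is the orientation bookkeeping in the last step: while the underlying set equality is immediate and transversality is a short dimension count, signs coming from the conormal construction and from the chosen symplectic form on $T^*X\oplus T^*Y$ require care, especially if one wants the identification to be natural enough to iterate later (e.g. in proposition~\ref{prop:composite}). Everything else is essentially bookkeeping once the common set $L$ has been identified.
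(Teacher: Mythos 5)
Your proposal is correct and follows essentially the same route as the paper: compute $Cgraph(f)^T$ explicitly from the derivative of $y\mapsto(f(y),y)$, evaluate both composites as the set $\{(f(y),(y,\vec 0))\}$, and observe that each intermediate intersection projects diffeomorphically (hence properly) onto that common image. (One quibble in your transversality bookkeeping for the bottom route: $X\times graph(\iota_Y)$ has dimension $\dim X+\dim Y$, not $\dim X+2\dim Y$; the expected intersection dimension $\dim Y$ still comes out right.)
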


\begin{proof}
  Three of these correspondences are easy. To get a hold of the
  conormal bundle atop, start with the map $f\times Id: Y \to X\times Y$,
  $y \mapsto (f(y),y)$, a diffeomorphism to the graph. 
  Its derivative at $y$ is $\vec a \mapsto (T_yf(\vec a), \vec a)$
  with dual $(\vec a,\vec b) \mapsto T_y^*f(\vec a) + \vec b$. 
  The conormal space at $y$ that we seek is the kernel
  $\{ (\vec a, -T^*f(\vec a))\ :\ \vec a\in T^*_{f(y)} X \}$ of that dual map. 
  In all
  $$ Cgraph(f)^T 
  = \left\{ \left( (f(y),\vec a), (y, -T_y^*f(\vec a) \right) 
    \in T^* X \times T^* Y  \right\} $$
  Now we try to compose 
  $$ graph(\iota_X) = \left\{ (x, (x,\vec 0)) \in X \times T^* X \right\} $$
  with $Cgraph(f)^T$. The intersection is
  $$ (graph(\iota_X) \times T^* X) \cap (X \times Cgraph(f)^T)
  = \left\{ (f(y), (f(y), \vec 0), (y, \vec 0) \right\} $$
  whose projection to
  $\left\{ \left(f(y), (y, \vec 0)\right)\, :\, y\in Y \right\}$
  is a diffeomorphism, hence proper.

  Also we try to compose 
  $$ graph(f)^T = \left\{ (f(y),y)\ :\ y\in Y \right\} $$
  with $graph(\iota_Y)$, obtaining the intersection
  $$ (graph(f)^T \times T^* Y) \cap (X \times graph(\iota_Y))
  = \left\{ \left(f(y), y, (y, \vec 0)\right)\ :\ y\in Y \right\} $$
  whose projection to
  $\left\{ \left(f(y), (y, \vec 0)\right)\, :\, y\in Y \right\}$
  is again a diffeomorphism, hence proper.
\end{proof}

We would like to infer a result on cohomology from that lemma, 
but %on the face of it 
our definition of $K$-transform for $K\subseteq M\times N$
involves integrating over the fibers of $M\times N \to N$,
so requires $M$ compact. We sidestep this in the next section.

When $X \subseteq M$ is a closed and irreducible but possibly singular
subvariety of a smooth variety, its \defn{conormal variety} $CX$
is by definition the closure of the conormal bundle to its
smooth part $X_{reg}$. For use later we bring up a characterization,
tracing to Monge (see \cite[p169]{KleimanTangency}), of
these subvarieties:

\begin{lem}\label{lem:reflex}\cite[Theorem 1.10]{TevelevDualityHomogeneous}
  A reduced closed subscheme $Y \subseteq T^*M$ is a conormal variety $CX$
  iff it is \defn{conical} (invariant under scaling the cotangent fibers),
  irreducible, and Lagrangian. In this case $X$ can be computed from $Y$
  either as the intersection of $Y$ with the zero section in $T^*M$
  (hence closed), or as the image of the projection $Y \into T^*M \onto M$
  (hence irreducible).
\end{lem}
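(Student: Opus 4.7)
The plan is to prove the two directions separately, with the reverse direction resting on the classical fact that the canonical 1-form $\theta = \sum p_i\, dq_i$ vanishes on any conical Lagrangian.

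\textbf{Forward direction.} Assuming $Y = CX$: conicality is immediate, since if $\xi$ annihilates $T_xX_{\mathrm{reg}}$ then so does $c\xi$. Irreducibility follows because $X_{\mathrm{reg}} \subseteq X$ is irreducible, the conormal bundle over $X_{\mathrm{reg}}$ is a vector bundle hence irreducible, and $CX$ is its closure. Lagrangianity is a local calculation at points of $CX$ over $X_{\mathrm{reg}}$: the dimension adds up to $\dim M$, and in Darboux coordinates chosen so $X_{\mathrm{reg}}$ is a coordinate subspace, $\theta$ manifestly vanishes on $CX$, hence so does $\omega = d\theta$.

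\textbf{Reverse direction.} Suppose $Y$ is closed, conical, irreducible, and Lagrangian. Set $X := \overline{\pi(Y)}$, which is irreducible. The key step is to show $\theta|_Y = 0$. Let $E$ be the Euler vector field generating the scaling action on cotangent fibers. Then $\iota_E \theta = 0$ in local coordinates, while $\mathcal L_E \theta = \theta$ since $\theta$ is homogeneous of degree $1$ in the fiber coordinates; by Cartan's formula, $\iota_E\,\omega = \iota_E\,d\theta = \mathcal L_E\theta - d(\iota_E\theta) = \theta$. Since $E$ is tangent to $Y$ by conicality, and $\omega|_Y = 0$ by Lagrangianity, we deduce $\theta|_{T_yY} = 0$ for every $y \in Y$.

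Now pick a smooth point $y = (x,\xi) \in Y$ lying over $x \in X_{\mathrm{reg}}$ at which $\pi : Y \to X$ is smooth (such points form a dense open subset). Surjectivity of $d\pi_y : T_yY \to T_xX_{\mathrm{reg}}$ combined with the identity $\theta_{(x,\xi)}(v,\eta) = \xi(v)$ forces $\xi \in (T_xX_{\mathrm{reg}})^\perp$. Hence a dense open subset of $Y$ lies in $CX$, and since both sides are closed, irreducible, and of the same dimension $\dim M$, they are equal. The two descriptions of $X$ drop out: $X = \pi(Y)$ by construction; and using closedness plus conicality, every $(x,\xi) \in Y$ yields $(x,0) = \lim_{c\to 0}(x,c\xi) \in Y$, so $Y \cap (\text{zero section}) = X \times \{0\}$.

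\textbf{The main obstacle.} Very little is truly difficult here; the only substantive step is the calculation $\iota_E\,\omega = \theta$ and its consequence that $\theta$ vanishes on conical Lagrangians. The rest is dimension and irreducibility bookkeeping. If anything, some care is needed when $Y$ (or $X$) is singular, to ensure the comparison $Y \subseteq CX$ is made on the open dense locus where both $\pi|_Y$ is smooth and $X$ is smooth, before invoking closure.
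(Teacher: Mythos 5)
Your proof is correct. The paper does not actually prove this lemma---it is quoted from Tevelev's book (Theorem~1.10 there, with the observation traced back to Monge)---but your argument is precisely the standard one contained in that reference: the identity $\iota_E\,\omega=\theta$ for the Euler vector field forces the Liouville form to vanish on any conical Lagrangian, whence $\xi$ annihilates $d\pi(T_yY)=T_xX_{\mathrm{reg}}$ at a generic point, and the comparison $Y\subseteq CX$ closes up by irreducibility and the dimension count $\dim Y=\dim CX=\dim M$. The use of generic smoothness of $\pi|_Y$ (valid in characteristic $0$) and the recovery of $X$ as the closed set $Y\cap(\text{zero section})$ are both handled correctly.
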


\subsection{Correspondences and (equivariant) cohomology,
  and dividing by the zero section}
\label{sssec:divzero}
In equivariant cohomology, there is a handy trick: once one ``localizes''
to the fraction field $frac\ H^*_T$, to define our transforms
it is enough for $M^T$ to be compact,
since $$frac\ H^*_T \tensor_{H^*_T} H^*_T(M) 
\iso frac\ H^*_T \tensor_{H^*_T} H^*_T(M^T) 
\iso frac\ H^*_T \tensor_{\ZZ} H^*(M^T) 
= frac\ H^*_T \tensor_{\ZZ} H_c^*(M^T).$$
Stated more baldly, every class on $M$ is a $(frac\ H^*_T)$-linear combination
of classes on $M^T$, and by expressing classes that way we can define the
application of the $\beta$-transforms $\Upsilon_\beta$ to them.
This is more than just a trick for computation: it can happen that
$\Upsilon_\beta$ applied to a class in $H^*_T(M)$ does not lie in $H^*_T(N)$,
but lies only in $(frac\ H^*_T) \tensor_{H^*_T} H^*_T(N)$.

Consider now the situation of lemma~\ref{lem:zerosection}, with $X,Y$
compact complex. 
Their cotangent bundles carry $\CC^\times$-actions with compact fixed points, 
so as just explained lemma~\ref{lem:zerosection} gives us a commuting square
$$
\begin{array}{ccc}
  H^*_{\CC^\times}(T^* X) 
& \xrightarrow{\ \ \Upsilon_{Cgraph(f)^T}\ \ } & H^*_{\CC^\times}(T^* Y)\\
  \uparrow & & \uparrow \\
  H^*_{\CC^\times}(X) & \xrightarrow{\qquad f^*\qquad}{} & H^*_{\CC^\times}(Y) 
\end{array}
\qquad\qquad\text{all tensored with $frac\ H^*_{\CC^\times}$}
$$
%  If we mistakenly take the vertical maps to be the evident isomorphisms,
%  then this square obviously does {\em not} commute when $\dim X \neq \dim Y$,
%  for a very crude reason: $f^*$ takes degree $k$ classes to
%  degree $k$ classes, whereas Lagrangian convolutions like $\beta$ take
%  degree $k+\dim X$ classes to degree $k+\dim Y$ classes.
%  There is an easy fix: take 
where each vertical map is the degree-shifting map
``pushforward in cohomology'' $\iota_*$ along the inclusion $\iota$ 
of the zero section. Compared to / composed with the isomorphism $\iota^*$,
this $\iota_*$ amounts to multiplying by the equivariant Euler class 
$$ e(T^*X) \in H^*_{\CC^\times}(X) = H^*(X)[\hbar]
$$
(for the left vertical map; replace $X$ by $Y$ for the right vertical map).

Before we analyze this class, we emphasize the differences between
the pullback and pushforward maps induced from $X \into T^*X$.
The pullback (on ordinary, equivariant, or localized equivariant cohomology)
is a graded ring isomorphism, whereas the pushforward is an 
isomorphism only on localized equivariant cohomology, and only as an
$H^*_{\CC^\times}$-module. The pushforward of $1$ is the class
$[X \subseteq T^* X]$ of the zero section, whose pullback is $e(T^* X)$.
In particular either composite is multiplication by this class
(in one of its guises). 

Any complex vector bundle $V$ on $M$ can be regarded as a
$\CC^\times$-equivariant vector bundle by the scaling action,
giving an equivariant Euler class
$e(V) \in H^*_{\CC^\times}(M) = H^*(M)[\hbar]$. Its dehomogenization
$e(V)|_{\hbar\to 1}$ is the total Chern class $c(V)$ (we thank Shaun Martin
for this point of view, which nicely retrodicts Stiefel--Whitney and
Pontrjagin classes as well. The proof is pretty immediate from the
usual characterizations of Chern classes).
Note that this is not the dehomogenization we needed in \S\ref{ssec:SSM}). 
Since the total Chern class is $1$ + nilpotent, the vertical maps in
the square above become isomorphisms already upon inverting $\hbar$,
much less tensoring with $frac\ H^*_{\CC^\times}$. (In practice we work with 
not just $\CC^\times$ but $(T\times \CC^\times)$-equivariant cohomology, 
in which case the total Chern class is only invertible
after more fully localizing.)

In particular, from a known $Cgraph(f)^T$-transform
$\rho \mapsto \Upsilon_\beta(\rho)$, we can infer thereby that
$f^*\big(\rho/[X \subseteq T^*X]\big) 
= \Upsilon_\beta(\rho) \, /\, [Y \subseteq T^*Y]$.
This result is what motivates the denominator in our SSM classes.

\junk{This is maybe the place to do the calculation of the conormal
  bundle $\{(F,X_1,X_2,X_3)\ :\ (F,X_i) \in T^*G/P, \, X_1+X_2+X_3 = 0 \}$
  to the graph of the diagonal inclusion, and to state the
  multiplication formula as derived from that convolution
}

Hereafter our calculations happen to be in Weinstein's symplectic ``category'',
though in fact we nowhere use the symplectic or Lagrangian structure.

\subsection{Stable envelopes}\label{ssec:envelopes}
In \cite[Theorem 3.7.4]{MO-qg} Maulik and Okounkov extend the $\attr$
construction from \S \ref{ssec:weinstein} as follows, perhaps inspired
by the following phenomenon.

\begin{lem}\label{lem:attrclosed}
  Let $V$ be a linear representation of $\CC^\times$, and $X\into V$ a closed
  $\CC^\times$-invariant subscheme.
  Then for each component $F \subseteq X^{\CC^\times}$ of $X$'s fixed-point set,
  the attracting set $attr(F \subseteq X)$ is closed.
\end{lem}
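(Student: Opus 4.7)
The plan is to reduce the statement to the easy case of $\attr(V_0 \subseteq V)$ by using the linear structure of $V$, and then observe that everything else is just pullback along a morphism.

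First I would decompose $V$ into $\CC^\times$-weight spaces $V = \bigoplus_{n\in\ZZ} V_n$, so that the fixed locus is $V^{\CC^\times} = V_0$ and
\[
 V_{\geq 0} := \bigoplus_{n\geq 0} V_n \subseteq V
\]
is a linear subspace, hence closed. The key observation is that for $v\in V$ the limit $\lim_{z\to0} z\cdot v$ exists (inside $V$) if and only if $v\in V_{\geq 0}$, and in that case the limit equals the linear projection $\pi_0 : V_{\geq 0} \to V_0$ onto the weight-zero summand; this $\pi_0$ is a morphism of varieties (even an algebraic group homomorphism from the additive group $V_{\geq0}$). In particular $\attr(V_0 \subseteq V) = V_{\geq 0}$ is closed.

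Next I would unravel the definition of $\attr(F \subseteq X)$ in terms of these data:
\[
 \attr(F \subseteq X) \;=\; X \cap \pi_0^{-1}(F).
\]
Here I need $F$ itself to be closed in $V_0$. Since $X$ is closed and $\CC^\times$-stable in $V$, its fixed subscheme $X^{\CC^\times} = X \cap V_0$ is closed in $X$, and in particular closed in $V_0$; any connected (or irreducible) component $F$ of the Noetherian scheme $X^{\CC^\times}$ is closed in $X^{\CC^\times}$, hence closed in $V_0$. Therefore $\pi_0^{-1}(F)$ is closed in $V_{\geq 0}$, and its intersection with the closed subscheme $X$ (which automatically lies in $V_{\geq0}$ on the locus where the limit exists) is closed in $X$.

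The only step that takes any thought is the identification of $\attr(F\subseteq X)$ with $X \cap \pi_0^{-1}(F)$, and in particular the assertion that the set-theoretic limit inside $X$ agrees with the linear-algebraic projection $\pi_0$: this is immediate since $X \hookrightarrow V$ is a closed embedding, so convergence in $X$ is the same as convergence in $V$ with limit lying in $X$. I do not expect any genuine obstacle; the lemma is really the remark that attracting sets of linear $\CC^\times$-representations are linear, combined with functoriality of $\attr$ under closed embeddings.
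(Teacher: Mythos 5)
Your proof is correct and follows essentially the same route as the paper: decompose $V$ into $\CC^\times$-weight spaces, observe that the limit exists exactly on $V_{\geq 0}$ and is given by the linear projection onto $V_0$, and write $\attr(F\subseteq X)=X\cap\pi_0^{-1}(F)$ — your $\pi_0^{-1}(F)$ is precisely the paper's $F+V_+$, which it likewise describes as the preimage of $F$ under the projection $V_0\oplus V_+\onto V_0$. No gaps; the only addition you make is the (correct, and implicitly assumed in the paper) remark that $F$ is closed in $V_0$.
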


\begin{proof}
  Break $V = V_- \oplus V_0 \oplus V_+$ into its negative, zero, and
  positive $\CC^\times$-weight spaces. Then $V^{\CC^\times} = V_0$ and
  $X^{\CC^\times} = X \cap V_0$, and
  \begin{eqnarray*}
    attr(F\subseteq X) 
    &=& \{\vec v \in V\ :\ \vec v\in X,\ \lim_{z\to 0} z\cdot \vec v \in F \} \\
    &=& \{\vec v \in V\ :\ \vec v\in X\}\ \cap\ \{\vec v \in V\ :\ \lim_{z\to 0} z\cdot \vec v \in F \} \quad
    = X \cap (F + V_+)
  \end{eqnarray*}
  where $F + V_+ = \{f + \vec v\ :\ f\in F, \ \vec v\in V_+ \}$ is closed,
  being the preimage of $F$ along the projection $V_0 \oplus V_+ \onto V_0$. 
  Since $attr(F\subseteq X)$ is the intersection of two closed sets, 
  it is closed.
\end{proof}

If $E \to S$ is a flat family with a fiberwise $\CC^\times$-action, 
call it a \defn{deformation to affine} if the fibers over a dense open set 
$S^\circ \subseteq S$ are subvarieties of affine space.
Then given a flat subfamily $F \subseteq E^{\CC^\times}$, 
define the \defn{stable envelope} 
$\overline{attr(F\subseteq E)} \subseteq F \times E$
as the closure of the image of the injection
$$ attr(F\subseteq E) \to F\times E, \quad
m \mapsto \left( \lim_{z\to 0} z\cdot m, \ m \right) 
$$
(Note that it is {\em not} defined as the closure of $attr(F\subseteq E)$
inside $E$, and indeed, the composite
$\overline{attr(F\subseteq E)} \into F \times E \onto E$
is typically not injective.)

By lemma~\ref{lem:attrclosed},
$\overline{attr(F\subseteq E)} \subseteq F \times E$
agrees with $attr(F\subseteq E)$ over $S^\circ$
(nothing is added in the closure), 
but inside other fibers $E|_{s\notin S^\circ}$ one may have strict containment
$\overline{attr(F\subseteq E)}\, |_s\supset\overline{attr(F|_s\subseteq E|_s)}$.
\junk{
  For proposition \ref{prop:stable} below we characterize the closure:
  \begin{equation}
    \label{eq:stableDef}
    \overline{attr(F\subseteq E)} = 
    \{ (f,e)\ :\ \exists (e_i)_{i\in \NN},\ \lim_{i\to\infty} e_i = e,
    \lim_{i\to \infty} \lim_{z\to 0} z\cdot e_i = f \} 
  \end{equation}
}
Given a non-affine fiber $E|_o$ of such a family, one can ask whether that 
fiber $\overline{attr(F\subseteq E)}\,|_o$ of the stable envelope depends on
the choice of deformation $E$ to affine; in fact it does not, as the
stable envelope in a fiber can be characterized as in \cite[\S3.3]{MO-qg}.
(Indeed, their definition of ``stable envelope'' is for one fiber
at a time, without a deformation to affine assumed, and in particular
is more general than the definition here.)
This independence matters little for quiver varieties and more generally for
symplectic resolutions $M$, as by \cite{Kaledin} any such $M$ is the central
fiber of a canonical deformation to affine, with $T$-diffeomorphic fibers.

To continue the basic example\footnote{%
  The noncommutative analogue may be more familiar, in which $T^*G/B$
  deforms to the central-character-zero algebra $(U\lie{g})_0$,
  whose Verma modules are very complicated. This deforms in an independent
  direction to $(U\lie{g})_\lambda$ with generic central character,
  whose Verma modules are irreducible and bear no $Ext$s with one another.}
from \S\ref{ssec:weinstein}, consider the Springer resolution
$M = T^* G/B$, and its Grothendieck--Springer deformation to 
$M_{def} = G\cdot \lambda$.
We can, for example, fix a regular $\lambda$ and consider the one-parameter
subfamily whose fibers are $G\cdot z\lambda$ for $z\neq 0$, $T^* G/B$ for $z=0$.
Note that this family, and its zero fiber $M$, possess a $\CC^\times$ action
(scaling of $\lie{g}^*$) that the general fiber $G\cdot \lambda$ does not.
This action is what we use to degenerate the attracting set in $M_{def}$
to the stable envelope in $M$.  Thanks to lemma~\ref{lem:attrclosed}
each attracting set $BwT/T$ in $G\cdot \lambda$ is closed, and we
obtain the stable envelope as $\lim_{z\to 0} z\cdot (BwT/T) \subseteq T^* G/B$.

Perhaps the most crucial property of stable envelopes is their
following functoriality:

\begin{prop}[{restatement of \cite[Lemma 3.6.1]{MO-qg}}]\label{prop:stable}
  Let $A_1,A_2:\CC^\times \to Aut(E\to S)$ be two commuting fiberwise-actions
  on a deformation-to-affine $E\to S$. Let $F_2$ be a component of
  $E^{\langle A_1, A_2\rangle}$, and $F_1 \subseteq E^{A_1}$ the component
  containing $F_1$. Then for $N \gg 0$, the triangle of relations
  $$
  \begin{matrix}
    & F_2 \phantom{\longrightarrow}& \xrightarrow{env(A_{1+})} & 
    \phantom{\longrightarrow} E \\
    & \scriptstyle{env(A_2)}\ \searrow && \nearrow\scriptstyle{env(A_1)} \\
     && F_1 && 
  \end{matrix}
  $$
  commutes, where $A_{1+}(z) = A_1(z^N) A_2(z)$.
\end{prop}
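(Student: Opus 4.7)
The idea is to reduce the claim to a weight analysis on the affine fibers of the deformation $E \to S$, then propagate to the distinguished fiber by flat degeneration using the sequential characterization \eqref{eq:stableDef}.

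First I would work over the dense open $S^\circ \subseteq S$ where each fiber $E_s$ is affine, and equivariantly embed $E_s$ into a linear $T$-representation with $T = \langle A_1, A_2 \rangle$. Decompose the ambient space into $T$-weight spaces; a character $\chi$ with pairings $(n_1, n_2)$ against $(A_1, A_2)$ pairs with $A_{1+}$ as $\langle \chi, A_{1+} \rangle = N n_1 + n_2$. Since only finitely many weights appear, choosing $N$ larger than $\max\{|n_2|/|n_1|\}$ over the finite set of weights with $n_1 \neq 0$ forces the condition $N n_1 + n_2 > 0$ to be equivalent to the disjunction ``$n_1 > 0$'' or ``$n_1 = 0$ and $n_2 > 0$''.

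Second I would translate this into an equality of attractor relations. By (the proof of) lemma~\ref{lem:attrclosed} the relevant attractor sets are closed in each affine fiber $E_s$, and can be read off from weight decompositions: the case $n_1 > 0$ describes $\attr(F_1 \subseteq E_s)$ under $A_1$, while the case $n_1 = 0$, $n_2 > 0$ describes $\attr(F_2 \subseteq F_1)$ under $A_2$ inside $F_1 = E_s^{A_1}$. A direct check on weight decompositions then yields the equality
\[
\attr(F_2 \subseteq E_s,\, A_{1+}) \;=\; \attr(F_2 \subseteq F_1,\, A_2) \star \attr(F_1 \subseteq E_s,\, A_1)
\]
as cycles in $F_2 \times E_s$; composability on the right is automatic over $S^\circ$ because the projections of attractor sets to their fixed loci are affine-space bundles, which make the relevant intersection transverse and the projection to $F_2 \times E_s$ proper. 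I would then degenerate across $S$: both $\overline{\attr(F_2 \subseteq E,\, A_{1+})}$ and the corresponding closure of the composed relation are closed subvarieties of $F_2 \times E$, flat over $S$ and agreeing over $S^\circ$; hence they agree fiberwise everywhere. Restricting to the fiber of interest and using \eqref{eq:stableDef} identifies these closures with $env(A_{1+})$ and $env(A_2) \star env(A_1)$ respectively.

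The hard part will be justifying the composition $env(A_2) \star env(A_1)$ in the special (non-affine) fiber, where attractor sets may fail to be closed and the projection from $(env(A_2) \times E) \cap (F_2 \times env(A_1))$ to $F_2 \times E$ can fail to be proper. The remedy is to perform the composition fiberwise over $F_1$, where the fibers of $env(A_1) \to F_1$ and $env(A_2) \to F_2$ arise as degenerations of the affine attractor spaces present on the generic fiber; properness and the closed-embedding property then propagate from the generic fiber to the special one through the closure construction. This is essentially the content of \cite[Lemma 3.6.1]{MO-qg}, whose argument I would follow once the weight-theoretic framework above has been set up.
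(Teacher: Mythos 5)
Your treatment of the generic (affine) fibers is correct and is a legitimate alternative to the paper's argument: where you linearize via an equivariant embedding $E_s \into V$ and compare the weight conditions ``$Nn_1+n_2>0$'' versus ``$n_1>0$, or $n_1=0$ and $n_2>0$'', the paper instead restricts to the closure of the two-torus orbit of each point and traces flows on a two-dimensional moment polytope. Both routes establish the key identity
$\lim_{z\to 0} A_2(z)\cdot \lim_{z'\to 0} A_1(z')\cdot e = \lim_{z\to 0} A_{1+}(z)\cdot e$
(equation \eqref{eq:stable} in the paper) on the fibers over $S^\circ$, together with the bound on $N$; your version is arguably more elementary, at the cost of invoking an equivariant affine embedding.

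The gap is in the degeneration step. What must be shown is that the convolution of the \emph{closures}, $env(A_2)\star env(A_1)$, equals $env(A_{1+})$ in the special fiber. Your argument identifies ``the closure of the composed relation'' with this convolution and then concludes from agreement over $S^\circ$ plus flatness. But the closure of (the composition over $S^\circ$) is only tautologically \emph{contained} in the composition of the closures; the whole danger — which the paper states explicitly — is that $env(A_2)\star env(A_1)$ may acquire extra components over $S\setminus S^\circ$ that are not in the closure of its generic part, in which case ``two closed sets agreeing over a dense open subset of $S$'' proves nothing, and flatness of the convolution over $S$ is neither automatic nor asserted anywhere. Your final paragraph correctly flags this as ``the hard part'' but then resolves it only by citing \cite[Lemma 3.6.1]{MO-qg}, which is circular here since the proposition \emph{is} a restatement of that lemma and the point of the proof is to supply the argument. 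The paper closes the gap directly: it writes out $env(A_2)\star env(A_1)$ using the sequential description \eqref{eq:stableDef} as a set of pairs $(f^2,e)$ witnessed by sequences $(f^1_i,e_i)$, observes that this set is tautologically contained in the analogous set where $f^1_i$ is forced to equal $\lim_{z\to 0}A_1(z)\cdot e_i$, eliminates $f^1_i$, and applies \eqref{eq:stable} to recognize the result as exactly $env(A_{1+})$; the reverse containment comes from the generic-fiber equality. You would need to supply this (or an equivalent) containment argument to complete the proof.
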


(In \cite{MO-qg} they only state the corresponding result in cohomology,
but their proof is actually at the level of convolution of cycles,
as stated here.)

\junk{

\begin{proof}
  Consider first the case $S^\circ = S$, where (thanks to lemma
  \ref{lem:attrclosed}) we're just convolving the
  attracting sets instead of needing to consider their closures. 
  Then the statement we'll need is that
  \begin{equation}
    \label{eq:stable}
     \lim_{z\to 0} A_2(z)\cdot \lim_{z'\to 0} A_1(z')\cdot e
  \quad=\quad \lim_{z\to 0} A_{1+}(z)\cdot e, \quad \forall e \in E 
  \end{equation}
  where ``$=$'' means that both sides are defined (and equal) for the 
  same set of $e\in E$. Checking this reduces to the toric surface case
  $E = \overline{ A_2(\CC^\times)\cdot A_1(\CC^\times)\cdot e }$, 
  where it becomes the following statement about 
  constant vector fields $\vec v_1, \vec v_2, N\vec v_1 + \vec v_2$
  on a two-dimensional\footnote{There is a very dumb case, where the
    $2$-torus action generated by $A_1,A_2$ is ineffective, in which
    case the the polyhedron is one- or zero-dimensional. The triangle
    commutes very easily in that case and we will assume we aren't in it.}
  convex polyhedron $P$ (possibly noncompact).

  \vskip\parskip
  \noindent
  \begin{minipage}{0.55\linewidth}
    \hskip 1em
      The limit of the $\vec v_1$ flow of a point $p$ in the relative
  interior of a face $F\leq P$ is obtained by flowing in 
  direction $\vec v_1$ projected to $F$; this continues one face
  to another until one hits a boundary wall $P' \perp \vec v_1$ 
  (assuming one doesn't flow off to infinity).
  The left limit corresponds to starting at $p\in P$ and flowing
  in direction $\vec v_1$ to a boundary wall $P' \perp \vec v_1$
  (or to infinity), then flowing similarly within $P'$ in direction $\vec v_2$
  to some face $P''$ (or to infinity).
  (This $P''$ will necessarily be a vertex.)
  In the right-hand limit one instead flows in the direction
  $N\vec v_1 + \vec v_2$, which for $N$ large enough also ends up at $P''$
  (or to infinity). Since $P$ is only two-dimensional, the only interesting
  case is when $P' \perp \vec v_1$ is an edge and $P''$ is a vertex of it,
  as pictured on the right.
\end{minipage}
\quad
  \begin{minipage}{0.35\linewidth}
    \includegraphics[width=2.5in]{prop13.eps}
  \end{minipage}
    \vskip\parskip

  Now consider the additional fibers, over $S\setminus S^\circ$. 
  The closure of an intersection is potentially smaller than the
  intersection of the closures, so a priori it is possible that the
  convolution 
  $\overline{attr_{A_2}(F_2\subseteq F_1)} 
  \,\star\, \overline{attr_{A_1}(F_1\subseteq E)}$
  contain extra components beyond $\overline{attr_{A_{1+}}(F_2\subseteq E)}$.
  To convolve the stable envelopes (as described in
  equation \eqref{eq:stableDef}) we first intersect the sets of triples 
  $(f^2,f^1,e) \in F_2 \times F_1 \times E$
  $$ 
  \left\{  (f^2,f^1)\ :\ \exists (f^1_i)_{i\in \NN},\
    \begin{matrix}
      \lim_{i\to \infty} \lim_{z\to 0} A_2(z)\cdot f^1_i = f^2 \\
      \lim_{i\to\infty} f^1_i = f^1 
    \end{matrix}
  \right\} 
  \ \cap\ 
  \left\{  (f^1,e)\ :\ \exists (e_i)_{i\in \NN},\
    \begin{matrix}
      \lim_{i\to \infty} \lim_{z\to 0} A_1(z)\cdot e_i = f^1 \\
      \lim_{i\to\infty} e_i = e 
    \end{matrix}
  \right\} 
  $$
  and project out $f^1$, obtaining
  $$ 
  \left\{   (f^2,e)\ :\ 
    \exists (f^1_i, e_i)_{i\in \NN},\
    \lim_{i\to \infty} \lim_{z\to 0} A_2(z)\cdot f^1_i = f^2,
    \lim_{i\to\infty} f^1_i = 
    \lim_{i\to \infty} \lim_{z\to 0} A_1(z)\cdot e_i,
    \lim_{i\to\infty} e_i = e
  \right\} 
  $$
  which is tautologically contained in {\bf AK: no it {\em contains}}
  \newlength{\limlen}   \settowidth{\limlen}{$\lim$}
  \newcommand\blim{  \makebox[\limlen]{} }
  $$ 
  \left\{   (f^2,e)\ :\ 
    \exists (f^1_i, e_i)_{i\in \NN},\
    \lim_{i\to \infty} \lim_{z\to 0} A_2(z)\cdot f^1_i = f^2, \blim
    f^1_i = \blim \lim_{z\to 0} A_1(z)\cdot e_i, 
    \lim_{i\to\infty} e_i = e
  \right\}.
  $$
  Now we can eliminate the $(f_i^1)$, and use equation \eqref{eq:stable} again,
  to get
  $$ 
  \left\{   (f^2,e)\ :\ 
    \exists (e_i)_{i\in \NN},\
    \lim_{i\to \infty} \lim_{z\to 0} A_2(z)\cdot \lim_{z\to 0} A_1(z)\cdot e_i =
    \lim_{i\to \infty} \lim_{z\to 0} A_{1+}(z)\cdot e_i = f^2, \
    \lim_{i\to\infty} e_i = e
  \right\} 
  $$
  which is exactly $env(A_{1+})$.
\end{proof}

} % end junk

\subsection{The flag type case}\label{ssec:grothspringer}
% \tikzset{hole/.style={rounded corners=2mm,inner sep=0.5mm,minimum size=4mm}}
The deformation-to-affine of Nakajima quiver varieties is easy to describe;
this is the variation of complex moment $\theta_\CC$ from \S\ref{sec:quiver}.
\junk{
  At each of the gauge vertices $v$, we ordinarily ask that the moment map
  $ \sum_{w \to v} \phi_{wv} \circ \phi_{vw} 
  - \sum_{v \to w} \phi_{vw} \circ \phi_{wv}$ vanish; in the affine deformation,
  we instead ask the moment map to equal a scalar $\varepsilon_v$ 
  (with independent scalars for different gauge vertices). 
}
In the flag type case % from \S \ref{sec:quiver},
this deformation of
quiver varieties is exactly the Grothendieck--Springer deformation of
$T^*(GL_n/P)$ to $GL_n/L$ ($L$ a Levi subgroup of $P$),
generalizing the $P=B$ example from \S\ref{ssec:envelopes}.
Specifically, if we put a scalar $\varepsilon_i$ at the $n_{i}$ gauge vertex of
$
\begin{matrix}  % just to vertically center
  \begin{tikzpicture}[script math mode,baseline=0]
    \node[gauged] at (1,0) (v1) {n_d}; 
    \coordinate (v2) at (2,0); 
    \coordinate (v3) at (3,0);
    \node[gauged] at (4,0) (v4) {n_2}; 
    \node[gauged] at (5,0) (v5) {n_1}; 
    \draw (v1) -- (v2);
    \draw[dotted] (v2) -- (v3);
    \draw (v3) -- (v4) -- (v5);
    \node[framed] at (1,1) (w1) {n}; \draw (v1) -- (w1);
  \end{tikzpicture}
\end{matrix}
$
then our deformation is to the space
$$ \{(X \in End(\CC^n), 
0 = V_0 \leq V_1 \leq V_2 \leq \ldots \leq V_d \leq V_{d+1} =\CC^n)\ :\
(X-\varepsilon_i {\bf 1})V_{i+1} \leq V_{i} \} $$
where $\dim V_i = n_i$ and $\varepsilon_{d+1} := 0$.
In particular $X$ has spectrum 
$\varepsilon_1^{(n_1)} \varepsilon_2^{(n_2-n_1)}
\ldots
\varepsilon_{d}^{(n_d-n_{d-1})}
0^{(n-n_d)} 
$
and if these $d+1$ values are distinct, then each subspace $V_i$
can be recovered as a sum of eigenspaces of $X$. Consequently, the
projection to $End(\CC^n)$ is an affine embedding, with image the space
$\calO_\varepsilon$ of semisimple operators $X$ with this specified spectrum.

The $T$-fixed points on $\calO_\varepsilon$ are just the diagonal matrices
$D \in \calO_\varepsilon$, of which there are 
$n\choose n_1,\ n_2-n_1,\ldots,n-n_d$.
With respect to the action of a regular dominant coweight, 
the attracting set $\attr(D)$ is $\{D$ plus block strictly upper
triangular matrices$\}$, with blocks of size $n_1,n_2-n_1,\ldots,n-n_d$.  
In particular each $\attr(D)$ is closed (as predicted 
by lemma~\ref{lem:attrclosed}).

\subsection{$d=1$}
In the $d=1$ case from figure \ref{fig:d1}, we give here full detail on how
to derive (the cohomology, not $K$-theory, version of) theorem \ref{thm:main}
from quiver variety geometry. The exposition is easier in this case,
in that the $X_{2d}$ quiver variety for $d=1$ is itself of flag type.

\subsubsection{Reviewing the representation theory.}
\label{sssec:reps}
The $T^n$-equivariant cohomology of the quiver scheme 
$\begin{matrix}
  \tikz[script math mode,baseline=0]{\dOne{*}{*} 
    \node[framed] at (1,1) (w1) {n}; \draw (v1) -- (w1);}
\end{matrix}
$ is a representation of $\Uq(\lie{sl}_3[z])$ \cite{Varagnolo}.
The $T^n$ fixed points form
$
\begin{pmatrix}
  \tikz[script math mode,baseline=0]{\dOne{*}{*} 
    \node[framed] at (1,1) (w1) {1}; \draw (v1) -- (w1);}
\end{pmatrix}^n
$
whose cohomology bears the representation $\tensor_{i=1}^n \CC^3(x_i)$,
a tensor product of evaluation representations.
As with $H_{T^n}^*$ of any space, when we tensor with $frac\ H^*_{T^n}$
(passing to the generic point in the space of equivariant parameters)
we can identify the equivariant cohomology of the whole with
that of the fixed points.
In particular, the equivariant parameters from $H^*_T(pt)$ enter as
the evaluation parameters, and in turn are the spectral parameters in
the (rational) $R$-matrices.
\junk{
  Then we tensor two of those together
  (though our interest is only in two of the weight spaces, coming from
  $\tikz[script math mode,baseline=0]{\dOne{j}{0} 
    \node[framed] at (1,1) (w1) {n}; \draw (v1) -- (w1);}
  $
  and
  $\tikz[script math mode,baseline=0]{\dOne{n}{j} 
    \node[framed] at (1,1) (w1) {n}; \draw (v1) -- (w1);}
  $
  ).
}

In \cite[\S 14.1]{Nakaj-quiv3} it is conjectured that this representation
is a tensor product even at nongeneric values. In the next section
we implicitly assume the truth of this for {\em motivating} the geometry
we pursue, but we don't actually make use of it in the eventual
cohomological calculations.

The subtle step to come is in constructing the map 
$\CC^3(x_i) \tensor \CC^3(y_i) \to Alt^2 \CC^3(z_i)$,
as that tensor product is irreducible for generic values of $(x_i),(y_i)$.
Indeed, this map will only exist when
\begin{equation}\label{eq:specialtorus}
  y_i=x_i+\hbar, \qquad z_i = x_i+\hbar/2
\end{equation}
(which is the rational analogue of lemma~\ref{lem:factorR} at $d=1$).
The value of $z_i$ is not so important because the $\hbar$-shift can be absorbed in the geometric construction.
The equality $y_i=x_i+\hbar$ however, is meaningful:
specialization of equivariant parameters, $H^*_T \onto H^*_S$, is equivalent
to passage to a subtorus $S\into T$. This $S$ will leave invariant more
possible equations on our quiver varieties and motivates one of our choices
in the next section.

This type of geometric fusion was first developed in \cite{artic64}, though only in type $A$ --
which in the present context encompasses the $d=1$ construction.

\subsubsection{Discovering the quiver geometry} 
\label{sssec:discovering}

Being of flag type, the quiver variety 
$\tikz[script math mode,baseline=0]{\dOne{n+j}{j} 
  \node[framed] at (1,1) (w1) {2n}; \draw (v1) -- (w1);}
$
has a well-known description in Springer co\"ordinates as
$$
\left\{ \left(X \in End(\CC^{2n}), \CC^{2n} \geq V^{n+j} \geq W^j \right)\ :\
  \CC^{2n} \xrightarrow{X} V \xrightarrow{X} W \xrightarrow{X} 0 \right\} $$
where $X$ is the composite $\fbox{$2n$} \to \ovalbox{$n+j$} \into \fbox{$2n$}$,
and $V,W$ are the images of $\ovalbox{$n+j$} \into \fbox{$2n$}$ and
$\ovalbox{$j$} \into \ovalbox{$n+j$} \into \fbox{$2n$}$. The containments
$X(V) \leq W$ etc. follow from the moment map conditions defining the
quiver variety.

We have two representation-theoretic
maps to study. The first, and simpler, one joins the two
representations $\tensor_{i=1}^n \CC^3(a_i)$ and $\tensor_{i=1}^n \CC^3(b_i)$
into their tensor product and on the quiver scheme level, is achieved
using a stable envelope we compute now.

Denote by $Z$ the circle $z\mapsto \diag
\begin{pmatrix} \overbrace{1,\ldots,1}^n, \overbrace{z,\ldots,z}^n\end{pmatrix}
 \in GL(2n)$
acting on the quiver variety above. A triple $(X,V,W)$ is $Z$-fixed if
$X$ is block diagonal (with two blocks each of size $n$) and $V,W$
are graded subspaces w.r.t. the splitting $\CC^{2n} = \CC^n \oplus \CC^n$.
The fixed point set of this $Z$-action is not connected;
rather,  the components are distinguished by
the statistics
$\dim(V\cap (\CC^n\oplus 0)), \dim(W\cap (\CC^n\oplus 0))$.

Our interest is in the component
$\tikz[script math mode,baseline=0]{\dOne{n}{j} 
  \node[framed] at (1,1) (w1) {n}; \draw (v1) -- (w1);}
\times
\tikz[script math mode,baseline=0]{\dOne{j}{0} 
  \node[framed] at (1,1) (w1) {n}; \draw (v1) -- (w1);}
$
where those statistics are maximized. %\junk{possibly we want these swapped}
Its $Z$-attracting set is
$$ \left\{ \left(X = \begin{pmatrix} A & B \\ 0 & C \end{pmatrix},
    V \geq (\CC^n \oplus 0) \geq W\right) \right\} $$
which is {\em already closed} and hence is a component $L_1$
of a stable envelope, $env(Z)$. In the Springer co\"ordinates of the two spaces,
$L_1 \subseteq env(Z)$ is the following Lagrangian coreduction:
\begin{gather*}
L_1 := \left\{ \left(
    (A,V',D,W'),
    \left(X,V,W\right)\right)
  \ :\
  X = \begin{pmatrix} A & * \\ 0 & D \end{pmatrix}, 
  \begin{array}[c]{ccc} V &=& \CC^n \oplus V'\\
    W &=& W'\oplus 0  \end{array}
  \right\} 
\\
\begin{matrix}
  \swarrow &&\searrow \\ \\
  \{(A \in End(\CC^n), V'^{\ j})\} \times  \{(D \in End(\CC^n), W'^{\ j})\}
  &\qquad\qquad&
  \left\{ \left(X \in End(\CC^{2n}), V^{n+j}, W^j \right) \right\}
\end{matrix}
\end{gather*}
The image of the Southeast map, an inclusion, is the attracting locus
determined above. The Southwest map fails to be an inclusion because
of the unspecified ``$*$''. 
\junk{make sure the $A,D$ are matched with the right subspaces. This'll
  likely require being clear on row vs. column vectors
}
Since $L_1$ is conical, lemma \ref{lem:reflex} applies,
saying $L_1$ is the conormal variety to the graph of
\begin{eqnarray*}
  Gr(j;\, \CC^n)^2 &\into& Fl(j,n+j;\, \CC^{2n}) \\
  (V',W') &\mapsto& (W'\oplus 0,\ \CC^n \oplus V')
\end{eqnarray*}
but we won't make use of this description.

Our second map of representations is
$\Tensor_{i=1}^n \CC^3(x_i) \tensor \Tensor_{i=1}^n \CC^3(x_i+\hbar)
\to \Tensor_{i=1}^n (Alt^2 \CC^3)\left(x_i+\frac{\hbar}{2}\right)$,
which we want to induce (in a certain weight) using a Lagrangian from
$\tikz[script math mode,baseline=0]
{\dOne{n+j}{j} \node[framed] at (1,1) (w1) {2n}; \draw (v1) -- (w1);}
$
to
$
\tikz[script math mode,baseline=0]
{\dOne{j}{j} \node[framed] at (2,1) (w2) {n}; \draw (v2) -- (w2);}
$.
The dimension of the first quiver variety is $2n^2$
more than that of the second (selected to have the same weight, i.e.
$2n\omega_1 - (n+j)\alpha_1 - j\alpha_2 = n\omega_2 - j\alpha_1 -j\alpha_2$).
Hence if we hope for our Lagrangian correspondence to be a reduction, 
we should impose $n^2$ many conditions on that
first quiver variety and submerse the resulting submanifold onto the second
quiver variety. In the next few paragraphs we motivate what will
be our choice of the $n^2$ many equations. 

Imposing equations on a scheme is the same as imposing them on the 
affinization, which in this case is
%\junk{the space $\calO_\varepsilon$ of matrices 
%  with spectrum $\varepsilon_1^{(j)} \varepsilon_2^{(n)} 0^{(n-j)}$.}
a space of nilpotent matrices $X$.
The torus acting on this space is the $2n$-torus with weights
$(x_i),(y_i)$ on $\CC^n\oplus \CC^n$, plus $\hbar$ from the circle that
acts by dilation on the cotangent fibers, or acts by scaling on these matrices.
Hence the weights of the $(2n)^2$ matrix entries are
$$
\begin{bmatrix}
  &\hbar + x_i - x_j & \hbar+x_i - y_j &\\
  &\hbar+y_i-x_j & \hbar+y_i - y_j &
\end{bmatrix} \qquad i,j \in [n]
$$
Recall from equation \eqref{eq:specialtorus} that to intertwine
our $\Uq(\lie{sl}_3[z])$-representations, one needs to specialize these
evaluation parameters as $y_i = x_i+\hbar$. That would make the weights now
$$
\begin{bmatrix}
  &\hbar + x_i - x_j & x_i - x_j  &\\
  &2\hbar+x_i-x_j & \hbar+x_i - x_j &
\end{bmatrix} \qquad i,j \in [n]
$$
with the intriguing consequence that the $n$ inhomogeneous linear equations
$m_{i,n+i} = 1$ become invariant.
\junk{In particular, since these weights are distinct,
  the only possible torus-invariant linear equations 
  would involve setting one of these variables to zero.

  Specialization of equivariant parameters $H^*_T \onto H^*_S$ corresponds
  to passage to a subtorus $S\leq T$.
  That allows for more possibilities for our $n^2$ equations --
  they need only be invariant under the
  subtorus dual to the $\hbar + x_i - y_i = 0$ specialization
  (as was determined in equation \eqref{eq:specialtorus}).
  These are exactly the weights of the $n$ entries along the diagonal 
  of the Northeast quadrant. }

We actually want to impose $n^2$ conditions, not just $n$,
but those $n$ conditions just found suggest the following: define the
submanifold $L_2$ by the matrix statement that the entire Northeast
quadrant be the identity matrix.

To make this $L_2$ into a reduction, we need to submerse it onto the 
$
\tikz[script math mode,baseline=0]
{\dOne{j}{j} \node[framed] at (2,1) (w2) {n}; \draw (v2) -- (w2);}
$
quiver variety, with fibers the null foliation of the presymplectic form
on $L_2$ restricted from the ambient quiver variety. 
In fact we are in especial luck (that will run out after $d=1$): 
$L_2$ is a {\em level set of a moment map,}
for the action of the abelian unipotent group
$U := \left\{  \begin{bmatrix}    I_n & 0 \\ * & I_n  \end{bmatrix}\right\}$.
%normalized by the Levi
%$L := \left\{  \begin{bmatrix}    A & 0 \\ 0 & D  \end{bmatrix}\right\}$.
By the Marsden--Weinstein theorem, the leaves of the null foliation on the 
moment map level set are exactly the orbits of $U$. 
To mod out by $U$, we want to use $U$-invariant functions of
$\begin{bmatrix} A & I_n \\ C & D \end{bmatrix}$, and we use $A+D$.
Redefine $L_2$ as the {\em Hamiltonian} reduction
$$ 
L_2 := \left\{ \left((X,V,W),(Y,V'') \right)  \ :\ 
  \begin{array}[c]{c}
    X = \begin{pmatrix} A & Id \\ * & D \end{pmatrix}, \   Y = A+D\\
    \begin{array}{rcl}
      V \cap (0\oplus \CC^n) &=& 0 \oplus V'' \\
      W / (0\oplus \CC^n) &=& (W' + \CC^n)/(0\oplus \CC^n)
    \end{array}
  \end{array}
\right\}   
$$
$$
\begin{matrix}
  \swarrow &&\searrow \\
  \left\{ \left(X \in End(\CC^{2n}), V^{n+j}, W^j \right) \right\}
  &\qquad\qquad&
  \{(Y \in End(\CC^n), V''^{\ j})\} 
\end{matrix}
$$
\junk{
  $$
  L_2 := \left\{ \left( 
      \begin{bmatrix} A & I_n \\ C & D \end{bmatrix},\, A+D \right)\ :\ 
    \begin{array}{ccl}
      A(A-\varepsilon_1)(A-\varepsilon_2) \ = \ 
      D(D-\varepsilon_1)(D-\varepsilon_2) &=& 0\\
      (A+D-\varepsilon_1)(A+D-(\varepsilon_1+\varepsilon_2))(A+D-\varepsilon_2)&=&0 
    \end{array}   \right\}  \qquad
  $$
}
This group $U$ is normalized by the Levi subgroup $GL_n \oplus GL_n\leq GL_{2n}$,
and $U$'s moment map ``Northeast quadrant''
is therefore $(GL_n \oplus GL_n)$-equivariant.
However, the value $I_n$ is only invariant under the diagonal subgroup
$(GL_n)_\Delta$, which descends to give the action we expect on the
target of the Hamiltonian reduction.

One very important difference between $L_1$ and $L_2$ is that $L_1$ is conical.
The reduction at $0$ would keep the full $GL_n\oplus GL_n$ action.
\junk{
and in fact one can interpret $L_1$ this way, albeit for the
transpose subgroup to $U$. % This seems to be a $d=1$ accident.
}

\junk{We could just as well use $-A-D$ here, which would fix the sign
  issue that comes in \S \ref{sssec:dOneverify}. The hard part is identifying
  the Hamiltonian reduction with the quiver variety, {\em symplectically}.
  To see a simplified version of the issue, 
  the conormal bundle to $M_\Delta$ in $M\times M$ is
  $\{(m,X,Y) : X+Y = 0\}$, which is isotopic to 
  $\{(m,X,Y) : X-Y = 0\}$ but the latter isn't Lagrangian.

  It might be easier to do if we frame the SE corner, and gauge it later:

$$
T^* Gr(n+j; \CC^{2n+k}) \quad\iso\quad
\tikz[script math mode,baseline=0]
{
  \node[gauged] at (1,0) (v1) {n+j}; 
  \node[framed] at (2,0) (v2) {k}; \draw (v1) -- (v2);
  \node[framed] at (1,1) (w2) {2n}; \draw (v1) -- (w2) ;
}
\quad //_{Id} U \quad\iso \quad
\tikz[script math mode,baseline=0]
{
  \node[gauged] at (1,0) (v1) {j}; 
  \node[framed] at (2,0) (v2) {k}; \draw (v1) -- (v2);
  \node[framed] at (2,1) (w2) {n}; \draw (v2) -- (w2);
}
\quad\iso\quad T^* Gr(j;\CC^k) \times T^* M_{kn}
$$
}

\begin{prop}\label{prop:composite}
  The two Lagrangian correspondences $L_1,L_2$
  $$
  \tikz[script math mode,baseline=0]{\dOne{j}{0} 
    \node[framed] at (1,1) (w1) {n}; \draw (v1) -- (w1);}
  \quad\times\quad
  \tikz[script math mode,baseline=0]{\dOne{n}{j} 
    \node[framed] at (1,1) (w1) {n}; \draw (v1) -- (w1);}
  \quad \xrightarrow{L_1}
  \phantom{   \prod\limits_{i=1}^{n} }
  \tikz[script math mode,baseline=0]{\dOne{n+j}{j} 
    \node[framed] at (1,1) (w1) {2n}; \draw (v1) -- (w1);} 
  \quad  \xrightarrow{L_2} % \xrightarrow{///_{I_n} U}
  \quad
  \tikz[script math mode,baseline=0]
  {
    \node[gauged] at (1,0) (v1) {j}; 
    \node[gauged] at (2,0) (v2) {j}; \draw (v1) -- (v2);
    \node[framed] at (2,1) (w2) {n}; \draw (v2) -- (w2);
  }
  $$
  can be composed. Under the identification of first and third spaces
  with $T^* Gr(j,n)^2$ and $T^* Gr(j,n)$, 
  the composite is the transpose $Cgraph(\Delta)^T$ of 
  the conormal bundle of the graph of the diagonal inclusion.
\end{prop}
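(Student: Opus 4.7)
My plan is to establish both composability and the identification by direct set-theoretic computation, and then check the symplectic content separately.

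\textbf{Set-theoretic intersection.} First I would intersect $L_1 \times T^*Gr(j,n)$ with $T^*Gr(j,n)^2 \times L_2$ inside the product $T^*Gr(j,n)^2 \times T^*Gr(n+j,2n) \times T^*Gr(j,n)$. The $L_1$ condition forces the lower-left $n\times n$ block of $X$ to vanish and forces $V = \CC^n \oplus V'$, $W = W' \oplus 0$; the $L_2$ condition forces the upper-right block of $X$ to equal $I_n$. On the subspace side, $L_2$'s equation $V\cap(0\oplus\CC^n) = 0\oplus V''$ combined with $V = \CC^n \oplus V'$ gives $V' = V''$, and the analogous equation for $W$ gives $W' = V''$, so all three Grassmannian points coincide. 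The intersection is therefore parametrized by $(V', A, D)$ with $X = \bigl(\begin{smallmatrix} A & I_n \\ 0 & D\end{smallmatrix}\bigr)$ and $Y = A+D$.

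\textbf{Composability.} Since $L_1$ pins down the lower-left block and $L_2$ pins down the upper-right block, the two conditions involve complementary linear data and intersect transversely. For properness of $\pi_{13}$, observe that once the first and third factors are specified (yielding $V' = V'' = W'$ together with $A$ and $D$), the whole intermediate datum $(X,V,W)$ is uniquely recovered by the block formulae above, so $\pi_{13}$ is in fact a closed embedding on the intersection.

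\textbf{Identification with $Cgraph(\Delta)^T$.} Projecting to the first and third factors, the image lies over the diagonal $\{(V', V', V'')\,:\,V'=V''\}\subseteq Gr(j,n)^3$, and on cotangent fibers it is cut out by the affine relation $Y = A+D$. Under the Springer identification of the quiver variety with $T^*Gr(j,n)$, and under the identification $\mu_U^{-1}(I_n)/U \iso T^*Gr(j,n)$ supplied by the Hamiltonian reduction, the Springer operators $A$, $D$, and $Y$ play the role of the three cotangent vectors $\xi_1,\xi_2,\eta$. Up to signs, the relation $Y = A+D$ is then the defining equation $\eta + \xi_1 + \xi_2 = 0$ of the conormal bundle of $graph(\Delta)$ (transposed to put the $(\xi_1,\xi_2)$-factors first), which by lemma \ref{lem:reflex} characterizes it: the composite is conical (scaling the cotangent fibers of the source scales $A,D$ and hence $Y$), irreducible (parametrized by $(V', A, D)$), and Lagrangian (of the correct dimension, with involutive equations).

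\textbf{The main obstacle} will be the bookkeeping of symplectic signs --- in particular, that the Hamiltonian reduction at the non-zero level $I_n$ is compatible with the cotangent symplectic structure of $T^*Gr(j,n)$ in such a way that $Y = A+D$ really corresponds to $\eta = -\xi_1-\xi_2$ rather than some other sign. One pragmatic way to handle this is equivariant localization: restrict both $L_2 \star L_1$ and $Cgraph(\Delta)^T$ to each $\hat T$-fixed point of $T^*Gr(j,n)^2 \times T^*Gr(j,n)$ and compare the resulting rational functions; since the $T$-fixed points on each cotangent bundle are just those of the underlying Grassmannian, this reduces to a finite per-fixed-point check.
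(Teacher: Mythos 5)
Your proposal is correct and follows essentially the same route as the paper: compute the set-theoretic intersection explicitly, get transversality from the fact that $L_1$ and $L_2$ constrain complementary blocks of $X$, get properness from the unique recoverability of the intermediate point, and identify the result via lemma \ref{lem:reflex}. The "main obstacle" you flag is actually moot — since lemma \ref{lem:reflex} characterizes a conormal variety intrinsically (conical, irreducible, Lagrangian, with prescribed base), the composite is identified as $Cgraph(\Delta)^T$ without ever matching the equation $Y=A+D$ against $\eta=-\xi_1-\xi_2$, which is precisely how the paper sidesteps the sign bookkeeping, so your localization workaround is unnecessary.
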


\begin{proof}
  We check the transversality (condition \#1 of composability) by first
  projecting $L_1,L_2$ into the middle quiver variety, and checking that
  even the images of these inclusions already are transverse.
  This is essentially because the equations imposed are on disjoint
  variables (the SW and NE quadrants). Properness is easier: once
  $X,Y$ are specified, the conditions on $V,W,V'$ are closed
  conditions inside a product of Grassmannians.

  The set-theoretic composition is easy to compute:  
  $$
  L_1 \star L_2
  = \left\{ ((A,V',D',W'), (Y,V''))\ :\ Y = A+D,\ V' = W' = V'' \right\} 
  $$
  This is again conical (which is surprising, since $L_2$ isn't conical),
  and lemma \ref{lem:reflex} again lets us identify this as the
  conormal bundle to $\left\{ ((V',W'), V'')\ :\ V' = W' = V'' \right\}$
  i.e. the transpose of the graph of the diagonal inclusion.
\end{proof}

In particular, if we can follow stable classes along $L_1\star L_2$
we can use this to compute products in cohomology, as follows.

Let $\beta$ be the class of the transpose of the conormal bundle to the graph
$\{(A,A,A)\}$ of the diagonal inclusion $Gr(j,n) \into Gr(j,n) \times Gr(j,n)$,
so its $\beta$-transform goes from 
$H^*_{T\times \CC^\times}(T^*Gr(j,n))^{\tensor 2}$ to $H^*_{T\times \CC^\times}(T^*Gr(j,n))$
(both suitably localized).
Then according to \S \ref{sssec:divzero},
$$ \frac{ \Upsilon_\beta(\alpha_1 \tensor \alpha_2) }{e(T^*Gr(j,n))}
= \frac{ \alpha_1 }{e(T^*Gr(j,n))}\ \frac{ \alpha_2 }{e(T^*Gr(j,n))} 
$$

\junk{

At $d=1$ all the quiver varieties involved are of flag type
(see figure \ref{fig:d1}). 

For simplicity we work at generic deformations $\varepsilon$.
\rem[gray]{AK will add the description that holds at arbitrary $\varepsilon$}
These two considerations taken
together let us identify our deformed quiver varieties with
isospectral sets of diagonalizable matrices.

The Lagrangian combining $\tensor_{i=1}^n (\CC^3,x_i)$
and $\tensor_{i=1}^n (\CC^3,y_i)$ to get 
$(\tensor_{i=1}^n (\CC^3,x_i)) \tensor (\tensor_{i=1}^n (\CC^3,y_i))$
is the stable envelope
$$ L_1 = \left\{ \left( (A,D) \in End(\CC^n)^2,
    \begin{bmatrix}      A & B \\ 0 & D    \end{bmatrix} \right)\ :\
  A(A-\varepsilon_1)(A-\varepsilon_2) = 
  D(D-\varepsilon_1)(D-\varepsilon_2) = 0 \right\}. $$
}

\subsubsection{Following the stable classes}\label{sssec:following}
On our intermediate quiver variety
$\tikz[script math mode,baseline=0]
{
  \node[gauged] at (1,0) (v1) {n+j}; 
  \node[gauged] at (2,0) (v2) {k}; \draw (v1) -- (v2);
  \node[framed] at (1,1) (w2) {2n}; \draw (v1) -- (w2) ;}
$
we have two commuting one-parameter subgroups inside 
the ``flavor group'' $GL(2n)$ that acts at the framed \fbox{$2n$} vertex. 
One is $z\mapsto \diag(z^1,z^2,\ldots,z^n,\, z^1,z^2,\ldots,z^n)$ which we will
call $(\rhocek_n)_\Delta$. The other is $z\mapsto \diag(1,\ldots,1,z,\ldots,z)$ 
which we have been calling $Z$. This gives us the left square below 
(ignore the right square for now) of stable envelopes,
and we call these envelopes $env()$ depending on the group used:
$$
\begin{matrix}
  &\prod\limits_{i=1}^{2n}
  \tikz[script math mode,baseline=0]{\dOne{*}{*} 
    \node[framed] at (1,1) (w1) {1}; \draw (v1) -- (w1);}
  &
  \xrightarrow{env(Z)}
  &
  \prod\limits_{i=1}^{n}
  \tikz[script math mode,baseline=0]{\dOne{*}{*} 
    \node[framed] at (1,1) (w1) {2}; \draw (v1) -- (w1);} 
  & 
  \xrightarrow{///_{1^n} (U_2)^n}
  &
  \prod\limits_{i=1}^{n}
  \tikz[script math mode,baseline=0]
  {
    \node[gauged] at (1,0) (v1) {*}; 
    \node[gauged] at (2,0) (v2) {*}; \draw (v1) -- (v2);
    \node[framed] at (2,1) (w2) {1}; \draw (v2) -- (w2);
  }
  \\[1em]
  &  env(\rhocek_\Delta) \downarrow \phantom{  env(\rhocek_\Delta)}
  && 
  \phantom{  env(\rhocek_\Delta)} \downarrow env(\rhocek_\Delta) 
  && 
  \phantom{  env(\rhocek_\Delta)} \downarrow env(\rhocek_\Delta) 
  \\[.5em]
  &
  \prod\limits_{i=1}^{2}
  \tikz[script math mode,baseline=0]{\dOne{*}{*} 
    \node[framed] at (1,1) (w1) {n}; \draw (v1) -- (w1);}
  &
  \xrightarrow{env(Z)}
  &
  \phantom{   \prod\limits_{i=1}^{n} }
  \tikz[script math mode,baseline=0]{\dOne{*}{*} 
    \node[framed] at (1,1) (w1) {2n}; \draw (v1) -- (w1);} 
  & 
  \xrightarrow{///_{Id} U}
  &
  \tikz[script math mode,baseline=0]
  {
    \node[gauged] at (1,0) (v1) {*}; 
    \node[gauged] at (2,0) (v2) {*}; \draw (v1) -- (v2);
    \node[framed] at (2,1) (w2) {n}; \draw (v2) -- (w2);
  }
\end{matrix}
$$

Note that this left square does {\em not} commute; rather, proposition
\ref{prop:stable} gives us ways to complete either the down-then-right
or right-then-down paths in that square to {\em different} stable
envelopes from Northwest to Southeast.
Apply proposition \ref{prop:stable} to down-then-right giving the circle
$$ \diag(z,z^2,\ldots,z^n,z,z^2,\ldots,z^n)\cdot
  \diag(1,\ldots,1,z,\ldots,z)^N
$$ 
whose exponents are increasing 
(for $N \gg 0$, in particular, whenever $N\geq n$).
Apply proposition \ref{prop:stable} to right-then-down giving the circle
$$\diag(z,z^2,\ldots,z^n,z,z^2,\ldots,z^n)^N\cdot
  \diag(1,\ldots,1,z,\ldots,z)
$$ 
whose exponents are ordered in the riffle shuffle permutation of $2n$ cards.
To express the first stable basis in the second requires multiplying
$n\choose 2$ $R$-matrices, as in \cite[\S 4.1]{MO-qg}.

Consider the top of the right hand square, made of
$ 
 \tikz[script math mode,baseline=0]{\dOne{*}{*} 
    \node[framed] at (1,1) (w1) {2}; \draw (v1) -- (w1);} 
\xrightarrow{///_1 U_2}
  \tikz[script math mode,baseline=1]
  {
    \node[gauged] at (1,0) (v1) {*}; 
    \node[gauged] at (2,0) (v2) {*}; \draw (v1) -- (v2);
    \node[framed] at (2,1) (w2) {1}; \draw (v2) -- (w2);
  }
$
reductions. As explained in \S\ref{sssec:reps}, this is modeling the
quotient map $(\CC^3)^{\tensor 2} \onto Alt^2 \CC^3$, losing $Sym^2 \CC^3$.
\junk{
  In detail, the moment map values of the points
  $
  \tikz[script math mode,baseline=0]{\dOne{0}{0} 
    \node[framed] at (1,1) (w1) {2}; \draw (v1) -- (w1);} ,\
  \tikz[script math mode,baseline=0]{\dOne{2}{0} 
    \node[framed] at (1,1) (w1) {2}; \draw (v1) -- (w1);} ,\
  \tikz[script math mode,baseline=0]{\dOne{2}{2} 
    \node[framed] at (1,1) (w1) {2}; \draw (v1) -- (w1);} 
  $
  are $\left[{0\atop 0}{0\atop 0}\right],
  \left[{\epsilon_2\atop 0}{0\atop \epsilon_2}\right],
  \left[{\epsilon_1\atop 0}{0\atop \epsilon_1}\right]$ respectively,
  so imposing that the NE corner be $1$ gives an empty intersection.
}
The Hamiltonian reductions of the points
$
\tikz[script math mode,baseline=0]{\dOne{0}{0} 
  \node[framed] at (1,1) (w1) {2}; \draw (v1) -- (w1);} ,\
\tikz[script math mode,baseline=0]{\dOne{2}{0} 
  \node[framed] at (1,1) (w1) {2}; \draw (v1) -- (w1);} ,\
\tikz[script math mode,baseline=0]{\dOne{2}{2} 
  \node[framed] at (1,1) (w1) {2}; \draw (v1) -- (w1);} 
$
are empty, whereas the reductions of the 
$
 \tikz[script math mode,baseline=0]{\dOne{1}{0} 
    \node[framed] at (1,1) (w1) {2}; \draw (v1) -- (w1);} ,\ 
 \tikz[script math mode,baseline=0]{\dOne{1}{1} 
    \node[framed] at (1,1) (w1) {2}; \draw (v1) -- (w1);} ,\ 
 \tikz[script math mode,baseline=0]{\dOne{2}{1} 
    \node[framed] at (1,1) (w1) {2}; \draw (v1) -- (w1);} 
$
spaces, each a $T^* \PP^1$, are points.

A curious thing happens in the right-hand square: the down-then-right composite
is not ordinarily defined insofar as the intersection is not transverse,
but if one ignores this, the square commutes set-theoretically.
This agreement is close enough that one can exploit it:

\begin{prop}\label{prop:rightsquare}
  The induced map on localized equivariant cohomology, 
  going down then right in the right-hand square above, is
  $ \prod_{1\leq i<j \leq n} \frac{x_j - x_i }{ \hbar+x_i - x_{j} } $
  times the induced map from going right then down. In particular,
  it takes stable classes to either zero or to stable classes times that factor.
\end{prop}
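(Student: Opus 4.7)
The plan is to reduce the statement to an explicit equivariant-localization computation, treating the non-transverse down-then-right composite via excess intersection. First I would verify that both compositions induce the same map on the set of $T$-fixed points in the Northwest corner, sending each configuration $((A_i,D_i),V'_i,W'_i)_{i=1}^n$ to the matching data $(\sum_i A_i\oplus \sum_i D_i + \mathrm{off\text{-}diag},\, V''=\bigoplus V'_i,\, W'=\bigoplus W'_i)$ in the Southeast. Thus the set-theoretic image agrees, and the only possible discrepancy between the two $\beta$-transforms is a scalar coming from an excess Euler-class contribution at each fixed point.

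Next I would reduce the general $n$ case to $n=2$ by induction, exploiting the multiplicativity of the claimed product $\prod_{i<j}\frac{x_j-x_i}{\hbar+x_i-x_j}$. The key tool is proposition \ref{prop:stable}: the stable envelope for $\check\rho_\Delta$ can be written as an iterated composition of envelopes for one-parameter subgroups corresponding to pairwise transpositions of adjacent $x_i$'s, each contributing a single factor. Pulling off one pair at a time through the Hamiltonian reduction reduces the discrepancy for $n$ pairs to a product of $\binom{n}{2}$ independent $n=2$ contributions.

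For the $n=2$ base case, I would compute directly at the fixed points of the diagonal torus. The big quiver variety is traversed by a single transposition, whose contribution to $env(\check\rho_\Delta)$ is a single entry of the cohomological $R$-matrix \eqref{eq:RsingleH}, acting on a tensor product of two evaluation representations. The Hamiltonian reduction $///_{Id} U$ forces the NE quadrant to be the identity, which amounts to specializing the two spectral parameters at the precise ratio dictated by \eqref{eq:specialtorus}. At that specialization, the only $R$-matrix entries that survive after the reduction are the diagonal ones, and their ratio to the corresponding entries in the right-then-down composition yields exactly $\frac{x_2-x_1}{\hbar+x_1-x_2}$.

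The main obstacle is making rigorous sense of the down-then-right composite in Weinstein's ``category'' $\calC$, since the intersection $(L_1\star L_2)\times\cdots$ along the $env(\check\rho_\Delta)$ side is not transverse. I would handle this by introducing an auxiliary deformation of the Hamiltonian reduction, shifting the moment-map level value $Id$ to a nearby regular value so that both orderings become strictly composable, performing the computation in the deformed setting, and taking the limit as the deformation parameter tends to zero. The limit exists because the Hamiltonian quotient depends continuously on the level, and its value is independent of the chosen deformation, producing exactly the stated factor. The final assertion about stable classes follows at once: a class supported on a single fixed point is sent to either zero or to another such class, and the scalar correction is uniform across fixed points, so it extends by linearity to all of localized equivariant cohomology.
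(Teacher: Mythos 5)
Your overall shape — set-theoretic agreement of the two composites plus a scalar correction computed by Euler classes — matches the paper's strategy, but the two concrete mechanisms you propose for producing that scalar both have problems.

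First, the deformation of the moment-map level does not repair the failure of transversality. The level $I_n$ is already a regular value of the moment map for $U$ (the ``NE quadrant'' map is a submersion), so there is no nearby ``more regular'' value to move to. The actual failure is that the image of the down-going stable envelope already forces the strict lower triangle of the NE quadrant $B$ to vanish, so ${n\choose 2}$ of the equations ``$B=I_n$'' are identically satisfied there. If you perturb the level to a generic matrix with nonzero entries below the diagonal, the intersection with that image becomes \emph{empty}, not transverse, and the limit of the (zero) composite as the perturbation shrinks does not recover the correct answer; if you keep the perturbation upper triangular, the excess persists. The paper instead modifies the \emph{correspondence}: it deletes the ${n\choose 2}$ redundant equations (passing to a larger, non-Lagrangian $L_3'$ whose class is $[L_3]$ divided by the weights $x_j-x_{n+i}+\hbar$, $i<j$), which after the specialization $x_{n+i}=x_i+\hbar$ accounts for the numerators $x_j-x_i$ in the stated factor.

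Second, your proposal never addresses properness, and this is where the other half of the factor comes from. Even after transversality is restored, the projection of the intersection to the product of the outer spaces is improper: the strict upper triangles of $C$ and of $A,D$ individually cannot be reconstructed from the endpoint data. The paper fixes this by imposing ${n\choose 2}$ further equations (forcing $A$ diagonal), multiplying the class by the weights $\hbar+x_i-x_j$, and then verifies properness of the corrected composite by explicitly solving for $A$, $D$ and (via the cubic relation satisfied by $X$) for $C$. Without some such step the pushforward defining the convolution is simply not defined, and the denominators $\hbar+x_i-x_j$ in the proposition are missing. Finally, your reduction to $n=2$ by ``pulling off one pair at a time through the Hamiltonian reduction'' is not justified: proposition~\ref{prop:stable} factors the stable envelope, but it does not tell you that the interaction of each pairwise factor with the single global reduction $///_{Id}U$ contributes independently; the paper avoids this by doing the general-$n$ computation directly.
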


\junk{Adjust those factors to match the ones from the end of the proof.
  The numerator weights should be
  $x_{n+i} - x_j + \hbar \mapsto x_i - x_j + 2\hbar$.  Does this fix
  \S \ref{sssec:dOneverify}?}
 
\begin{proof}
  \newcommand\ve\varepsilon
  We start with right-then-down, where the relations are composable.
  In all cases we work with the affine deformations, which as explained
  in \S\ref{ssec:grothspringer} are varieties $\calO_{\ve}$ of matrices 
  with fixed spectrum.
  The right-moving, then down-moving, relations are
  \begin{eqnarray*}
    \left\{ \left( \left(
    \begin{bmatrix} a_1 & 1 \\ c_1 & d_1 \end{bmatrix}, \ldots
        \begin{bmatrix} a_n & 1 \\ c_n & d_n \end{bmatrix} \right),
      (a_1+d_1, \ldots, a_n+d_n) 
    \right) \right\}
  &\ : \ &
  \prod\limits_{i=1}^{n}
  \tikz[script math mode,baseline=0]{\dOne{*}{*} 
    \node[framed] at (1,1) (w1) {2}; \draw (v1) -- (w1);} 
  \ \to \
   \prod\limits_{i=1}^{n}
  \tikz[script math mode,baseline=0]
  {
    \node[gauged] at (1,0) (v1) {*}; 
    \node[gauged] at (2,0) (v2) {*}; \draw (v1) -- (v2);
    \node[framed] at (2,1) (w2) {1}; \draw (v2) -- (w2);
  } 
                   \\
  \left\{ \left( (e_1,\ldots,e_n), 
      \begin{bmatrix} e_1 & * & * \\ 0 & \ddots & * \\ 0 & 0 & e_n \end{bmatrix}
      \right) \right\}
  &\ : \ &
  \prod\limits_{i=1}^{n}
  \tikz[script math mode,baseline=0]
  {
    \node[gauged] at (1,0) (v1) {*}; 
    \node[gauged] at (2,0) (v2) {*}; \draw (v1) -- (v2);
    \node[framed] at (2,1) (w2) {1}; \draw (v2) -- (w2);
  }
  \ \to \
  \tikz[script math mode,baseline=0]
  {
    \node[gauged] at (1,0) (v1) {*}; 
    \node[gauged] at (2,0) (v2) {*}; \draw (v1) -- (v2);
    \node[framed] at (2,1) (w2) {n}; \draw (v2) -- (w2);
  }
  \end{eqnarray*}
  
  This intersection is transverse, matching up each $a_i+d_i$ with its $e_i$.
  Also, the projection to the image is an isomorphism, resulting in the
  Lagrangian
  $$
  \left\{ \left( \left(
        \begin{bmatrix} a_1 & 1 \\ c_1 & d_1 \end{bmatrix}, \ldots,
        \begin{bmatrix} a_n & 1 \\ c_n & d_n \end{bmatrix} \right),
      \begin{bmatrix} a_1+d_1&*&* \\ 0 & \ddots & * \\ 0&0&a_n+d_n \end{bmatrix}
    \right) \right\}
  \ : \
  \prod\limits_{i=1}^{n}
  \tikz[script math mode,baseline=0]{\dOne{*}{*} 
    \node[framed] at (1,1) (w1) {2}; \draw (v1) -- (w1);} 
  \ \to \
  \tikz[script math mode,baseline=0]
  {
    \node[gauged] at (1,0) (v1) {*}; 
    \node[gauged] at (2,0) (v2) {*}; \draw (v1) -- (v2);
    \node[framed] at (2,1) (w2) {n}; \draw (v2) -- (w2);
  }
  $$  
  Now we write out the down-then-right Lagrangian relations 
  we will try to compose, with partial success:
  \begin{eqnarray*}
    L_3 = \left\{ \left( \left(
    \begin{bmatrix} a_i & b_i \\ c_i & d_i \end{bmatrix}
%, \ldots, \begin{bmatrix} a_n & b_n \\ c_n & d_n \end{bmatrix} 
   \right)_{i=1\ldots n},
      \left[
        \begin{array}{ccc|ccc}
          a_1 & * & * & b_1 & * & * \\
          0 & \ddots & * & 0 & \ddots & * \\
          0 & 0 & a_n & 0 & 0 & b_n \\
          \hline
          c_1 & * & * & d_1 & * & * \\
          0 & \ddots & * & 0 & \ddots & * \\
          0 & 0 & c_n & 0 & 0 & d_n \\
        \end{array} \right] \right) \right\}
    &: &
  \prod\limits_{i=1}^{n}
  \tikz[script math mode,baseline=0]{\dOne{*}{*} 
    \node[framed] at (1,1) (w1) {2}; \draw (v1) -- (w1);} 
  \ \to \
  \tikz[script math mode,baseline=0]{\dOne{*}{*} 
    \node[framed] at (1,1) (w1) {2n}; \draw (v1) -- (w1);} 
              \\
  L_2 = \left\{ \left(
    \begin{bmatrix} A & I \\ C & D \end{bmatrix}, A+D \right) \right\}
    &: &
  \tikz[script math mode,baseline=0]{\dOne{*}{*} 
    \node[framed] at (1,1) (w1) {2n}; \draw (v1) -- (w1);} 
  \ \to \
  \tikz[script math mode,baseline=0]
  {
    \node[gauged] at (1,0) (v1) {*}; 
    \node[gauged] at (2,0) (v2) {*}; \draw (v1) -- (v2);
    \node[framed] at (2,1) (w2) {n}; \draw (v2) -- (w2);
  }
  \end{eqnarray*}
  The intersection is nontransverse, as the 
  $B := \begin{bmatrix} b_1 &*&* \\ 0 & \ddots & * \\ 0&0& b_n \end{bmatrix}$
  matrix already has $0$s in the lower triangle, so asking $B = I$ 
  imposes $n\choose 2$ equations that are already satisfied.
  We deal with this as follows: remove those conditions on $B$.
  The result is a larger (non-Lagrangian) correspondence $L_3' \supset L_3$,
  with class $[L_3'] = [L_3] / \prod \{ x_j - x_{n+i} + \hbar\ :\ i<j \}$
  as it has been divided by the weights of those equations.

  Now the intersection {\em is} transverse, but its projection to 
  $  \Bigg( \prod\limits_{i=1}^{n}
  \tikz[script math mode,baseline=0]{\dOne{*}{*} 
    \node[framed] at (1,1) (w1) {2}; \draw (v1) -- (w1);}  \Bigg)
  \times
  \tikz[script math mode,baseline=0]
  {    \node[gauged] at (1,0) (v1) {*}; 
    \node[gauged] at (2,0) (v2) {*}; \draw (v1) -- (v2);
    \node[framed] at (2,1) (w2) {n}; \draw (v2) -- (w2);  }
  $
  is improper; specifically, the strict upper triangle of $C$ cannot
  be reconstructed from the first and third factors, nor can the
  strict upper triangles of $A,D$ be determined individually.
  We will fix these two problems (soon) by asking that 
  $A := \begin{bmatrix} a_1 &*&* \\ 0 & \ddots & * \\ 0&0& a_n \end{bmatrix}$
  be not just triangular but diagonal. The result is a smaller correspondence
  $L_3'' \subset L_3'$, with class
  $[L_3''] = [L_3'] \prod \{ x_i - x_{j} + \hbar\ :\ i<j \}$,
  having been multiplied by the weights of those newly imposed equations.

  The intersection of the correspondences $L_3'',L_2$ is now
  $$
  \left\{ \left( \left(
        \begin{bmatrix} a_i & b_i \\ c_i & d_i \end{bmatrix}
      \right)_{i=1\ldots n}, \quad
      \left[
        \begin{array}{ccc|ccc}
          a_1 & 0 & 0 & b_1 & * & * \\
          0 & \ddots & 0 & * & \ddots & * \\
          0 & 0 & a_n & * & * & b_n \\
          \hline
          c_1 & * & * & d_1 & * & * \\
          0 & \ddots & * & 0 & \ddots & * \\
          0 & 0 & c_n & 0 & 0 & d_n \\
        \end{array} \right] = 
      \begin{bmatrix} A & I \\ C & D \end{bmatrix}, \quad A+D \right) \right\}
  $$
  To show the properness of its projection to the first times third factors,
  it suffices to be able to uniquely reconstruct $A,C,D$ from
  $\left(\begin{bmatrix} a_i & b_i \\ c_i & d_i \end{bmatrix}\right)_{i=1\ldots n},
  A+D$. Since $A$ is diagonal we can get it from $(a_i)_{i=1\ldots n}$.
  Knowing $A$ and $A+D$ we get $D$. Obtaining $C$ is trickier: since
  our $2n\times 2n$ matrix satisfies a cubic equation
  $(X-\ve_1)(X-\ve_2)(X-\ve_3) = 0$, we get a linear relation between
  the NE quadrants of
  $$ I_{2n} =  \begin{bmatrix}    * & 0 \\ * & *  \end{bmatrix}\qquad
  X =  \begin{bmatrix}    * & I_n \\ * & *  \end{bmatrix}\qquad
  X^2 =  \begin{bmatrix}    * & A+D \\ * & *  \end{bmatrix}\qquad
  X^3 =  \begin{bmatrix}    * & A^2+AD+D^2+C \\ * & *  \end{bmatrix} $$
  (coefficient $1$ on $X^3$) with which to determine $C$ from $A$ and $D$.

  In order to bring in $L_2$, we have to specialize as explained in
  \S\ref{sssec:discovering} to $x_{n+i} = x_i + \hbar$.

  Now the $L_3'' \star L_2$ convolution is defined and gives the
  same result as the right-then-down convolution, so, they induce the
  same map on localized equivariant cohomology. 
  Consequently, the map on localized equivariant cohomology given by
  the original $L_3$, then $L_2$, is off by the factors above
  (specialized to $x_{n+i} = x_i + \hbar$).
\end{proof}

\junk{
Although we won't use it for anything else in this paper,
we pause to demonstrate that such set-theoretically commuting
squares are easy to manufacture.

\begin{prop}\label{prop:commuteset}
  Let $M,N$ be complex symplectic manifolds (with deformations to
  affine) carrying a symplectic action of a circle $S\iso \CC^\times$,
  and $L \subseteq (-M)\times N$ a reduction.
  Let $L'$ be a component of $L^S$, and $M'\times N'$ the component
  of $M^S\times N^S$ containing it. If the down-then-right composite 
  in the square
  $$
  \begin{array}{rcl}
    M' &\xrightarrow{L'}& N' \\ 
    env(M')\downarrow && \downarrow env(N') \\ 
    M &\xrightarrow{L}& N
  \end{array}
  $$
  is defined by the usual intersection and projection (ignoring
  nontransversality and improperness issues), then the two composites agree.
\end{prop}

\rem[gray]{need to make a real decision on how to denote stable envelopes.
  Likely, follow \cite{MO-qg}}

\begin{proof}
  We are used to thinking of $\lim_{z\to 0} z\cdot$ as a partially-defined
  function $X \to X^S$, where using the notation ``$\lim_{z\to 0} z\cdot m$''
  already implies that the limit exists (i.e. the function is defined).
  If $X' \subseteq X^S$, then we restrict the partial function
  $X \to X^S$ even further to a partial function $X \to X'$.
  Since $L$ is a reduction, we can likewise think of it as 
  a partially-defined function $M\to N$, and use ``$L(m)$'' to imply
  that $L$ is defined at $m\in M$.

  With these, and working at the deformation to affine where the
  envelope is just the attracting set, we can compute 
  \begin{eqnarray*}
    env(M') \star L &=&
   \left\{ (m' \in M', n\in N) \ :\ \exists m\in M, 
                       \lim_{z\to 0} z\cdot m = m', L(m) = n \right\} \\
    L' \star env(N') &=& 
   \left\{ (m' \in M', n\in N) \ :\ L(m') = \lim_{z\to 0} z\cdot n \right\} 
                         \qquad \text{one might say both equal }n'
  \end{eqnarray*}
  We now claim that the former is contained in the latter,
  using the $S$-invariance and closedness of $L$:
  $$ L(m')   \quad=\quad   L\left(\lim_{z\to 0} z\cdot m\right) 
  \quad= \quad   \lim_{z\to 0} L(z\cdot m)   \quad=\quad 
  \lim_{z\to 0} z\cdot L(m)  \quad=\quad  \lim_{z\to 0} z\cdot  n 
  $$
  The second equality does not hold on its own, in that bad $m$ might
  have $\lim_{z\to 0} L(z\cdot m)$ even when $\lim_{z\to 0} z\cdot m$ 
  doesn't exist. Luckily we know the latter does exist (and is $m'$). 
  
  Where it is defined, $\lim_{z\to 0} z\cdot:\, N\to N'$ is an affine bundle
  (by \cite{BB}), so $L'\star env(N')$ is a bundle over $M'$, 
  hence irreducible. Since both composites are Lagrangian
  \cite[?]{KashiwaraSchapira} 
  and the larger one has just been shown to be irreducible, 
  the two composites are equal.
\end{proof}
endjunk}

We now give a schematic way of indexing the stable basis elements,
on each of the six spaces in our double square diagram. 
There is a complication in that the intermediate space (bottom middle)
has two relevant stable bases, related by a composition of $n\choose 2$
$R$-matrices. In each of the below, a stable basis element corresponds
to labeling \NE, \NW\ using $0,1,2$, and \horizbar\ 
using $\{0,1\},\{0,2\},\{1,2\}$. In these pictures, each stable envelope amounts
to concatenating strings, whereas each of the reductions amounts to
flattening an \tikz[baseline=0]{\NE[i]\NW[j]} to an
\underline{$\{i,j\}$} if $i\neq j$.
$$
\begin{matrix}
  \begin{matrix}
    &&&\NE&\NW \\ &&\iddots &&& \ddots \\ &\NE&&&&&\NW \\
    \NE&&&&&&&\NW
  \end{matrix}
  &\quad\to\quad&
  \tikz[baseline=0]{\NE\NW} \  \tikz[baseline=0]{\NE\NW}\ \ldots\ \tikz[baseline=0]{\NE\NW} 
  &\quad\to\quad&
  \horizbar\  
  \horizbar\ \ldots\ \horizbar 
  \\
  \downarrow &&   \downarrow &&   \downarrow 
  \\[.5em]
  \tikz[scale=3]{\NE} \quad \tikz[scale=3]{\NW}
  &\quad\to\quad&
  \left( \raisebox{-0.5cm}{$\tikz[scale=3,baseline=0]{\NE\NW} \xrightarrow{\check R^{n\choose 2}}
\tikz[baseline=0]{\NE\NW\begin{scope}[xshift=0.5cm]{\NE\NW}\end{scope}
%\begin{scope}[xshift=1cm]{\NE\NW}\end{scope}
}
\ldots  \tikz[baseline=0]{\NE\NW}
$}  \right)
  &\quad\to\quad&
\tikz[baseline=0] { \draw[thick] (0,0) -- (2,0); }
\end{matrix}
$$

We have nearly arrived at puzzles. 
\begin{enumerate}
\item We start with strings $\lambda$ of content $1^k 2^{n-k}$, $\mu$
  of content $0^k 1^{n-k}$, giving a point in the NW corner. 
  Going down to the SW corner,
  we get the stable basis element $MO_\lambda \tensor MO_\mu$ 
  on $(T^* Gr(k,n))^2$, which is of course where we want to start
  our product calculation (modulo the $Gr$ vs. $T^* Gr$ issue dealt
  with in \S\ref{sssec:divzero}).
\item Going East from the SW corner to the bottom middle,
  we get the stable basis element
  $MO_{\lambda\mu}$ on $T^* Fl(k,n+k;\ \CC^{2n})$.
  So far nothing has happened on the puzzle side; we are just interpreting
  the NW-then-NE labels on the puzzle as a single stable class.
\item To rewrite our stable basis element $MO_{\lambda\mu}$ in terms of
  the stable basis coming from right-then-down in the left square,
  we use ``the'' reduced word for the riffle shuffle permutation.
  (It is unique up to commuting moves, no braiding necessary.)
  These correspond to filling in the $n\choose 2$ rhombi
  in the puzzle, from top to bottom.
  We now have a linear combination of stable classes in
  the riffle-shuffle-permuted stable basis. We deal with them one by one.
  Let $\eta$ be the labels on   \tikz[baseline=0]{\NE\NW} \  \tikz[baseline=0]{\NE\NW}\ \ldots\ \tikz[baseline=0]{\NE\NW}.
\item Before attempting the reduction (bottom edge of the right square)
  of the $\eta$ class, we pull this string apart into $\eta_1,\ldots,\eta_n$
  labeling the individual sawteeth, and so seeing our $\eta$ class as 
  coming from a stable class from the top middle of the double square.
\item As we traverse the top of the right square, we either get a point
  (if $i\neq j$ in each \tikz[baseline=0]{\NE[i]\NW[j]}) or the empty set.
  These correspond to being able or unable to complete each 
  \tikz[baseline=0]{\NE[i]\NW[j]} to a triangular puzzle piece
  (with \underline{$\{i,j\}$} on the bottom).
\item Going down, we glue these \underline{$\{i,j\}$} into a string, 
  taking the point to a class $MO_\nu$ on $T^*Gr(k,n)$.
\item The actual coefficient of $MO_\nu$ in the convolution we care about,
  down-right-right from the Northwest corner of the double square,
  comes from the $n\choose 2$ $R$-matrix entries times the overall factor from
  proposition \ref{prop:rightsquare}, summed over all the puzzles.
\end{enumerate}

\subsubsection{$d=1$, computing fugacities}\label{sssec:dOneverify}
It remains to define the fugacities of the puzzle rhombus
in position $(i,j)$, $i<j$, which in the planar dual cross the lines bearing 
rapidity $x_{n+i}=x_i+\hbar$ and $x_j$. These fugacities are
the entries of the rational $R$-matrix $\check R \in End(\CC^3 \tensor \CC^3)$
(which was already given in \eqref{eq:RsingleH}):
\rem[gray]{not quite ``standard'' $R$-matrix because we use trivial polarization,
so signs are unusual}
$$ \vec v_a \tensor \vec v_b \mapsto 
\begin{cases}
  \frac{x_{n+i}-x_j}{\hbar+x_j - x_{n+i}}   \vec v_b \tensor \vec v_a +
  \frac{\hbar}{\hbar+x_j-x_{n+i}}   \vec v_a \tensor \vec v_b = 
  \frac{\hbar+x_i-x_j}{x_j - x_i}   \vec v_b \tensor \vec v_a +
  \frac{\hbar}{x_j - x_i}   \vec v_a \tensor \vec v_b
  &\text{if }a\neq b \\[.5em]
  \vec v_a \tensor \vec v_a &\text{if }a = b 
\end{cases}
$$
but we will scale the rhombus at position $(i,j)$ by 
$\frac{x_j-x_i}{\hbar+x_i-x_j}$ in order to account for the 
overall factor from proposition~\ref{prop:rightsquare}.
We then obtain fugacities
\[
\vec v_a \tensor \vec v_b \mapsto 
\begin{cases}
  \vec v_b \tensor \vec v_a +
  \frac{\hbar}{\hbar+x_i - x_j}   \vec v_a \tensor \vec v_b
  &\text{if }a\neq b \\[.8em]
  \frac{x_j-x_i}{\hbar+x_i-x_j}   \vec v_a \tensor \vec v_a &\text{if }a = b 
\end{cases}
\]
This is the desired result:
compare with table~\ref{eq:tab1}.
In particular, the rhombi with all labels $0$
have fugacity $1$,
thus explaining the unusual choice of normalization of the $R$-matrix in \eqref{eq:normR}.
Equivalently, the factor of proposition\ref{prop:rightsquare} matches the lower right entry
of table~\ref{eq:tab1}, which is due to the fact that highest weight $R$-matrix entries are always $1$
in the geometric normalization.

With these, one can compute the action of the convolution 
$T^* Gr(k,n)^2 \xrightarrow{Cgraph(\Delta)^T} T^* Gr(k,n)$
on stable basis elements, as a sum over puzzles.
Using \S\ref{sssec:divzero}, we use that to compute the structure
constants of multiplication of SSM classes, finally reproducing
theorem \ref{thm:main} (in cohomology, for $d=1$). 

\subsection{$d=2$.}\label{ssec:d2}
The intermediate quiver varieties for $d=2,3,4$ are not cotangent bundles,
so to set the stage for them we revisit the $d=1$ case
% in \S \ref{sssec:discovering}
from the quiver variety point of view.  Consider the reduction $L_2$
as going between these not-quite-Nakajima quiver varieties:
$$
\tikz[script math mode,baseline=0]{\dOne{n+j}{j} 
\node[framed] at (0.5,1) (w1) {n}; \draw (v1) -- (w1);
\node[framed] at (1.5,1) (w2) {n}; \draw (v1) -- (w2);
}
\qquad \stackrel{L_2}{\longrightarrow}\qquad
\tikz[script math mode,baseline=0]{\dOne{j}{j} \node[framed] at (2,1) (w2) {n}; \draw (v2) -- (w2);}
$$
The image of $L_2$ embedded in the left quiver variety consists of
those representations such that the composite map
$
\tikz[script math mode,baseline=0]{
\node[gauged] at (  1,0) (v1) {n+j}; 
\node[framed] at (0.5,1) (w1) {n}; \draw [%line width = 1.5pt, 
-Latex] (w1) -- (v1);
\node[framed] at (1.5,1) (w2) {n}; \draw [%line width = 1.5pt, 
-Latex] (v1) -- (w2);
}
$
is the identity. This assumption produces a splitting of the $n+j$ vertex:
the $n$-dimensional image of the map in, $\oplus$, the $j$-dimensional
kernel of the map out.
The splitting isn't really equitable -- 
the $n$-dimensional image, despite sitting at a gauged vertex, 
has coordinates inherited from (either of) the two gauged \fbox{$n$}.

In this picture, the map from $L_2$ to the right-hand quiver variety
does the following to a representation in the left-hand quiver variety:
delete the two \fbox{$n$} vertices, and carry the newly found $n$-plane 
from the $n+j$ gauged vertex Northeast to become a framed \fbox{$n$}
attached to the old $j$ vertex. Written with spaces instead of
their dimensions, this is
$$
\tikz[script math mode,baseline=0]{\dOneWide{\CC^n \oplus V}{W} 
\node[framed] at (0.5,1) (w1) {\CC^n}; \draw (v1) -- (w1);
\node[framed] at (1.5,1) (w2) {\CC^n}; \draw (v1) -- (w2);
}
\qquad \mapsto \qquad
\tikz[script math mode,baseline=0]{\dOne{V}{W}
  \node[framed] at (2,1) (w2) {\CC^n}; \draw (v2) -- (w2);}
$$
where each of the four morphisms in the second diagram (recall that
each edge carries a morphism in each direction) is inferred from one
of the horizontal morphisms in the first diagram.

Now we are ready to describe the Lagrangian correspondence
required for $d=2$, in
$$
\tikz[script math mode,baseline=0]{\dtwo{n+k}{\ss n+\atop\ss k+j}{n+j}{k} \node[framed] at (1,1) (w1) {n}; \node[framed] at (3,1) (w3) {n}; \draw (v1) -- (w1) (v3) -- (w3);}
\qquad\longrightarrow\qquad
\tikz[script math mode,baseline=0]{\dtwo{k}{k+j}{j}{k} \node[framed] at (3,-1) (w4) {n}; \draw (v4) -- (w4);}
\ .
$$
Once again, we seek a reduction losing $2n^2$ dimensions, 
so want to impose $n^2$ equations. The obvious gauge-invariant equations 
involve paths from one framed vertex to another. In addition,
we want to split an \fbox{$n$} off of each of the three gauge vertices between
the two frame vertices.

Accordingly, we ask that the composite map from the left
\fbox{$n$} to the right \fbox{$n$} be the identity.
This induces splittings of each of the $n+k,n+k+j,n+j$ gauge vertices into 
\fbox{$n$} plus a complement; throw all these $n$-planes away except the
$n$-plane in the middle space (connected to the $k$ below it), which we
rip out and make into the \fbox{$n$} in the Southeast.
$$
\tikz[script math mode,baseline=0]
{\dtwoWide{\CC^n\oplus V}{\CC^n\oplus W}{\CC^n\oplus Y}{Z} 
  \node[framed] at (0,1) (w1) {\CC^n}; 
  \node[framed] at (4,1) (w3) {\CC^n}; 
  \draw (v1) -- (w1) (v3) -- (w3);}
\qquad\longrightarrow\qquad
\tikz[script math mode,baseline=0]
{\dtwo{V}{W}{Y}{Z} \node[framed] at (3,-1) (w4) {\CC^n}; \draw (v4) -- (w4);}
\ .
$$

It is easy to derive an analogue of proposition~\ref{prop:rightsquare}
by following stable classes: we find that the length four path from the left
\fbox{$n$} to the right \fbox{$n$}
already has $0$s in the lower
triangle, resulting in a factor of $\prod_{i<j} (x_j-x_i)$;
and the projection is improper,
which can be fixed by zeroing out the upper triangle
of e.g. the length 2 path from the left \fbox{$n$} to itself,
resulting in a factor of  $\prod_{i<j} (\hbar+x_i-x_j)^{-1}$.
In the end we obtain the same table \ref{eq:tab1} of fugacities.

In contrast with the $d=1$ case, the equations imposed do {\em not}
form the moment map for a subgroup of the $(GL_n)^2$ that acts at the
frame vertices. (The equations are not linear in the two $GL_n$ moment maps, 
which are the endomorphisms of the \fbox{$n$} vertices.)
%\rem[gray]{What flows do these functions generate, actually?}

\subsection{$d=3,4$.}
The principal new feature at $d=3,4$ is that the drop in dimension
from the intermediate quiver variety to the final one is not $2n^2$
but is $4n^2,6n^2$ respectively. Consequently, our reductions should result
from imposing $2$ or $3$ matrix equations respectively instead of the
$1$ we imposed at each of $d=1,2$. We present here without proof
how one can ``guess'' the form of the equations at $d=3,4$.

In equation \eqref{eq:specialtorus} for $d=1$, we learned that some of
the equations should have weight $\hbar+x_i-y_i$. Since half the
steps along a path contribute $\hbar$, there the path had two steps.
By lemma \ref{lem:factorR}, at $d=1,2,3,4$ the difference in
spectral parameters is $h_d/3 = 1,2,4,10$ times $\hbar$.
This retrodicts the $d=2$ path having four steps.
For all $d$ we therefore expect an {\em inhomogeneous}\/ equation of the form
``some path of length $h_d/3$, from one framed vertex to another,
equals the identity matrix''.

The $\hbar$-weights of {\em all} the equations can be predicted from the
factors in the numerator of the SE entry of the table of fugacities (see
\S\ref{ssec:d123} and \S\ref{ssec:d4}): we add $h_d/3$ to
their $\hbar$ coefficients
to get the $\hbar$-weights, or half-lengths, of the paths:
\[
\begin{array}{c|cccc}
d&  \quad1\quad & \quad2\quad &  \quad3\quad & \quad4\quad \\ 
\hline\vphantom{\big|}
h_d/3 & 1 & 2 & 4 & 10 \\
\text{numerator} &
x & x & x(x-3\hbar) & x(x-4\hbar)(x-9\hbar) \\
\text{$\hbar$-weights} &  1 & 2 & 4,1 & 10,6,1 \\
\text{path lengths} &  2 & 4 & 8,2 & 20,12,2
\end{array}
\]
The reasoning is that each path creates a redundant set of equations
which is that its lower triangle is zero; necessarily, such paths have the same
endpoints as the one of the inhomogeneous equation
(trivially at $d=1,2$ because there's only
one path; and trivially at $d=3,4$ because there's only one possible endpoint);
the resulting weights of a path
of length $\ell$ are of the form $\frac{\ell}{2}\hbar
+x_j-y_i$, $i<j$, which upon substitution $y_i=x_i+\frac{h_d}{3}\hbar$,
$x=x_j-x_i$, gives $x-(\frac{h_d}{3}-\frac{\ell}{2})\hbar$.

Studying the algebras of paths (modulo the moment map equations) leads to
a unique solution to the constraints above for $d=3,4$ which we present now.
(A related result is \cite[Theorem 3.4.1]{Etingof}, applied to
the $X_{2d}$ quiver plus the framed vertex.)

The $d=3$ reduction
$$
\tikz[script math mode,baseline=0,xscale=1.2]
{\dthree{2n+l}{\ss 2n+\atop\ss l+k}
{\ss 2n+l\atop\ss +k+j}{\ss n+\atop\ss l+j}{l}{n+k} 
\node[framed] at ( .5,1) (w1) {n}; \draw (v1) -- (w1);
\node[framed] at (1.5,1) (w2) {n}; \draw (v1) -- (w2);}
\qquad\longrightarrow \qquad
\tikz[script math mode,baseline=0,xscale=1.2]
{\dthree{l}{l+k}{\ss l+k\atop\ss +j}{l+j}{l}{k} 
\node[framed] at (5,1) (w5) {n}; \draw (v5) -- (w5);}
$$
is obtained from the following two matrix equations, both described using
paths from the left \fbox{$n$} to the right \fbox{$n$}:
the length $8$ path going to and from the $n+k$ vertex gives the identity,
%(as explained in \S\ref{ssec:d2}, lemma \ref{lem:factorR} predicts
%the length of that path),
and the direct path of length $2$ gives $0$:
\newcommand\dthreebare[1]{
  \begin{tikzpicture}[xscale=1.5,yscale=1.2,baseline=0,script math mode]
    \node[gauged] (v1) at (1,0) {};
    \node[gauged] (v2) at (2,0) {};
    \node[gauged] (v3) at (3,0) {};
    \node[gauged] (v4) at (3,-1) {};
    \node[gauged] (v5) at (4,0) {};
    \node[gauged] (v6) at (5,0) {};
    \node[framed] (w1) at (0.5,1) {};
    \node[framed] (w1') at (1.5,1) {};
    #1
  \end{tikzpicture}
}
\begin{align*}
  \dthreebare{
  \draw[-Latex] (w1) to node{1} (v1);
  \begin{scope}[-Latex,bend right]
    \draw (v1) to node{2} (v2);
    \draw (v2) to node{3} (v3);
    \draw (v3) to node{4} (v4);
    \draw (v4) to node{5} (v3);
    \draw (v3) to node{6} (v2);
    \draw (v2) to node{7} (v1);
  \end{scope}
  \draw[Latex-] (w1') to node{8} (v1);
  }
  \qquad&=\qquad1
  \\
  \dthreebare{
  \draw[-Latex] (w1) to node{1} (v1);
  \draw[Latex-] (w1') to node{2} (v1);
  }
  \qquad&=\qquad0
\end{align*}

We already know how to use the length $8$ path to split off an \fbox{$n$} from
each of the vertices it passes through, but that doesn't help with
the $n+l+j$ vertex. So we observe now that the other length $8$ path,
to $n+l+j$ and back, must also give the identity matrix (up to a sign choice).
Proof: by the moment map condition at the trivalent vertex,
the difference of the paths gives (up to another sign choice)
\begin{multline*}
  \dthreebare{
  \draw[-Latex] (w1) to node{1} (v1);
  \begin{scope}[-Latex,bend right]
    \draw (v1) to node{2} (v2);
    \draw (v2) to node{3} (v3);
    \draw (v3) to node{4} (v4);
    \draw (v4) to node{5} (v3);
    \draw (v3) to node{6} (v2);
    \draw (v2) to node{7} (v1);
  \end{scope}
  \draw[Latex-] (w1') to node{8} (v1);
  }
  \qquad-\qquad
    \dthreebare{
    \draw[-Latex] (w1) to node{1} (v1);
    \begin{scope}[-Latex,bend right]
      \draw (v1) to node{2} (v2);
      \draw (v2) to node{3} (v3);
      \draw (v3) to node{4} (v5);
      \draw (v5) to node{5} (v3);
      \draw (v3) to node{6} (v2);
      \draw (v2) to node{7} (v1);
    \end{scope}
    \draw[Latex-] (w1') to node{8} (v1);
    }
  \\[-2mm]
\begin{aligned}
\qquad&=\qquad
    \dthreebare{
    \draw[-Latex] (w1) to node{1} (v1);
    \begin{scope}[-Latex,bend right]
      \draw (v1) to node{2} (v2);
      \draw[bend right=55] (v2) to node{3} (v3);
      \draw[bend left=20] (v3) to node{4} (v2);
      \draw[bend left=20] (v2) to node{5} (v3);
      \draw[bend right=55] (v3) to node{6} (v2);
      \draw (v2) to node{7} (v1);
    \end{scope}
    \draw[Latex-] (w1') to node{8} (v1);
    }
  \\[-2mm]
  \qquad&=\qquad
    \dthreebare{
    \begin{scope}[-Latex]
      \draw (w1) to node{1} (v1);
      \draw[bend right=75] (v1) to node{2} (v2);
      \draw[bend left=40] (v2) to node{3} (v1);
      \draw[bend right=15] (v1) to node{4} (v2);
      \draw[bend right=15] (v2) to node{5} (v1);
      \draw[bend left=40] (v1) to node{6} (v2);
      \draw[bend right=75] (v2) to node{7} (v1);
      \draw[bend left=15] (v1) to node{8} (w1');
    \end{scope}
    }
  \\[-2mm]
  \qquad&=\qquad
    \dthreebare{
    \begin{scope}[-Latex]
      \draw[bend right=80] (w1) to node{1} (v1);
      \draw[bend left=50] (v1) to node{2} (w1);
      \draw[bend right=25] (w1) to node{3} (v1);
      \draw (v1) to node{4} (w1);
      \draw[bend left=25] (w1) to node{5} (v1);
      \draw[bend right=50] (v1) to node{6} (w1);
      \draw[bend left=80] (w1) to node{7} (v1);
      \draw[bend right=50] (v1) to node{8} (w1');
    \end{scope}
    }
    \quad+\quad \cdots
  \\[-2mm]
  \qquad&=\qquad 0
\end{aligned}
\end{multline*}
I.e., since the length $2$ path (as an endomorphism of \fbox{$2n$})
is assumed to have NE quadrant zero, the
same is true of its fourth power, verifying that the two length $8$ paths
(as endomorphisms of \fbox{$2n$}) agree in their NE quadrants,
so {\em both} have the identity matrix there.

It is then a fun linear algebra exercise to show that these give
direct sum decompositions of the $2n+\cdots$ vertices into
$\fbox{$n$} \oplus \fbox{$n$} \oplus \cdots$, and also split an \fbox{$n$}
off each of the $n+l+j,n+k$ vertices. We take the \fbox{$n$} split off the 
$n+l+j$ vertex and attach it as a new \fbox{$n$} to the $l$ vertex.

\junk{In {\tt exfusion.tex} one either imposes one path being $I$ or 
  another path being $0$. These are only equivalent at $\varepsilon=0$,
  so we should spell this out better. AK: Huh, how do we split off
  a $\CC^n$ from a composite that is $0$ rather than $I$?}

The $d=4$ reduction
\begin{align*}
&\tikz[script math mode,baseline=0,xscale=1.25]{\dfour{3n+m}{\ss 4n+\atop\ss m+l}
{\ss 5n+m\atop\ss +l+k}{\ss 6n+m\atop\ss +l+k+j}{\ss 7n+m\atop\ss +l+k+j}
{\ss 5n+m\atop\ss +k+j}{\ss 2n+\atop\ss m+j}{\ss 3n +\atop\ss m+k} 
\node[framed] at ( .5,1) (w1) {n}; \draw (v1) -- (w1);
\node[framed] at (1.5,1) (w2) {n}; \draw (v1) -- (w2);
}
\\
&\qquad\qquad\longrightarrow \qquad
\tikz[script math mode,baseline=0,xscale=1.25]{\dfour{n+m}{\ss n+\atop\ss m+l}
{\ss n+m\atop\ss +l+k}{\ss n+m+\atop\ss l+k+j}{\ss n+m+\atop\ss l+k+j}
{\ss n+m\atop\ss +k+j}{m+j}{m+k} 
 \node[framed] at (1,1) (w1) {n}; \draw (v1) -- (w1);}
\end{align*}
similarly results from the three paths:
\newcommand\dfourbare[1]{
  \begin{tikzpicture}[xscale=1.5,yscale=1.2,baseline=0,script math mode]
    \node[gauged] (v1) at (1,0) {};
    \node[gauged] (v2) at (2,0) {};
    \node[gauged] (v3) at (3,0) {};
    \node[gauged] (v4) at (4,0) {};
    \node[gauged] (v5) at (5,0) {};
    \node[gauged] (v6) at (5,-1) {};
    \node[gauged] (v7) at (6,0) {};
    \node[gauged] (v8) at (7,0) {};
    \node[framed] (w1) at (0.5,1) {};
    \node[framed] (w1') at (1.5,1) {};
    #1
  \end{tikzpicture}
}
\begin{align*}
  \dfourbare{
  \draw[Latex-] (w1') to node{20} (v1);
  \draw[-Latex] (w1) to node{1} (v1);
  \draw[bend right,-Latex] (v1) to node{2} (v2);
  \draw[bend left,Latex-] (v1) to node{19} (v2);
  \draw[bend right,-Latex] (v2) to node{3} (v3);
  \draw[bend left,Latex-] (v2) to node{18} (v3);
  \draw[bend right,-Latex] (v3) to node{4} (v4);
  \draw[bend left,Latex-] (v3) to node{17} (v4);
  \draw[bend right,-Latex] (v4) to node{5} (v5);
  \draw[bend left,Latex-] (v4) to node{16} (v5);
  \draw[bend right,-Latex] (v5) to node{10} (v6);
  \draw[bend left,Latex-] (v5) to node{11} (v6);
  \draw[bend right=55,-Latex] (v5) to node{12} (v7);
  \draw[bend left=55,Latex-] (v5) to node{20} (v7);
  \draw[bend right=55,-Latex] (v7) to node{13} (v8);
  \draw[bend left=55,Latex-] (v7) to node{14} (v8);
  \draw[bend right=20,Latex-] (v5) to node{9} (v7);
  \draw[bend left=20,-Latex] (v5) to node{6} (v7);
  \draw[bend right=20,Latex-] (v7) to node{8} (v8);
  \draw[bend left=20,-Latex] (v7) to node{7} (v8);
  }
  \qquad&=\qquad 1
  \\[-5mm]
  \dfourbare{
  \draw[Latex-] (w1') to node{12} (v1);
  \draw[-Latex] (w1) to node{1} (v1);
  \draw[bend right,-Latex] (v1) to node{2} (v2);
  \draw[bend left,Latex-] (v1) to node{11} (v2);
  \draw[bend right,-Latex] (v2) to node{3} (v3);
  \draw[bend left,Latex-] (v2) to node{10} (v3);
  \draw[bend right,-Latex] (v3) to node{4} (v4);
  \draw[bend left,Latex-] (v3) to node{9} (v4);
  \draw[bend right,-Latex] (v4) to node{5} (v5);
  \draw[bend left,Latex-] (v4) to node{8} (v5);
  \draw[bend right,-Latex] (v5) to node{6} (v6);
  \draw[bend left,Latex-] (v5) to node{7} (v6);
  }
  \qquad&=\qquad 0
  \\[-5mm]
  \dfourbare{
  \draw[Latex-] (w1') to node{2} (v1);
  \draw[-Latex] (w1) to node{1} (v1);
  }
  \qquad&=\qquad 0
\end{align*}

%\end{document}

\vfill\eject

\appendix
\junk{\section{Proof of Lemma~\ref{lem:norm}}\label{app:norm}
In the proof of Thm.~\ref{thm:main}, we obtained the result
\begin{equation}\label{eq:almostmain}
C_\omega \, S^\lambda S^\mu=
\sum_\nu \tikz[scale=1.8,baseline=0.5cm]{\uptri{\lambda}{\nu}{\mu}}
    \ S^\nu 
\end{equation}
In order to conclude the proof, we need to prove $C_\omega=1$, i.e.,
lemma~\ref{lem:norm}. 
We do so by applying \eqref{eq:almostmain} on some small size examples.
\rem[gray]{because of lack of functioriality, need to be extra careful that there aren't extra puzzles at $d=4$. comment here}

\rem[gray]{rewrite! the triangles will be annoying because the normalization of weight vectors in geometric, but the definition of U is RTic -- might need to use YBE or related}
We fix $0\le i<j\le d$ and consider products in $T^* \PP^1$ and $T^* \PP^2$, thinking of them
as special cases of $T^*(\text{$d$-step})$ where dimension jumps only occur at steps $i$ and $j$. 
(In fact the case of $T^* \PP^1$ was also treated in \S\ref{sec:exd1}). 

\def\thescale{1.4}
\def\posa{0.5}\def\posb{0.5}

Start with $T^*\PP^1\cong T^* Gr(1,2)$. Recall that $\omega=ij$, $\alpha=ji$,
\[
C_\omega=
\begin{tikzpicture}[math mode,nodes={edgelabel},x={(-0.577cm,-1cm)},y={(0.577cm,-1cm)},scale=\thescale,baseline=(current  bounding  box.center)]
\draw[thick] (0,0) -- node[pos=\posa] {i} ++(0,1); \draw[thick] (0,0) -- node[pos=\posb] {j} ++(1,0); %\draw[thick,lightgray] (0+1,0) -- node[lightgray] {10} ++(-1,1); 
\draw[thick] (0,1) -- node[pos=\posa] {j} ++(0,1); \draw[thick] (0,1) -- node[pos=\posb] {j} ++(1,0); \draw[thick] (0+1,1) -- node {j} ++(-1,1); 
\draw[thick] (1,0) -- node[pos=\posa] {i} ++(0,1); \draw[thick] (1,0) -- node[pos=\posb] {i} ++(1,0); \draw[thick] (1+1,0) -- node {i} ++(-1,1); 
\end{tikzpicture}
\]

Using $(S^\alpha)^2=\frac{q(1-z_2/z_1)}{1-q^2z_2/z_1}S^\alpha$, with unique puzzle
\[
\begin{tikzpicture}[math mode,nodes={edgelabel},x={(-0.577cm,-1cm)},y={(0.577cm,-1cm)},scale=\thescale]
\draw[thick] (0,0) -- node[pos=\posa] {j} ++(0,1); \draw[thick] (0,0) -- node[pos=\posb] {i} ++(1,0); %\node[blue] at (0+0.5,0+0.5) {4}; 
\draw[thick] (0,1) -- node[pos=\posa] {i} ++(0,1); \draw[thick] (0,1) -- node[pos=\posb] {i} ++(1,0); \draw[thick] (0+1,1) -- node {i} ++(-1,1); 
\draw[thick] (1,0) -- node[pos=\posa] {j} ++(0,1); \draw[thick] (1,0) -- node[pos=\posb] {j} ++(1,0); \draw[thick] (1+1,0) -- node {j} ++(-1,1); 
\end{tikzpicture}
\]
and comparing with \eqref{eq:almostmain} divided by $C_\omega$, we conclude that
\[
%\frac{\rh{j}{i}{j}{i}}{\rh{i}{j}{i}{j}}
{\rh{j}{i}{j}{i}}
\Bigg /
{\rh{i}{j}{i}{j}}
=
\frac{q(1-z_2/z_1)}{1-q^2z_2/z_1}
\]

Now perform the same computation in $T^*\PP^2\cong T^* Gr(1,3)$, with $\omega=ijj$, $\alpha=jji$, with unique puzzle
\[
\begin{tikzpicture}[math mode,nodes={edgelabel},x={(-0.577cm,-1cm)},y={(0.577cm,-1cm)},scale=\thescale]
\draw[thick] (0,0) -- node[pos=\posa] {j} ++(0,1); \draw[thick] (0,0) -- node[pos=\posb] {i} ++(1,0); %\node[blue] at (0+0.5,0+0.5) {0}; 
\draw[thick] (0,1) -- node[pos=\posa] {j} ++(0,1); \draw[thick] (0,1) -- node[pos=\posb] {i} ++(1,0); %\node[blue] at (0+0.5,1+0.5) {0}; 
\draw[thick] (0,2) -- node[pos=\posa] {i} ++(0,1); \draw[thick] (0,2) -- node[pos=\posb] {i} ++(1,0); \draw[thick] (0+1,2) -- node {i} ++(-1,1); 
\draw[thick] (1,0) -- node[pos=\posa] {j} ++(0,1); \draw[thick] (1,0) -- node[pos=\posb] {j} ++(1,0); %\draw[thick] (1+1,0) -- node {j} ++(-1,1); 
\draw[thick] (1,1) -- node[pos=\posa] {j} ++(0,1); \draw[thick] (1,1) -- node[pos=\posb] {j} ++(1,0); \draw[thick] (1+1,1) -- node {j} ++(-1,1); 
\draw[thick] (2,0) -- node[pos=\posa] {j} ++(0,1); \draw[thick] (2,0) -- node[pos=\posb] {j} ++(1,0); \draw[thick] (2+1,0) -- node {j} ++(-1,1); 
\end{tikzpicture}
\]
By direct calculation, $(S^\alpha)^2=\frac{q(1-z_3/z_1)}{1-q^2z_3/z_1}\frac{q(1-z_3/z_2)}{1-q^2z_3/z_2}S^\alpha$; comparing with
\[
C_\omega=
\begin{tikzpicture}[math mode,nodes={edgelabel},x={(-0.577cm,-1cm)},y={(0.577cm,-1cm)},scale=\thescale,baseline=(current  bounding  box.center)]
\draw[thick] (0,0) -- node[pos=\posa] {i} ++(0,1); \draw[thick] (0,0) -- node[pos=\posb] {j} ++(1,0);
\draw[thick] (0,1) -- node[pos=\posa] {j} ++(0,1); \draw[thick] (0,1) -- node[pos=\posb] {j} ++(1,0);
\draw[thick] (0,2) -- node[pos=\posa] {j} ++(0,1); \draw[thick] (0,2) -- node[pos=\posb] {j} ++(1,0); 
\draw[thick] (0+1,2) -- node {j} ++(-1,1); 
\draw[thick] (1,0) -- node[pos=\posa] {i} ++(0,1); \draw[thick] (1,0) -- node[pos=\posb] {j} ++(1,0);
\draw[thick] (1,1) -- node[pos=\posa] {j} ++(0,1); \draw[thick] (1,1) -- node[pos=\posb] {j} ++(1,0); 
\draw[thick] (1+1,1) -- node {j} ++(-1,1); 
\draw[thick] (2,0) -- node[pos=\posa] {i} ++(0,1); \draw[thick] (2,0) -- node[pos=\posb] {i} ++(1,0); 
\draw[thick] (2+1,0) -- node {i} ++(-1,1); 
\end{tikzpicture}
\] 
we conclude that
\[
%\frac{\rh{j}{j}{j}{j}}{\rh{j}{i}{j}{i}}=
{\rh{j}{j}{j}{j}}
\Bigg/
{\rh{j}{i}{j}{i}}=
1
\]
\rem[gray]{NO!}

Similarly, the same computation in $T^*Gr(2,3)$ leads to
\[
%\frac{\rh{i}{i}{i}{i}}{\rh{j}{i}{j}{i}}=
{\rh{i}{i}{i}{i}}\Bigg/{\rh{j}{i}{j}{i}}=
1
\]
and the first part of lemma~\ref{lem:norm} follows from \eqref{eq:normR}.

For the second part, we consider the {\em nonequivariant}\/ version of the rule, i.e., every rhombus can be factored as a product of triangles
according to \eqref{eq:factorR}. We let the reader check that the nonequivariant multiplication rule for motivic Segre classes in $T^* \PP^{n-1}$ is
\[
S^a S^b = S^{a+b} - q S^{a+b+1}
\]
where $a,b=0,\ldots,n-1$ correspond to binary strings made of $i<j$ with a single $i$ at location $a+1$, and where conventionally $S^a=0$ if
$a\ge n$.

Inspection of puzzles at $n=2,3$ \rem[gray]{say more?} immediately allows to fix all triangles involved and in particular to conclude that
\[
%\frac{\uptri{i}{i}{i}}{\uptri{j}{j}{j}}=1
{\uptri{i}{i}{i}}\Bigg/{\uptri{j}{j}{j}}=1
\]
so the second part of lemma~\ref{lem:norm} follows from \eqref{eq:normUD} and from $\rh{i}{i}{i}{i}=\uptri{i}{i}{i}\downtri{i}{i}{i}=1$.

}

\section{Quantized affine algebras}\label{app:qg}
\rem[gray]{Note that we use the opposite coproduct than usual, and invert spectral parameters (which is
equivalent to using the opposite gradation). This is exactly what's needed to accommodate for time flowing
downwards}
\subsection{Generators and relations}
Let $\mathfrak g$ be a simple or affine algebra, with a Cartan matrix $C_{ij}$.
Since we are only interested in simply-laced simple
Lie algebras and the corresponding untwisted affine algebras,
all the formul\ae\ below are written for $C_{ij}$ {\em symmetric}\/
(and $C=2-A$ where $A$ is the adjacency matrix of the Dynkin diagram 
of $\mathfrak g$).

The quantized affine algebra $\Uq(\mathfrak g)$ is the $\CC(q)$-algebra given by
generators $\{E_i,F_i,K_i^{\pm 1}\}$ and relations
\begin{gather*}
K_i K_j=K_jK_i
\\
K_i E_j K_i^{-1} = q^{C_{ij}} E_j
\quad 
K_i F_j K_i^{-1} = q^{-C_{ij}} F_j
\quad
[E_i,F_j] = \delta_{ij} \frac{K_i-K_i^{-1}}{q-q^{-1}}
\\
\sum_{k=0}^{1-C_{ij}} 
(-)^k \left[ \begin{array}{c} 1-C_{ij} \\ k \end{array} \right]_{q}
E_i^{1-C_{ij}-k} E_j E_i^k = 0
\\
\sum_{k=0}^{1-C_{ij}} 
(-)^k \left[ \begin{array}{c} 1-C_{ij} \\ k \end{array} \right]_{q}
F_i^{1-C_{ij}-k} F_j F_i^k = 0
\end{gather*}
in terms of the $q$-binomials
$
\left[\begin{array}{c} m \\ n \end{array}\right]_q
:=
\frac{(q^m-q^{-m})\dots (q^{m-n+1}-q^{-m+n-1})}{(q^n-q^{-n})\dots (q-q^{-1})}$.
Their coproduct is
\[
%\Delta(E_i) = E_i \otimes 1 + K_i \otimes E_i
\Delta(E_i) = E_i \otimes K_i + 1 \otimes E_i
\qquad
%\Delta(F_i) = F_i \otimes K_i^{-1} + 1 \otimes F_i
\Delta(F_i) = F_i \otimes 1 + K_i^{-1} \otimes F_i
\qquad
\Delta(K_i) = K_i \otimes K_i
\]
(we omit counit and antipode since they will not be needed here).

\subsection{Affine case}
In the affine case, the labels are traditionally chosen to be $0,\ldots,r$
where $0$ is the affine root.

Because the Cartan matrix has rank $r$ (one less than its size),
the algebra possesses a nontrivial gradation (not induced by a Cartan element).
A possible choice (homogeneous gradation) is
\[
\delta(E_i)=\delta_{i,0}
\qquad
\delta(F_i)=-\delta_{i,0}
\qquad
\delta(K_i)=0
\]
We can then extend the algebra by adding a degree generator $q^D$
such that $q^D x q^{-D} = q^{\delta(x)}x$ for any homogeneous element $x$.

Relatedly, the Cartan matrix has a nullspace: $\sum_i m_i C_{ij}=0$,
which implies that $K:=\prod_{i=0}^r K_i^{m_i}$ is central. The quotient of $\Uq(\mathfrak g)$ (including the degree generator $q^D$)
by the relation $K=1$ is the corresponding quantized loop algebra $\Uq(\mathfrak h[z^\pm])$ where
$\mathfrak g=\mathfrak h^{(1)}$.

\subsection{Evaluation representation}\label{app:qgeval}
Consider the case $\mathfrak g = \mathfrak a_d^{(1)}$, as in \S \ref{sec:stable}. We make $\aqg$ act on $V^A(z)$, with basis $e_0,\ldots,e_d$
over $\CC(q)[z^\pm]$, by
\begin{align*}
E_i \, e_{j-1} &= \delta_{i,j}\,z^{-\delta_{i,0}}\,e_j
\\
F_i \, e_{j} &= \delta_{i,j} \,z^{\delta_{i,0}}\,e_{j-1} 
\\
K_i\, e_j &= q^{\delta_{i,j}-\delta_{i,j+1}} e_j
\end{align*}
where indices $i,j$ are considered in $\ZZ/(d+1)\ZZ$. The degree operator is given by $D=-z\frac{d}{dz}$.

One can then check that $\check R(z''/z')$ given by \eqref{eq:Rsingle} is the unique (up to normalization)
intertwiner from $V^A(z')\otimes V^A(z'')$ to $V^A(z'')\otimes V^A(z')$.

\section{$R$-matrices at $d=1$}\label{app:Rd1}
As a special case of the previous appendix, consider the $d=1$ case of $\mathfrak{x}_{2d}^{(1)}=\mathfrak a_2^{(1)}$, i.e.,
the Cartan matrix $C=\left(\begin{smallmatrix}
2&-1&-1
\\
-1&2&-1
\\
-1&-1&2
\end{smallmatrix}\right)
$ corresponding to the 
affine Dynkin
diagram \tikz[baseline=0,scale=0.5] { \draw (0:1) node[mynode] {} -- (120:1) node[mynode] {} -- (240:1) node[mynode] {} -- cycle; }.

%The quantized loop algebra $\Uq(\mathfrak{a}_2[z^\pm])$ is its quotient by the relation $k_0k_1k_2=1$. 

The representation matrices $\rho_a$ on $V_a(1)$ in the basis $\{e_{a,1},e_{a,0},e_{a,10}\}$ are:
\begin{align*}
\rho_1(E_0)&=
\begin{pmatrix}
 0 & 0 & 0 \\
 0 & 0 & 0 \\
 -q & 0 & 0 
\end{pmatrix}
&
\rho_1(E_1)&=
\begin{pmatrix}
0 & 1 & 0 \\
 0 & 0 & 0 \\
 0 & 0 & 0
\end{pmatrix}
&
\rho_1(E_2)&=
\begin{pmatrix}
0 & 0 & 0 \\
 0 & 0 & -q^{-1} \\
 0 & 0 & 0 
\end{pmatrix}
\\
\rho_1(F_0)&=
\begin{pmatrix}
 0 & 0 & -q^{-1} \\
 0 & 0 & 0 \\
 0 & 0 & 0 
\end{pmatrix}
&
\rho_1(F_1)&=
\begin{pmatrix}
 0 & 0 & 0 \\
 1 & 0 & 0 \\
 0 & 0 & 0
\end{pmatrix}           
&
\rho_1(F_2)&=
\begin{pmatrix}
0 & 0 & 0 \\
 0 & 0 & 0 \\
 0 & -q & 0
\end{pmatrix}
\\
\rho_1(K_0)&=
\begin{pmatrix}
q^{-1} & 0 & 0 \\
 0 & 1 & 0 \\
 0 & 0 & q
\end{pmatrix}
&
\rho_1(K_1)&=
\begin{pmatrix}
q & 0 & 0 \\
 0 & q^{-1} & 0 \\
 0 & 0 & 1
\end{pmatrix}
&
\rho_1(K_2)&=
\begin{pmatrix}
1 & 0 & 0 \\
 0 & q & 0 \\
 0 & 0 & q^{-1} 
\end{pmatrix}
\\[0.5cm]
\rho_2(E_0)&=
\begin{pmatrix}
 0 & 0 & 0 \\
 0 & 0 & -q^{-1} \\
 0 & 0 & 0
\end{pmatrix}
&
\rho_2(E_1)&=
\begin{pmatrix}
 0 & 0 & 0 \\
 0 & 0 & 0 \\
 -q & 0 & 0
\end{pmatrix}
&
\rho_2(E_2)&=
\begin{pmatrix}
0 & 1 & 0 \\
 0 & 0 & 0 \\
 0 & 0 & 0 
\end{pmatrix}
\\
\rho_2(F_0)&=
\begin{pmatrix}
0 & 0 & 0 \\
 0 & 0 & 0 \\
 0 & -q & 0
\end{pmatrix}
&
\rho_2(F_1)&=
\begin{pmatrix}
 0 & 0 & -q^{-1} \\
 0 & 0 & 0 \\
 0 & 0 & 0
\end{pmatrix}           
&
\rho_2(F_2)&=
\begin{pmatrix}
0 & 0 & 0 \\
 1 & 0 & 0 \\
 0 & 0 & 0
\end{pmatrix}
\\
\rho_2(K_0)&=
\begin{pmatrix}
1 & 0 & 0 \\
 0 & q & 0 \\
 0 & 0 & q^{-1}
\end{pmatrix}
&
\rho_2(K_1)&=
\begin{pmatrix}
q^{-1} & 0 & 0 \\
 0 & 1 & 0 \\
 0 & 0 & q
\end{pmatrix}
&
\rho_2(K_2)&=
\begin{pmatrix}
q & 0 & 0 \\
 0 & q^{-1} & 0 \\
 0 & 0 & 1
\end{pmatrix}
\\[0.5cm]
\rho_3(E_0)&=
\begin{pmatrix}
0 & 0 & 0 \\
-1 & 0 & 0 \\
 0 & 0 & 0
\end{pmatrix}
&
\rho_3(E_1)&=
\begin{pmatrix}
0 & 0 & 0 \\
 0 & 0 & 0 \\
 0 & 1 & 0
\end{pmatrix}
&
\rho_3(E_2)&=
\begin{pmatrix}
0 & 0 & 1 \\
 0 & 0 & 0 \\
 0 & 0 & 0
\end{pmatrix}
\\
\rho_3(F_0)&=
\begin{pmatrix}
 0 & -1 & 0 \\
 0 & 0 & 0 \\
 0 & 0 & 0
\end{pmatrix}
&
\rho_3(F_1)&=
\begin{pmatrix}
0 & 0 & 0 \\
 0 & 0 & 1 \\
 0 & 0 & 0
\end{pmatrix}           
&
\rho_3(F_2)&=
\begin{pmatrix}
0 & 0 & 0 \\
 0 & 0 & 0 \\
 1 & 0 & 0
\end{pmatrix}
\\
\rho_3(K_0)&=
\begin{pmatrix}
q^{-1} & 0 & 0 \\
 0 & q & 0 \\
 0 & 0 & 1
\end{pmatrix}
&
\rho_3(K_1)&=
\begin{pmatrix}
1 & 0 & 0 \\
 0 & q^{-1} & 0 \\
 0 & 0 & q
\end{pmatrix}
&
\rho_3(K_2)&=
\begin{pmatrix}
q & 0 & 0 \\
 0 & 1 & 0 \\
 0 & 0 & q^{-1}
\end{pmatrix}
\end{align*}
We then include the effect of the gradation by writing, for $V_a(z)$,
\[
\rho_{a,z}(g)=z^{-\delta(g)} \rho_a(g)
\]
for all homogeneous elements $g$.

\begin{rmk*}
  One could have chosen the entries of $E_i$ and $F_i$
to be $0$ or $1$ (as in appendix \ref{app:qgeval}) since these representations are minuscule;
it is however convenient to renormalize the basis elements labeled $10$ by powers of $-q$,
which is related to the different role of this label compared to single numbers
(see \S \ref{sec:twist}, in particular footnote \ref{foot:norm})
\rem[gray]{the fact that in the single number sector
it's $0/1$ means the $R$-matrix in this sector will be the usual Hecke one.
the $10$ normalization is arbitrary -- just a good way to share the
fugacities, in particular for $q\to0$. but maybe all this belongs in the
general section. also, is $10$ also a stable basis element? well, no because of these
factors of $q$. note that such changes only affect $R_{i,j}$ if $i\ne j$.
also, the weird signs are to make sure that nonequivariant pieces all have
fugacity 1}
\end{rmk*}

We list here the single-color $R$-matrices at $d=1$:
\begin{align*}
\check R_{1,1}(z',z'')&=
\begin{pmatrix}
 1 & 0 & 0 & 0 & 0 & 0 & 0 & 0 & 0 \\
 0 & \frac{\left(q^2-1\right) {z''}}{q^2 {z''}-{z'}} & 0 & \frac{q ({z''}-{z'})}{q^2
   {z''}-{z'}} & 0 & 0 & 0 & 0 & 0 \\
 0 & 0 & \frac{\left(q^2-1\right) {z''}}{q^2 {z''}-{z'}} & 0 & 0 & 0 & \frac{q ({z''}-{z'})}{q^2
   {z''}-{z'}} & 0 & 0 \\
 0 & \frac{q ({z''}-{z'})}{q^2 {z''}-{z'}} & 0 & \frac{\left(q^2-1\right) {z'}}{q^2
   {z''}-{z'}} & 0 & 0 & 0 & 0 & 0 \\
 0 & 0 & 0 & 0 & 1 & 0 & 0 & 0 & 0 \\
 0 & 0 & 0 & 0 & 0 & \frac{\left(q^2-1\right) {z''}}{q^2 {z''}-{z'}} & 0 & \frac{q
   ({z''}-{z'})}{q^2 {z''}-{z'}} & 0 \\
 0 & 0 & \frac{q ({z''}-{z'})}{q^2 {z''}-{z'}} & 0 & 0 & 0 & \frac{\left(q^2-1\right) {z'}}{q^2
   {z''}-{z'}} & 0 & 0 \\
 0 & 0 & 0 & 0 & 0 & \frac{q ({z''}-{z'})}{q^2 {z''}-{z'}} & 0 & \frac{\left(q^2-1\right)
   {z'}}{q^2 {z''}-{z'}} & 0 \\
 0 & 0 & 0 & 0 & 0 & 0 & 0 & 0 & 1
\end{pmatrix}
\\
\check R_{2,2}(z',z'')&=
\begin{pmatrix}
1 & 0 & 0 & 0 & 0 & 0 & 0 & 0 & 0 \\
 0 & \frac{\left(q^2-1\right) {z''}}{q^2 {z''}-{z'}} & 0 & \frac{q ({z''}-{z'})}{q^2{z''}-{z'}} & 0 & 0 & 0 & 0 & 0 \\
 0 & 0 & \frac{\left(q^2-1\right) {z'}}{q^2 {z''}-{z'}} & 0 & 0 & 0 & \frac{q ({z''}-{z'})}{q^2{z''}-{z'}} & 0 & 0 \\
 0 & \frac{q ({z''}-{z'})}{q^2 {z''}-{z'}} & 0 & \frac{\left(q^2-1\right) {z'}}{q^2{z''}-{z'}} & 0 & 0 & 0 & 0 & 0 \\
 0 & 0 & 0 & 0 & 1 & 0 & 0 & 0 & 0 \\
 0 & 0 & 0 & 0 & 0 & \frac{\left(q^2-1\right) {z'}}{q^2 {z''}-{z'}} & 0 & \frac{q({z''}-{z'})}{q^2 {z''}-{z'}} & 0 \\
 0 & 0 & \frac{q ({z''}-{z'})}{q^2 {z''}-{z'}} & 0 & 0 & 0 & \frac{\left(q^2-1\right) {z''}}{q^2{z''}-{z'}} & 0 & 0 \\
 0 & 0 & 0 & 0 & 0 & \frac{q ({z''}-{z'})}{q^2 {z''}-{z'}} & 0 & \frac{\left(q^2-1\right){z''}}{q^2 {z''}-{z'}} & 0 \\
 0 & 0 & 0 & 0 & 0 & 0 & 0 & 0 & 1
\end{pmatrix}
\\
\check R_{3,3}(z',z'')&=
\begin{pmatrix}
1 & 0 & 0 & 0 & 0 & 0 & 0 & 0 & 0 \\
 0 & \frac{\left(q^2-1\right) {z''}}{q^2 {z''}-{z'}} & 0 & \frac{q ({z''}-{z'})}{q^2{z''}-{z'}} & 0 & 0 & 0 & 0 & 0 \\
 0 & 0 & \frac{\left(q^2-1\right) {z''}}{q^2 {z''}-{z'}} & 0 & 0 & 0 & \frac{q ({z''}-{z'})}{q^2{z''}-{z'}} & 0 & 0 \\
 0 & \frac{q ({z''}-{z'})}{q^2 {z''}-{z'}} & 0 & \frac{\left(q^2-1\right) {z'}}{q^2{z''}-{z'}} & 0 & 0 & 0 & 0 & 0 \\
 0 & 0 & 0 & 0 & 1 & 0 & 0 & 0 & 0 \\
 0 & 0 & 0 & 0 & 0 & \frac{\left(q^2-1\right) {z'}}{q^2 {z''}-{z'}} & 0 & \frac{q({z''}-{z'})}{q^2 {z''}-{z'}} & 0 \\
 0 & 0 & \frac{q ({z''}-{z'})}{q^2 {z''}-{z'}} & 0 & 0 & 0 & \frac{\left(q^2-1\right) {z'}}{q^2{z''}-{z'}} & 0 & 0 \\
 0 & 0 & 0 & 0 & 0 & \frac{q ({z''}-{z'})}{q^2 {z''}-{z'}} & 0 & \frac{\left(q^2-1\right){z''}}{q^2 {z''}-{z'}} & 0 \\
 0 & 0 & 0 & 0 & 0 & 0 & 0 & 0 & 1
\end{pmatrix}
\end{align*}
Note that these matrices are subtly different -- even though $V_1(z)$ and $V_2(z)$ are isomorphic,
the bases we choose for them are different.

\junk{An alternate picture:
$$
\check R_{33}(z',z'') =
\left( \begin{array}{c|ccc|c|ccc|c}
1 &&&&&&&&\\ \hline 
  & \frac{\left(q^2-1\right) {z''}}{q^2 {z''}-{z'}} & 0 & \frac{q ({z''}-{z'})}{q^2
   {z''}-{z'}} &  & 0 & 0 & 0 &  \\
  & 0 & \frac{\left(q^2-1\right) {z'}}{q^2 {z''}-{z'}} & 0 &  & 0 & \frac{q ({z''}-{z'})}{q^2
   {z''}-{z'}} & 0 &  \\
  & \frac{q ({z''}-{z'})}{q^2 {z''}-{z'}} & 0 & \frac{\left(q^2-1\right) {z'}}{q^2
   {z''}-{z'}} &  & 0 & 0 & 0 &  \\
&&&&&&&& \\ \hline   &  &  &  & 1 &  &  &  &  \\ \hline 
  & 0 & 0 & 0 &  & \frac{\left(q^2-1\right) {z'}}{q^2 {z''}-{z'}} & 0 & \frac{q
   ({z''}-{z'})}{q^2 {z''}-{z'}} &  \\
  & 0 & \frac{q ({z''}-{z'})}{q^2 {z''}-{z'}} & 0 &  & 0 & \frac{\left(q^2-1\right) {z''}}{q^2
   {z''}-{z'}} & 0 &  \\
  & 0 & 0 & 0 &  & \frac{q ({z''}-{z'})}{q^2 {z''}-{z'}} & 0 & \frac{\left(q^2-1\right)
   {z''}}{q^2 {z''}-{z'}} &  \\
&&&&&&&& \\ \hline   &  &  &  &  &  &  &  & 1
\end{array} \right)
$$
}

\newcommand\V{\vee}
\renewcommand\L{\wedge}

We also provide $\check R_{1,2}(z',z'')$, which is the building block of puzzles
(here $z := z''/z'$):
%\check R_{1,2}(z',z'')
%=
% \begin{pmatrix}
% \kbordermatrix{
% & 1\V 1 & 1\V 0& 1\V 10&& 0\V 1& 0\V 0& 0\V 10&& 10\V 1&10\V 0& 10\V 10 \\
% 1\L 1  & \mathclap{1-z} & 0 & 0&\vline & 0 & 0 & 0 &\vline& 0 & \mathclap{(q^2-1)z} & 0 \\
% 1\L 0 & 0 & 0 & 0&\vline & \mathclap{1-z} & 0 & 0 &\vline& 0 & 0 & {q (1-q^2)z} \\
% 1\L 10  & 0 & 0 & 0&\vline & 0 & 0 & 0 &\vline& {q^{-1}-q z} & 0 & 0 \\
% \hline  
% 0\L 1 & 0 &\mathclap{q^{-1}-q z} & 0 &\vline& 0 & 0 & 0 &\vline& 0 & 0 & 0 \\
% 0\L0 & 0 & 0 &{(q^2-1)z} &\vline& 0 & \mathclap{1-z} & 0&\vline & 0 & 0 & 0 \\
% 0\L 10 &{1-q^{-2} } & 0 & 0 &\vline& 0 & 0 & 0 &\vline& 0 & \mathclap{1-z} & 0 \\
% \hline  
% 10\L 1  & 0 & 0 & \mathclap{1-z}&\vline & 0 &\mathclap{1-q^{-2} } & 0 &\vline& 0 & 0 & 0 \\
% 10\L0  & 0 & 0 & 0&\vline & 0 & 0 &{q^{-1}-q z} &\vline& 0 & 0 & 0 \\
% 10\L 10  & 0 & 0 & 0&\vline &{q^{-3}(1-q^2) } & 0 & 0 &\vline& 0 & 0 & \mathclap{1-z}
% \end{pmatrix}
\junk{
  \[
\kbordermatrix{
  & 1\L1  & 1\L0  & 1\L10  && 0\L1  & 0\L0  & 0\L10  && 10\L1  &10\L0 & 10\L10 \\
 1\V1  & 1-z & 0 & 0 &\vline& 0 & 0 & 1-q^{-2} &\vline& 0 & 0 & 0 \\
 1\V0  & 0 & 0 & 0 &\vline& q^{-1}-q z & 0 & 0 &\vline& 0 & 0 & 0 \\
 1\V10  & 0 & 0 & 0 &\vline& 0 & (q^2-1) z & 0 &\vline& 1-z & 0 & 0 \\
\hline
 0\V1  & 0 & 1-z & 0 &\vline& 0 & 0 & 0 &\vline& 0 & 0 & q^{-1}(q^{-2}-1) \\
 0\V0  & 0 & 0 & 0 &\vline& 0 & 1-z & 0 &\vline& 1-q^{-2} & 0 & 0 \\
 0\V10  & 0 & 0 & 0 &\vline& 0 & 0 & 0 &\vline& 0 & q^{-1}-q z & 0 \\
\hline
 10\V1  & 0 & 0 & q^{-1}-q z &\vline& 0 & 0 & 0 &\vline& 0 & 0 & 0 \\
 10\V0  & (q^2-1) z & 0 & 0 &\vline& 0 & 0 & 1-z &\vline& 0 & 0 & 0 \\
 10\V10  & 0 & q(1-q^2) z & 0 &\vline& 0 & 0 & 0 &\vline& 0 & 0 & 1-z \\
 } / (1-z)
\]
\rem{my spoon is too big}
}
\[
  \frac{
\kbordermatrix{  
  & 1\L1  & 1\L0  & 1\L10  && 0\L1  & 0\L0  & 0\L10  && 10\L1  &10\L0 & 10\L10 \\
 1\V1  & {1-z} & 0 & 0 &\vline& 0 & 0 & \mathclap{1-q^{-2}} &\vline& 0 & 0 & 0 \\
 1\V0  & 0 & 0 & 0 &\vline& {q^{-1}-q z} & 0 & 0 &\vline& 0 & 0 & 0 \\
 1\V10  & 0 & 0 & 0 &\vline& 0 & \mathclap{(q^2-1) z} & 0 &\vline& {1-z} & 0 & 0 \\
\cline{2-12}
 0\V1  & 0 & {1-z} & 0 &\vline& 0 & 0 & 0 &\vline& 0 & 0 & {q^{-1}(q^{-2}-1)} \\
 0\V0  & 0 & 0 & 0 &\vline& 0 & {1-z} & 0 &\vline& \mathclap{1-q^{-2}} & 0 & 0 \\
 0\V10  & 0 & 0 & 0 &\vline& 0 & 0 & 0 &\vline& 0 & {q^{-1}-q z} & 0 \\
\cline{2-12}
 10\V1  & 0 & 0 & {q^{-1}-q z} &\vline& 0 & 0 & 0 &\vline& 0 & 0 & 0 \\
 10\V0  & {(q^2-1) z} & 0 & 0 &\vline& 0 & 0 & {1-z} &\vline& 0 & 0 & 0 \\
 10\V10  & 0 & \mathclap{q(1-q^2) z} & 0 &\vline& 0 & 0 & 0 &\vline& 0 & 0 & {1-z} \\
 } } {1-z}
\]

At the specialization $z'' = q^{-2}z'$ (or $z=q^{-2}$) 
from lemma \ref{lem:factorR}, this matrix drops to rank $3$ and
factors (nonuniquely) as

$$
\kbordermatrix{
  & \sout 1 & \sout{0} & \sout{10} \\
1\V 1  & 1 & 0 & 0 \\
1\V 0 & 0 & 0 & 0 \\
1\V 10  & 0 & 1 & 0 \\
\cline{2-4}
0\V 1 & 0 & 0 & 1 \\
0\V0 & 0 & 1 & 0 \\
0\V 10 & 0 & 0 & 0 \\
\cline{2-4}
10\V 1  & 0 & 0 & 0 \\
10\V0  & 1 & 0 & 0 \\
10\V 10  & 0 & 0 & -q
}
\quad
\kbordermatrix{
& 1\L 1 & 1\L 0& 1\L 10&& 0\L 1& 0\L 0& 0\L 10&& 10\L 1&10\L 0& 10\L 10 \\
 \sout 1 & 1 & 0 & 0 &\vline& 0 & 0 & 1&\vline& 0 & 0 & 0 \\
\sout{0} & 0 & 0 & 0 &\vline& 0 & 1 & 0&\vline& 1 & 0 & 0 \\
\sout{10}  & 0 & 1 & 0 &\vline& 0 & 0 & 0&\vline& 0 & 0 & -q^{-1}
    }
$$
It is interesting to note that this rule again enjoys the $\ZZ_3$
rotational symmetry that was already notable in $K$-theory:
$$ c_{\lambda\mu}^{\overleftarrow\nu} 
= \pi_*\left( [X_\lambda] [X_\mu] [X_\nu] [\calO(1)] \right),
\quad \pi:Gr(k,n)\to pt $$
(see \cite[\S 8]{Buch}).

\section{Examples of $d=4$ puzzles}\label{app:exd4}
We depict $d=4$ puzzles using the standard multinumber notation. A
word of warning is needed: this notation is {\em not}\/ unique at
$d=4$, i.e., the same weight can be represented by several multinumbers.
We make here an arbitrary choice (ordering equivalent multinumbers by
length / lexicographically, then picking the smallest).

\subsection{$d=3$ vs $d=4$}\label{app:exd4a}
The coefficient of $S^{2301}$ in $S^{2103}S^{0321}$ in $H_{\hat T}^{*\loc}$
is given by 26 $d=3$ puzzles; at $d=4$, there are 11 additional puzzles, such as
\begin{center}
\def\thescale{1.4}
\def\posa{0.42}\def\posb{0.58}
\begin{tikzpicture}[script math mode,nodes={edgelabel},x={(-0.577cm,-1cm)},y={(0.577cm,-1cm)},scale=\thescale]\useasboundingbox (0,0) -- (4+0.5,0) -- (0,4+0.5);
\draw[thick] (0,0) -- node[pos=\posa] {0} ++(0,1); \draw[thick] (0,0) -- node[pos=\posb] {3} ++(1,0); \node[blue] at (0+0.1,0+0.1) {{}_0}; \draw[thick,lightgray] (0+1,0) -- node {30} ++(-1,1); \node[blue] at (0+0.9,0+0.1) {{}_0}; 
\draw[thick] (0,1) -- node[pos=\posa] {3} ++(0,1); \draw[thick] (0,1) -- node[pos=\posb] {3} ++(1,0); \node[blue] at (0+0.1,1+0.1) {{}_0}; \draw[thick,lightgray] (0+1,1) -- node {3} ++(-1,1); \node[blue] at (0+0.9,1+0.1) {{}_0}; 
\draw[thick] (0,2) -- node[pos=\posa] {2} ++(0,1); \draw[thick] (0,2) -- node[pos=\posb] {3} ++(1,0); \node[blue] at (0+0.1,2+0.1) {{}_0}; \draw[thick,lightgray] (0+1,2) -- node {32} ++(-1,1); \node[blue] at (0+0.9,2+0.1) {{}_1}; 
\draw[thick] (0,3) -- node[pos=\posa] {1} ++(0,1); \draw[thick] (0,3) -- node[pos=\posb] {1} ++(1,0); \node[blue] at (0+0.1,3+0.1) {{}_0}; \draw[thick] (0+1,3) -- node {1} ++(-1,1); 
\draw[thick] (1,0) -- node[pos=\posa] {0} ++(0,1); \draw[thick] (1,0) -- node[pos=\posb] {0} ++(1,0); \node[blue] at (1+0.1,0+0.1) {{}_0}; \draw[thick,lightgray] (1+1,0) -- node {0} ++(-1,1); \node[blue] at (1+0.9,0+0.1) {{}_1}; 
\draw[thick] (1,1) -- node[pos=\posa] {3} ++(0,1); \draw[thick] (1,1) -- node[pos=\posb] {(((43)2)1)0} ++(1,0); \node[blue] at (1+0.1,1+0.1) {{}_1}; \node[blue] at (1+0.9,1+0.1) {{}_2}; 
\draw[thick] (1,2) -- node[pos=\posa] {(32)1} ++(0,1); \draw[thick] (1,2) -- node[pos=\posb] {((32)1)0} ++(1,0); \node[blue] at (1+0.1,2+0.1) {{}_0}; \draw[thick] (1+1,2) -- node {0} ++(-1,1); 
\draw[thick] (2,0) -- node[pos=\posa] {((43)2)1} ++(0,1); \draw[thick] (2,0) -- node[pos=\posb] {1} ++(1,0); \node[blue] at (2+0.1,0+0.1) {{}_0}; \draw[thick,lightgray] (2+1,0) -- node {(43)2} ++(-1,1); \node[blue] at (2+0.9,0+0.1) {{}_1}; 
\draw[thick] (2,1) -- node[pos=\posa] {4} ++(0,1); \draw[thick] (2,1) -- node[pos=\posb] {43} ++(1,0); \node[blue] at (2+0.1,1+0.1) {{}_0}; \draw[thick] (2+1,1) -- node {3} ++(-1,1); 
\draw[thick] (3,0) -- node[pos=\posa] {2} ++(0,1); \draw[thick] (3,0) -- node[pos=\posb] {2} ++(1,0); \node[blue] at (3+0.1,0+0.1) {{}_0}; \draw[thick] (3+1,0) -- node {2} ++(-1,1); 
\end{tikzpicture}
\end{center}
One can check that the sum of fugacities is the same in both cases:
\[
\frac{\hbar^2 \left(x_2-x_3\right) \left(x_1-x_4\right)}{\left(\hbar+x_1-x_3\right) \left(\hbar+x_2-x_3\right) \left(\hbar+x_1-x_4\right) \left(\hbar+x_2-x_4\right)}
\]
Note that the sum of fugacities of the $d=4$ extra puzzles does {\em not}\/ vanish! This is possible because fugacities
are different at $d=4$ (even for puzzles which would already appear at $d=3$), as explained in \S\ref{ssec:d4}.

\subsection{Zero weight states}\label{app:exd4b}
As explained in \S\ref{ssec:d4}, a new feature of $d=4$ puzzles is that the corresponding representation
has a weight space of dimension greater than one, namely, the zero weight space. These zero weight states
must be treated separately, cf proposition~\ref{prop:d4}.
Here is a full example involving them. Let us compute the expansion
of $\Sc^{01432}\Sc^{21043}$ in ordinary Schubert calculus in the full flag variety of $\CC^5$;
equivalently, we work in $H^{*\loc}_{\CC^\times}$ and keep only the terms
in the expansion of $S^{01432}S^{21043}$ whose labels have lowest inversion number $\ell(01432)+\ell(21043)=7$:
\[
S^{01432}S^{21043}
=
S^{41302}
+S^{34102}
+S^{43012}
+S^{24301}
+S^{42031}
+S^{24130}
+S^{41230}
+\cdots
\]

Each of the terms in the r.h.s.\ except the last one is obtained from a single $d=4$ puzzle:
\begin{center}\def\thescale{1}\def\posa{0.42}\def\posb{0.58}%
\hspace*{-1cm}%
\begin{tikzpicture}[script math mode,nodes={edgelabel},x={(-0.577cm,-1cm)},y={(0.577cm,-1cm)},scale=\thescale]\useasboundingbox (0,0) -- (5+0.5,0) -- (0,5+0.5);
\draw[thick] (0,0) -- node[pos=\posa] {2} ++(0,1); \draw[thick] (0,0) -- node[pos=\posb] {2} ++(1,0); \draw[thick] (0+1,0) -- node {2} ++(-1,1); 
\draw[thick] (0,1) -- node[pos=\posa] {1} ++(0,1); \draw[thick] (0,1) -- node[pos=\posb] {32} ++(1,0); \draw[thick] (0+1,1) -- node {(32)1} ++(-1,1); 
\draw[thick] (0,2) -- node[pos=\posa] {0} ++(0,1); \draw[thick] (0,2) -- node[pos=\posb] {32} ++(1,0); \draw[thick] (0+1,2) -- node {(32)0} ++(-1,1); 
\draw[thick] (0,3) -- node[pos=\posa] {4} ++(0,1); \draw[thick] (0,3) -- node[pos=\posb] {4((32)0)} ++(1,0); \draw[thick] (0+1,3) -- node {(32)0} ++(-1,1); 
\draw[thick] (0,4) -- node[pos=\posa] {3} ++(0,1); \draw[thick] (0,4) -- node[pos=\posb] {32} ++(1,0); \draw[thick] (0+1,4) -- node {2} ++(-1,1); 
\draw[thick] (1,0) -- node[pos=\posa] {3} ++(0,1); \draw[thick] (1,0) -- node[pos=\posb] {3} ++(1,0); \draw[thick] (1+1,0) -- node {3} ++(-1,1); 
\draw[thick] (1,1) -- node[pos=\posa] {1} ++(0,1); \draw[thick] (1,1) -- node[pos=\posb] {43} ++(1,0); \draw[thick] (1+1,1) -- node {(43)1} ++(-1,1); 
\draw[thick] (1,2) -- node[pos=\posa] {4} ++(0,1); \draw[thick] (1,2) -- node[pos=\posb] {43} ++(1,0); \draw[thick] (1+1,2) -- node {3} ++(-1,1); 
\draw[thick] (1,3) -- node[pos=\posa] {0} ++(0,1); \draw[thick] (1,3) -- node[pos=\posb] {0} ++(1,0); \draw[thick] (1+1,3) -- node {0} ++(-1,1); 
\draw[thick] (2,0) -- node[pos=\posa] {4} ++(0,1); \draw[thick] (2,0) -- node[pos=\posb] {4} ++(1,0); \draw[thick] (2+1,0) -- node {4} ++(-1,1); 
\draw[thick] (2,1) -- node[pos=\posa] {1} ++(0,1); \draw[thick] (2,1) -- node[pos=\posb] {1} ++(1,0); \draw[thick] (2+1,1) -- node {1} ++(-1,1); 
\draw[thick] (2,2) -- node[pos=\posa] {30} ++(0,1); \draw[thick] (2,2) -- node[pos=\posb] {0} ++(1,0); \draw[thick] (2+1,2) -- node {3} ++(-1,1); 
\draw[thick] (3,0) -- node[pos=\posa] {41} ++(0,1); \draw[thick] (3,0) -- node[pos=\posb] {1} ++(1,0); \draw[thick] (3+1,0) -- node {4} ++(-1,1); 
\draw[thick] (3,1) -- node[pos=\posa] {10} ++(0,1); \draw[thick] (3,1) -- node[pos=\posb] {0} ++(1,0); \draw[thick] (3+1,1) -- node {1} ++(-1,1); 
\draw[thick] (4,0) -- node[pos=\posa] {40} ++(0,1); \draw[thick] (4,0) -- node[pos=\posb] {0} ++(1,0); \draw[thick] (4+1,0) -- node {4} ++(-1,1); 
\end{tikzpicture}%
\begin{tikzpicture}[script math mode,nodes={edgelabel},x={(-0.577cm,-1cm)},y={(0.577cm,-1cm)},scale=\thescale]\useasboundingbox (0,0) -- (5+0.5,0) -- (0,5+0.5);
\draw[thick] (0,0) -- node[pos=\posa] {2} ++(0,1); \draw[thick] (0,0) -- node[pos=\posb] {2} ++(1,0); \draw[thick] (0+1,0) -- node {2} ++(-1,1); 
\draw[thick] (0,1) -- node[pos=\posa] {1} ++(0,1); \draw[thick] (0,1) -- node[pos=\posb] {32} ++(1,0); \draw[thick] (0+1,1) -- node {(32)1} ++(-1,1); 
\draw[thick] (0,2) -- node[pos=\posa] {0} ++(0,1); \draw[thick] (0,2) -- node[pos=\posb] {32} ++(1,0); \draw[thick] (0+1,2) -- node {(32)0} ++(-1,1); 
\draw[thick] (0,3) -- node[pos=\posa] {4} ++(0,1); \draw[thick] (0,3) -- node[pos=\posb] {4((32)0)} ++(1,0); \draw[thick] (0+1,3) -- node {(32)0} ++(-1,1); 
\draw[thick] (0,4) -- node[pos=\posa] {3} ++(0,1); \draw[thick] (0,4) -- node[pos=\posb] {32} ++(1,0); \draw[thick] (0+1,4) -- node {2} ++(-1,1); 
\draw[thick] (1,0) -- node[pos=\posa] {3} ++(0,1); \draw[thick] (1,0) -- node[pos=\posb] {3} ++(1,0); \draw[thick] (1+1,0) -- node {3} ++(-1,1); 
\draw[thick] (1,1) -- node[pos=\posa] {1} ++(0,1); \draw[thick] (1,1) -- node[pos=\posb] {1} ++(1,0); \draw[thick] (1+1,1) -- node {1} ++(-1,1); 
\draw[thick] (1,2) -- node[pos=\posa] {4} ++(0,1); \draw[thick] (1,2) -- node[pos=\posb] {41} ++(1,0); \draw[thick] (1+1,2) -- node {1} ++(-1,1); 
\draw[thick] (1,3) -- node[pos=\posa] {0} ++(0,1); \draw[thick] (1,3) -- node[pos=\posb] {0} ++(1,0); \draw[thick] (1+1,3) -- node {0} ++(-1,1); 
\draw[thick] (2,0) -- node[pos=\posa] {31} ++(0,1); \draw[thick] (2,0) -- node[pos=\posb] {4} ++(1,0); \draw[thick] (2+1,0) -- node {4(31)} ++(-1,1); 
\draw[thick] (2,1) -- node[pos=\posa] {4} ++(0,1); \draw[thick] (2,1) -- node[pos=\posb] {4} ++(1,0); \draw[thick] (2+1,1) -- node {4} ++(-1,1); 
\draw[thick] (2,2) -- node[pos=\posa] {10} ++(0,1); \draw[thick] (2,2) -- node[pos=\posb] {0} ++(1,0); \draw[thick] (2+1,2) -- node {1} ++(-1,1); 
\draw[thick] (3,0) -- node[pos=\posa] {31} ++(0,1); \draw[thick] (3,0) -- node[pos=\posb] {1} ++(1,0); \draw[thick] (3+1,0) -- node {3} ++(-1,1); 
\draw[thick] (3,1) -- node[pos=\posa] {40} ++(0,1); \draw[thick] (3,1) -- node[pos=\posb] {0} ++(1,0); \draw[thick] (3+1,1) -- node {4} ++(-1,1); 
\draw[thick] (4,0) -- node[pos=\posa] {30} ++(0,1); \draw[thick] (4,0) -- node[pos=\posb] {0} ++(1,0); \draw[thick] (4+1,0) -- node {3} ++(-1,1); 
\end{tikzpicture}%
\begin{tikzpicture}[script math mode,nodes={edgelabel},x={(-0.577cm,-1cm)},y={(0.577cm,-1cm)},scale=\thescale]\useasboundingbox (0,0) -- (5+0.5,0) -- (0,5+0.5);
\draw[thick] (0,0) -- node[pos=\posa] {2} ++(0,1); \draw[thick] (0,0) -- node[pos=\posb] {2} ++(1,0); \draw[thick] (0+1,0) -- node {2} ++(-1,1); 
\draw[thick] (0,1) -- node[pos=\posa] {1} ++(0,1); \draw[thick] (0,1) -- node[pos=\posb] {32} ++(1,0); \draw[thick] (0+1,1) -- node {(32)1} ++(-1,1); 
\draw[thick] (0,2) -- node[pos=\posa] {0} ++(0,1); \draw[thick] (0,2) -- node[pos=\posb] {4((32)1)} ++(1,0); \draw[thick] (0+1,2) -- node {(4((32)1))0} ++(-1,1); 
\draw[thick] (0,3) -- node[pos=\posa] {4} ++(0,1); \draw[thick] (0,3) -- node[pos=\posb] {4((32)1)} ++(1,0); \draw[thick] (0+1,3) -- node {(32)1} ++(-1,1); 
\draw[thick] (0,4) -- node[pos=\posa] {3} ++(0,1); \draw[thick] (0,4) -- node[pos=\posb] {32} ++(1,0); \draw[thick] (0+1,4) -- node {2} ++(-1,1); 
\draw[thick] (1,0) -- node[pos=\posa] {3} ++(0,1); \draw[thick] (1,0) -- node[pos=\posb] {3} ++(1,0); \draw[thick] (1+1,0) -- node {3} ++(-1,1); 
\draw[thick] (1,1) -- node[pos=\posa] {4} ++(0,1); \draw[thick] (1,1) -- node[pos=\posb] {43} ++(1,0); \draw[thick] (1+1,1) -- node {3} ++(-1,1); 
\draw[thick] (1,2) -- node[pos=\posa] {0} ++(0,1); \draw[thick] (1,2) -- node[pos=\posb] {1} ++(1,0); \draw[thick] (1+1,2) -- node {10} ++(-1,1); 
\draw[thick] (1,3) -- node[pos=\posa] {1} ++(0,1); \draw[thick] (1,3) -- node[pos=\posb] {1} ++(1,0); \draw[thick] (1+1,3) -- node {1} ++(-1,1); 
\draw[thick] (2,0) -- node[pos=\posa] {4} ++(0,1); \draw[thick] (2,0) -- node[pos=\posb] {4} ++(1,0); \draw[thick] (2+1,0) -- node {4} ++(-1,1); 
\draw[thick] (2,1) -- node[pos=\posa] {31} ++(0,1); \draw[thick] (2,1) -- node[pos=\posb] {1} ++(1,0); \draw[thick] (2+1,1) -- node {3} ++(-1,1); 
\draw[thick] (2,2) -- node[pos=\posa] {0} ++(0,1); \draw[thick] (2,2) -- node[pos=\posb] {0} ++(1,0); \draw[thick] (2+1,2) -- node {0} ++(-1,1); 
\draw[thick] (3,0) -- node[pos=\posa] {41} ++(0,1); \draw[thick] (3,0) -- node[pos=\posb] {1} ++(1,0); \draw[thick] (3+1,0) -- node {4} ++(-1,1); 
\draw[thick] (3,1) -- node[pos=\posa] {30} ++(0,1); \draw[thick] (3,1) -- node[pos=\posb] {0} ++(1,0); \draw[thick] (3+1,1) -- node {3} ++(-1,1); 
\draw[thick] (4,0) -- node[pos=\posa] {40} ++(0,1); \draw[thick] (4,0) -- node[pos=\posb] {0} ++(1,0); \draw[thick] (4+1,0) -- node {4} ++(-1,1); 
\end{tikzpicture}
\hspace*{-1cm}%
\begin{tikzpicture}[script math mode,nodes={edgelabel},x={(-0.577cm,-1cm)},y={(0.577cm,-1cm)},scale=\thescale]\useasboundingbox (0,0) -- (5+0.5,0) -- (0,5+0.5);
\draw[thick] (0,0) -- node[pos=\posa] {2} ++(0,1); \draw[thick] (0,0) -- node[pos=\posb] {2} ++(1,0); \draw[thick] (0+1,0) -- node {2} ++(-1,1); 
\draw[thick] (0,1) -- node[pos=\posa] {1} ++(0,1); \draw[thick] (0,1) -- node[pos=\posb] {1} ++(1,0); \draw[thick] (0+1,1) -- node {1} ++(-1,1); 
\draw[thick] (0,2) -- node[pos=\posa] {0} ++(0,1); \draw[thick] (0,2) -- node[pos=\posb] {31} ++(1,0); \draw[thick] (0+1,2) -- node {(31)0} ++(-1,1); 
\draw[thick] (0,3) -- node[pos=\posa] {4} ++(0,1); \draw[thick] (0,3) -- node[pos=\posb] {4((31)0)} ++(1,0); \draw[thick] (0+1,3) -- node {(31)0} ++(-1,1); 
\draw[thick] (0,4) -- node[pos=\posa] {3} ++(0,1); \draw[thick] (0,4) -- node[pos=\posb] {31} ++(1,0); \draw[thick] (0+1,4) -- node {1} ++(-1,1); 
\draw[thick] (1,0) -- node[pos=\posa] {21} ++(0,1); \draw[thick] (1,0) -- node[pos=\posb] {3} ++(1,0); \draw[thick] (1+1,0) -- node {3(21)} ++(-1,1); 
\draw[thick] (1,1) -- node[pos=\posa] {3} ++(0,1); \draw[thick] (1,1) -- node[pos=\posb] {3} ++(1,0); \draw[thick] (1+1,1) -- node {3} ++(-1,1); 
\draw[thick] (1,2) -- node[pos=\posa] {4} ++(0,1); \draw[thick] (1,2) -- node[pos=\posb] {43} ++(1,0); \draw[thick] (1+1,2) -- node {3} ++(-1,1); 
\draw[thick] (1,3) -- node[pos=\posa] {0} ++(0,1); \draw[thick] (1,3) -- node[pos=\posb] {0} ++(1,0); \draw[thick] (1+1,3) -- node {0} ++(-1,1); 
\draw[thick] (2,0) -- node[pos=\posa] {21} ++(0,1); \draw[thick] (2,0) -- node[pos=\posb] {4} ++(1,0); \draw[thick] (2+1,0) -- node {4(21)} ++(-1,1); 
\draw[thick] (2,1) -- node[pos=\posa] {4} ++(0,1); \draw[thick] (2,1) -- node[pos=\posb] {4} ++(1,0); \draw[thick] (2+1,1) -- node {4} ++(-1,1); 
\draw[thick] (2,2) -- node[pos=\posa] {30} ++(0,1); \draw[thick] (2,2) -- node[pos=\posb] {0} ++(1,0); \draw[thick] (2+1,2) -- node {3} ++(-1,1); 
\draw[thick] (3,0) -- node[pos=\posa] {21} ++(0,1); \draw[thick] (3,0) -- node[pos=\posb] {1} ++(1,0); \draw[thick] (3+1,0) -- node {2} ++(-1,1); 
\draw[thick] (3,1) -- node[pos=\posa] {40} ++(0,1); \draw[thick] (3,1) -- node[pos=\posb] {0} ++(1,0); \draw[thick] (3+1,1) -- node {4} ++(-1,1); 
\draw[thick] (4,0) -- node[pos=\posa] {20} ++(0,1); \draw[thick] (4,0) -- node[pos=\posb] {0} ++(1,0); \draw[thick] (4+1,0) -- node {2} ++(-1,1); 
\end{tikzpicture}%
\begin{tikzpicture}[script math mode,nodes={edgelabel},x={(-0.577cm,-1cm)},y={(0.577cm,-1cm)},scale=\thescale]\useasboundingbox (0,0) -- (5+0.5,0) -- (0,5+0.5);
\draw[thick] (0,0) -- node[pos=\posa] {2} ++(0,1); \draw[thick] (0,0) -- node[pos=\posb] {2} ++(1,0); \draw[thick] (0+1,0) -- node {2} ++(-1,1); 
\draw[thick] (0,1) -- node[pos=\posa] {1} ++(0,1); \draw[thick] (0,1) -- node[pos=\posb] {1} ++(1,0); \draw[thick] (0+1,1) -- node {1} ++(-1,1); 
\draw[thick] (0,2) -- node[pos=\posa] {0} ++(0,1); \draw[thick] (0,2) -- node[pos=\posb] {41} ++(1,0); \draw[thick] (0+1,2) -- node {(41)0} ++(-1,1); 
\draw[thick] (0,3) -- node[pos=\posa] {4} ++(0,1); \draw[thick] (0,3) -- node[pos=\posb] {41} ++(1,0); \draw[thick] (0+1,3) -- node {1} ++(-1,1); 
\draw[thick] (0,4) -- node[pos=\posa] {3} ++(0,1); \draw[thick] (0,4) -- node[pos=\posb] {31} ++(1,0); \draw[thick] (0+1,4) -- node {1} ++(-1,1); 
\draw[thick] (1,0) -- node[pos=\posa] {21} ++(0,1); \draw[thick] (1,0) -- node[pos=\posb] {3} ++(1,0); \draw[thick] (1+1,0) -- node {3(21)} ++(-1,1); 
\draw[thick] (1,1) -- node[pos=\posa] {4} ++(0,1); \draw[thick] (1,1) -- node[pos=\posb] {4(3(21))} ++(1,0); \draw[thick] (1+1,1) -- node {3(21)} ++(-1,1); 
\draw[thick] (1,2) -- node[pos=\posa] {0} ++(0,1); \draw[thick] (1,2) -- node[pos=\posb] {3} ++(1,0); \draw[thick] (1+1,2) -- node {30} ++(-1,1); 
\draw[thick] (1,3) -- node[pos=\posa] {3} ++(0,1); \draw[thick] (1,3) -- node[pos=\posb] {3} ++(1,0); \draw[thick] (1+1,3) -- node {3} ++(-1,1); 
\draw[thick] (2,0) -- node[pos=\posa] {4} ++(0,1); \draw[thick] (2,0) -- node[pos=\posb] {4} ++(1,0); \draw[thick] (2+1,0) -- node {4} ++(-1,1); 
\draw[thick] (2,1) -- node[pos=\posa] {21} ++(0,1); \draw[thick] (2,1) -- node[pos=\posb] {1} ++(1,0); \draw[thick] (2+1,1) -- node {2} ++(-1,1); 
\draw[thick] (2,2) -- node[pos=\posa] {0} ++(0,1); \draw[thick] (2,2) -- node[pos=\posb] {0} ++(1,0); \draw[thick] (2+1,2) -- node {0} ++(-1,1); 
\draw[thick] (3,0) -- node[pos=\posa] {41} ++(0,1); \draw[thick] (3,0) -- node[pos=\posb] {1} ++(1,0); \draw[thick] (3+1,0) -- node {4} ++(-1,1); 
\draw[thick] (3,1) -- node[pos=\posa] {20} ++(0,1); \draw[thick] (3,1) -- node[pos=\posb] {0} ++(1,0); \draw[thick] (3+1,1) -- node {2} ++(-1,1); 
\draw[thick] (4,0) -- node[pos=\posa] {40} ++(0,1); \draw[thick] (4,0) -- node[pos=\posb] {0} ++(1,0); \draw[thick] (4+1,0) -- node {4} ++(-1,1); 
\end{tikzpicture}%
\begin{tikzpicture}[script math mode,nodes={edgelabel},x={(-0.577cm,-1cm)},y={(0.577cm,-1cm)},scale=\thescale]\useasboundingbox (0,0) -- (5+0.5,0) -- (0,5+0.5);
\draw[thick] (0,0) -- node[pos=\posa] {2} ++(0,1); \draw[thick] (0,0) -- node[pos=\posb] {2} ++(1,0); \draw[thick] (0+1,0) -- node {2} ++(-1,1); 
\draw[thick] (0,1) -- node[pos=\posa] {1} ++(0,1); \draw[thick] (0,1) -- node[pos=\posb] {1} ++(1,0); \draw[thick] (0+1,1) -- node {1} ++(-1,1); 
\draw[thick] (0,2) -- node[pos=\posa] {0} ++(0,1); \draw[thick] (0,2) -- node[pos=\posb] {0} ++(1,0); \draw[thick] (0+1,2) -- node {0} ++(-1,1); 
\draw[thick] (0,3) -- node[pos=\posa] {4} ++(0,1); \draw[thick] (0,3) -- node[pos=\posb] {40} ++(1,0); \draw[thick] (0+1,3) -- node {0} ++(-1,1); 
\draw[thick] (0,4) -- node[pos=\posa] {3} ++(0,1); \draw[thick] (0,4) -- node[pos=\posb] {30} ++(1,0); \draw[thick] (0+1,4) -- node {0} ++(-1,1); 
\draw[thick] (1,0) -- node[pos=\posa] {21} ++(0,1); \draw[thick] (1,0) -- node[pos=\posb] {3} ++(1,0); \draw[thick] (1+1,0) -- node {3(21)} ++(-1,1); 
\draw[thick] (1,1) -- node[pos=\posa] {10} ++(0,1); \draw[thick] (1,1) -- node[pos=\posb] {3} ++(1,0); \draw[thick] (1+1,1) -- node {3(10)} ++(-1,1); 
\draw[thick] (1,2) -- node[pos=\posa] {4} ++(0,1); \draw[thick] (1,2) -- node[pos=\posb] {4(3(10))} ++(1,0); \draw[thick] (1+1,2) -- node {3(10)} ++(-1,1); 
\draw[thick] (1,3) -- node[pos=\posa] {3} ++(0,1); \draw[thick] (1,3) -- node[pos=\posb] {3} ++(1,0); \draw[thick] (1+1,3) -- node {3} ++(-1,1); 
\draw[thick] (2,0) -- node[pos=\posa] {21} ++(0,1); \draw[thick] (2,0) -- node[pos=\posb] {4} ++(1,0); \draw[thick] (2+1,0) -- node {4(21)} ++(-1,1); 
\draw[thick] (2,1) -- node[pos=\posa] {4} ++(0,1); \draw[thick] (2,1) -- node[pos=\posb] {4} ++(1,0); \draw[thick] (2+1,1) -- node {4} ++(-1,1); 
\draw[thick] (2,2) -- node[pos=\posa] {10} ++(0,1); \draw[thick] (2,2) -- node[pos=\posb] {0} ++(1,0); \draw[thick] (2+1,2) -- node {1} ++(-1,1); 
\draw[thick] (3,0) -- node[pos=\posa] {21} ++(0,1); \draw[thick] (3,0) -- node[pos=\posb] {1} ++(1,0); \draw[thick] (3+1,0) -- node {2} ++(-1,1); 
\draw[thick] (3,1) -- node[pos=\posa] {40} ++(0,1); \draw[thick] (3,1) -- node[pos=\posb] {0} ++(1,0); \draw[thick] (3+1,1) -- node {4} ++(-1,1); 
\draw[thick] (4,0) -- node[pos=\posa] {20} ++(0,1); \draw[thick] (4,0) -- node[pos=\posb] {0} ++(1,0); \draw[thick] (4+1,0) -- node {2} ++(-1,1); 
\end{tikzpicture}%
\end{center}

On the other hand, there are $25$ puzzles with bottom boundary $41230$.
They have the common part
\begin{center}
\def\thescale{1.4}
\def\posa{0.42}\def\posb{0.58}
\begin{tikzpicture}[script math mode,nodes={edgelabel},x={(-0.577cm,-1cm)},y={(0.577cm,-1cm)},scale=\thescale]\useasboundingbox (0,0) -- (5+0.5,0) -- (0,5+0.5);
\draw[thick] (0,0) -- node[pos=\posa] {2} ++(0,1); \draw[thick] (0,0) -- node[pos=\posb] {2} ++(1,0); \draw[thick] (0+1,0) -- node {2} ++(-1,1); 
\draw[thick] (0,1) -- node[pos=\posa] {1} ++(0,1);
\draw[thick] (0,2) -- node[pos=\posa] {0} ++(0,1); \draw[thick] (0,2) -- node[pos=\posb] {0} ++(1,0); \draw[thick] (0+1,2) -- node {0} ++(-1,1); 
\draw[thick] (0,3) -- node[pos=\posa] {4} ++(0,1); \draw[thick] (0,3) -- node[pos=\posb] {40} ++(1,0); \draw[thick] (0+1,3) -- node {0} ++(-1,1); 
\draw[thick] (0,4) -- node[pos=\posa] {3} ++(0,1); \draw[thick] (0,4) -- node[pos=\posb] {30} ++(1,0); \draw[thick] (0+1,4) -- node {0} ++(-1,1); 
\draw[thick] (1,0) -- node[pos=\posb] {3} ++(1,0);
\draw[thick] (1,2) -- node[pos=\posa] {4} ++(0,1);
\draw[thick] (1,3) -- node[pos=\posa] {3} ++(0,1); \draw[thick] (1,3) -- node[pos=\posb] {3} ++(1,0); \draw[thick] (1+1,3) -- node {3} ++(-1,1); 
\draw[thick] (2,0) -- node[pos=\posa] {4} ++(0,1); \draw[thick] (2,0) -- node[pos=\posb] {4} ++(1,0); \draw[thick] (2+1,0) -- node {4} ++(-1,1); 
\draw[thick] (2,1) -- node[pos=\posb] {1} ++(1,0);
\draw[thick] (2+1,2) -- node {2} ++(-1,1); 
\draw[thick] (3,0) -- node[pos=\posa] {41} ++(0,1); \draw[thick] (3,0) -- node[pos=\posb] {1} ++(1,0); \draw[thick] (3+1,0) -- node {4} ++(-1,1); 
\draw[thick] (3,1) -- node[pos=\posa] {10} ++(0,1); \draw[thick] (3,1) -- node[pos=\posb] {0} ++(1,0); \draw[thick] (3+1,1) -- node {1} ++(-1,1); 
\draw[thick] (4,0) -- node[pos=\posa] {40} ++(0,1); \draw[thick] (4,0) -- node[pos=\posb] {0} ++(1,0); \draw[thick] (4+1,0) -- node {4} ++(-1,1); 
\draw[thick] (1+1,1) -- ++(-1,1); 
\end{tikzpicture}
\end{center}
where the central unlabeled edge has zero weight, i.e., one must sum over the 9-dimensional zero weight space.

Each of the hexagons have five possible fillings:
\begin{center}
\def\thescale{1.3}
\def\posa{0.42}\def\posb{0.58}
\begin{tikzpicture}[script math mode,nodes={edgelabel},x={(-0.577cm,-1cm)},y={(0.577cm,-1cm)},scale=\thescale]%\useasboundingbox (0,0) -- (3+0.5,0) -- (0,3+0.5);
\draw[thick] (0+1,0) -- node {2} ++(-1,1); 
\draw[thick] (0,1) -- node[pos=\posa] {1} ++(0,1); \draw[thick] (0,1) -- node[pos=\posb] {10} ++(1,0); \draw[thick] (0+1,1) -- node {0} ++(-1,1); 
\draw[thick] (0,2) -- node[pos=\posb] {0} ++(1,0);
\draw[thick] (1,0) -- node[pos=\posa] {2(10)} ++(0,1); \draw[thick] (1,0) -- node[pos=\posb] {3} ++(1,0); \draw[thick] (1+1,0) -- node {3(2(10))} ++(-1,1); 
\draw[thick] (1,1) -- node[pos=\posa] {0} ++(0,1); \draw[thick] (1,1) -- node[pos=\posb] {4(3(2(10)))} ++(1,0); \draw[thick] (1+1,1) -- node {} ++(-1,1); 
\draw[thick] (2,0) -- node[pos=\posa] {4} ++(0,1);
\end{tikzpicture}
\begin{tikzpicture}[script math mode,nodes={edgelabel},x={(-0.577cm,-1cm)},y={(0.577cm,-1cm)},scale=\thescale]%\useasboundingbox (0,0) -- (3+0.5,0) -- (0,3+0.5);
\draw[thick] (0+1,0) -- node {2} ++(-1,1); 
\draw[thick] (0,1) -- node[pos=\posa] {1} ++(0,1); \draw[thick] (0,1) -- node[pos=\posb] {1} ++(1,0); \draw[thick] (0+1,1) -- node {1} ++(-1,1); 
\draw[thick] (0,2) -- node[pos=\posb] {0} ++(1,0);
\draw[thick] (1,0) -- node[pos=\posa] {21} ++(0,1); \draw[thick] (1,0) -- node[pos=\posb] {3} ++(1,0); \draw[thick] (1+1,0) -- node {3(21)} ++(-1,1); 
\draw[thick] (1,1) -- node[pos=\posa] {10} ++(0,1); \draw[thick] (1,1) -- node[pos=\posb] {4(3(21))} ++(1,0); \draw[thick] (1+1,1) -- node {} ++(-1,1); 
\draw[thick] (2,0) -- node[pos=\posa] {4} ++(0,1);
\end{tikzpicture}
\begin{tikzpicture}[script math mode,nodes={edgelabel},x={(-0.577cm,-1cm)},y={(0.577cm,-1cm)},scale=\thescale]%\useasboundingbox (0,0) -- (3+0.5,0) -- (0,3+0.5);
\draw[thick] (0+1,0) -- node {2} ++(-1,1); 
\draw[thick] (0,1) -- node[pos=\posa] {1} ++(0,1); \draw[thick] (0,1) -- node[pos=\posb] {2} ++(1,0); \draw[thick] (0+1,1) -- node {21} ++(-1,1); 
\draw[thick] (0,2) -- node[pos=\posb] {0} ++(1,0);
\draw[thick] (1,0) -- node[pos=\posa] {2} ++(0,1); \draw[thick] (1,0) -- node[pos=\posb] {3} ++(1,0); \draw[thick] (1+1,0) -- node {32} ++(-1,1); 
\draw[thick] (1,1) -- node[pos=\posa] {(21)0} ++(0,1); \draw[thick] (1,1) -- node[pos=\posb] {4(32)} ++(1,0); \draw[thick] (1+1,1) -- node {} ++(-1,1); 
\draw[thick] (2,0) -- node[pos=\posa] {4} ++(0,1);
\end{tikzpicture}
\begin{tikzpicture}[script math mode,nodes={edgelabel},x={(-0.577cm,-1cm)},y={(0.577cm,-1cm)},scale=\thescale]%\useasboundingbox (0,0) -- (3+0.5,0) -- (0,3+0.5);
\draw[thick] (0+1,0) -- node {2} ++(-1,1); 
\draw[thick] (0,1) -- node[pos=\posa] {1} ++(0,1); \draw[thick] (0,1) -- node[pos=\posb] {32} ++(1,0); \draw[thick] (0+1,1) -- node {(32)1} ++(-1,1); 
\draw[thick] (0,2) -- node[pos=\posb] {0} ++(1,0);
\draw[thick] (1,0) -- node[pos=\posa] {3} ++(0,1); \draw[thick] (1,0) -- node[pos=\posb] {3} ++(1,0); \draw[thick] (1+1,0) -- node {3} ++(-1,1); 
\draw[thick] (1,1) -- node[pos=\posa] {((32)1)0} ++(0,1); \draw[thick] (1,1) -- node[pos=\posb] {43} ++(1,0); \draw[thick] (1+1,1) -- node {} ++(-1,1); 
\draw[thick] (2,0) -- node[pos=\posa] {4} ++(0,1);
\end{tikzpicture}
\begin{tikzpicture}[script math mode,nodes={edgelabel},x={(-0.577cm,-1cm)},y={(0.577cm,-1cm)},scale=\thescale]%\useasboundingbox (0,0) -- (3+0.5,0) -- (0,3+0.5);
\draw[thick] (0+1,0) -- node {2} ++(-1,1); 
\draw[thick] (0,1) -- node[pos=\posa] {1} ++(0,1); \draw[thick] (0,1) -- node[pos=\posb] {(43)2} ++(1,0); \draw[thick] (0+1,1) -- node {((43)2)1} ++(-1,1); 
\draw[thick] (0,2) -- node[pos=\posb] {0} ++(1,0);
\draw[thick] (1,0) -- node[pos=\posa] {43} ++(0,1); \draw[thick] (1,0) -- node[pos=\posb] {3} ++(1,0); \draw[thick] (1+1,0) -- node {4} ++(-1,1); 
\draw[thick] (1,1) -- node[pos=\posa] {(((43)2)1)0} ++(0,1); \draw[thick] (1,1) -- node[pos=\posb] {4} ++(1,0); \draw[thick] (1+1,1) -- node {} ++(-1,1); 
\draw[thick] (2,0) -- node[pos=\posa] {4} ++(0,1);
\end{tikzpicture}

\begin{tikzpicture}[script math mode,nodes={edgelabel},x={(-0.577cm,-1cm)},y={(0.577cm,-1cm)},scale=\thescale]%\useasboundingbox (0,0) -- (5+0.5,0) -- (0,5+0.5);
\draw[thick] (1,2) -- node[pos=\posa] {4} ++(0,1); \draw[thick] (1,2) -- node[pos=\posb] {4(3(2(10)))} ++(1,0); \draw[thick] (1+1,2) -- node {3(2(10))} ++(-1,1);
\draw[thick] (1,3) -- node[pos=\posb] {3} ++(1,0);
\draw[thick] (2,1) -- node[pos=\posa] {0} ++(0,1); \draw[thick] (2,1) -- node[pos=\posb] {1} ++(1,0); \draw[thick] (2+1,1) -- node {10} ++(-1,1);
\draw[thick] (2,2) -- node[pos=\posa] {2(10)} ++(0,1); \draw[thick] (2,2) -- node[pos=\posb] {10} ++(1,0); \draw[thick] (2+1,2) -- node {2} ++(-1,1);
\draw[thick] (3,1) -- node[pos=\posa] {10} ++(0,1);
\draw[thick] (1+1,1) -- ++(-1,1); 
\end{tikzpicture}
\begin{tikzpicture}[script math mode,nodes={edgelabel},x={(-0.577cm,-1cm)},y={(0.577cm,-1cm)},scale=\thescale]%\useasboundingbox (0,0) -- (5+0.5,0) -- (0,5+0.5);
\draw[thick] (1,2) -- node[pos=\posa] {4} ++(0,1); \draw[thick] (1,2) -- node[pos=\posb] {4(3(20))} ++(1,0); \draw[thick] (1+1,2) -- node {3(20)} ++(-1,1);
\draw[thick] (1,3) -- node[pos=\posb] {3} ++(1,0);
\draw[thick] (2,1) -- node[pos=\posa] {1} ++(0,1); \draw[thick] (2,1) -- node[pos=\posb] {1} ++(1,0); \draw[thick] (2+1,1) -- node {1} ++(-1,1); 
\draw[thick] (2,2) -- node[pos=\posa] {20} ++(0,1); \draw[thick] (2,2) -- node[pos=\posb] {0} ++(1,0); \draw[thick] (2+1,2) -- node {2} ++(-1,1); 
\draw[thick] (3,1) -- node[pos=\posa] {10} ++(0,1);
\draw[thick] (1+1,1) -- ++(-1,1); 
\end{tikzpicture}
\begin{tikzpicture}[script math mode,nodes={edgelabel},x={(-0.577cm,-1cm)},y={(0.577cm,-1cm)},scale=\thescale]%\useasboundingbox (0,0) -- (5+0.5,0) -- (0,5+0.5);
\draw[thick] (1,2) -- node[pos=\posa] {4} ++(0,1); \draw[thick] (1,2) -- node[pos=\posb] {4(32)} ++(1,0); \draw[thick] (1+1,2) -- node {32} ++(-1,1);
\draw[thick] (1,3) -- node[pos=\posb] {3} ++(1,0);
\draw[thick] (2,1) -- node[pos=\posa] {(21)0} ++(0,1); \draw[thick] (2,1) -- node[pos=\posb] {1} ++(1,0); \draw[thick] (2+1,1) -- node {2(10)} ++(-1,1); 
\draw[thick] (2,2) -- node[pos=\posa] {2} ++(0,1); \draw[thick] (2,2) -- node[pos=\posb] {2} ++(1,0); \draw[thick] (2+1,2) -- node {2} ++(-1,1); 
\draw[thick] (3,1) -- node[pos=\posa] {10} ++(0,1);
\draw[thick] (1+1,1) -- ++(-1,1); 
\end{tikzpicture}
\begin{tikzpicture}[script math mode,nodes={edgelabel},x={(-0.577cm,-1cm)},y={(0.577cm,-1cm)},scale=\thescale]%\useasboundingbox (0,0) -- (5+0.5,0) -- (0,5+0.5);
\draw[thick] (1,2) -- node[pos=\posa] {4} ++(0,1); \draw[thick] (1,2) -- node[pos=\posb] {43} ++(1,0); \draw[thick] (1+1,2) -- node {3} ++(-1,1);
\draw[thick] (1,3) -- node[pos=\posb] {3} ++(1,0);
\draw[thick] (2,1) -- node[pos=\posa] {((32)1)0} ++(0,1); \draw[thick] (2,1) -- node[pos=\posb] {1} ++(1,0); \draw[thick] (2+1,1) -- node {(32)(10)} ++(-1,1); 
\draw[thick] (2,2) -- node[pos=\posa] {3} ++(0,1); \draw[thick] (2,2) -- node[pos=\posb] {32} ++(1,0); \draw[thick] (2+1,2) -- node {2} ++(-1,1); 
\draw[thick] (3,1) -- node[pos=\posa] {10} ++(0,1);
\draw[thick] (1+1,1) -- ++(-1,1); 
\end{tikzpicture}
\begin{tikzpicture}[script math mode,nodes={edgelabel},x={(-0.577cm,-1cm)},y={(0.577cm,-1cm)},scale=\thescale]%\useasboundingbox (0,0) -- (5+0.5,0) -- (0,5+0.5);
\draw[thick] (1,2) -- node[pos=\posa] {4} ++(0,1); \draw[thick] (1,2) -- node[pos=\posb] {4} ++(1,0); \draw[thick] (1+1,2) -- node {4} ++(-1,1); 
\draw[thick] (1,3) -- node[pos=\posb] {3} ++(1,0);
\draw[thick] (2,1) -- node[pos=\posa] {(((43)2)1)0} ++(0,1); \draw[thick] (2,1) -- node[pos=\posb] {1} ++(1,0); \draw[thick] (2+1,1) -- node {((43)2)(10)} ++(-1,1); 
\draw[thick] (2,2) -- node[pos=\posa] {43} ++(0,1); \draw[thick] (2,2) -- node[pos=\posb] {(43)2} ++(1,0); \draw[thick] (2+1,2) -- node {2} ++(-1,1); 
\draw[thick] (3,1) -- node[pos=\posa] {10} ++(0,1);
\draw[thick] (1+1,1) -- ++(-1,1); 
\end{tikzpicture}
\end{center}

The fugacities depend on the scalar product of the edges surrounding the zero weight edge. With the same ordering
as above, we find:
\[
\frac{1}{5}
\begin{bmatrix}
 -4 & 1 & 1 & 1 & 1 \\
 1 & 1 & 1 & 1 & 1 \\
 1 & 1 & -4 & 1 & 1 \\
 1 & 1 & 1 & -4 & 1 \\
 1 & 1 & 1 & 1 & -4
\end{bmatrix}
\]
which sums up to $1$, as expected.

\subsection{Violation of inversion number inequalities}\label{app:exd4c}
Similarly, the 25 puzzles that contribute to the coefficient of $S^{41230}$ in the product $S^{10432}S^{21043}$ in $H^{*\loc}_{\CC^\times}$ have the common part:
\begin{center}
\def\thescale{1.4}
\def\posa{0.42}\def\posb{0.58}
\begin{tikzpicture}[script math mode,nodes={edgelabel},x={(-0.577cm,-1cm)},y={(0.577cm,-1cm)},scale=\thescale]\useasboundingbox (0,0) -- (5+0.5,0) -- (0,5+0.5);
\draw[thick] (0,0) -- node[pos=\posa] {2} ++(0,1); \draw[thick] (0,0) -- node[pos=\posb] {2} ++(1,0); \draw[thick] (0+1,0) -- node {2} ++(-1,1); 
\draw[thick] (0,1) -- node[pos=\posa] {1} ++(0,1);
\draw[thick] (0,2) -- node[pos=\posa] {0} ++(0,1); \draw[thick] (0,2) -- node[pos=\posb] {0} ++(1,0); \draw[thick] (0+1,2) -- node {0} ++(-1,1); 
\draw[thick] (0,3) -- node[pos=\posa] {4} ++(0,1); \draw[thick] (0,3) -- node[pos=\posb] {40} ++(1,0); \draw[thick] (0+1,3) -- node {0} ++(-1,1); 
\draw[thick] (0,4) -- node[pos=\posa] {3} ++(0,1); \draw[thick] (0,4) -- node[pos=\posb] {30} ++(1,0); \draw[thick] (0+1,4) -- node {0} ++(-1,1); 
\draw[thick] (1,0) -- node[pos=\posb] {3} ++(1,0);
\draw[thick] (1,2) -- node[pos=\posa] {4} ++(0,1);
\draw[thick] (1,3) -- node[pos=\posa] {3} ++(0,1); \draw[thick] (1,3) -- node[pos=\posb] {3} ++(1,0); \draw[thick] (1+1,3) -- node {3} ++(-1,1); 
\draw[thick] (2,0) -- node[pos=\posa] {4} ++(0,1); \draw[thick] (2,0) -- node[pos=\posb] {4} ++(1,0); \draw[thick] (2+1,0) -- node {4} ++(-1,1); 
\draw[thick] (2,1) -- node[pos=\posb] {0} ++(1,0);
\draw[thick] (2+1,2) -- node {2} ++(-1,1); 
\draw[thick] (3,0) -- node[pos=\posa] {40} ++(0,1); \draw[thick] (3,0) -- node[pos=\posb] {0} ++(1,0); \draw[thick] (3+1,0) -- node {4} ++(-1,1); 
\draw[thick] (3,1) -- node[pos=\posa] {1} ++(0,1); \draw[thick] (3,1) -- node[pos=\posb] {1} ++(1,0); \draw[thick] (3+1,1) -- node {1} ++(-1,1); 
\draw[thick] (4,0) -- node[pos=\posa] {41} ++(0,1); \draw[thick] (4,0) -- node[pos=\posb] {1} ++(1,0); \draw[thick] (4+1,0) -- node {4} ++(-1,1); 
\draw[thick] (1+1,1) -- ++(-1,1); 
\end{tikzpicture}
\end{center}

The hexagons have the same possible fillings as in previous section, up to 180 degree rotation for the bottom part.

This time the table of fugacities
\[
\frac{1}{5}
\begin{bmatrix}
 -4 & 1 & 1 & 1 & 1 \\
 1 & -4 & 1 & 1 & 1 \\
 1 & 1 & -4 & 1 & 1 \\
 1 & 1 & 1 & -4 & 1 \\
 1 & 1 & 1 & 1 & -4
\end{bmatrix}
\]
sums up to $0$,
consistent with the fact that
$\ell(10432)+\ell(21043)=4+4=8>7=\ell(41230)$.

\section{Scalar products}\label{app:scal}
We provide here the scalar products (for $d\le 3$) of weights given in terms of multinumbers. More precisely,
given two multinumbers $X$ and $Y$, the table below provides the scalar products
$\killing{\vf_X}{\tau \vf_Y}$ corresponding to any of the following configurations:
\[
\tikz[baseline=0,scale=1.25]{
      \draw[thick,black] (-0.5,0) ++(60:-1) -- node[edgelabel] {$X$} ++(-60:-1) -- node[edgelabel] {$Y$} ++(60:1);
}
\qquad
\tikz[baseline=0,scale=-1.25]{
      \draw[thick,black] (-0.5,0) ++(60:-1) -- node[edgelabel] {$X$} ++(-60:-1) -- node[edgelabel] {$Y$} ++(60:1);
}
\qquad
\tikz[baseline=0,scale=1.25]{
      \draw[thick,black] (-0.5,0) -- node[edgelabel] {$X$} ++(-60:1) -- node[edgelabel] {$Y$} ++(60:1);
}
\qquad
\tikz[baseline=0,scale=-1.25]{
      \draw[thick,black] (-0.5,0) -- node[edgelabel] {$X$} ++(-60:1) -- node[edgelabel] {$Y$} ++(60:1);
}
\]
In the notations of \S\ref{sec:coho}, the first (resp.\ last) two compute the scalar product $s$ (resp.\ $t$).

As in \S\ref{sec:coho}, we parametrize the scalar products as $-1+ka$ where
$a=3,2,3/2$ for $d=1,2,3$; and only indicate $k$ in the table. This way, the tables for $d=1,2$ are subtables
of the $d=3$ table shown below.

\junk{At $d=1$,
\[
\begin{tikzpicture}
\matrix[matrix of math nodes,column sep={28pt,between origins},row sep={28pt,between origins}] {
X\backslash Y &0&1&10\\
0&-1&-1&2\\
1&2&-1&-1\\
10&-1&2&-1\\
};
\end{tikzpicture}
\]

At $d=2$,
\[
\begin{tikzpicture}
\matrix[matrix of math nodes,column sep={28pt,between origins},row sep={28pt,between origins}] {
X\backslash Y  & 0 & 1 & 2 & 10 & 20 & 21 & 2(10) & (21)0 \\
 0 & -1 & -1 & -1 & 1 & 1 & -1 & 1 & 1 \\
 1 & 1 & -1 & -1 & -1 & 1 & 1 & -1 & 1 \\
 2 & 1 & 1 & -1 & 1 & -1 & -1 & -1 & 1 \\
 10 & -1 & 1 & -1 & -1 & -1 & 1 & 1 & 1 \\
 20 & -1 & 1 & 1 & 1 & -1 & -1 & 1 & -1 \\
 21 & 1 & -1 & 1 & 1 & 1 & -1 & -1 & -1 \\
 2(10) & 1 & 1 & 1 & -1 & -1 & 1 & -1 & -1 \\
 (21)0 & -1 & -1 & 1 & -1 & 1 & 1 & 1 & -1 \\
};
\end{tikzpicture}
\]

\vfill\eject
}
%\newcommand{\oh}{\frac{1}{2}}
% using -1+3/2 k
\newcommand\minusone{0}
\newcommand\oh{1}
\newcommand\two{2}
\hspace{-1.8cm}
\begin{tikzpicture}
\matrix[matrix of math nodes,column sep={18pt,between origins},row sep={18pt,between origins}] {
\scriptscriptstyle X\backslash Y  &\scriptscriptstyle 0 &\scriptscriptstyle 1 &\scriptscriptstyle 2 &\scriptscriptstyle 3 &\scriptscriptstyle 10 &\scriptscriptstyle 20 &\scriptscriptstyle 21 &\scriptscriptstyle 30 &\scriptscriptstyle 31 &\scriptscriptstyle 32 &\scriptscriptstyle 2(10) &\scriptscriptstyle 3(10) &\scriptscriptstyle 3(20) &\scriptscriptstyle (21)0 &\scriptscriptstyle 3(21) &\scriptscriptstyle (31)0 &\scriptscriptstyle (32)0
   &\scriptscriptstyle (32)1 &|[yshift=12pt]|\scriptscriptstyle (32)(10) &\scriptscriptstyle 3(2(10)) &|[yshift=12pt]|\scriptscriptstyle 3((21)0) &\scriptscriptstyle (32)((21)0) &|[yshift=12pt]|\scriptscriptstyle (3(21))0 &\scriptscriptstyle (3(21))(10) &|[yshift=12pt]|\scriptscriptstyle ((32)1)0 &\scriptscriptstyle
   (3(2(10)))0 &|[yshift=12pt]|\scriptscriptstyle 3(((32)1)0) \\|[xshift=-5pt]|\scriptscriptstyle
 0 &\minusone &\minusone &\minusone &\minusone &\oh &\oh &\minusone &\oh &\minusone &\minusone &\oh &\oh &\oh &\oh &\minusone &\oh &
   \oh &\minusone &\oh &\minusone &\oh &\oh &\oh &\oh &\oh &\two &\oh \\|[xshift=-5pt]|\scriptscriptstyle
 1 &\oh &\minusone &\minusone &\minusone &\minusone &\oh &\oh &\oh &\oh &\minusone &\minusone &\minusone &\minusone &\oh &\oh &\oh &
   \oh &\oh &\minusone &\minusone &\oh &\oh &\two &\oh &\oh &\oh &\oh \\|[xshift=-5pt]|\scriptscriptstyle
 2 &\oh &\oh &\minusone &\minusone &\oh &\minusone &\minusone &\oh &\oh &\oh &\minusone &\oh &\minusone &\oh &\minusone &\two &
   \oh &\oh &\oh &\minusone &\minusone &\minusone &\oh &\oh &\oh &\oh &\oh \\|[xshift=-5pt]|\scriptscriptstyle
 3 &\oh &\oh &\oh &\minusone &\oh &\oh &\oh &\minusone &\minusone &\minusone &\oh &\minusone &\minusone &\oh &\minusone &
   \oh &\oh &\oh &\oh &\minusone &\minusone &\oh &\oh &\oh &\two &\oh &\minusone \\|[xshift=-5pt]|\scriptscriptstyle
 10 &\minusone &\oh &\minusone &\minusone &\minusone &\minusone &\oh &\minusone &\oh &\minusone &\oh &\oh &\minusone &\oh &\minusone &\oh &\minusone &
   \oh &\oh &\oh &\oh &\oh &\oh &\two &\oh &\oh &\oh \\|[xshift=-5pt]|\scriptscriptstyle
 20 &\minusone &\oh &\oh &\minusone &\oh &\minusone &\minusone &\minusone &\minusone &\oh &\oh &\oh &\oh &\minusone &\minusone &\oh &
   \oh &\oh &\two &\oh &\minusone &\oh &\minusone &\oh &\oh &\oh &\oh \\|[xshift=-5pt]|\scriptscriptstyle
 21 &\oh &\minusone &\oh &\minusone &\oh &\oh &\minusone &\oh &\minusone &\oh &\minusone &\minusone &\oh &\minusone &\oh &
   \oh &\two &\oh &\oh &\minusone &\minusone &\oh &\oh &\minusone &\oh &\oh &\oh \\|[xshift=-5pt]|\scriptscriptstyle
 30 &\minusone &\oh &\oh &\oh &\oh &\oh &\oh &\minusone &\minusone &\minusone &\two &\oh &\oh &\oh &\minusone &
   \minusone &\minusone &\minusone &\oh &\oh &\oh &\oh &\minusone &\oh &\oh &\oh &\minusone \\|[xshift=-5pt]|\scriptscriptstyle
 31 &\oh &\minusone &\oh &\oh &\oh &\two &\oh &\oh &\minusone &\minusone &\oh &\minusone &\oh &\oh &
   \oh &\minusone &\oh &\minusone &\minusone &\minusone &\oh &\oh &\oh &\minusone &\oh &\oh &\minusone \\|[xshift=-5pt]|\scriptscriptstyle
 32 &\oh &\oh &\minusone &\oh &\oh &\oh &\oh &\oh &\oh &\minusone &\oh &\oh &\minusone &
   \two &\minusone &\oh &\minusone &\minusone &\minusone &\minusone &\oh &\minusone &\oh &\oh &\oh &\oh &\minusone \\|[xshift=-5pt]|\scriptscriptstyle
 2(10) &\oh &\oh &\oh &\minusone &\minusone &\minusone &\oh &\minusone &\oh &\oh &\minusone &\minusone &\minusone &\minusone &\oh &
   \oh &\oh &\two &\oh &\oh &\minusone &\oh &\oh &\oh &\oh &\minusone &\oh \\|[xshift=-5pt]|\scriptscriptstyle
 3(10) &\oh &\oh &\oh &\oh &\minusone &\oh &\two &\minusone &\oh &\minusone &\oh &\minusone &\minusone &\oh &
   \oh &\minusone &\minusone &\oh &\minusone &\oh &\oh &\oh &\oh &\oh &\oh &\minusone &\minusone \\|[xshift=-5pt]|\scriptscriptstyle
 3(20) &\oh &\two &\oh &\oh &\oh &\minusone &\oh &\minusone &\oh &\oh &\oh &\oh &\minusone &
   \oh &\minusone &\oh &\minusone &\oh &\oh &\oh &\minusone &\minusone &\minusone &\oh &\oh &\minusone &\minusone \\|[xshift=-5pt]|\scriptscriptstyle
 (21)0 &\minusone &\minusone &\oh &\minusone &\minusone &\oh &\oh &\minusone &\minusone &\minusone &\oh &\minusone &\oh &\minusone &\oh &\minusone &
   \oh &\oh &\oh &\oh &\oh &\two &\oh &\oh &\oh &\oh &\oh \\|[xshift=-5pt]|\scriptscriptstyle
 3(21) &\oh &\oh &\oh &\oh &\two &\oh &\minusone &\oh &\minusone &\oh &\oh &\oh &
   \oh &\oh &\minusone &\oh &\oh &\minusone &\oh &\minusone &\minusone &\minusone &\minusone &\minusone &\oh &\oh &\minusone \\|[xshift=-5pt]|\scriptscriptstyle
 (31)0 &\minusone &\minusone &\minusone &\oh &\minusone &\oh &\oh &\oh &\oh &\minusone &\oh &\oh &\oh &
   \oh &\oh &\minusone &\minusone &\minusone &\minusone &\oh &\two &\oh &\oh &\oh &\minusone &\oh &\oh \\|[xshift=-5pt]|\scriptscriptstyle
 (32)0 &\minusone &\oh &\minusone &\oh &\oh &\minusone &\minusone &\oh &\oh &\oh &\oh &\two &\oh &
   \oh &\minusone &\oh &\minusone &\minusone &\oh &\oh &\oh &\minusone &\minusone &\oh &\minusone &\oh &\oh \\|[xshift=-5pt]|\scriptscriptstyle
 (32)1 &\oh &\minusone &\minusone &\oh &\oh &\oh &\minusone &\two &\oh &\oh &\minusone &\oh &\oh &
   \oh &\oh &\oh &\oh &\minusone &\minusone &\minusone &\oh &\minusone &\oh &\minusone &\minusone &\oh &\oh \\|[xshift=-5pt]|\scriptscriptstyle
 (32)(10) &\oh &\oh &\minusone &\oh &\minusone &\minusone &\oh &\oh &\two &\oh &\minusone &\oh &\minusone &\oh &
   \oh &\oh &\minusone &\oh &\minusone &\oh &\oh &\minusone &\oh &\oh &\minusone &\minusone &\oh \\|[xshift=-5pt]|\scriptscriptstyle
 3(2(10)) &\two &\oh &\oh &\oh &\oh &\oh &\oh &\oh &\oh &\oh &\minusone &\minusone &
   \minusone &\oh &\oh &\oh &\oh &\oh &\minusone &\minusone &\minusone &\minusone &\oh &\minusone &\oh &\minusone &\minusone \\|[xshift=-5pt]|\scriptscriptstyle
 3((21)0) &\oh &\oh &\two &\oh &\oh &\oh &\oh &\minusone &\minusone &\oh &\oh &\minusone &
   \oh &\minusone &\oh &\minusone &\oh &\oh &\oh &\oh &\minusone &\oh &\minusone &\minusone &\oh &\minusone &\minusone \\|[xshift=-5pt]|\scriptscriptstyle
 (32)((21)0) &\oh &\oh &\oh &\oh &\oh &\minusone &\minusone &\oh &\oh &\two &\minusone &\oh &
   \oh &\minusone &\oh &\oh &\oh &\oh &\oh &\oh &\minusone &\minusone &\minusone &\minusone &\minusone &\minusone &\oh \\|[xshift=-5pt]|\scriptscriptstyle
 (3(21))0 &\minusone &\minusone &\oh &\oh &\oh &\oh &\minusone &\oh &\minusone &\oh &\oh &\oh &\two &\minusone &
   \oh &\minusone &\oh &\minusone &\oh &\oh &\oh &\oh &\minusone &\minusone &\minusone &\oh &\oh \\|[xshift=-5pt]|\scriptscriptstyle
 (3(21))(10) &\oh &\minusone &\oh &\oh &\minusone &\oh &\oh &\oh &\oh &\oh &\minusone &\minusone &
   \oh &\minusone &\two &\minusone &\oh &\oh &\minusone &\oh &\oh &\oh &\oh &\minusone &\minusone &\minusone &\oh \\|[xshift=-5pt]|\scriptscriptstyle
 ((32)1)0 &\minusone &\minusone &\minusone &\minusone &\minusone &\minusone &\minusone &\oh &\oh &\oh &\minusone &\oh &\oh &\minusone &\oh &\oh
   &\oh &\oh &\oh &\oh &\oh &\oh &\oh &\oh &\minusone &\oh &\two \\|[xshift=-5pt]|\scriptscriptstyle
 (3(2(10)))0 &\minusone &\oh &\oh &\oh &\minusone &\minusone &\oh &\minusone &\oh &\oh &\oh &\oh &
   \oh &\minusone &\oh &\minusone &\minusone &\oh &\oh &\two &\oh &\oh &\minusone &\oh &\minusone &\minusone &\oh \\|[xshift=-5pt]|\scriptscriptstyle
 3(((32)1)0) &\oh &\oh &\oh &\two &\oh &\oh &\oh &\oh &\oh &\oh &
   \oh &\oh &\oh &\oh &\oh &\minusone &\minusone &\minusone &\minusone &\oh &\oh &\minusone &\minusone &\minusone &\minusone &\minusone &\minusone \\
};
\end{tikzpicture}

\vfill\eject

% to fix MR issues
\gdef\MRshorten#1 #2MRend{#1}%
\gdef\MRfirsttwo#1#2{\if#1M%
MR\else MR#1#2\fi}
\def\MRfix#1{\MRshorten\MRfirsttwo#1 MRend}
\renewcommand\MR[1]{\relax\ifhmode\unskip\spacefactor3000 \space\fi
\MRhref{\MRfix{#1}}{{\scriptsize \MRfix{#1}}}}
\renewcommand{\MRhref}[2]{%
\href{http://www.ams.org/mathscinet-getitem?mr=#1}{#2}}
\bibliographystyle{amsalphahyper}
\bibliography{biblio}

\end{document}